\newtheorem{thm}{Theorem}[section]
\newtheorem{lem}[thm]{Lemma}%
\newtheorem{prop}[thm]{Proposition}%
\newtheorem{cor}[thm]{Corollary}%
\theoremstyle{remark}
\newtheorem{remark}{Remark}[section] %
\theoremstyle{plain}
\numberwithin{equation}{section}
\def\RR{{\mathbb R}}
\def\veca{{\text{\boldmath$a$}}}
\def\vecb{{\text{\boldmath$b$}}}
\def\vecc{{\text{\boldmath$c$}}}
\def\vece{{\text{\boldmath$e$}}}
\def\vech{{\text{\boldmath$h$}}}
\def\veck{{\text{\boldmath$k$}}}
\def\vecell{{\text{\boldmath$\ell$}}}
\def\vecm{{\text{\boldmath$m$}}}
\def\vecq{{\text{\boldmath$q$}}}
\def\vecQ{{\text{\boldmath$Q$}}}
\def\vecp{{\text{\boldmath$p$}}}
\def\vecs{{\text{\boldmath$s$}}}
\def\vecS{{\text{\boldmath$S$}}}
\def\vect{{\text{\boldmath$t$}}}
\def\vecu{{\text{\boldmath$u$}}}
\def\vecv{{\text{\boldmath$v$}}}
\def\vecV{{\text{\boldmath$V$}}}
\def\vecw{{\text{\boldmath$w$}}}
\def\vecx{{\text{\boldmath$x$}}}
\def\vecy{{\text{\boldmath$y$}}}
\def\vecz{{\text{\boldmath$z$}}}
\def\vecalf{{\text{\boldmath$\alpha$}}}
\def\vecnull{{\text{\boldmath$0$}}}
\def\scrB{{\mathcal B}}
\def\scrD{{\mathcal D}}
\def\scrE{{\mathcal E}}
\def\scrF{{\mathcal F}}
\def\scrK{{\mathcal K}}
\def\scrL{{\mathcal L}}
\def\scrN{{\mathcal N}}
\def\fC{{\mathfrak C}}
\def\fZ{{\mathfrak Z}}
\def\tvecv{\widetilde{\vecv}}
\def\diag{\operatorname{diag}}
\def\id{\operatorname{id}}
\def\C{\operatorname{C{}}}
\def\GL{\operatorname{GL}}
\def\S{\operatorname{S{}}}
\def\SL{\operatorname{SL}}
\def\SO{\operatorname{SO}}
\def\T{\operatorname{T{}}}
\def\vol{\operatorname{vol}}
\def\trans{\,^\mathrm{t}\!}
\def\Onder#1#2#3#4#5{#1 \setbox0=\hbox{$#1$}\setbox1=\hbox{$#2$}
       \dimen0=.5\wd0 \dimen1=\dimen0 \dimen2=\dp0 \dimen3=\dimen2
       \advance\dimen0 by .5\wd1 \advance\dimen0 by -#4
       \advance\dimen1 by -.5\wd1 \advance\dimen1 by -#4
       \advance\dimen2 by -#3 \advance\dimen2 by \ht1
       \advance\dimen2 by 0.3ex \advance\dimen3 by #5
        \kern-\dimen0\raisebox{-\dimen2}[0ex][\dimen3]{\box1}
       \kern\dimen1}
\def\utilde#1{\Onder{#1}{\mbox{\char'176}}{0pt}{0ex}{0.5ex}} %
\def\myutilde#1{\Onder{#1}{\mbox{$\sim$}}{0pt}{0ex}{0.5ex}} %
\newcommand{\mytA}{{t}}
\newcommand{\mytB}{{t'}}
\newcommand{\Q}{\mathbb{Q}}
\newcommand{\R}{\mathbb{R}}
\newcommand{\Z}{\mathbb{Z}}
\renewcommand{\aa}{\mathsf{a}}
\newcommand{\kk}{\mathsf{k}}
\newcommand{\nn}{\mathsf{n}}
\newcommand{\sfrac}[2]{{\textstyle \frac {#1}{#2}}}
\newcommand{\col}{\: : \:}
\newcommand{\Si}{\mathcal{S}}
\newcommand{\bn}{\mathbf{0}}
\newcommand{\ta}{\utilde{a}}
\newcommand{\tu}{\utilde{u}}
\newcommand{\tM}{\myutilde{M}}
\newcommand{\tkk}{\utilde{\mathsf{k}}}
\newcommand{\ve}{\varepsilon}
\newcommand{\F}{\mathcal{F}}
\newcommand{\matr}[4]{\left( \begin{matrix} #1 & #2 \\ #3 & #4 \end{matrix} \right) }
\title{On the probability of a random lattice avoiding a large convex set}
\author{Andreas Str\"ombergsson}
\address{Department of Mathematics, Box 480, Uppsala University,
SE-75106 Uppsala, Sweden\newline
\rule[0ex]{0ex}{0ex} \hspace{8pt}{\tt astrombe@math.uu.se}}
\date{\today}
\thanks{2000 \textit{Mathematics Subject Classification.} 
11H06, 52C07, 82C40.}
\thanks{The author is a Royal Swedish Academy of Sciences Research Fellow supported by a grant from the Knut and Alice Wallenberg Foundation, and is also supported by the Swedish Research Council, Research Grant 621-2007-6352.}
\begin{document}

\begin{abstract}
Given a set $\fC\subset\R^d$, let $p(\fC)$ be the probability
that a random $d$-dimensional
unimodular lattice, chosen according to Haar measure on 
$\SL(d,\Z)\backslash\SL(d,\R)$, is disjoint from $\fC\setminus\{\bn\}$.
For special convex sets $\fC$ we prove bounds on $p(\fC)$ 
which are sharp up to a scaling of $\fC$ by a constant.
We also prove bounds on a %
variant 
of $p(\fC)$ where the probability is conditioned %
on the random lattice containing a fixed given point $\vecp\neq\bn$.
Our bounds have applications, among other things,
to the asymptotic properties of the
collision kernel of the periodic Lorentz gas in the Boltzmann-Grad limit,
in arbitrary dimension $d$.
\end{abstract}

\maketitle

\section{Introduction}
\subsection{General introduction}

Let $X_1$ be the space of $d$-dimensional lattices $L\subset\R^d$
of covolume one,
equipped with its invariant probability measure $\mu$.
Let $p(\fC)=p^{(d)}(\fC)$ be the probability that a random lattice
$L\in X_1$ is disjoint from
a given subset $\fC\subset\R^d$ excluding the origin, i.e.
\begin{align}
p(\fC)=p^{(d)}(\fC)
:=\mu\bigl(\bigl\{L\in X_1\col L\cap\fC\setminus\{\bn\}=\emptyset
\bigr\}\bigr).
\end{align}
In recent years this probability $p(\fC)$
for certain specific choices of $\fC$,
as well as a conditional variant $p_\vecp(\fC)$ which we discuss below,
have appeared
as limit functions in a number of asymptotic problems in
number theory and mathematical physics, %
cf.\ 
\cite[Sec.\ 4]{Marklof00}, 
\cite{Elkies04}, %
\cite[Thm.\ 2]{SV}, \cite{partI}, \cite{partII}.

Our aim in the present paper is to give bounds on $p(\fC)$ and $p_\vecp(\fC)$
for special choices of convex %
sets $\fC$ of large volume,
and to point out some applications.
For a general measurable set $\fC$ the following
fundamental bound was recently
proved by Athreya and Margulis (\cite[Thm.\ 2.2]{athreyamargulis}):
\begin{align}\label{ATHREYAMARGULISRES}
p(\fC)\ll|\fC|^{-1},  %
\end{align}
where $|\fC|$ denotes the volume of $\fC$.
Here and throughout the paper we keep the convention that the implied constant
in any ``$\ll$'', ``$\asymp$'', or ``big-$O$'' depends only on $d$.
As Athreya and Margulis point out, \eqref{ATHREYAMARGULISRES} 
can be seen as a 'random' analogue to the classical Minkowski
theorem in the geometry of numbers.
We will be interested in giving stronger bounds than
\eqref{ATHREYAMARGULISRES} for special sets $\fC$.
(The bound \eqref{ATHREYAMARGULISRES} itself is %
easy for convex $\fC$; cf.\ Lemma \ref{TRIVPCBOUNDLEM2} below.)

The probability $p(\fC)$ for arbitrary sets $\fC$
was also studied in the late 1950's by Rogers \cite[part II]{Rogers58} and
Schmidt \cite{Schmidt58}, \cite{Schmidt59},
from a different point of view.
They obtained precise results on the size of $p(\fC)$
in the case of large dimension $d$ and not too large volume $|\fC|$.

\vspace{5pt}

The basic principle which we will use to obtain bounds on $p(\fC)$
is the following result, which 
as we will see
is an easy consequence of
classical reduction theory in $\SL_d(\R)$.
Let $\S_1^{d-1}$ be the unit sphere in $\R^d$ and let $\vol_{\S_1^{d-1}}$ be
the %
$(d-1)$-dimensional volume measure on $\S_1^{d-1}$.
\begin{prop}\label{GENPRINCBOUNDPROP}
Given $d\geq2$ there exist constants $k_1,k_2>0$ such that
for every measurable set $\fC\subset\R^d$ we have
\begin{align}\label{GENPRINCBOUNDPROPRES1}
p^{(d)}(\fC)\leq\min\biggl\{1,k_1\int_{\S_1^{d-1}}\int_{k_2r}^\infty
p^{(d-1)}\Bigl(a_1^{\frac1{d-1}}\fC\cap\vecv^\perp\Bigr)
\,\frac{da_1}{a_1^{d+1}}\, %
d\!\vol_{\S_1^{d-1}}(\vecv)
\biggr\},
\end{align}
where $r$ is the supremum of the radii of all 
$d$-dimensional balls contained in $\fC$,
and $a_1^{\frac1{d-1}}\fC\cap\vecv^\perp$ is viewed as a subset
of $\R^{d-1}$ via any 
volume preserving linear space isomorphism $\vecv^\perp\cong\R^{d-1}$.
\end{prop}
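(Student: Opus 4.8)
\emph{Strategy.} Since $p^{(d)}(\fC)$ is a probability, $p^{(d)}(\fC)\le1$, so only the integral bound needs proof, and I would obtain it from reduction theory applied to the maximal parabolic $Q\le\SLR$ stabilising $\langle e_1,\dots,e_{d-1}\rangle$ (acting on rows). Write its Langlands decomposition $Q=N_QA_QM_Q$, with unipotent radical $N_Q\cong\R^{d-1}$, split torus $A_Q=\{a_Q(s):=\diag(s^{-1/(d-1)},\dots,s^{-1/(d-1)},s):s>0\}$, and $M_Q$ having semisimple factor $\SL(d-1,\R)$; together with $\SLR=Q\cdot\SO(d)$ this writes each $g\in\SLR$ as $g=n_Q\,a_Q(s)\,m\,k$, $n_Q\in N_Q$, $s>0$, $m$ the $\SL(d-1,\R)$-part of $M_Q$, $k\in\SO(d)$. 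The point of this $Q$: $N_Q$ fixes the first $d-1$ rows of $g$, which span the hyperplane $(e_dk)^\perp$, so with $L=\Z^dg$, $\vecv=\vecv(g):=e_dk$ and $M=M(g):=L\cap\vecv^\perp$, the sublattice $M$ is exactly the $\Z$-span of those rows, a \emph{primitive} rank-$(d-1)$ sublattice of $L$ (as $L/M\cong\Z$), with $\covol(M)=1/s$ by a direct computation; moreover the identification $\vecv^\perp\cong\R^{d-1}$ carrying $M$ onto $s^{-1/(d-1)}\Z^{d-1}m$ is volume preserving, so that $s^{1/(d-1)}M$ corresponds to the unimodular lattice $\Z^{d-1}m$.

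\emph{Geometric input.} Let $L\in X_1$ with $L\cap(\fC\setminus\{\bn\})=\emptyset$. As $r$ is the supremum of radii of balls inside $\fC$, $\fC$ contains a ball, and — shrinking it by a bounded factor and translating it slightly if it happened to contain $\bn$ — one gets $B(\vecc,\rho)\subseteq\fC$ with $\rho\gg r$ and $\bn\notin B(\vecc,\rho)$. Since $L$ misses $B(\vecc,\rho)$, $\dist(\vecc,L)\ge\rho$, so the covering radius of $L$ is $\ge\rho\gg r$; combined with the elementary bound $(\text{covering radius})\le\tfrac12\sum_i\lambda_i(L)\le\tfrac d2\lambda_d(L)$, this gives $\lambda_d(L)\gg r$. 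On the Siegel set used below one has $s\asymp\lambda_d(L)$ by reduction theory, so there $\lambda_d(L)\gg r$ becomes $s\ge k_2r$. Finally — the payoff of the parabolic — $M\subseteq L$ forces $M\cap\bigl((\fC\cap\vecv^\perp)\setminus\{\bn\}\bigr)=\emptyset$; rescaling $\vecv^\perp$ by $s^{1/(d-1)}$ turns $M$ into the unimodular lattice $L':=s^{1/(d-1)}M$ and $\fC\cap\vecv^\perp$ into $(s^{1/(d-1)}\fC)\cap\vecv^\perp$, so the hypothesis $L\in E:=\{L'':L''\cap(\fC\setminus\{\bn\})=\emptyset\}$ forces $s\ge k_2r$ and ``$L'$ avoids $\bigl((s^{1/(d-1)}\fC)\cap\vecv^\perp\bigr)\setminus\{\bn\}$''. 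For a uniformly random unimodular $L'\subset\vecv^\perp$ the latter event has probability $p^{(d-1)}\bigl(s^{1/(d-1)}\fC\cap\vecv^\perp\bigr)$, independently of the volume-preserving identification $\vecv^\perp\cong\R^{d-1}$, since $p^{(d-1)}(\fE h)=p^{(d-1)}(\fE)$ for every $h\in\SL(d-1,\R)$.

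\emph{Assembling the bound.} Fix a Siegel set $\fS\subset\SLR$ with $\SLZ\fS=\SLR$ of bounded multiplicity; as $\fS$ surjects onto $X_1$, $p^{(d)}(\fC)=\mu(E)\le\int_{\fS}\mathbf{1}[\Z^dg\in E]\,dg$. In the coordinates $g=n_Q\,a_Q(s)\,m\,k$ one has $dg=\operatorname{const}\cdot s^{-d-1}\,ds\,d(n_Q)\,dm_0\,dk$ — the exponent $-d-1$ coming from $\delta_Q(a_Q(s))=s^{d}$ (conjugation by $a_Q(s)$ scales $N_Q\cong\R^{d-1}$ by $s^d$) — and over $\fS$, $n_Q$ runs over a bounded region, $m_0$ (the $\SL(d-1,\R)$-part of $m$) over a Siegel set $\fS'$ of $\SL(d-1,\R)$ modulo $\SL(d-1,\Z)$, and $k$ over $\SO(d)$. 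The first $d-1$ rows of $g$, hence $M(g)$, hence the reduced lattice $\Z^{d-1}m_0$ (read in $\vecv(g)^\perp\cong\R^{d-1}$), are independent of $n_Q$, and on $\fS$ the condition $\Z^dg\in E$ forces $s\ge k_2r$ together with ``$\Z^{d-1}m_0$ avoids $\bigl((s^{1/(d-1)}\fC)\cap\vecv(g)^\perp\bigr)\setminus\{\bn\}$''. Thus the $n_Q$-integral contributes only a constant; folding $\SO(d)$ onto $\S_1^{d-1}$ along $k\mapsto e_dk$ (fibre $\SO(d-1)$) and absorbing the fibre into the $m_0$-integral — using that $\fS'$ is stable under right multiplication by $\SO(d-1)$ and a change of variable — and then replacing $\fS'$ by $X_1^{(d-1)}=\SL(d-1,\Z)\backslash\SL(d-1,\R)$ at the cost of a bounded multiplicity, the $m_0$-integral becomes $\ll p^{(d-1)}\bigl(s^{1/(d-1)}\fC\cap\vecv^\perp\bigr)$. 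Integrating $s$ over $[k_2r,\infty)$ now gives $p^{(d)}(\fC)\ll\int_{\S_1^{d-1}}\int_{k_2r}^\infty p^{(d-1)}\bigl(a_1^{1/(d-1)}\fC\cap\vecv^\perp\bigr)\,a_1^{-d-1}\,da_1\,d\vol_{\S_1^{d-1}}(\vecv)$ (with $a_1=s$), which together with $p^{(d)}(\fC)\le1$ is the assertion. All absorbed constants depend only on $d$; $d=2$ is the degenerate case, with $\SL(1,\R)$ and $\fS'$ trivial.

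\emph{Main obstacle.} No single step is deep; the work is in the bookkeeping, and the most delicate items are: (i) the precise Haar-measure computation — verifying $\delta_Q(a_Q(s))=s^{d}$ and hence the exponent $-d-1$, together with $\covol(M(g))=1/s$ and the reduction-theoretic relation $s\asymp\lambda_d(\Z^dg)$ valid on $\fS$ (including that $s\to\infty$ is indeed the cusp responsible for short lattice vectors); and (ii) keeping the various comparisons in check — the bounded multiplicities of the Siegel sets in dimensions $d$ and $d-1$, the $\SO(d-1)$-folding (using that a Siegel set is stable under right multiplication by its maximal compact), and the harmless torus and compact-fibre factors — each of which costs only a $d$-dependent constant, absorbed into $k_1$.
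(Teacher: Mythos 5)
Your proposal is correct and takes essentially the same approach as the paper. You parametrize $G$ via the Langlands decomposition of a maximal parabolic that splits off a hyperplane (and the torus parameter $s$ recording the covolume of the rank-$(d-1)$ primitive sublattice in $\vecv^\perp$), exactly as the paper does with its $[a_1,\vecv,\vecu,\tM]$ coordinates — the two are conjugate under the Weyl involution, with your $s$ playing the role of the paper's $a_1$ and your lower-triangular Siegel set corresponding to the paper's $\Si_d$; the paper's \eqref{SLDRSPLITHAAR} is your Haar formula $s^{-d-1}\,ds\,dn_Q\,dm_0\,dk$, your covering-radius argument via $\lambda_d$ is the paper's Lemma \ref{A1LARGELEM} forcing $a_1\gg r$, and the final assembly (dropping the $n_Q$-integral, folding $\SO(d)\to\S_1^{d-1}$, replacing $\Si_{d-1}$ by $X_1^{(d-1)}$ at bounded multiplicity) is the same. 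The one point you leave implicit is that your Siegel set must be the one adapted to your parabolic $Q$ (lower block-triangular, so that $a'_d=s$ is the \emph{large} torus parameter and $s\asymp\lambda_d(L)$); with the standard upper-triangular Siegel set the $A_Q$-component of a reduced element is instead $\asymp\lambda_1(L)^{-1}$, which would be small — but this is a minor bookkeeping clarification, not an error.
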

To bound $p(\fC)$ for a given ``nice'' set $\fC$, 
a reasonable strategy %
seems to be to first use the invariance relation
\begin{align}\label{PDINV}
p(\fC)=p(\fC M), \qquad\forall M\in\SL_d(\R),
\end{align}
so as to make the radius $r$ maximal or nearly maximal,
and then apply Proposition \ref{GENPRINCBOUNDPROP}.
In fact, if $\fC$ is convex, then the resulting bound is
\textit{sharp}, up to a scaling of $\fC$ %
by a constant factor only depending on $d$;
cf.\ Remark \ref{GENCONVEXPLREM} below.
(We remark that for $\fC$ convex we have 
$p(k_2\fC)\leq p(k_1\fC)$ for all $0<k_1<k_2$, 
cf.\ Lemma \ref{CONVPDMONOTONELEM}.
From now on, when we say that a bound 
$p(\fC)\leq B$ is ``sharp'',
we mean that there is a constant $k>0$ which only depends on $d$
such that $p(k\fC)\geq\min(\frac12,B)$.)

We will see that for an arbitrary convex set $\fC$ 
of large volume, if $\bn$ lies outside $\fC$ and not too near $\fC$,
then the Athreya-Margulis bound 
\eqref{ATHREYAMARGULISRES} is sharp, viz.\
$p(\fC)\gg |\fC|^{-1}$;
cf.\ Corollary~\ref{GENCONVAMSHARPCOR} below.
On the other hand we trivially have 
$p(\fC)=0$ whenever $\bn\in\fC$ and $\bn$ has
distance $\gg1$
to $\partial\fC$.  %
Hence the question about the order of magnitude of $p(\fC)$
for a general convex set $\fC$ of large volume
is interesting primarily when
$\bn$ lies fairly near $\partial\fC$. 

\subsection{Bounds on $p(\fC)$ for $\fC$ a ball, a cut ball,
a cone or a cylinder}

Our first main result is a sharp bound on $p(\fC)$ for any
$d$-dimensional \textit{ball} $\fC\subset\R^d$. 
Set %
\begin{align}\label{FBALLDEF}
F_{ball}^{(d)}(\tau;v):=
\begin{cases}
0&\text{if }\: \tau\leq -v^{-\frac2{d-1}}
\\[3pt]
v^{-2}&\text{if }\:|\tau|<v^{-\frac2{d-1}}
\\[3pt]
\tau^{\frac{d-1}2}v^{-1}&\text{if }\:\tau\geq v^{-\frac2{d-1}}.
\end{cases}
\end{align}
\begin{thm}\label{GENBALLTHM}
Given $d\geq2$ there exist constants $0<k_1<k_2$ 
such that for any $d$-dimensional ball $\fC\subset\R^d$
of volume $|\fC|\geq\frac12$,
\begin{align}\label{GENBALLTHMRES}
F_{ball}^{(d)}(\tau;k_2|\fC|)\leq 
p(\fC)\leq 
F_{ball}^{(d)}(\tau;k_1|\fC|)
\qquad\text{with }\:
\tau=\frac{\|\vecq\|-r}{\max(\|\vecq\|,r)},
\end{align}
where $r$ and $\vecq$ are the radius and center of $\fC$.
\end{thm}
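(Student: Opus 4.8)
The plan is to establish both inequalities by induction on $d$, lowering the dimension by one at each step via Proposition~\ref{GENPRINCBOUNDPROP} (for the upper bound) and its convex counterpart (for the lower bound, cf.\ Remark~\ref{GENCONVEXPLREM}); the base case $d=2$ is handled separately but elementarily, using that the slices occurring there are one--dimensional so that $p^{(1)}$ enters, and $p^{(1)}$ is just the indicator that the relevant interval misses $\ZZ\setminus\{0\}$, from which $F_{ball}^{(2)}$ is recovered by a short computation. Throughout one uses the rotation invariance~\eqref{PDINV} to put $\vecq=\|\vecq\|e_1$, and records that $|\fC|\asymp r^d$ while $\tau=1-r/\|\vecq\|$ when $\|\vecq\|\ge r$ and $\tau=r/\|\vecq\|-1$ when $\|\vecq\|<r$.

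\emph{Upper bound.} Since a ball of radius $r$ has inradius exactly $r$, and among ellipsoids of a given volume the ball already maximises the inradius, nothing is gained by replacing $\fC$ by $\fC M$, so I would feed $\fC=B(\vecq,r)$ straight into Proposition~\ref{GENPRINCBOUNDPROP}. For a unit vector $\vecv$ the slice $\fC\cap\vecv^\perp$ is empty if $|\vecq\cdot\vecv|>r$ and otherwise is a $(d-1)$-ball of radius $\rho(\vecv):=\sqrt{r^2-(\vecq\cdot\vecv)^2}$ whose centre lies at distance $\sigma(\vecv):=\sqrt{\|\vecq\|^2-(\vecq\cdot\vecv)^2}$ from $\bn$ inside $\vecv^\perp$; hence $a_1^{1/(d-1)}(\fC\cap\vecv^\perp)$ is a $(d-1)$-ball of volume $\asymp a_1\rho(\vecv)^{d-1}$ with parameter $\tau'(\vecv)=(\sigma(\vecv)-\rho(\vecv))/\max(\sigma(\vecv),\rho(\vecv))$. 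Where this volume is $\ge\tfrac12$ one inserts the inductive bound $p^{(d-1)}(\,\cdot\,)\le F_{ball}^{(d-1)}\bigl(\tau'(\vecv);c\,a_1\rho(\vecv)^{d-1}\bigr)$, and where it is smaller — which forces $\vecv^\perp$ to nearly graze $\fC$ — one uses $p^{(d-1)}(\,\cdot\,)\le1$ and checks that this part contributes only $O\bigl(F_{ball}^{(d)}(\tau;k_1|\fC|)\bigr)$. What remains is to evaluate
\[
\int_{\S_1^{d-1}}\int_{k_2r}^{\infty}F_{ball}^{(d-1)}\bigl(\tau'(\vecv);\,c\,a_1\rho(\vecv)^{d-1}\bigr)\,\frac{da_1}{a_1^{d+1}}\,d\!\vol_{\S_1^{d-1}}(\vecv),
\]
which one does by splitting the sphere according to the size of $|\vecv\cdot e_1|$ and the inner integral at the thresholds where $F_{ball}^{(d-1)}$ changes regime; after substituting $|\fC|\asymp r^d$ and the formula for $\tau$, each of the three regimes of $F_{ball}^{(d)}$ comes out with the asserted shape, for a suitable constant $k_1$.

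\emph{Lower bound.} The cleanest route is to invoke the convex companion of Proposition~\ref{GENPRINCBOUNDPROP} (Remark~\ref{GENCONVEXPLREM}), plug in the inductive lower bound for $p^{(d-1)}$ of the cross--sectional balls, and redo the same two--variable integral as a lower estimate, for which it is enough to keep a convenient subregion of $(\vecv,a_1)$. Alternatively one builds a positive--measure set of lattices by hand: after the rotation, take unimodular lattices with one generator whose $e_1$--component $a_1$ is large (of size $\gg r$, and $\gg\|\vecq\|$ when $\bn$ lies well outside $\fC$) together with $d-1$ generators nearly orthogonal to $e_1$, spanning a transverse lattice of covolume $\asymp a_1^{-1}$, whose $e_1$--components are nonzero but only of size $\asymp r^{-1}a_1^{-2/(d-1)}$. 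Because $a_1$ exceeds the width $2r$ of the slab $S=\{\,\|\vecq\|-r\le x_1\le\|\vecq\|+r\,\}\supset\fC$, the only lattice points whose first coordinate can fall in $S$ lie in the single affine hyperplane through $\bn$ with $x_1\approx0$; when $\bn\notin\fC$ one arranges that hyperplane to lie just below $S$, so that no lattice point meets $\fC$ at all, and when $\bn$ is on or just inside $\partial\fC$ one instead requires the transverse lattice — which now sweeps through the slightly curved $\partial\fC$ near $\bn$ — to avoid the slice of $\fC$, possible by the inductive hypothesis (this is where an upper bound on $a_1$ enters). All the defining inequalities being strict, the set is open, and computing its Haar measure gives $\gg F_{ball}^{(d)}(\tau;k_2|\fC|)$ for a suitable $k_2>k_1$.

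\emph{Main obstacle.} The delicate case is the lower bound in and near the transitional regime, where $|\tau|$ is at most a constant times $|\fC|^{-2/(d-1)}$ and $\bn$ sits on or barely inside $\partial\fC$: one must then produce probability of order $|\fC|^{-2}$, not merely $|\fC|^{-1}$. This is exactly why, in the construction, the $e_1$--components of the small generators must be pushed down to $\asymp r^{-1}a_1^{-2/(d-1)}$ (so the near--boundary slice does not dip into the curvature of $\partial\fC$) and why the transverse lattice must be chosen with shortest vector $\gg r\sqrt{|\tau|}$; the two requirements $a_1\gg r$ and ``shortest vector $\gg r\sqrt{|\tau|}$'' are jointly satisfiable precisely in that range of $\tau$, and one still has to verify that the Haar measure of the resulting set is genuinely $\gg|\fC|^{-2}$, with a single constant $k_2$ valid across all three regimes. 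A smaller but genuine issue, on the upper side, is to bound the contribution of the near--grazing directions $\vecv$, where the cross--sectional balls drop below volume $\tfrac12$ so the inductive hypothesis does not apply, and to show it is absorbed into $F_{ball}^{(d)}(\tau;k_1|\fC|)$.
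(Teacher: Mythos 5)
Your approach—induction on $d$, slicing $\fC$ by hyperplanes $\vecv^\perp$ via the upper-bound formula of Proposition~\ref{GENPRINCBOUNDPROP} (or its convex refinement Proposition~\ref{GENCONVBOUNDPROP1}) and the matching lower-bound formula (Proposition~\ref{GENCONVBOUNDPROP2}, which is what your ``convex companion'' reference really points to rather than Remark~\ref{GENCONVEXPLREM}), computing the radius and $\tau$-ratio of the cross-sectional balls, and handling the near-grazing directions by the trivial $p^{(d-1)}\le 1$—is exactly the paper's proof, up to the cosmetic choice of whether to collapse the $a_1$-integral first via the monotonicity Lemma~\ref{CONVPDMONOTONELEM}. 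The alternative explicit-lattice construction you sketch for the lower bound is in substance the same Siegel-domain computation that lies inside the proof of Proposition~\ref{GENCONVBOUNDPROP2}; one small slip to fix is the formula $\tau=r/\|\vecq\|-1$ in the case $\|\vecq\|<r$, which should read $\tau=\|\vecq\|/r-1$.
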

Of course, by \eqref{PDINV}, Theorem \ref{GENBALLTHM} immediately 
implies a sharp bound on $p(\fC)$ in the
more general case of $\fC$ an arbitrary \textit{ellipsoid.}
Theorem \ref{GENBALLTHM} shows in particular that
for $\fC$ a ball or an ellipsoid, the Athreya-Margulis bound in
\eqref{ATHREYAMARGULISRES} can be improved to $|\fC|^{-2}$
whenever $\bn$ lies sufficiently near $\partial\fC$.
Regarding the restriction $|\fC|\geq\frac12$
in Theorem~\ref{GENBALLTHM}, 
note that if $\fC\subset\R^d$ is \textit{any} 
measurable set of volume $|\fC|<\frac12$ then
$\frac12<p(\fC)\leq1$ (cf.\ Lemma \ref{TRIVPLOWBOUNDLEM} below).
Note also that in Theorem~\ref{GENBALLTHM} we make no assumption on
$\fC$ being open or closed; in fact we have
$p(\fC)=p(\fC^\circ)=p(\overline\fC)$ for any set $\fC$ with
$|\partial\fC|=0$ (cf.\ Lemma \ref{PCONTLEM} below).

We remark that
Theorem \ref{GENBALLTHM} leads to 
good bounds on $p(\fC)$ also for %
many sets $\fC$ which are not ellipsoids, 
using the obvious fact that 
$p(\fC)\leq p(\fC')$ whenever $\fC'\subset\fC$.
For example Theorem~\ref{GENBALLTHM} implies a simple explicit sharp 
bound on $p(\fC)$ for any convex body $\fC$
such that $\partial\fC$ has pinched positive curvature;
cf.\ Corollary~\ref{GENCURVCOR} below.

\vspace{5pt}

Our second main result concerns a special situation
with $\bn\in\partial\fC$, tailored to suit our applications:
We take $\fC$ to be a ``cut ball'', by which we mean an intersection of
a $d$-dimensional ball and a half space,
and we assume that $\bn$ belongs to the flat part of $\partial\fC$.
Set 
\begin{align*}
F_{\substack{cut\\ball}}^{(d)}(e;t;v)=\begin{cases}
v^{-2}(1+v^{\frac2d}t^{1-\frac1d})
&\text{if }\: t=0\:\text{ or }\:e\leq v^{-\frac2d}t^{\frac1d-1}
\\
0&\text{if }\: e>v^{-\frac2d}t^{\frac1d-1}.
\end{cases} 
\end{align*}

\begin{thm}\label{MAINTHM2}
Given $d\geq2$ there exist constants $0<k_1<k_2$ such that the following
holds. 
Let $B$ be a $d$-dimensional ball 
containing $\bn$ in its closure, let $\vecw$ be a unit vector,
and assume that the intersection
\begin{align*}
\fC:=%
B\cap\{\vecx\in\R^d\col \vecw\cdot\vecx>0\}
\end{align*}
has volume $|\fC|\geq\frac12$. 
Let $r$ and $\vecp$ be the radius and center of $B$,
let $r'$ and $\vecq$ be the radius and center of the 
$(d-1)$-dimensional ball $B\cap\vecw^\perp$, and set
\begin{align*}
t:=1-\frac{\vecw\cdot\vecp}r\in[0,2)\quad\text{and}\quad
e=\frac{r'-\|\vecq\|}{r'}\in[0,1]
\end{align*}
(we leave $e$ undefined when $r'=0$).
Then
\begin{align}\label{MAINTHM2RES}
F_{\substack{cut\\ball}}^{(d)}\Bigl(e;t;k_2|\fC|\Bigr)\leq
p(\fC)\leq F_{\substack{cut\\ball}}^{(d)}\Bigl(e;t;k_1|\fC|\Bigr).
\end{align}
\end{thm}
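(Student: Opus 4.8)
The plan is to derive both the upper and lower bounds in \eqref{MAINTHM2RES} from the general machinery already established, treating the two regimes $t=0$ and $t>0$ somewhat separately, since the geometry of the ``cut ball'' degenerates as $t\to0$. For the \textbf{upper bound}, I would apply Proposition~\ref{GENPRINCBOUNDPROP}, but only after first choosing a convenient representative of $\fC$ in its $\SL_d(\R)$-orbit via \eqref{PDINV}. The natural choice is to apply a diagonal map that stretches $\fC$ in the direction $\vecw$ and compresses in the orthogonal hyperplane so as to make the inscribed-ball radius $r$ as large as possible; this converts the long, thin cut ball into one that is roughly ``round'' and makes the first term $r$ in \eqref{GENPRINCBOUNDPROPRES1} comparable to $|\fC|^{1/d}$ up to a factor depending on $t$. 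One then estimates the $(d-1)$-dimensional slices $a_1^{1/(d-1)}\fC\cap\vecv^\perp$: for most directions $\vecv$ these slices are $(d-1)$-dimensional cut balls or ellipsoids, and one bounds $p^{(d-1)}$ of them using Theorem~\ref{GENBALLTHM} (applied to an inscribed ball) together with the trivial bound $p^{(d-1)}(\cdot)\le 1$; integrating the resulting piecewise expression $da_1/a_1^{d+1}\,d\vol$ should reproduce the $v^{-2}(1+v^{2/d}t^{1-1/d})$ behaviour, with the cut-off $e>v^{-2/d}t^{1/d-1}\Rightarrow p=0$ coming from the fact that when $\|\vecq\|$ is too close to $r'$ the origin fails to be adequately covered, so a deterministic short-vector argument (Minkowski, or the $p(\fC)=0$ observation in the introduction for $\bn$ interior and far from $\partial\fC$ applied to a sub-cut-ball) kills the probability.

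For the \textbf{lower bound}, the cleanest route is to exhibit a lattice-avoidance event of the right probability directly, using reduction theory in the same spirit as the proof of Proposition~\ref{GENPRINCBOUNDPROP} but run in reverse. Concretely, I would look for lattices $L$ possessing a short primitive vector $\vecv$ roughly parallel to $\vecw$, together with the property that the projection of $L$ onto $\vecw^\perp$ avoids the $(d-1)$-dimensional slice; the contribution of such lattices to $\mu$ can be estimated from below by a Siegel-type / Rogers-type unfolding, or more simply by the ``Iwasawa coordinate'' construction: pick the first basis vector in a thin cone around $\vecw$ of length slightly exceeding that needed to poke through the flat face $\bn\in\partial\fC$, and then choose the remaining $(d-1)$-dimensional sublattice (in $\vecw^\perp$, suitably rescaled) to avoid the slice, which by Theorem~\ref{GENBALLTHM} in dimension $d-1$ can be done with probability $\gg F_{ball}^{(d-1)}$ of the slice. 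Matching the exponents $v^{-2}$ versus $v^{-2}(1+v^{2/d}t^{1-1/d})$ forces one to optimize the length and opening angle of this cone against $t$; the product of (measure of admissible directions) $\times$ (measure of admissible lengths) $\times$ ($(d-1)$-dimensional avoidance probability) is what yields the stated two-term formula, and the regime $e\le v^{-2/d}t^{1/d-1}$ is exactly the range in which the slice is small enough for the $(d-1)$-dimensional avoidance probability not to be forced to zero.

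I would organize the write-up as: (i) reduce to a normalized $\fC$ via \eqref{PDINV}, recording how $t$, $e$, $|\fC|$ transform; (ii) prove the upper bound by Proposition~\ref{GENPRINCBOUNDPROP} plus Theorem~\ref{GENBALLTHM} in dimension $d-1$, splitting the $\vecv$-integral into the cone of directions near $\pm\vecw$ and its complement, and splitting the $a_1$-integral at the threshold where the slice starts to contain a ball; (iii) prove the vanishing $p(\fC)=0$ for $e>v^{-2/d}t^{1/d-1}$ by producing a forced short lattice vector; (iv) prove the lower bound by the explicit Iwasawa/reduction-theory construction above, again invoking the dimension-$(d-1)$ case of Theorem~\ref{GENBALLTHM} for the lower bound on the slice-avoidance probability; (v) verify that the constants $k_1<k_2$ can be chosen uniformly, using the monotonicity Lemma~\ref{CONVPDMONOTONELEM} to absorb the scaling mismatch. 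The \textbf{main obstacle} I anticipate is step (ii)–(iv) bookkeeping in the degenerate regime $t\to0$: there the inscribed ball radius, the slice geometry, and the admissible cone of directions all scale in $t$ with competing powers, and one must check that the piecewise boundaries in $F_{\substack{cut\\ball}}^{(d)}$ genuinely come out matching on both sides rather than merely up to a $t$-dependent factor — i.e.\ that the constants really depend on $d$ alone. A secondary technical point is handling directions $\vecv$ for which the slice $\fC\cap\vecv^\perp$ is itself a thin ``cut ball'' rather than an ellipsoid, which requires an inductive-flavoured estimate; in practice one circumvents this by bounding $p^{(d-1)}$ from above by the probability for an inscribed $(d-1)$-ball, so that only Theorem~\ref{GENBALLTHM}, not a $(d-1)$-dimensional analogue of Theorem~\ref{MAINTHM2}, is needed.
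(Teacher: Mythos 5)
Your proposal correctly identifies the two main ingredients (Iwasawa-coordinate integration over slices, plus a Minkowski-theorem argument for vanishing), but it departs from the paper's actual argument in a way I believe is a genuine gap. The paper's proof of the upper bound is \emph{inductive on $d$}: it observes that the slices $\fC\cap\vecv^\perp$ are themselves $(d-1)$-dimensional cut balls, and applies Theorem~\ref{MAINTHM2} in dimension $d-1$ — see \eqref{BALLMAINTHMPF2}, where the cut ratio $T(s\cos\omega,\varpi)$ of the slice appears raised to the power $\frac{d-2}{d-1}$. You explicitly propose to sidestep this induction by bounding $p^{(d-1)}$ of the slice via an inscribed $(d-1)$-ball and Theorem~\ref{GENBALLTHM}. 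That gives \emph{an} upper bound, but a potentially much weaker one: when the slice is a thin cut ball with small edge ratio, the largest ball that fits inside it and has $\bn$ on or near its boundary has volume dramatically smaller than the slice (roughly by a power of the slice's own cut ratio), so $F_{ball}^{(d-1)}(\cdot;k|B'|)$ overshoots. You would need to verify that this loss, integrated in $\varpi$, $\omega$, $a_1$ against $da_1/a_1^{d+1}(\sin\varpi)^{d-2}(\sin\omega)^{d-3}$, still produces the two-term expression $v^{-2}(1+v^{2/d}t^{1-1/d})$ with constants depending only on $d$. This is not obvious, and the paper's choice to instead carry the cut-ratio of the slice through the induction is what makes the bookkeeping close. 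As you yourself flagged, the "$t\to0$ bookkeeping" is the hard part, and the inscribed-ball shortcut discards exactly the information needed to control it.

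Two further points. First, the paper does not try to handle all $t\in[0,2)$ directly by the integral method; it first reduces to $t\in[0,\frac15]$ via Lemma~\ref{MAINTHM2REDLEM} (built on Lemma~\ref{MAINTHM2REDAUXLEM} and containment arguments), because for $t$ bounded away from $0$ the cut ball compares with cones and cylinders more cleanly. Your step (i) ("reduce to a normalized $\fC$ via \eqref{PDINV}") will not accomplish the same: a non-orthogonal element of $\SL_d(\R)$ turns a cut ball into a cut ellipsoid, so the slice analysis would need to be redone in that greater generality; one needs a separate argument (containment plus Corollary~\ref{CONECYLCOR}) to get back to a bona fide cut ball with small cut ratio. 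Second, for the lower bound the paper is lighter than you suggest: it uses Proposition~\ref{GENCONVBOUNDPROP2} and then, for $\vecv$ in a small cone with $\omega$ small, the slice volume is $\ll\frac12$, so the trivial bound $p^{(d-1)}>\frac12$ from Lemma~\ref{TRIVPLOWBOUNDLEM} suffices — no appeal to Theorem~\ref{GENBALLTHM} in dimension $d-1$ (nor any circumscribed-ball estimate) is needed on that side.
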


\begin{figure}
\begin{center}
\framebox{
\begin{minipage}{0.4\textwidth}
\unitlength0.1\textwidth
\begin{picture}(9,5.8)(0,0)
\put(2,0.5){\includegraphics[width=0.6\textwidth]{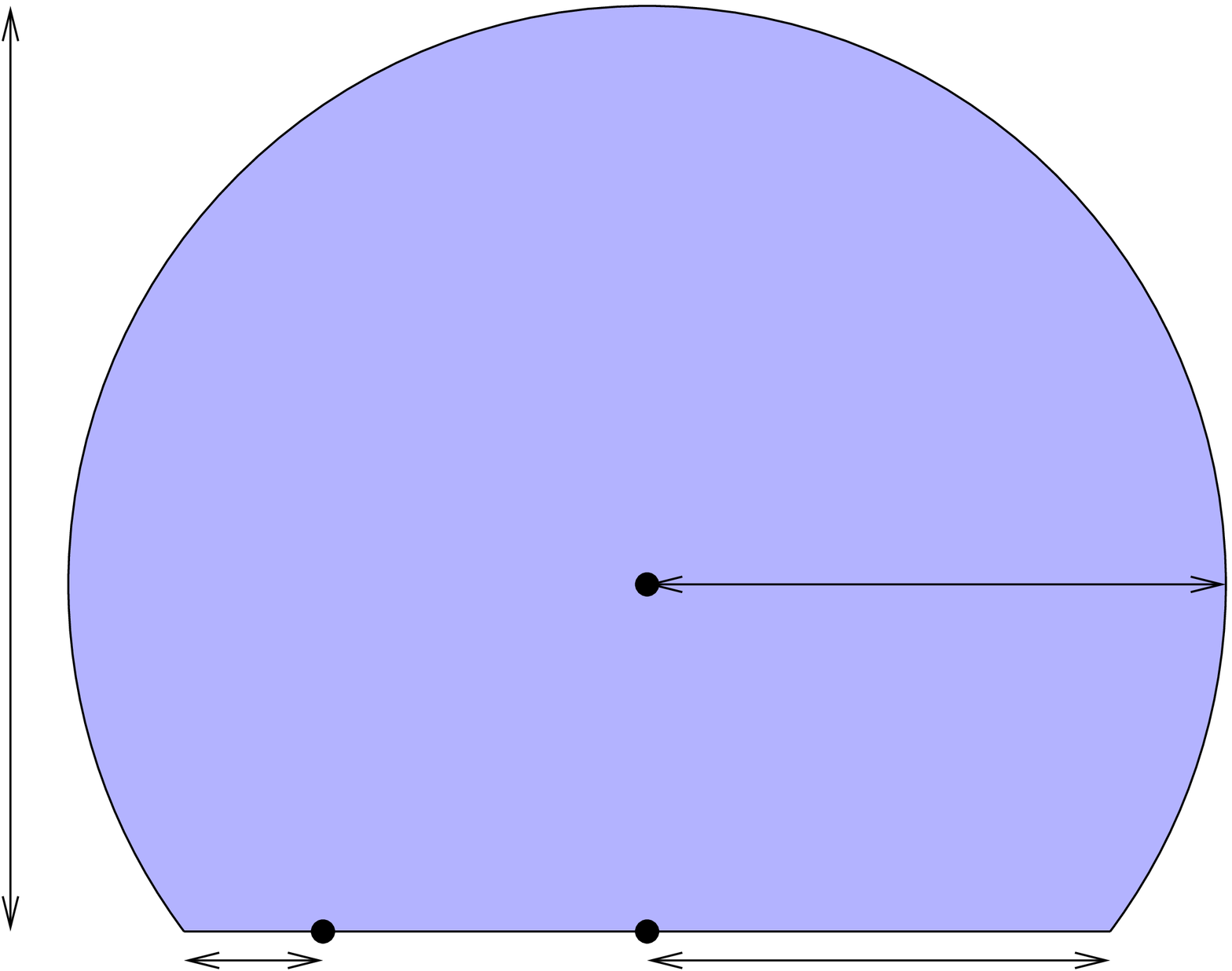}}
\put(-0.1,2.2){$(2-t)r$}
\put(6.3,2.6){$r$}
\put(3,0.0){$er'$}
\put(6,0.0){$r'$}
\put(4.9,2.6){$\vecp$}
\put(4.9,1.0){$\vecq$}
\put(3.5,0.9){$\bn$}
\end{picture}
\end{minipage}
}
\framebox{
\begin{minipage}{0.4\textwidth}
\unitlength0.1\textwidth
\begin{picture}(9,5.8)(0,0)
\put(2,0.5){\includegraphics[width=0.6\textwidth]{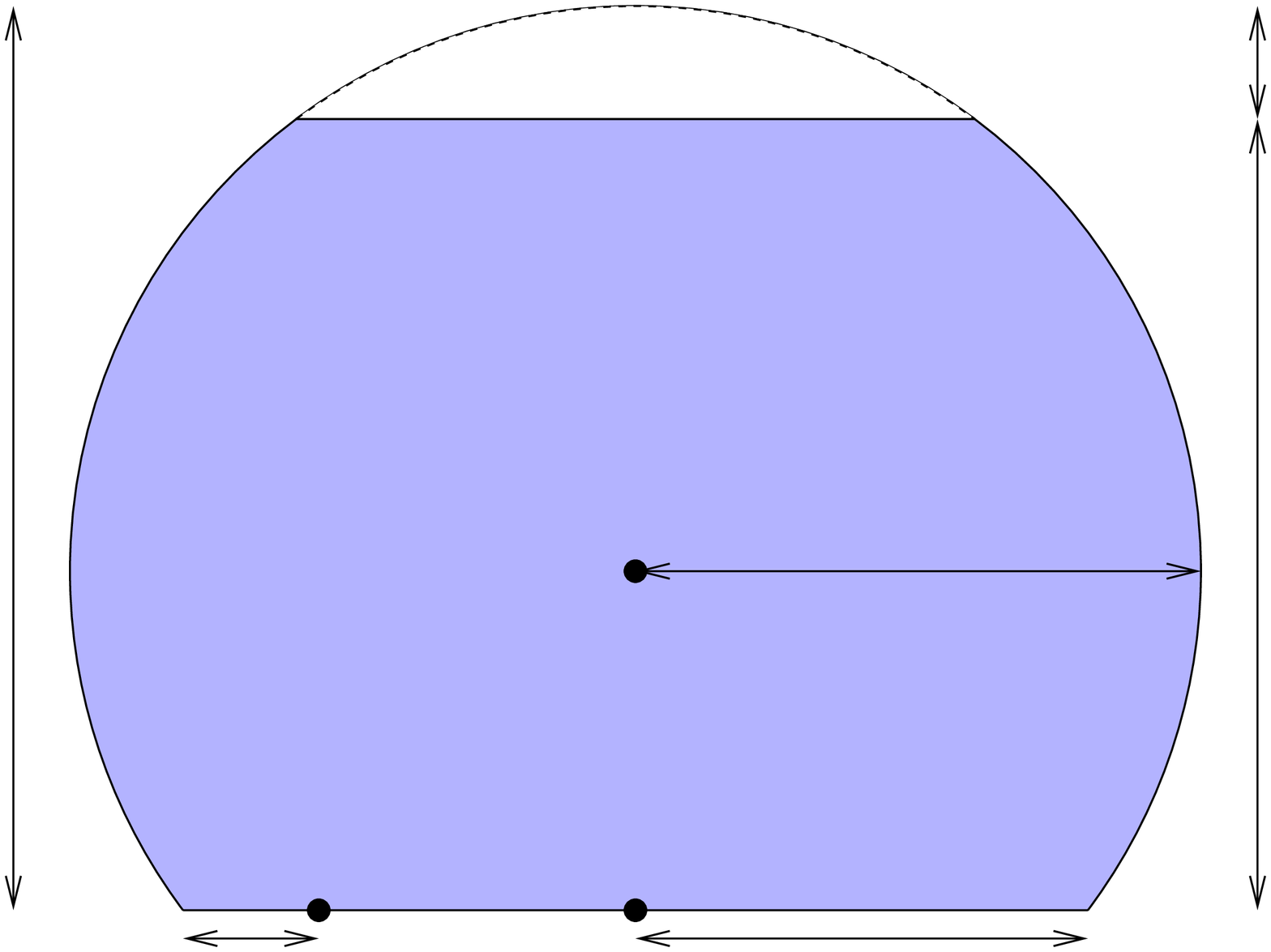}}
\put(-0.1,2.2){$(2-t)r$}
\put(6.3,2.6){$r$}
\put(3,0.0){$er'$}
\put(6,0.0){$r'$}
\put(4.9,2.6){$\vecp$}
\put(4.9,1.0){$\vecq$}
\put(3.5,0.9){$\bn$}
\put(8.1,2.2){$(t'-t)r$}
\put(8.1,4.6){$(2-t')r$}
\end{picture}
\end{minipage}
}
\end{center}
\caption{Left: The cut ball in Theorem \ref{MAINTHM2}, in dimension $d=2$. Right: The doubly cut ball in Corollary \ref{DCBCOR}.} 
\label{FIGCUTBALL}
\end{figure}

Note that the special case $t=0$ in Theorem \ref{MAINTHM2}
is the same as the case $\tau=0$ in Theorem~\ref{GENBALLTHM},
saying that $p(\fC)\asymp|\fC|^{-2}$ when $\fC$ is a large ball
with $\bn\in\partial\fC$.
On the other hand if we keep $t$ bounded away from zero then
$F_{\substack{cut\\ball}}^{(d)}(e;t;v)\asymp v^{-2+\frac2d}$
for $ev^{\frac2d}$ small, and
$F_{\substack{cut\\ball}}^{(d)}(e;t;v)=0$
for $ev^{\frac2d}$ large.
Using this case of Theorem \ref{MAINTHM2} together with
the monotonicity %
$\fC'\subset\fC\Rightarrow p(\fC)\leq p(\fC')$,
leads to sharp bounds on $p(\fC)$ for many other sets $\fC$
such that $\bn$ belongs to a large flat part of $\partial\fC$.
We state this as a corollary for the useful special cases of %
a cone or a cylinder.
Set
\begin{align*}
F_{cone}^{(d)}(e;v)=\begin{cases}
v^{-2+\frac2d}&\text{if }\: e\leq v^{-\frac2d}
\\
0&\text{if }\: e>v^{-\frac2d}.
\end{cases}
\end{align*}

\begin{cor}\label{CONECYLCOR}
Given $d\geq2$ there exist constants $0<k_1<k_2$ such that the following
holds. 
Let $B\subset\R^d$ be a $(d-1)$-dimensional ball containing $\bn$ in its
closure, let $\vecp\in\R^d$ be a point, and let $\fC$ be the
cone which is the convex hull of $B$ and $\vecp$.
Set $e=\frac{r-\|\vecq\|}r$ where $r$ and $\vecq$ are the
radius and center of $B$. Assume $|\fC|\geq\frac12$. Then
\begin{align}\label{CONECYLCORRES}
F_{cone}^{(d)}(e;k_2|\fC|)\leq p(\fC)\leq F_{cone}^{(d)}(e;k_1|\fC|).
\end{align}
Exactly the same bound holds (with new $k_1,k_2$)
if we instead take $\fC$ to be the cylinder 
which is the convex hull of 
$B$ and some translate $B'$ of $B$,
and we again assume $|\fC|\geq\frac12$.
\end{cor}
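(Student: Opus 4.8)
The plan is to derive Corollary~\ref{CONECYLCOR} from Theorem~\ref{MAINTHM2} by sandwiching the cone (resp.\ cylinder) $\fC$ between two cut balls, so that the monotonicity $\fC'\subset\fC\Rightarrow p(\fC)\leq p(\fC')$ gives matching upper and lower bounds up to a constant scaling of $\fC$. First I would normalize: using the invariance \eqref{PDINV} we may place the $(d-1)$-dimensional ball $B$ in a coordinate hyperplane $\vecw^\perp$ with $\vecw$ a unit normal, and since $p(\fC)$ is invariant under the isometries fixing that hyperplane we may assume the center $\vecq$ of $B$ lies on a fixed ray. The key geometric observation is that the cone $\fC$ (the convex hull of $B$ and the apex $\vecp$) contains a cut ball $\fC_{\mathrm{in}}$ of the type in Theorem~\ref{MAINTHM2}: namely take a $d$-dimensional ball through $\bn$ whose equatorial $(d-1)$-ball is a scaled-down copy of $B$, cut by the half space $\{\vecw\cdot\vecx>0\}$, chosen large enough to be comparable in volume to $\fC$ yet thin enough in the $\vecw$-direction to fit inside $\fC$ near its base. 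Dually, $\fC$ is contained in a cut ball $\fC_{\mathrm{out}}$ obtained by fattening: replace the cone by the intersection of a slightly larger $d$-dimensional ball (again through $\bn$, equator a dilate of $B$) with the same half space. Both constructions can be arranged so that $|\fC_{\mathrm{in}}|\asymp|\fC_{\mathrm{out}}|\asymp|\fC|$, and so that the parameters $t$ and $e$ attached to $\fC_{\mathrm{in}}$ and $\fC_{\mathrm{out}}$ are each $\asymp e$ with $t$ bounded away from $0$ — for a cone the apex forces $t\asymp1$, while for a cylinder one truncates a ball of radius $\asymp$ the cylinder's length, again giving $t\asymp1$.

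Granting this, Theorem~\ref{MAINTHM2} applied to $\fC_{\mathrm{out}}$ yields, for suitable constants,
\begin{align*}
p(\fC)\leq p(\fC_{\mathrm{out}})\leq F_{\substack{cut\\ball}}^{(d)}\bigl(c_1 e;\,c_2;\,c_3|\fC|\bigr),
\end{align*}
and since $t=c_2$ is a fixed constant in $(0,2)$ the displayed value of $F_{\substack{cut\\ball}}^{(d)}$ is $\asymp |\fC|^{-2+\frac2d}$ when $e|\fC|^{\frac2d}$ is small and $=0$ when $e|\fC|^{\frac2d}$ is large, i.e.\ it is $\asymp F_{cone}^{(d)}(e';k_1'|\fC|)$ for appropriate $e'\asymp e$ and constant $k_1'$; absorbing the constant $c_1$ into $e$ via the threshold and rescaling $\fC$ by a bounded factor (legitimate by Lemma~\ref{CONVPDMONOTONELEM}, since $\fC$ is convex) puts this in the exact form $p(\fC)\leq F_{cone}^{(d)}(e;k_1|\fC|)$. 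Applying Theorem~\ref{MAINTHM2} to $\fC_{\mathrm{in}}$ in the same way gives the lower bound $p(\fC)\geq p(\fC_{\mathrm{in}})\geq F_{cone}^{(d)}(e;k_2|\fC|)$ after a similar bounded rescaling. The cylinder case is identical once one records that the convex hull of $B$ and a translate $B'$ of $B$ can likewise be squeezed between two cut balls (here $\fC_{\mathrm{out}}$ is a ball whose equator contains both $B$ and $B'$, cut by the half space supporting the base $B$, and $\fC_{\mathrm{in}}$ a thinner such cut ball inscribed near the base), again with $t\asymp1$ and volumes comparable to $|\fC|$; one then quotes the new-constants version of the bound.

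I expect the main obstacle to be the bookkeeping in the inscribed cut ball $\fC_{\mathrm{in}}$: one must verify that a cut ball of volume $\asymp|\fC|$, passing through $\bn$, with equatorial ball a dilate of $B$, can genuinely be fitted \emph{inside} the cone (resp.\ cylinder) — this requires checking that the cut ball's extent in the $\vecw$-direction is $\ll$ the corresponding extent of $\fC$ near the base, which in turn pins down how much one may dilate $B$ while keeping the $d$-ball's curvature small enough. A secondary nuisance is handling the degenerate regimes — $e$ very close to $1$ (so $\bn$ near $\partial B$), the apex $\vecp$ nearly in the plane of $B$, or $|\fC|$ close to its minimum $\frac12$ — where the inscribed or circumscribed cut ball may fail the hypothesis $|\fC_{\mathrm{in}}|\geq\frac12$ of Theorem~\ref{MAINTHM2}; these are dispatched by shrinking the target constant $k_2$ and, if necessary, invoking Lemma~\ref{TRIVPLOWBOUNDLEM} together with the trivial bound $p(\fC)\leq1$, since in those ranges $F_{cone}^{(d)}$ is already of order $1$ or the statement is vacuous. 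None of this is deep, but it is the part where the constants (and the choice of which dilate of $B$ to use) must be tracked with care.
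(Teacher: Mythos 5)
Your plan — sandwich the cone or cylinder between two cut balls and invoke Theorem~\ref{MAINTHM2} — is sound and is essentially how the paper proceeds (via Lemma~\ref{MAINTHM2REDAUXLEM}). However, you apply the monotonicity backwards in both key chains, and this is a substantive error, not a mere relabelling. You correctly quote $\fC'\subset\fC\Rightarrow p(\fC)\leq p(\fC')$. Since $\fC\subset\fC_{\mathrm{out}}$, this gives $p(\fC_{\mathrm{out}})\leq p(\fC)$, so the circumscribed cut ball supplies the \emph{lower} bound; and $\fC_{\mathrm{in}}\subset\fC$ gives $p(\fC)\leq p(\fC_{\mathrm{in}})$, so the inscribed cut ball supplies the \emph{upper} bound. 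Your displayed inequalities $p(\fC)\leq p(\fC_{\mathrm{out}})$ and $p(\fC)\geq p(\fC_{\mathrm{in}})$ are both the wrong way round. Once the roles are corrected, the required properties of the two cut balls are asymmetric: the \emph{inscribed} cut ball must have edge ratio $\gtrsim e$ (so that $F_{\substack{cut\\ball}}^{(d)}$ vanishes when $e|\fC|^{\frac2d}$ is large, matching the vanishing of $F_{cone}^{(d)}$ in the upper bound) and volume $\gtrsim|\fC|$, while the \emph{circumscribed} one must have edge ratio $\lesssim e$ and volume $\lesssim|\fC|$; conflating these would break the threshold comparison in exactly the regime where $p(\fC)=0$ has to be detected.

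Two smaller points. First, the geometry of the edge ratio is reversed in your remarks: $e=\frac{r-\|\vecq\|}{r}$ close to $1$ means $\vecq$ is near $\bn$, so $\bn$ is near the \emph{center} of $B$; it is $e\to0$ that puts $\bn$ near $\partial B$, and this is precisely the regime where a naive ``scaled-down copy of $B$'' can fail to contain $\bn$ in its closure (a standing hypothesis of Theorem~\ref{MAINTHM2}) unless you place the shrunken ball carefully. The paper sidesteps this by translating a fixed small cut ball $\fC_1$ by an $e$-dependent amount, arriving at edge ratio $\min(1,20e)\geq e$; some equivalent device is needed in your argument. Second, the paper is a bit leaner than your plan: Lemma~\ref{MAINTHM2REDAUXLEM} inscribes a cut ball only in the \emph{cone} (giving the cone's upper bound) and circumscribes one only around the \emph{cylinder} (giving the cylinder's lower bound), then obtains the remaining two bounds by comparing the cone and cylinder directly — the cone sits inside a cylinder with the same base and volume exactly $d$ times larger, and a cone of volume $d^{-1}|\fC|$ sits inside the cylinder — so the constants track automatically without building all four cut balls.
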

Another observation which will be useful for us is that, 
again using %
$\fC'\subset\fC\Rightarrow p(\fC)\leq p(\fC')$,
Theorem \ref{MAINTHM2} may be generalized to the case of a
``doubly cut ball'', where the two cuts are parallel
(see Figure \ref{FIGCUTBALL}).
\begin{cor}\label{DCBCOR}
Given $d\geq2$ there exist constants $0<k_1<k_2$ such that the following
holds. 
Given any $B$, $\vecw$, $r$, $t$, $e$ as in Theorem \ref{MAINTHM2}
and any $t'\in(t,2]$, we set
\begin{align*}
\fC:=%
\{\vecx\in B\col0<\vecw\cdot\vecx<(t'-t)r\}.
\end{align*}
Assume that $\fC$ has volume $|\fC|\geq\frac12$. Then
\begin{align}\label{DCBCORRES}
F_{\substack{cut\\ball}}^{(d)}\Bigl(e;\frac t{t'};k_2|\fC|\Bigr)\leq
p(\fC)\leq F_{\substack{cut\\ball}}^{(d)}\Bigl(e;\frac t{t'};k_1|\fC|\Bigr).
\end{align}
\end{cor}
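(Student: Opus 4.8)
The plan is to run the argument of Theorem~\ref{MAINTHM2} for the doubly cut ball $\fC$, supplemented by the two elementary devices already used for the cone and cylinder: the $\SL_d(\R)$-invariance~\eqref{PDINV} and the anti-monotonicity $\fC'\subseteq\fC\Rightarrow p(\fC)\le p(\fC')$. The guiding principle is that, up to inscribing and circumscribing suitable \emph{cut ellipsoids} and up to a constant rescaling, a doubly cut ball behaves exactly like a single cut ball in which the parameter $t$ has been replaced by something $\asymp t/t'$. A cut ellipsoid --- an ellipsoid intersected with a halfspace whose bounding hyperplane passes through $\bn$ --- is an admissible comparison set because some $M\in\SL_d(\R)$ carries it onto a genuine cut ball with $\bn$ on the flat face, leaving $e$ and the normalized volume unchanged, so that Theorem~\ref{MAINTHM2} applies to it verbatim.

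For the upper bound one seeks a set $\fC_-\subseteq\fC$ for which Theorem~\ref{MAINTHM2} (after straightening by an element of $\SL_d(\R)$) or Theorem~\ref{GENBALLTHM} supplies a bound no larger than $F_{\substack{cut\\ball}}^{(d)}(e;t/t';k_1|\fC|)$; for the lower bound, either a set $\fC_+\supseteq\fC$ with a matching bound from below, or --- this seems to be forced in the worst case --- a direct application of Proposition~\ref{GENPRINCBOUNDPROP} to $\fC$, the second cut modifying the inner volume integral in exactly the way that turns $t$ into $t/t'$. It is natural to separate the case $t'>1$, where the far cut lies beyond the equator of $B$, so that $|\fC|\asymp r^{d}$ and one may essentially take $\fC_-=\fC_+=B\cap\{\vecw\cdot\vecx>0\}$ (with parameters $(e,t)$ and $t\asymp t/t'$), from the case $t'\le 1$, where $\fC$ is a thin ``barrel slice'' of $B$ lying below the equator. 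In the latter case the core of $\fC_-$, before straightening, is the largest ball that fits inside $B$ with top at $\vecw$-height $\le(t'-t)r$; an elementary optimization gives this ball radius $t'r/2$ and centre at $\vecw$-height $(t'-2t)r/2$, hence ball-parameter exactly $2t/t'$, and one then applies the volume-preserving linear map stretching the $\vecw$-direction by a factor chosen so that the $\vecw$-thickness of $\fC$ becomes comparable to its $\vecw^\perp$-diameter, in order to raise the normalized volume of the resulting inscribed cut ellipsoid to $\asymp|\fC|$ while keeping $\bn$ in its flat face; a slightly enlarged ball through the two rims $B\cap\{\vecw\cdot\vecx=0\}$ and $B\cap\{\vecw\cdot\vecx=(t'-t)r\}$ supplies $\fC_+$ in the same way.

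Granting such $\fC_\pm$, the two inequalities follow by transferring the $F_{\substack{cut\\ball}}^{(d)}$-estimates of Theorem~\ref{MAINTHM2} across $\fC_-\subseteq\fC\subseteq\fC_+$ and absorbing the $d$-dependent discrepancies in the arguments of $F_{\substack{cut\\ball}}^{(d)}$ into $k_1,k_2$, using the elementary monotonicity of $F_{\substack{cut\\ball}}^{(d)}$ --- the sharp cutoff in the $e$-variable being exactly what forces $k_1<k_2$, as in Theorem~\ref{MAINTHM2} --- together with the convexity of $\fC$ and Lemma~\ref{CONVPDMONOTONELEM}. I expect the main obstacle to be precisely the geometry in the barrel-slice regime $t'\le1$: one must balance the three competing scales $h=(t'-t)r$, $r'=r\sqrt{t(2-t)}$ and $r''=\sqrt{(r')^2+2(1-t)rh-h^2}$ against the curvature radius $r$ of $B$, and check that inscribed and circumscribed cut ellipsoids can be chosen \emph{simultaneously} with normalized volume $\asymp|\fC|$, ball-parameter $\asymp t/t'$, and $\bn$ at depth $\asymp e$ in the flat face --- which forces the ellipsoids to be suitably eccentric and, when $e$ is small, off-centre in $\vecw^\perp$ so as to catch $\bn$ near a rim --- and it is here that, for the lower bound, it may be cleaner to drop $\fC_+$ altogether and run the computation of Theorem~\ref{MAINTHM2} with $\fC$ itself in place of a single cut ball.
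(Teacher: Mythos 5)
Your high-level plan --- inscribe and circumscribe ``cut ellipsoids'', straighten them by an element of $\SL_d(\R)$, and apply Theorem \ref{MAINTHM2} to the straightened cut balls --- is exactly the paper's, and the case split at $t'\asymp 1$ (with the trivial containment $\fC\subset B\cap\{\vecw\cdot\vecx>0\}$ serving for the lower bound when $t'\geq1$) also matches. But the concrete comparison bodies you propose do not work, and the difficulty is precisely the one you flag but do not resolve.

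For the inscribed body: the largest ball inside $B$ with top at height $\leq(t'-t)r$ is forced, by internal tangency, to have its $\vecw^\perp$-coordinates equal to those of the center $\vecp$ of $B$, i.e.\ to $r s\vece_2$ with $s=(1-e)\sqrt{t(2-t)}$; it cannot be shifted off-axis without shrinking. Its intersection with $\vecw^\perp$ is then a $(d-1)$-ball of radius $r\sqrt{t(t'-t)}$, concentric with $B\cap\vecw^\perp$ (which has radius $r\sqrt{t(2-t)}$). Thus $\bn$ lies in its closure only when $(1-e)^2(2-t)\leq t'-t$; for $t<\frac15$ this fails whenever, say, $e<\frac14$ and $t'\leq1$ --- precisely the ``barrel-slice'' regime you are trying to handle. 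In that regime your $\fC_-$ is not a cut ball in the sense of Theorem \ref{MAINTHM2} at all (its edge-ratio parameter would be negative), and stretching in the $\vecw$-direction does not help, since it leaves the $\vecw^\perp$-slice through $\bn$ untouched. The paper resolves this by inscribing not a ball but an \emph{ellipsoid} $E$ chosen so that $E\cap\vecw^\perp=B\cap\vecw^\perp$ (i.e.\ $E$ is tangent to $B$ along that rim and to the plane $\{\vecw\cdot\vecx=(t'-t)r\}$); this makes the flat face of the straightened cut ball literally the same $(d-1)$-ball as that of $\fC$, so the edge ratio comes out equal to $e$ automatically, and only then is the cut ratio computed to be $\frac{t(2-t)}{t+(1-t)t'}\asymp t/t'$. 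No inscribed \emph{ball} can be tangent to $B$ along a circle, so this cannot be produced by ``ball, then stretch''.

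The circumscribed body has the analogous problem: the unique ball whose boundary passes through both rims $B\cap\{\vecw\cdot\vecx=0\}$ and $B\cap\{\vecw\cdot\vecx=(t'-t)r\}$ is $B$ itself (its center projects to the common axis of the two rims and the radius is forced to be $r$), so there is no ``slightly enlarged ball through the two rims''; and $B\cap\{\vecw\cdot\vecx>0\}$ has cut ratio $t$ and volume $\asymp r^d$, which for small $t'$ are the wrong scale in both parameters. Here too the paper circumscribes an explicit ellipsoid tangent to $B$ along \emph{both} rims (for $2t<t'<1$; the subcases $t'\geq1$ and $t'\leq 2t$ are dispatched as you suggest, via a single cut ball and via Corollary \ref{CONECYLCOR} respectively), again ensuring equal flat face hence equal $e$. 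So the missing ingredient in your proposal is exactly the right family of comparison ellipsoids, characterized by rim-tangency; without it the argument breaks down for small $e$ in the main regime $t<\frac15$, $t'\leq1$.
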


\begin{figure}
\begin{center}
\framebox{
\begin{minipage}{0.4\textwidth}
\unitlength0.1\textwidth
\begin{picture}(10,5.0)(0,0)
\put(2,0.5){\includegraphics[width=0.6\textwidth]{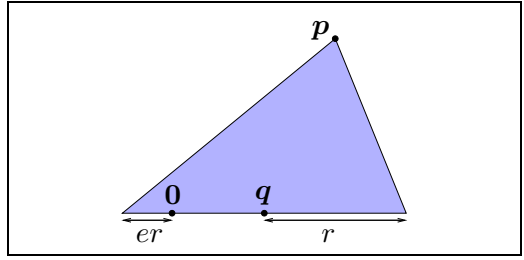}}
\put(2.3,0.1){$er$}
\put(6.2,0.1){$r$}
\put(6.0,4.5){$\vecp$}
\put(4.8,1.0){$\vecq$}
\put(2.9,0.9){$\bn$}
\end{picture}
\end{minipage}
}
\end{center}
\caption{The cone in Corollary \ref{CONECYLCOR} and Theorem \ref{CONEAPEXTHM}, in dimension $d=2$.}\label{FIGCONE}
\end{figure}


\subsection{The conditional probability $p_\vecp(\fC)$; the case of $\fC$ a cone and applications to statistics of directions to lattice points}

We next turn
to a function related to $p(\fC)$: %
the \textit{conditional} probability of 
$L\cap\fC\setminus\{\bn,\vecp\}=\emptyset$
given that $L$ contains 
a fixed point $\vecp\neq\bn$.
We denote this probability by $p_\vecp(\fC)$,
cf.\ Section~\ref{X1RECOLLECTSEC} below for the precise definition.
We will prove non-trivial bounds on $p_\vecp(\fC)$
in two special cases %
both of which have important applications.
In both cases we will have $\bn,\vecp\in\partial\fC$.

The first case is that of an open cone with $\bn$
in its base and \textit{apex} $\vecp$.
In this case it turns out that $p_\vecp(\fC)$ satisfies %
the same kind of upper and lower bounds as $p(\fC)$ 
(cf.\ Corollary \ref{CONECYLCOR}):
\begin{thm}\label{CONEAPEXTHM}
Given $d\geq2$ there exist constants $0<k_1<k_2$ such that the following
holds. 
Let $B\subset\R^d$ be a relatively open $(d-1)$-dimensional ball 
with $\bn\in B$, let $\vecp$ be a point $\neq\bn$, and let $\fC$ be the
open cone which is the interior of the convex hull of $B$ and $\vecp$.
Set $e=\frac{r-\|\vecq\|}r$ where $r$ and $\vecq$ are the
radius and center of $B$. Assume $|\fC|\geq\frac12$. Then
\begin{align}\label{CONEAPEXTHMRES}
F_{cone}^{(d)}(e;k_2|\fC|)  %
\leq p_\vecp(\fC)\leq F_{cone}^{(d)}(e;k_1|\fC|).
\end{align}
\end{thm}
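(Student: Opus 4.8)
The plan is to deduce the bounds for the conditional probability $p_\vecp(\fC)$ from the already-established unconditional bounds in Corollary \ref{CONECYLCOR} for a cone, together with the standard machinery relating $p_\vecp$ to integrals over the space $X_1'$ of lattices containing the primitive point $\vecp$ (which, after acting by a suitable element of $\SL_d(\R)$ carrying $\vecp$ to $e_1/\|\vecp\|$-type position, fibers over the space $X_1^{(d-1)}$ of $(d-1)$-dimensional unimodular lattices). Concretely, writing $\fC$ for the open cone with base $B\ni\bn$ and apex $\vecp\in\partial\fC$, the event $L\cap\fC\setminus\{\bn,\vecp\}=\emptyset$ is equivalent to: the ``projected'' $(d-1)$-dimensional lattice $\overline L$ (projection along $\vecp$) avoids the shadow of $\fC$, \emph{and} none of the $\ZZ\vecp$-translates of a fundamental domain of the fiber puts a lattice point inside $\fC$. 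The first sub-event is governed by $p^{(d-1)}$ of a $(d-1)$-dimensional cross-section, and the second by a correction factor bounded above and below by absolute constants once $|\fC|$ is large; this is exactly the structure that produced $F_{cone}^{(d)}$ in Corollary \ref{CONECYLCOR}, so I expect the same cross-sectional analysis to go through verbatim.

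First I would set up the reduction: fix the primitive-vector normalization, describe $p_\vecp(\fC)$ as an average over $X_1^{(d-1)}$ of the probability (over the fiber $\RR/\ZZ$ of ``heights'') that the full lattice misses $\fC$, and record the key geometric fact that for a cone with apex $\vecp$ the slices $\fC\cap(\text{hyperplane at height }s\vecp)$ are rescaled copies of the base $B$, shrinking linearly from $B$ at $s=0$ to a point at $s=1$. This makes the fiber-integral essentially a one-dimensional computation: for each $(d-1)$-lattice $\overline L$, the set of bad heights is a union of intervals whose total measure is controlled by the number of $\overline L$-points in the scaled bases, and summing/integrating over $\overline L$ reproduces (up to constants) the quantity $\int p^{(d-1)}(\text{scaled }B)\,ds$ that one evaluates just as in the proof of Corollary \ref{CONECYLCOR}. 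The upper bound then follows by dropping the fiber correction (it is $\le 1$ trivially, or $\ll 1$), and the lower bound by noting the fiber correction is $\gg 1$ whenever $|\fC|$ is large — here one uses that the apex forces the slices to degenerate, so only $O(1)$ heights can be simultaneously bad for a ``typical'' $\overline L$, a point where the cone geometry (as opposed to a cylinder) is genuinely used.

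Second, I would invoke Corollary \ref{CONECYLCOR} as a black box for the cross-sectional estimate: the same $e$ and $|\fC|$ enter, and the cross-sections of the cone are $(d-1)$-balls with the same relative off-centeredness $e$, so the $(d-1)$-dimensional input is precisely $p^{(d-1)}$ of an off-center ball, which by Theorem \ref{GENBALLTHM} (applied in dimension $d-1$) is $\asymp F_{ball}^{(d-1)}$ of the appropriate parameters; carrying this through the $s$-integral yields exactly $F_{cone}^{(d)}(e;v)$ up to the allowed rescaling of constants $k_1,k_2$. I would be careful about the volume bookkeeping: $|\fC|\asymp r^{d-1}\cdot(\text{height})$, and the relation between $r$, $\|\vecp\|$ and $|\fC|$ must be tracked so that the threshold $e\le v^{-2/d}$ comes out correctly — this is the same normalization already done for Corollary \ref{CONECYLCOR}, so I expect no new difficulty, only a rerun with the fiber measure inserted.

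The main obstacle, I expect, is showing that conditioning on $\vecp\in L$ does not change the order of magnitude — i.e., that the fiber correction factor is bounded above \emph{and} below by constants depending only on $d$. The upper bound side is routine (conditioning can only decrease the miss-probability relative to the ``projected'' estimate, or at worst costs an absolute constant via a Siegel-type mean value argument on $X_1'$). The lower bound is the delicate part: one must exhibit a positive-measure set of $(d-1)$-lattices $\overline L$ for which (a) $\overline L$ misses the shadow of $\fC$ and (b) simultaneously a positive-measure set of heights keeps all $\ZZ\vecp$-translates clear of $\fC$; because the cone's slices collapse to a point near the apex, only boundedly many translates are ever in danger, so (b) holds with probability $\gg 1$ conditioned on (a). Making this quantitative — essentially a second-moment or explicit-construction argument on the fiber, uniform in the conditioning lattice — is where the real work lies, and I would model it on whatever fiber-integral estimate was used to prove the lower bound in Corollary \ref{CONECYLCOR}, adapting it to the conditioned measure.
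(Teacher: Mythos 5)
Your plan to obtain $p_\vecp(\fC)$ by projecting along $\vecp$, averaging over $X_1^{(d-1)}$, and treating the remaining fiber as a bounded multiplicative correction has a decisive gap: there is no a priori reason the ``fiber correction factor'' is $\asymp 1$ uniformly. The claim $p_\vecp(\fC)\asymp p(\fC)$ for a cone with apex $\vecp$ and $\bn$ in its base is, in view of Corollary \ref{CONECYLCOR}, essentially equivalent to the theorem being proved, so it cannot serve as an input. The paper's proof does not pass through any such comparison. Instead it expands $p_\vecp(\fC)$ as a sum over primitive integer vectors $\veck$ using the parametrization of $X_1(\vecp)$ from Section \ref{X1RECOLLECTSEC} (Lemmas \ref{GKYMEASLEM1} and \ref{GKYMEASLEM2}), combined with the Siegel set $\Si_d$ and the Iwasawa coordinates $[a_1,\vecv,\vecu,\tM]$; the slicing is by hyperplanes $\vecv^\perp$ through the origin (not perpendicular to the apex axis), and those slices of $\fC$ are $(d-1)$-dimensional \emph{cones}, not the rescaled $(d-1)$-balls you have in mind.

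A second gap concerns the lower-dimensional input. In the paper's treatment of the terms with $k_1=0$ (i.e.\ $\vecv\perp\vecp$), the slice $\fC\cap\vecv^\perp$ is a $(d-1)$-dimensional cone whose apex is precisely $\vecp$, so the conditioning on $\vecp\in L$ propagates to the $(d-1)$-dimensional lattice: one needs the \emph{conditional} estimate $p_\vecy^{(d-1)}$, and the paper obtains it by \emph{induction on Theorem \ref{CONEAPEXTHM} itself} in dimension $d-1$. Invoking Corollary \ref{CONECYLCOR} (an unconditional result) as a black box cannot supply this. Finally, two of the three ingredients of the theorem do not come out of your $s$-integral at all: the vanishing $p_\vecp(\fC)=0$ when $e\gg|\fC|^{-2/d}$ is proved by a separate Minkowski-theorem argument (inscribing a box into $\fC\cup(-\fC)\cup\vece_1^\perp$ and using \eqref{CONEAPEXTHMPF2}), and the lower bound is a direct Siegel-set computation restricted to $k_1=1$ and a thin set $S$ of directions $\vecv$ for which the slice volume is $<\tfrac12$, using only Lemma \ref{TRIVPLOWBOUNDLEM} --- no second moment on the fiber, and no comparison with the unconditional measure.
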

As an application, Theorem \ref{CONEAPEXTHM}
yields information on the
tail behavior of a certain limit density
related to the fine-scale statistics
of directions to lattice points in a fixed $d$-dimensional lattice
(cf.\ Marklof and Str\"ombergsson \cite[Sections 1.2 and 2.2-4]{partI}).
To describe the problem,
fix a lattice $\scrL\subset\R^d$ of covolume one.
Let us write $\scrB_T^d$ for the open ball in $\R^d$ with center $\bn$
and radius $T$, and consider, for large $T$, 
the set of non-zero lattice points in $\scrB_T^d$.
We are interested in the corresponding \textit{directions},
\begin{align}\label{DIRSET}
\|\vecm\|^{-1}\vecm\in\S_1^{d-1},\qquad
\text{for }\:\vecm\in\scrL\cap\scrB_T^d\setminus\{\bn\}. 
\end{align}
It is well known that, as $T\to\infty$, these points become uniformly
distributed on $\S_1^{d-1}$ with respect to the volume measure
$\vol_{\S_1^{d-1}}$.
We are interested in the fine-scale statistics of these directions,
i.e.\ we wish to study the behavior of the point set in \eqref{DIRSET}
when rescaled in such a way that we have on average a constant number 
of points per unit 
volume.
This question was studied for $d=2$ by
Boca, Cobeli and Zaharescu \cite{Boca00}.
Later a general result on the limit statistics in arbitrary dimension
$d$ was proved by
Marklof and 
Str\"ombergsson in 
\cite[Thm.\ 2.1]{partI} (cf.\ also \cite[Sec.\ 2.4]{partI});
we will recall this result here.

Given $\vecv\in\S_1^{d-1}$ and $\sigma>0$ we
let $\scrD_T(\sigma,\vecv)$ be the open disc inside $\S_1^{d-1}$ with
center $\vecv$ and volume $\vol_{\S_1^{d-1}}(\scrD)=\sigma d T^{-d}$;
thus the radius of $\scrD_T(\sigma,\vecv)$ is $\asymp T^{-\frac d{d-1}}$.
We denote by $\scrN_T(\sigma,\vecv)$ the number of directions to
lattice points 
which lie in $\scrD_T(\sigma,\vecv)$:
\begin{align}\label{NTDEF}
\scrN_T(\sigma,\vecv)=\#\bigl\{\vecm\in\scrL\cap\scrB_T^d\setminus\{\bn\}
\col \|\vecm\|^{-1}\vecm\in\scrD_T(\sigma,\vecv)\bigr\}.
\end{align}
The motivation for the definition of $\scrD_T(\sigma,\vecv)$ 
is that it implies that the expectation value of $\scrN_T(\sigma,\vecv)$
for random $\vecv$ is asymptotically equal to $\sigma$ as $T\to\infty$,
cf.\ \cite[(2.11)]{partI}.
In particular this means that the distance (viz., the angle) 
from a random direction to the nearest lattice direction
is typically on the order of $T^{-\frac d{d-1}}$.
Now, as a special case of \cite[Thm.\ 2.1]{partI},
for any Borel probability measure $\lambda$ on $\S_1^{d-1}$,
and for any $\sigma\geq0$ and $r\in\Z_{\geq0}$, 
we have that the limit
\begin{align}\label{E0RSLIMIT}
E_\bn(r,\sigma):=
\lim_{T\to\infty}\lambda\bigl(\bigl\{\vecv\in\S_1^{d-1}\col
\scrN_T(\sigma,\vecv)
=r\bigr\}\bigr)
\end{align}
exists. 
In other words, if $\vecv$ is picked at random according to $\lambda$,
then the random variable $\scrN_T(\sigma,\cdot)$
has a limit distribution as $T\to\infty$, which 
is independent of $\scrL$!
The limit probability in \eqref{E0RSLIMIT} is given by
\begin{align}\label{E0RSDEF}
E_\bn(r,\sigma)=\mu\bigl(\bigl\{L\in X_1\col\#(L\cap\fC\setminus\{\bn\})=r
\bigr\}\bigr),
\end{align}
where $\fC\subset\R^d$ is any $d$-dimensional 
cone with apex $\bn$ and volume $\sigma$.

The case $r=0$ in \eqref{E0RSLIMIT} is of particular interest since it
corresponds to the ``spherical contact'', or ``empty space'', 
distribution function for our set of directions;
cf., e.g., \cite[p.\ 105]{dSwKjM95}.
To make this explicit,
let us write $\varphi_T(\vecv)$ for the smallest angle 
from the point $\vecv\in\S_1^{d-1}$ 
to a point in our set \eqref{DIRSET},
\begin{align}\label{PHITVDEF}
\varphi_T(\vecv)=\min\bigl\{\varphi(\vecv,\vecm)\col
\vecm\in\scrL\cap\scrB_T^d\setminus\{\bn\}
\bigr\}.
\end{align}
Here and from now on $\varphi(\vecv,\vecw)$ denotes the angle between
any two non-zero vectors $\vecv,\vecw$. 
It follows from \eqref{E0RSLIMIT} that
the properly scaled random variable
$T^{\frac d{d-1}}\varphi_T(\vecv)$ has a limit distribution
as $T\to\infty$:
For any $x\geq0$ we have
\begin{align}\label{EMPTYSPACELIMITDISTR}
F_\bn(x):=\lim_{T\to\infty}\lambda\bigl(\bigl\{\vecv\in\S_1^{d-1}\col
T^{\frac d{d-1}}\varphi_T(\vecv)\leq x\bigr\}\bigr)
=1-E_\bn\bigl(0,\kappa_dx^{d-1}\bigr),
\end{align}
where $\kappa_d:=d^{-1}\vol(\scrB_1^{d-1})$.

Note that $E_\bn(0,\sigma)=p(\fC)$ in our notation,
with $\fC$ a cone with apex $\bn$ and volume $\sigma$,
and it is an easy consequence of the theory which we will develop in
Section \ref{GENCONVEXSEC} 
that $E_\bn(0,\sigma)\asymp\sigma^{-1}$ as $\sigma\to\infty$
(cf.\ in particular Corollary \ref{ATHREYAMFROMBELOWPROP1}).
Hence the spherical contact limit distribution function has the
tail asymptotics 
\begin{align}\label{EMPTYSPACEDISTRTAIL}
1-F_\bn(x)\asymp x^{1-d}\qquad\text{as }\:x\to\infty.
\end{align}
In particular this large tail asymptotics implies that there are many
large ``deserts'' in the
set of directions to the points of $\scrL$, to an extent that 
the $(d-1)$th moment of the random variable $T^{\frac d{d-1}}\varphi_T(\vecv)$
tends to $\infty$ as $T\to\infty$.
The main point we wish to make here, however, is that using
Theorem \ref{CONEAPEXTHM} we are even able to give sharp bounds as
$x\to\infty$ on the \textit{density} corresponding to $F_\bn(x)$.
Indeed, it follows from 
\cite[Remark 2.2]{partI} that $F_\bn(x)\in\C^1(\R_{>0})$,
and by \cite[(8.48)]{partI} we have
\begin{align*}
f_\bn(x):=F_\bn'(x)
=\kappa_d(d-1)x^{d-2}\frac1{\vol_{d-1}(B)}\int_B p_\vecp(\fC)
\,d\vecp,
\end{align*}
where $\fC$ is a cone with apex $\bn$ and volume $\kappa_d x^{d-1}$,
and $B$ is the base of $\fC$,
i.e.\ the $(d-1)$-dimensional ball with the property that $\fC$ is the 
convex hull of $B$ and $\bn$;
also $d\vecp$ denotes the standard $(d-1)$-dimensional Lebesgue measure.
Using now $p_\vecp(\fC)=p_\vecp(\vecp-\fC)$ (cf.\ \eqref{PPFSYMM} below)
and Theorem \ref{CONEAPEXTHM}, we conclude that
$\vol_{d-1}(B)^{-1}\int_B p_\vecp(\fC)\,d\vecp\asymp|\fC|^{-2}$
when $|\fC|$ is large, and hence we obtain:
\begin{cor}\label{LATDIRCOR}
$f_\bn(x)\asymp x^{-d}$ as $x\to\infty$.
\end{cor}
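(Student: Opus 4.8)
The plan is to derive Corollary~\ref{LATDIRCOR} directly from the integral formula for $f_\bn(x)$ quoted just above it, together with Theorem~\ref{CONEAPEXTHM}. Concretely, I would start from
\begin{align*}
f_\bn(x)=\kappa_d(d-1)x^{d-2}\cdot\frac1{\vol_{d-1}(B)}\int_B p_\vecp(\fC)\,d\vecp,
\end{align*}
where $\fC$ is the cone of apex $\bn$ and volume $\kappa_d x^{d-1}$ and $B$ is its base, a $(d-1)$-dimensional ball. Since $x^{d-2}\asymp|\fC|^{(d-2)/(d-1)}$, the whole statement $f_\bn(x)\asymp x^{-d}$ reduces to showing
\begin{align}\label{REDUCEDCLAIM}
\frac1{\vol_{d-1}(B)}\int_B p_\vecp(\fC)\,d\vecp\asymp|\fC|^{-2}
\qquad\text{as }|\fC|\to\infty.
\end{align}

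For the averaged estimate \eqref{REDUCEDCLAIM} I would fix $\vecp$ in the base $B$ and apply Theorem~\ref{CONEAPEXTHM} to the cone $\vecp-\fC$: note that $\vecp-\fC$ is again an open cone, whose base is $\vecp-B$ (a translate of $B$, hence a $(d-1)$-dimensional ball of the same radius $r$) and whose apex is $\vecp-\bn\neq\bn$, and we have $p_\vecp(\fC)=p_\vecp(\vecp-\fC)$ by \eqref{PPFSYMM}. For the cone $\vecp-\fC$ the relevant eccentricity parameter is $e(\vecp)=\frac{r-\|\vecp-\vecq\|}{r}$ where $\vecq$ is the center of $B$, so that $r-\|\vecp-\vecq\|$ is exactly the distance from $\bn$ to the boundary sphere of $B$ measured through $\vecp$; equivalently $r\,e(\vecp)=r-\|\vecp-\vecq\|$. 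Theorem~\ref{CONEAPEXTHM} then gives, with $v\asymp|\fC|$,
\begin{align*}
p_\vecp(\fC)\asymp
\begin{cases}|\fC|^{-2+\frac2d}&\text{if }e(\vecp)\leq k|\fC|^{-2/d}\\[2pt]
0&\text{if }e(\vecp)\geq k'|\fC|^{-2/d},\end{cases}
\end{align*}
for suitable constants $k,k'$ (with the usual caveat that the upper and lower bounds use different constants $k_1,k_2$ inside $F_{cone}^{(d)}$; one only obtains matching orders after allowing a bounded rescaling, which is exactly how ``$\asymp$'' is used throughout the paper). Thus $p_\vecp(\fC)$ is of order $|\fC|^{-2+2/d}$ precisely on the thin annular shell of points $\vecp\in B$ with $\|\vecp-\vecq\|$ within distance $\asymp r|\fC|^{-2/d}$ of the boundary sphere $\|\vecp-\vecq\|=r$, and essentially zero elsewhere.

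It remains to compute the measure of that shell relative to $\vol_{d-1}(B)$. The shell $\{\vecp\in B:\ r-\|\vecp-\vecq\|\lesssim r|\fC|^{-2/d}\}$ has $(d-1)$-dimensional volume $\asymp r^{d-1}|\fC|^{-2/d}$ (a spherical shell of thickness $\asymp r|\fC|^{-2/d}$ in dimension $d-1$), while $\vol_{d-1}(B)\asymp r^{d-1}$; here $r^{d-1}$ itself is comparable to $|\fC|^{(d-1)/d}$ up to the height of the cone, but since both numerator and denominator carry the factor $r^{d-1}$ it cancels cleanly. Hence
\begin{align*}
\frac1{\vol_{d-1}(B)}\int_B p_\vecp(\fC)\,d\vecp
\asymp |\fC|^{-2/d}\cdot|\fC|^{-2+\frac2d}=|\fC|^{-2},
\end{align*}
which is \eqref{REDUCEDCLAIM}. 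Feeding this back, $f_\bn(x)\asymp x^{d-2}\cdot(\kappa_dx^{d-1})^{-2}\asymp x^{-d}$, as claimed. The one point needing genuine care — the ``main obstacle'' — is the bookkeeping of the constants: since $F_{cone}^{(d)}(e;v)$ has a sharp cutoff in $e$ at $v^{-2/d}$ and Theorem~\ref{CONEAPEXTHM} sandwiches $p_\vecp(\fC)$ between $F_{cone}^{(d)}(e;k_2|\fC|)$ and $F_{cone}^{(d)}(e;k_1|\fC|)$ with $k_1<k_2$, the set of $\vecp$ on which the lower bound is nonzero is strictly smaller than the set on which the upper bound is nonzero; one must check that both sets are annular shells of thickness $\asymp r|\fC|^{-2/d}$ (which they are, since $k_1,k_2$ only shift the cutoff radius by a bounded factor) so that the two-sided estimate \eqref{REDUCEDCLAIM} indeed holds with the same order. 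Everything else is the routine volume-of-a-spherical-shell computation indicated above.
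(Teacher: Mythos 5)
Your proposal is correct and follows exactly the route the paper takes: apply the symmetry $p_\vecp(\fC)=p_\vecp(\vecp-\fC)$, invoke Theorem~\ref{CONEAPEXTHM} for the reflected cone $\vecp-\fC$ (whose base $\vecp-B$ contains $\bn$ and whose apex is $\vecp$), and then do the spherical-shell volume computation to average over $\vecp\in B$. The paper states the conclusion $\vol_{d-1}(B)^{-1}\int_B p_\vecp(\fC)\,d\vecp\asymp|\fC|^{-2}$ without spelling out the shell computation; you have merely supplied those routine details, including the correct observation that the mismatched constants $k_1,k_2$ in $F_{cone}^{(d)}$ only rescale the shell thickness by a bounded factor.
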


Using similar arguments as in \cite{partIV},
building on the present paper,
it should even be possible to obtain an asymptotic formula
for $f_\bn(x)$ as $x\to\infty$,
in arbitrary dimension $d$.
We hope to carry this out in a later paper.
Note that for $d=2$ one knows a completely explicit formula for
$f_\bn(x)$; %
cf.\ Boca, Cobeli and Zaharescu \cite[Cor.\ 0.4]{Boca00}. 

We stress that \cite[Thm.\ 2.1]{partI} is 
more general than \eqref{E0RSLIMIT}--\eqref{E0RSDEF}; 
in particular it applies also in the 
case when we consider the set of directions to any fixed 
\textit{shifted} lattice,
i.e.\ we replace $\scrL$ by $\vecq+\scrL$
in \eqref{DIRSET}--\eqref{NTDEF}, for any fixed $\vecq\in\R^d$.
Here, if $\vecq\in\R^d$ is not a rational
linear combination of points in $\scrL$ then the limit distribution
given by \cite[Thm.\ 2.1]{partI} is in fact 
universal
in the sense that it is independent of both $\scrL$ \textit{and} $\vecq$.
In this case, the limit spherical contact density function is,  
by \cite[(8.48)]{partI},
\begin{align*}
f(x)=\kappa_d(d-1)x^{d-2}\frac1{\vol_{d-1}(B)}\int_B p(\vecp-\fC)
\,d\vecp.
\end{align*}
Hence also in this case we have the asymptotic relation
$f(x)\asymp x^{-d}$ as $x\to\infty$,
now as a consequence of Corollary \ref{CONECYLCOR}.

\subsection{The case of $\fC$ a cylinder, and applications to the periodic Lorentz gas in the Boltzmann-Grad limit}

The second case in which we prove a non-trivial bound on $p_\vecp(\fC)$
is that of an open cylinder, with $\bn$ and $\vecp$ lying
on its opposite bases. %
The function $p_\vecp(\fC)$ in this case occurs as the collision
kernel between consecutive collisions, $\Phi_\bn(\xi,\vecw,\vecz)$,
in the periodic Lorentz gas in the Boltzmann-Grad limit,
cf.\ \cite{partI}, \cite{partII}, \cite{partIII}. 
It was the task of understanding
the asymptotics of this kernel which %
led us to undertake the
present work; in fact we %
make crucial use of 
both Theorem \ref{MAINTHM2} and Theorem~\ref{CONEAPEXTHM}
in the proof of Theorem~\ref{CYLINDER2PTSMAINTHM} below.

\begin{figure}
\begin{center}
\framebox{
\begin{minipage}{0.55\textwidth}
\unitlength0.1\textwidth
\begin{picture}(10,4.8)(0,0)
\put(2,0.5){\includegraphics[width=0.6\textwidth]{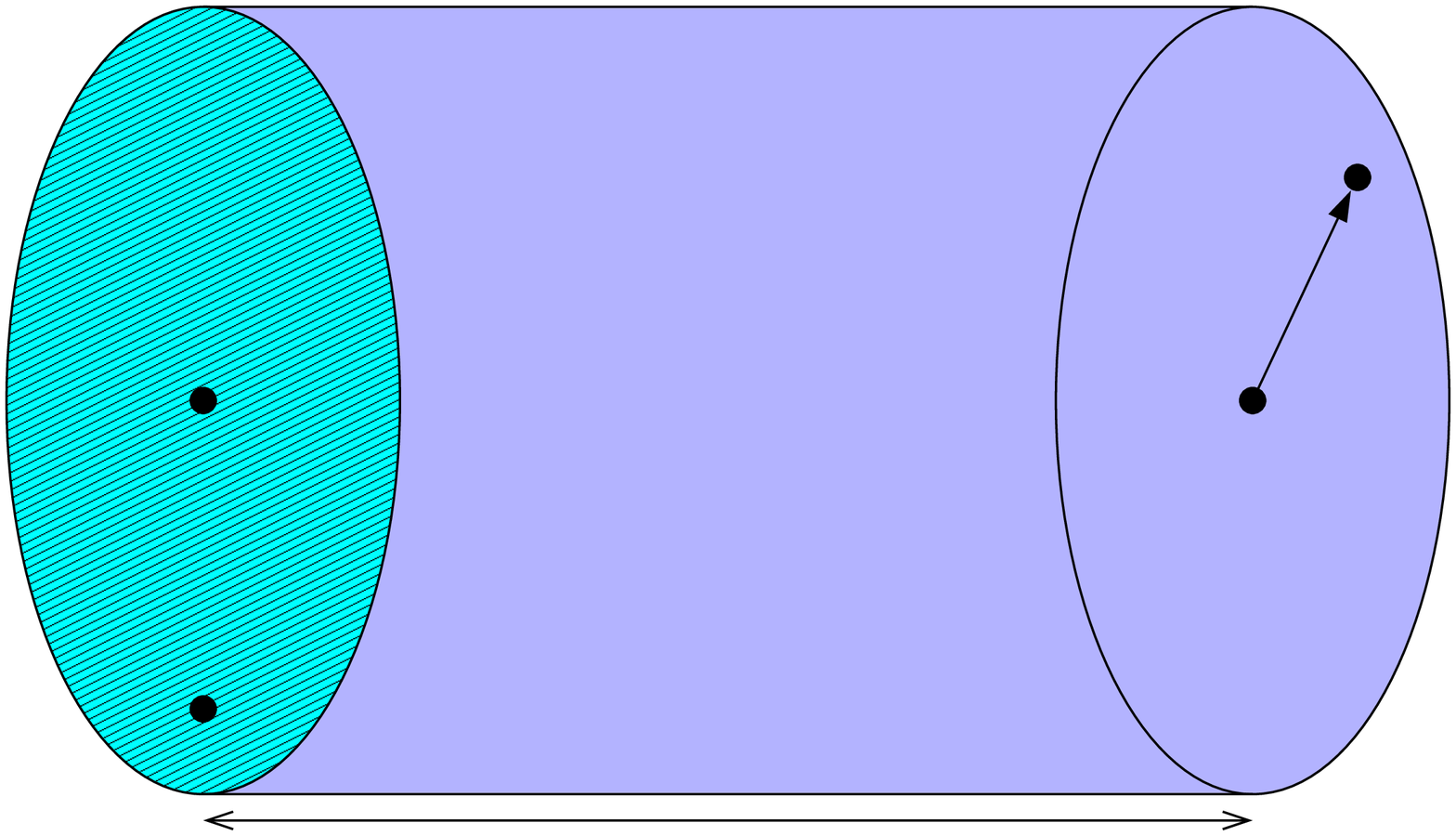}}
\put(5.0,0.2){$\xi$}
\put(2.9,1.1){$\bn$}
\put(2.9,2.45){$\vecz$}
\put(7.25,3.25){$\vecp$}
\put(7.43,2.53){$\vecw$}
\end{picture}
\end{minipage}
}
\end{center}
\caption{The cylinder $\fC$ in the definition of $\Phi_\bn(\xi,\vecw,\vecz)$, in dimension $d=3$.}\label{FIGPHI0}
\end{figure}

For any $\xi>0$ and $\vecw,\vecz\in\scrB_1^{d-1}$
(i.e.\ $\vecw,\vecz\in\R^{d-1}$, $\|\vecw\|,\|\vecz\|<1$),
$\Phi_\bn(\xi,\vecw,\vecz)$ is defined as
\begin{align}\label{PHI0DEF1}
\Phi_\bn(\xi,\vecw,\vecz)=p_\vecp(\fC), 
\end{align}
where $\fC$ is the cylinder (cf.\ Figure \ref{FIGPHI0})
\begin{align}\label{PHI0DEF}
\begin{cases}\fC=\bigl\{(x_1,\ldots,x_d)\in\R^d\col
0<x_1<\xi,\:\bigl\|(x_2,\ldots,x_d)-\vecz\bigr\|<1\bigr\};
\\
\vecp=(\xi,\vecz+\vecw).
\end{cases}
\end{align}
Note that $\Phi_\bn(\xi,\vecw,\vecz)$ only depends on the
four scalars $\xi,\|\vecw\|,\|\vecz\|,\varphi(\vecw,\vecz)$.
Note also that if $\fC$ is an \textit{arbitrary} open cylinder 
with ellipsoidal cross section and 
with $\bn$ and $\vecp$ lying on the opposite bases of $\fC$,
then $p_\vecp(\fC)$ can be expressed in terms of $\Phi_\bn(\xi,\vecw,\vecz)$
(cf.\ \eqref{PPDINV} below).

Before stating our main results on 
$\Phi_\bn(\xi,\vecw,\vecz)$, we briefly explain the connection
with the periodic Lorentz gas, borrowing from the presentation in
\cite{partIV}.
For more details see \cite{partI}, \cite{partII}, \cite{partIII}.

The periodic Lorentz gas describes an ensemble of non-interacting
point particles in an infinite periodic array of spherical scatterers.
Specifically, for a fixed lattice $\scrL\subset\R^d$ of covolume one
and given $\rho>0$ (small), we take the scatterers to be
all the open balls $\scrB_\rho^d+\vecell$ with $\vecell\in\scrL$.
We denote by $\scrK_\rho\subset\RR^d$ the complement of the union of these balls
(the ``billiard domain''), and $\T^1(\scrK_\rho)=\scrK_\rho\times\S_1^{d-1}$ 
its unit tangent bundle (the ``phase space''), with $\vecq(t)\in\scrK_\rho$ 
the position and $\vecv(t)\in\S_1^{d-1}$ the velocity of the particle 
at time $t$. The dynamics of a particle in the Lorentz gas is defined as the motion with unit speed along straight lines, and specular reflection at the 
balls $\scrB_\rho^d+\vecell$ ($\vecell\in\scrL$).
We may in fact also permit other scattering processes, such as the scattering map of a Muffin-tin Coulomb potential (cf.~\cite{partII}). 
A dimensional argument shows that in the Boltzmann-Grad limit $\rho\to 0$ the free path length scales like $\rho^{-(d-1)}$, i.e., the inverse of the total scattering cross section of an individual scatterer. It is therefore natural to rescale space and time by introducing the macroscopic coordinates (see Figure \ref{figLorentz})
\begin{equation}\label{macQV}
	\big(\vecQ(t),\vecV(t)\big) = \big(\rho^{d-1} \vecq(\rho^{-(d-1)} t),\vecv(\rho^{-(d-1)} t)\big) .
\end{equation}
The time evolution of a particle with initial data $(\vecQ,\vecV)$ is then described by the billiard flow
\begin{equation}\label{LP}	(\vecQ(t),\vecV(t))=F_{t,\rho}(\vecQ,\vecV) .
\end{equation}
We extend the dynamics to the inside of each scatterer 
trivially, i.e., set $F_{t,\rho}=\id$ whenever $\vecQ\in\scrB_\rho^d+\scrL$;
thus the relevant phase space is now $\T^1(\RR^d)$,
the unit tangent bundle of $\RR^d$.

\begin{figure}
\begin{center}
\framebox{
\begin{minipage}{0.3\textwidth}
\unitlength0.1\textwidth
\begin{picture}(10,10)(0,0)
\put(0.5,1){\includegraphics[width=0.9\textwidth]{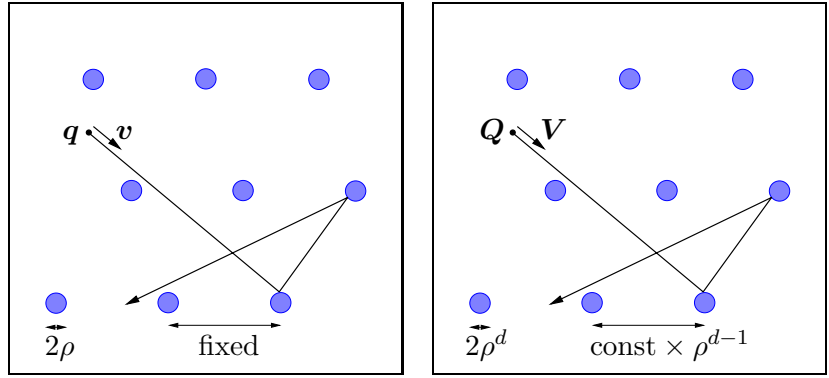}}
\put(1,6.4){$\vecq$} \put(2.5,6.4){$\vecv$}
\put(0.5,0.3){$2\rho$} \put(4.8,0.3){fixed}
\end{picture}
\end{minipage}
}
\begin{minipage}{0.1\textwidth}\end{minipage}
\begin{minipage}{0.1\textwidth}\end{minipage}
\framebox{
\begin{minipage}{0.3\textwidth}
\unitlength0.1\textwidth
\begin{picture}(10,10)(0,0)
\put(0.5,1){\includegraphics[width=0.9\textwidth]{lorentzgas.eps}}
\put(0.8,6.4){$\vecQ$} \put(2.5,6.4){$\vecV$}
\put(0.4,0.3){$2\rho^d$} \put(3.9,0.3){$\text{const}\times\rho^{d-1}$}
\end{picture}
\end{minipage}
}
\end{center}
\caption{Left: The periodic Lorentz gas in ``microscopic'' coordinates---the lattice $\scrL$ remains fixed as the radius $\rho$ of the scatterer tends to zero. Right: The periodic Lorentz gas in ``macroscopic'' coordinates ---both the lattice constant and the radius of each scatter tend to zero, in such a way that the mean free path length remains finite.} \label{figLorentz}
\end{figure}

Let us fix an arbitrary probability measure $\Lambda$ on $\T^1(\RR^d)$. For random initial data $(\vecQ_0,\vecV_0)$ with respect to $\Lambda$, we can then view the billiard flow $\{F_{t,\rho} : t> 0\}$ as a stochastic process. The central result of \cite{partI}, \cite{partII} is that, if $\Lambda$ is absolutely continuous with respect to Lebesque measure, the billiard flow converges in the Boltzmann-Grad limit $\rho\to0$ to a random flight process $\{\Xi(t):t > 0\}$, which is defined as the flow with unit speed along a random piecewise linear curve, whose path segments $\vecS_1,\vecS_2,\vecS_2,\ldots\in\RR^d$ are generated by a Markov process with memory two. Specifically, if we set $\xi_j=\|\vecS_j\|$ and $\vecV_{j-1} =\frac{\vecS_j}{\|\vecS_j\|}$ for $j=1,2,3,\ldots$,
then the distribution of the first $n$ path segments is given by the probability density
\begin{multline}\label{MAINRESULTPARTII}
	\Lambda'(\vecQ_0,\vecV_0) p(\vecV_0,\xi_1,\vecV_1) p_\vecnull(\vecV_0,\vecV_1,\xi_2,\vecV_2)\cdots \\ \cdots p_\vecnull(\vecV_{n-3},\vecV_{n-2},\xi_{n-1},\vecV_{n-1}) \int_{\S_1^{d-1}} p_\vecnull(\vecV_{n-2},\vecV_{n-1},\xi_n,\vecV_n)\, d\!\vol_{\S_1^{d-1}}(\vecV_n) ,
\end{multline}
see Theorem 1.3 and Section 4 in \cite{partII}.
The transition kernels $p$ and $p_\bn$ in \eqref{MAINRESULTPARTII}
are given by
\begin{align}
&	p(\vecV,\xi,\vecV_+) 
	=\sigma(\vecV,\vecV_+)\, \Phi\big(\xi,\vecb(\vecV,\vecV_+)\big),
\\\label{P0V0VXIVPLDEF}
&	p_{\vecnull}(\vecV_0,\vecV,\xi,\vecV_+) 
	=\sigma(\vecV,\vecV_+)\, \Phi_\vecnull\big(\xi,\vecb(\vecV,\vecV_+),
	-\vecs(\vecV,\vecV_0)\big),
\end{align}
where $\sigma(\vecV,\vecV_+)$ is the differential cross section,
$\Phi_\bn$ is the function defined in \eqref{PHI0DEF1}--\eqref{PHI0DEF},
\begin{align}\label{PHIXIWDEF}
\Phi(\xi,\vecw)=\int_\xi^\infty\int_{\scrB_1^{d-1}}\Phi_\bn(\eta,\vecw,\vecz)
\,d\vecz\,d\eta,
\end{align}
and $\vecb(\vecV,\vecV_+)$ and $\vecs(\vecV,\vecV_0)$ are the
impact and exit parameters (cf.~Figure \ref{figTP}),
both measured in units of the scattering radius, and considered as
vectors in $\R^{d-1}$ via a fixed Euclidean space isomorphism
$\vecV^\perp\cong\R^{d-1}$
(thus $\vecb(\vecV,\vecV_+),\vecs(\vecV,\vecV_0)\in\scrB_1^{d-1}$).

\begin{remark}
The function $\Phi$ may alternatively be defined by
$\Phi(\xi,\vecz)=p(\fC)$ where $\fC$ is the cylinder in \eqref{PHI0DEF}.
Cf.\ \cite[Thm.\ 4.4 ($\vecalf\notin\Q$), (4.16), (8.32)]{partI},
and \cite[Remark 6.2]{partII}.
\end{remark}

\begin{remark} 
If the scattering map is given by specular reflection (as in the original Lorentz gas), we have explicitly $\sigma(\vecV,\vecV_+)= \frac 14\|\vecV-\vecV_+\|^{3-d}$
for the scattering cross section, and 
\begin{equation}
\vecs(\vecV,\vecV_0)= -\frac{(\vecV_0 K(\vecV))_\perp}{\|\vecV_0 K(\vecV)-\vece_1\|}, \qquad \vecb(\vecV,\vecV_+)= \frac{(\vecV_+ K(\vecV))_\perp}{\|\vecV_+ K(\vecV)-\vece_1\|},
\end{equation}
for the exit and impact parameters. 
Here $\vecx_\perp$
denotes the orthogonal projection of $\vecx\in\RR^d$ onto $\vece_1^{\perp}=\{0\}\times\RR^{d-1}$,
and for each $\vecV\in\S_1^{d-1}$ we have fixed a rotation
$K(\vecV)\in\SO(d)$ with $\vecV K(\vecV)=\vece_1$.
\end{remark}

It can be seen from \eqref{MAINRESULTPARTII} -- \eqref{P0V0VXIVPLDEF} that
$\Phi_\bn(\xi,\vecb,-\vecs)$ is the limiting probability density 
(in the limit $\rho\to 0$, and with respect to the reference measure
$d\xi\,d\vecb$) of hitting, from a given scatterer
with exit parameter $\vecs$, the next scatterer at time
$\rho^{-(d-1)}\xi$ 
with impact parameter $\vecb$.
Again, cf.~Figure \ref{figTP}.
Similarly
$\Phi\big(\xi,\vecb\big)$ is the limiting probability density of hitting, from a generic point in $\T^1(\RR^d)$, the first scatterer at time $\rho^{-(d-1)}\xi$ with impact parameter $\vecb$.
These results were proved in \cite[Thm.\ 4.4]{partI},
and they form the first steps of the proof of the convergence
of the billiard flow $\{F_{t,\rho} : t> 0\}$ given in \cite{partII}.

\begin{figure}
\begin{center}
\begin{minipage}{0.49\textwidth}
\unitlength0.1\textwidth
\begin{picture}(10,8)(0,0)
\put(0.5,1){\includegraphics[width=0.9\textwidth]{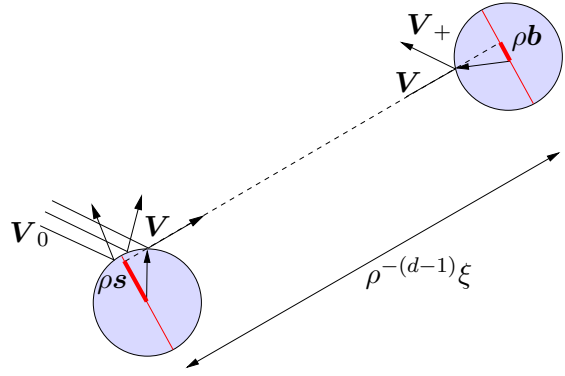}}
\put(0.0,3.2){$\vecV_0$}
\put(2.3,3.3){$\vecV$} 
\put(6.6,5.8){$\vecV$} 
\put(1.5,2.4){$\rho \vecs$}
\put(6.1,2.5){$\rho^{-(d-1)}\xi$}
\put(8.6,6.6){$\rho \vecb$}
\put(6.8,6.8){$\vecV_+$} 
\end{picture}
\end{minipage}
\end{center}
\caption{Two consecutive collisions in the Lorentz gas. \label{figTP}}
\end{figure}

\vspace{5pt}

Our main result on $\Phi_\bn(\xi,\vecw,\vecz)$ is the following.

\begin{thm}\label{CYLINDER2PTSMAINTHM}
Let $d\geq3$. We then have, for all
$\xi>0$, $\vecw,\vecz\in\scrB_1^{d-1}$,
writing $\varphi=\varphi(\vecw,\vecz)$,
\begin{align}\label{CYLINDER2PTSMAINTHMRES}
\Phi_\bn(\xi,\vecw,\vecz)\ll\begin{cases}
\xi^{-2+\frac2d}\min\Bigl\{1,
(\xi\varphi^{d})^{-1+\frac2{d(d-1)}}\Bigr\}
&\text{if }\:\varphi\leq\frac\pi2
\\
\xi^{-2}\min\Bigl\{1,
(\xi(\pi-\varphi)^{d-2})^{-1+\frac2{d-1}}\Bigr\}
&\text{if }\:\varphi\geq\frac\pi2.
\end{cases}
\end{align}
\end{thm}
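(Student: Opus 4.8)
Since $\Phi_\bn(\xi,\vecw,\vecz)=p_\vecp(\fC)$ with $\fC$ the open cylinder of \eqref{PHI0DEF}, the assertion is an upper bound on $p_\vecp(\fC)$, and the plan is to compare $\fC$ with well-chosen inscribed sets. I would use throughout the two elementary facts $p_\vecp(\fC)\le p_\vecp(\fC')$ whenever $\vecp\in\fC'\subseteq\fC$, and $p_\vecp(\fC)=p_\vecp(\vecp-\fC)$ (the latter interchanges $\vecz$ and $\vecw$, so after relabelling we may assume $\|\vecz\|\ge\|\vecw\|$). Also, since $p_\vecp\le1$ and the right-hand side of \eqref{CYLINDER2PTSMAINTHMRES} is $\gg1$ when $\xi\ll1$, we may assume $\xi\gg1$.

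\emph{Coarse bound.} By the triangle inequality the convex hull of the relatively open cross-sectional disc of $\fC$ lying in $\{x_1=0\}$ together with the apex $\vecp$ is contained in $\overline\fC$; its interior is therefore an open cone $\fC'\subseteq\fC$ with apex $\vecp$, base a $(d-1)$-ball through $\bn$, volume $\asymp\xi$, and with the parameter $e$ of Theorem~\ref{CONEAPEXTHM} equal to $1-\|\vecz\|$. Theorem~\ref{CONEAPEXTHM} then gives $\Phi_\bn\ll\xi^{-2+2/d}$ for all $\varphi$; moreover it forces $\Phi_\bn=0$ unless $1-\|\vecz\|\ll\xi^{-2/d}$ and, by the reflection symmetry, $1-\|\vecw\|\ll\xi^{-2/d}$. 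Thus $\vecz$ and $\vecw$ may henceforth be taken very close to unit vectors, which is the only regime carrying content; this also already disposes of the ``$1$'' alternative in the $\varphi\le\frac\pi2$ case.

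\emph{Sharpening.} For the finer factors one must go below $\xi^{-2+2/d}$, and for that I would invoke the conditional analogue of Proposition~\ref{GENPRINCBOUNDPROP} (the material of Section~\ref{X1RECOLLECTSEC}): conditioning on $\vecp\in L$ and quotienting by $\Z\vecp$ bounds $p_\vecp(\fC)$ in terms of unconditional probabilities $p^{(d-1)}$ of $(d-1)$-dimensional sections/projections of $\fC$ built from $\vecp$ -- and this is precisely where the hypothesis $d\ge3$ enters. These $(d-1)$-dimensional sets are, up to comparability, cut balls, balls, or cylinders: e.g.\ the projection of $\fC$ along $\vecp$ is a $(d-1)$-dimensional cylinder with ball cross-section of ``length'' $\asymp\|\vecz+\vecw\|\asymp\cos(\varphi/2)$ in which the origin sits at boundary-distance $\asymp1-\sin(\varphi/2)$. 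Feeding the relevant geometric parameters into the bounds of Theorems~\ref{MAINTHM2} and \ref{GENBALLTHM} in dimension $d-1$ yields the ``$\xi^{-2}$'' of the $\varphi\ge\frac\pi2$ case together with the extra decay factors $(\xi\varphi^{d})^{-1+2/(d(d-1))}$ and $(\xi(\pi-\varphi)^{d-2})^{-1+2/(d-1)}$; an equivalent, purely $d$-dimensional way to see the same estimates is to inscribe directly in $\fC$ a doubly cut ball (Corollary~\ref{DCBCOR}) with one flat face through $\bn$ and the parallel cuts placed according to $\xi$ and $\varphi$, the point being that the largest ball inscribed in $\fC$ and disjoint from a fixed small neighbourhood of $\bn$ has radius and offset bounded below by powers of $\varphi$ (resp.\ $\pi-\varphi$), which is exactly what makes the $t$-argument of $F_{\substack{cut\\ball}}^{(d)}$ large enough. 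Taking in each parameter range the best of the coarse cone and of the (doubly) cut ball / cylinder estimate gives \eqref{CYLINDER2PTSMAINTHMRES}.

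\emph{Expected main obstacle.} The hard part will not be any single estimate but the geometry and bookkeeping behind the sharpening step: one has to make the conditioning-reduction precise (converting the conditional $p_\vecp$ into unconditional lower-dimensional probabilities to which Theorem~\ref{MAINTHM2} applies), and then, for every range of $(\xi,\|\vecw\|,\|\vecz\|,\varphi)$, identify the optimal inscribed region incident to $\bn$ and $\vecp$ in exactly the way required by Theorems~\ref{CONEAPEXTHM}, \ref{MAINTHM2} and \ref{GENBALLTHM} and Corollary~\ref{DCBCOR}, and check that the induced values of $e,t,t',\tau$ and of the volume reproduce the stated exponents precisely rather than up to a spurious power of $\xi$ or of $\varphi$. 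All of this stays within the toolbox assembled in Section~\ref{GENCONVEXSEC}, together with the symmetry and monotonicity of $p_\vecp$.
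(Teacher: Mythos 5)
Your broad plan---condition on $\vecp\in L$ via the material of Section~\ref{X1RECOLLECTSEC}, pass to lower-dimensional probabilities of hyperplane sections $\fC\cap\vecv^\perp$, and feed the geometry of those sections into the earlier theorems, then integrate---is exactly the paper's strategy, and your observation that this is where $d\geq3$ enters is on target, as is the coarse step (inscribe a cone and invoke Theorem~\ref{CONEAPEXTHM} to get $\Phi_\bn\ll\xi^{-2+2/d}$ and vanishing unless $1-\|\vecz\|,\,1-\|\vecw\|\ll\xi^{-2/d}$).

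There are, however, two genuine gaps in the sharpening step. First, the claimed ``equivalent, purely $d$-dimensional way''---inscribing a doubly cut ball in $\fC$ and invoking Corollary~\ref{DCBCOR}---does not work. Corollary~\ref{DCBCOR} bounds the \emph{unconditional} $p(\fC')$, while the object to bound is the \emph{conditional} $p_\vecp(\fC)$, and there is no general pointwise comparison $p_\vecp(\fC)\ll p(\fC')$ for $\fC'\subset\fC$ with $\vecp\notin\fC'$: the measures $\nu_\vecp$ on $X_1(\vecp)$ and $\mu$ on $X_1$ live on different spaces, and Proposition~\ref{CONDPROP} relates them only after integration over $\vecp$. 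In the paper, Corollaries~\ref{CONECYLCOR} and~\ref{DCBCOR} are applied only \emph{after} the conditioning-reduction, to the $(d-1)$-dimensional cut balls, cones and doubly cut balls that arise as hyperplane sections $\fC\cap\vecv^\perp$ (cf.\ Lemma~\ref{CONVENIENTCONELEM} and the discussion around \eqref{CYLINDER2PTSMAINTHMPF10}--\eqref{CYLINDER2PTSMAINTHMPF11}), never to $\fC$ itself.

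Second, the conditioning-reduction \eqref{PHI0FIRSTSPLIT} intrinsically splits into the contribution from $k_1\neq0$---yielding ordinary probabilities $p^{(d-1)}$ of sections, bounded via Corollaries~\ref{CONECYLCOR} and \ref{DCBCOR}---and the contribution from $k_1=0$, which yields \emph{conditional} probabilities $p_\vecy^{(d-1)}$, with $\vecy$ the image of $\vecp$ in $\R^{d-1}$. That piece cannot be handled by Theorems~\ref{MAINTHM2} or \ref{GENBALLTHM} in dimension $d-1$; it requires Theorem~\ref{CONEAPEXTHM} in dimension $d-1$, i.e.\ an induction at the level of the conditional probability itself, which your list of tools omits. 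Finally, for $d=3$ and $\varphi\geq\pi/2$ even this $k_1=0$ piece carries a stray logarithmic factor (see \eqref{CYLINDER2PTSMAINTHMPF8}); stripping it off requires an extra device (Lemma~\ref{TRAPEZOIDLEM}, resting on the explicit two-dimensional formula \eqref{PF2DIMGENERIC}), which your proposal would not have anticipated.
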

(If $\vecw=\bn$ or $\vecz=\bn$ then $\varphi$ is undefined,
but in these cases we have 
$\Phi_\bn(\xi,\vecw,\vecz)=0$ whenever $\xi$ is sufficiently large,
cf.\ Proposition \ref{CYLSUPPPROP} below.)

For completeness we recall from \cite{partIII}
that when $d=2$ it is possible to
give an %
explicit formula for $\Phi_\bn(\xi,\vecw,\vecz)$
(now $\vecw=w$ and $\vecz=z$ are real numbers in the 
interval $-1<w,z<1$):
\begin{equation}\label{PF2DIMGENERIC}
\Phi_\vecnull(\xi,w,z)
=\frac{6}{\pi^2}
\begin{cases}
\Upsilon\Bigl(1+\frac{\xi^{-1}-\max(|w|,|z|)-1}{|w+z|}\Bigr)
& \text{if }\: w+z\neq 0
\\
0 & \text{if }\: w+z=0, \: \xi^{-1}<1+|w| \\
1 &  \text{if }\: w+z=0, \: \xi^{-1}\geq 1+|w|,
\end{cases}
\end{equation}
where $\Upsilon(x)=\max(0,\min(1,x))$.

Returning to the case $d\geq3$, 
as a complement to the bound in Theorem \ref{CYLINDER2PTSMAINTHM}
we are able to give a sharp bound on the \textit{support} of the function
$\Phi_\bn$. %
Set
\begin{align*}
s_d(\xi,\varphi):=
\begin{cases}\min(\xi^{-\frac2d},
(\varphi \xi)^{-\frac2{d-1}})&\text{if }\:\varphi\leq\frac\pi2
\\
\max(\xi^{-\frac2{d-2}},(\frac{\xi}{\pi-\varphi})^{-\frac2{d-1}})
&\text{if }\:\varphi>\frac\pi2
\end{cases}
\end{align*}
(and, say, $s_d(\xi,\varphi):=\xi^{-\frac2{d-2}}$ 
when $\varphi$ is undefined).
Then:
\begin{prop}\label{CYLSUPPPROP}
There exist constants $0<c_1<c_2$ which only depend on $d$ such 
that if $\Phi_\bn(\xi,\vecw,\vecz)>0$ then
$\|\vecw\|,\|\vecz\|>1-c_2 s_d(\xi,\varphi)$,
and on the other hand $\Phi_\bn(\xi,\vecw,\vecz)>0$ does hold
for any $\xi>0$, $\vecw,\vecz\in\scrB_1^{d-1}$
satisfying %
$\|\vecw\|,\|\vecz\|>1-c_1 s_d(\xi,\varphi)$.
\end{prop}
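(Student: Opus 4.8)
The plan is to prove the two inclusions separately. Throughout we use the monotonicity $\fC'\subset\fC\Rightarrow p_\vecp(\fC)\le p_\vecp(\fC')$, the reflection symmetry $p_\vecp(\fC)=p_\vecp(\vecp-\fC)$ (which swaps the two bases of the cylinder $\fC$ of \eqref{PHI0DEF}, and hence the roles of $\vecz$ and $\vecw$), and the fact that for a set $\fC'$ lying at positive distance from $\vecp$ one has $p_\vecp(\fC')>0\Rightarrow p(\fC')>0$, so that the unconditional results of Corollaries~\ref{CONECYLCOR} and~\ref{DCBCOR} apply to such auxiliary sets. It is useful to note at the outset that the orthogonal projection of $\fC$ along $\R\vecp$ onto $\vecp^\perp\cong\R^{d-1}$ is the ``stadium'' $\bigcup_{0<s<1}\scrB((1-s)\vecz-s\vecw,1)$, a unit tube around the segment $[\vecz,-\vecw]$, of $(d-1)$-volume $\asymp1$, and that the width of its thin part near $\bn$ is controlled precisely by $1-\|\vecz\|$, $1-\|\vecw\|$ and $\varphi$. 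This is the geometric reason the exponents $2/d$, $2/(d-1)$, $2/(d-2)$ — the vanishing thresholds of $F_{cone}$ and $F_{\substack{cut\\ball}}$ in dimensions $d$, $d-1$, $d-2$ — enter $s_d$.

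For the necessity of $\|\vecw\|,\|\vecz\|>1-c_2 s_d(\xi,\varphi)$, suppose $\Phi_\bn(\xi,\vecw,\vecz)>0$; by the $\vecz\leftrightarrow\vecw$ symmetry it suffices to bound $1-\|\vecz\|$. In the regime $\varphi\le\frac\pi2$ we place inside $\fC$, at the base through $\bn$, a sub-cylinder of length $\asymp\xi$ and eccentricity $\asymp1-\|\vecz\|$ and invoke Corollary~\ref{CONECYLCOR} together with $F_{cone}^{(d)}(e;v)=0$ for $e>v^{-2/d}$ to get $1-\|\vecz\|\ll\xi^{-2/d}$; to obtain the sharper term $(\varphi\xi)^{-2/(d-1)}$ one uses instead a slimmer wedge that exploits the tilt of $\fC$ towards $\vecp$, where conditioning on $\vecp\in L$ (which forces lattice points near $\R\vecp$) turns the binding constraint into a $(d-1)$-dimensional cone-avoidance problem, explaining the exponent $2/(d-1)$. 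In the regime $\varphi>\frac\pi2$, $\vecp$ lies near the axis of the far base, and one produces the term $\xi^{-2/(d-2)}$ from a short doubly-cut ball (Corollary~\ref{DCBCOR}) and the term $(\xi/(\pi-\varphi))^{-2/(d-1)}$ from a cone with apex $\vecp$ (Theorem~\ref{CONEAPEXTHM}); the case $\vecw=\bn$ or $\vecz=\bn$ is the degenerate limit of the latter, giving $\xi^{-2/(d-2)}$.

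For the sufficiency, suppose $\|\vecw\|,\|\vecz\|>1-c_1 s_d(\xi,\varphi)$. We cover $\fC$ by two pieces, $\fC\subset\fC_1\cup\fC_2$, where $\fC_1$ is a cut ball (or cone) seated at the $\bn$-end with $\bn$ on its flat face, of eccentricity $\asymp1-\|\vecz\|$ and volume $\asymp|\fC|$, and $\fC_2=\vecp-\fC_1'$ is the analogous piece at the $\vecp$-end; the hypothesis on $s_d$ is exactly what makes these eccentricities small enough that the lower bounds in Theorems~\ref{MAINTHM2} and~\ref{CONEAPEXTHM} yield $p_\vecp(\fC_i)>0$. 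One then argues that, conditionally on $\vecp\in L$, avoidance of $\fC_1$ constrains only the part of $L$ near the base through $\bn$ while avoidance of $\fC_2$ constrains only the part near $\vecp$, so the two events occur together with positive probability; finally a perturbation/openness argument promotes the existence of a single avoiding lattice to a set of positive conditional measure.

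I expect the main obstacle to be the sufficiency direction, and specifically the matching of constants across the three $\varphi$-regimes: one must identify in each regime the sub-cylinder, cone, or cut ball whose avoidability is binding, and check that covering $\fC$ by the two end-pieces and decoupling the two avoidance events after conditioning on $\vecp$ loses only a constant depending on $d$. The shape of the shadow-stadium and the position of $\vecp$ relative to the far base — i.e.\ the dependence on $\varphi$ — is the delicate geometric input, and ensuring that the exponents $2/d$, $2/(d-1)$, $2/(d-2)$ land on the correct side of each comparison is where the bookkeeping is heaviest.
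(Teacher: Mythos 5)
Your plan correctly identifies the symmetry $\Phi_\bn(\xi,\vecw,\vecz)=\Phi_\bn(\xi,\vecz,\vecw)$ and the role of Theorem~\ref{CONEAPEXTHM} in producing the bound $1-\|\vecz\|\ll\xi^{-2/d}$, but in both directions the actual mechanism in the paper is quite different from what you describe, and in the sufficiency direction your mechanism has a genuine gap.

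On necessity: your claimed principle ``for a set $\fC'$ at positive distance from $\vecp$, $p_\vecp(\fC')>0\Rightarrow p(\fC')>0$'' is not correct ($\nu_\vecp$ and $\mu$ are mutually singular, so neither probability controls the other), and the paper does not pass to the unconditional $p$ via Corollary~\ref{CONECYLCOR}; it applies Theorem~\ref{CONEAPEXTHM} directly to the cone with apex $\vecp$ and base $B_1$ contained in $\fC$, giving $1-\|\vecz\|\ll\xi^{-2/d}$. The sharper terms $(\varphi\xi)^{-2/(d-1)}$, $\xi^{-2/(d-2)}$ and $(\xi/(\pi-\varphi))^{-2/(d-1)}$ are \emph{not} obtained by locating ``slimmer wedges'' inside $\fC$ and quoting the earlier vanishing thresholds. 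The paper instead builds a single explicit box $F$ with $F\subset\fC\cup\fC'\cup\vece_1^\perp\cup(-\fC)\cup(-\fC')$ (where $\fC'=\vecp-\fC$), uses the fact (from \eqref{CONEAPEXTHMPF2}) that $\nu_\vecp$-a.e.\ lattice meets $\vece_1^\perp$ only in $\bn$, and applies Minkowski's theorem to conclude $|F|\le2^d$; unwinding $|F|\asymp\xi\,s\,|z-w|\,e^{(d-3)/2}$ and the geometry $|z-w|\gg\varphi$, $y\asymp\pi-\varphi$ is what produces the $s_d$-thresholds. Your description gestures at the right exponents but supplies no argument that would actually yield them.

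On sufficiency, the gap is more serious. Covering $\fC\subset\fC_1\cup\fC_2$ gives $p_\vecp(\fC)\ge p_\vecp(\fC_1\cup\fC_2)$, so you must bound $p_\vecp(\fC_1\cup\fC_2)$ from below; lower bounds on each $p_\vecp(\fC_i)$ separately plus a union bound go the wrong way (they give an upper bound on the intersection, not a lower bound). The proposed decoupling --- that conditionally on $\vecp\in L$, avoidance of $\fC_1$ constrains only the part of $L$ near $\bn$ and avoidance of $\fC_2$ only the part near $\vecp$ --- is not true: a lattice is a rigid global object, $L=\vecp-L$, and any basis vector near one end has a dual manifestation at the other. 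The paper sidesteps probability entirely here: it writes down an explicit lattice $L\in X_1(\vecp)$ spanned by $\vecp$ and $I_\vecv(L')$ for a tilt vector $\vecv=(1,\ve,0,\dots,0)$ and a concrete $(d-1)$-dimensional lattice $L'$ (three different choices, \eqref{EXPLLATTICE1}--\eqref{EXPLLATTICE3}, according to whether $\varphi>\tfrac\pi2$, whether $\triangle\bn\vecz\vecw$ is acute, etc.), verifies directly that $L\cap\overline\fC=\{\bn,\vecp\}$, and then uses openness of this condition to conclude $\Phi_\bn>0$. Without such a construction (or an alternative one) your sufficiency argument does not close.
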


We stress that, in contrast to our bound on the support, 
we do not expect the bound 
in Theorem \ref{CYLINDER2PTSMAINTHM} to be sharp in general.
We will prove (cf.\ Propositions \ref{CYLD3OPTPROP} and
\ref{CYLINDER2PTSLOWBDPROP2}) 
that the bound in Theorem \ref{CYLINDER2PTSMAINTHM} is sharp
in a natural sense when $d=3$,
and also for general $d$
if either $\varphi\ll\xi^{-\frac1d}$ or $\pi-\varphi\ll\xi^{-\frac1{d-2}}$.
In particular, for $d\geq4$, while %
we have $\Phi_\bn(\xi,\vecw,\vecz)\ll\xi^{-3+\frac2{d-1}}$ for 
$\ve<\varphi<\pi-\ve$ (any fixed $\ve>0$) and $\xi$ sufficiently large,
$\Phi_\bn(\xi,\vecw,\vecz)$ takes \textit{significantly larger}
values than $\xi^{-3+\frac2{d-1}}$ both when $\varphi\approx0$
and $\varphi\approx\pi$. 

An important consequence of the bound in Theorem \ref{CYLINDER2PTSMAINTHM}
is that it implies a sharp upper bound on the
integral $\int_{\scrB_1^{d-1}}\Phi_\bn(\xi,\vecw,\vecz)\,d\vecz$,
and also implies that the main contribution to this integral comes from
$\vecz$ with $\varphi=\varphi(\vecz,\vecw)$ small.
This integral %
is important since from it we can recover, by further integration, 
both the collision kernel for a generic initial point, $\Phi(\xi,\vecb)$;
cf.\ \eqref{PHIXIWDEF}, 
and the limit density functions %
for the free path length between consecutive collisions
and the free path length from a generic initial point
(\cite[Remark 4.6]{partI}):
\begin{align*}
\overline{\Phi}_\bn(\xi)=\frac1{\vol_{d-1}(\scrB_1^{d-1})}
\int_{\scrB_1^{d-1}}\int_{\scrB_1^{d-1}}\Phi_\bn(\xi,\vecw,\vecz)\,d\vecw\,
d\vecz;
\qquad
\Phi(\xi)=\int_{\scrB_1^{d-1}}\Phi(\xi,\vecw)\,d\vecw.
\end{align*}
\begin{cor}\label{PHI0INTCOR}
Let $d\geq3$. We have 
\begin{align}\label{PHI0INT}
\int_{\scrB_1^{d-1}}\Phi_\bn(\xi,\vecw,\vecz)\,d\vecz\ll\xi^{-3+\frac2d}
\end{align}
for all $\xi\geq1$ and $\vecw\in\scrB_1^{d-1}$.
For any fixed $\ve>0$ the contribution from all $\vecz\in\scrB_1^{d-1}$
with $\varphi(\vecz,\vecw)\geq\ve$ in \eqref{PHI0INT}
is $\ll_{d,\ve}\xi^{-3}$.
Furthermore the integral in \eqref{PHI0INT} vanishes unless
$1-\|\vecw\|\ll\xi^{-\frac2d}$.
On the other hand, there is a constant $c>0$ which only depends on $d$
such that 
\begin{align}\label{PHI0INTLB}
\int_{\scrB_1^{d-1}}\Phi_\bn(\xi,\vecw,\vecz)\,d\vecz\gg\xi^{-3+\frac2d}
\end{align}
holds whenever $\xi\geq1$ and $\|\vecw\|>1-c\xi^{-\frac2d}$.
\end{cor}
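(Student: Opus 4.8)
The plan is to reduce the corollary to the pointwise bound of Theorem~\ref{CYLINDER2PTSMAINTHM} together with the support restriction of Proposition~\ref{CYLSUPPPROP}, and then carry out the resulting one-dimensional integral in the angle $\varphi=\varphi(\vecw,\vecz)$. First it suffices to treat $\xi\geq\xi_0(d)$ with $\xi_0$ as large as we like, since for $1\leq\xi\ll1$ we trivially have $\int_{\scrB_1^{d-1}}\Phi_\bn(\xi,\vecw,\vecz)\,d\vecz\leq\vol_{d-1}(\scrB_1^{d-1})\ll1\asymp\xi^{-3+\frac2d}$ (and the lower bound is trivial there too). Inspecting the definition of $s_d$ one checks that $s_d(\xi,\varphi)\ll\xi^{-\frac2d}$ for every $\varphi$ as soon as $\xi\geq1$ (each constituent expression is $\ll\xi^{-\frac2d}$ in that range); combined with Proposition~\ref{CYLSUPPPROP} this already gives the assertion that the integral in \eqref{PHI0INT} vanishes unless $1-\|\vecw\|\ll\xi^{-\frac2d}$, and in particular we may henceforth assume $\|\vecw\|\asymp1$.

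For the upper bound \eqref{PHI0INT} I would pass to spherical coordinates $\vecz=\rho\vecu$ with $\rho=\|\vecz\|\in(0,1)$, $\vecu\in\S_1^{d-2}$, and decompose $\vecu$ by its angle $\varphi$ to $\vecw/\|\vecw\|$, so that $d\vecz\asymp\rho^{d-2}(\sin\varphi)^{d-3}\,d\rho\,d\varphi\,d\vol_{\S_1^{d-3}}$. By Proposition~\ref{CYLSUPPPROP} the integrand is supported on the shell $\rho>1-c_2\,s_d(\xi,\varphi)$, of thickness $\ll s_d(\xi,\varphi)$ and with $\rho^{d-2}\asymp1$ on it; hence, bounding $\Phi_\bn$ by the right-hand side $B_d(\xi,\varphi)$ of \eqref{CYLINDER2PTSMAINTHMRES},
\[
\int_{\scrB_1^{d-1}}\Phi_\bn(\xi,\vecw,\vecz)\,d\vecz\ \ll\ \int_0^{\pi}B_d(\xi,\varphi)\,s_d(\xi,\varphi)\,(\sin\varphi)^{d-3}\,d\varphi .
\]
I would evaluate this by splitting at $\varphi=\tfrac\pi2$ and, in each half, at the crossover $\varphi\asymp\xi^{-\frac1d}$ (respectively $\pi-\varphi\asymp\xi^{-\frac1{d-2}}$), which is common to both terms of the ``$\min$'' in $B_d$ and both terms in $s_d$. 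On the two ``flat'' pieces $\varphi\leq\xi^{-\frac1d}$ and $\pi-\varphi\leq\xi^{-\frac1{d-2}}$ the integrand is $\asymp\xi^{-2+\frac2d}\cdot\xi^{-\frac2d}\cdot\varphi^{d-3}$, respectively $\asymp\xi^{-2}\cdot\xi^{-\frac2{d-2}}\cdot(\pi-\varphi)^{d-3}$, giving contributions $\asymp\xi^{-3+\frac2d}$ and $\asymp\xi^{-3-\frac2{d-2}}$. On the remaining piece $\xi^{-\frac1d}\leq\varphi\leq\tfrac\pi2$ the exponent of $\varphi$ collapses to exactly $-3$, so the integral is dominated by the lower endpoint and again contributes $\asymp\xi^{-3+\frac2d}$, with the bulk at $\varphi\asymp\xi^{-\frac1d}$; on the piece $\xi^{-\frac1{d-2}}\leq\pi-\varphi\leq\tfrac\pi2$ the exponent of $\pi-\varphi$ collapses to $+1$, so the integral is dominated by the upper endpoint $\varphi\asymp\tfrac\pi2$ and contributes $\asymp\xi^{-3}$. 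Adding the four pieces yields \eqref{PHI0INT}. Since the only piece exceeding $\xi^{-3}$ is the one with $\varphi\lesssim\xi^{-\frac1d}$, cutting the domain to $\varphi\geq\ve$ deletes the flat piece and, on the truncated intermediate piece $\ve\leq\varphi\leq\tfrac\pi2$, replaces the $\xi^{\frac2d}$ coming from the lower endpoint by an $O_\ve(1)$, leaving $\ll_{d,\ve}\xi^{-3}$ overall; this is the second claim. (It is precisely the factor $s_d(\xi,\varphi)$, i.e.\ the support restriction, that improves the crude $\xi^{-3+\frac4d}$ obtained from integrating $B_d$ alone over all of $\scrB_1^{d-1}$.)

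For the lower bound \eqref{PHI0INTLB} I would use the matching lower estimate for $\Phi_\bn$ in the small-angle regime supplied by Proposition~\ref{CYLINDER2PTSLOWBDPROP2}: there are constants $c',c''>0$ with $\Phi_\bn(\xi,\vecw,\vecz)\gg\xi^{-2+\frac2d}$ whenever $\varphi(\vecw,\vecz)\leq c'\xi^{-\frac1d}$ and $\|\vecw\|,\|\vecz\|>1-c''\xi^{-\frac2d}$. Fixing $c\leq c''$ and any $\vecw$ with $\|\vecw\|>1-c\xi^{-\frac2d}$, the set of admissible $\vecz$ contains a radial shell of thickness $\asymp\xi^{-\frac2d}$ intersected with a cap of $\S_1^{d-2}$ of $\vol_{\S_1^{d-2}}$-measure $\asymp(\xi^{-\frac1d})^{d-2}$, hence has $(d-1)$-dimensional measure $\gg\xi^{-\frac2d}\cdot\xi^{-\frac{d-2}d}=\xi^{-1}$; therefore $\int_{\scrB_1^{d-1}}\Phi_\bn(\xi,\vecw,\vecz)\,d\vecz\gg\xi^{-2+\frac2d}\cdot\xi^{-1}=\xi^{-3+\frac2d}$.

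The main obstacle is the bookkeeping of exponents in the $\varphi$-integral for the upper bound: one must check that over all four regimes the powers of $\xi$ collapse to $-3+\frac2d$ (and to $-3$ away from $\varphi=0$), which hinges on the two crossover scales $\xi^{-\frac1d}$ and $\xi^{-\frac1{d-2}}$ being shared by $B_d$ and $s_d$, and on the intermediate integrands being precisely $\asymp\varphi^{-3}$ and $\asymp(\pi-\varphi)^{+1}$. Beyond routine measure estimates, the only nontrivial input for the lower bound is the quantitative small-$\varphi$ lower bound on $\Phi_\bn$, which we borrow from Proposition~\ref{CYLINDER2PTSLOWBDPROP2}.
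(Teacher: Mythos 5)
Your proposal is correct and follows exactly the route the paper indicates: Theorem~\ref{CYLINDER2PTSMAINTHM} gives the pointwise bound, Proposition~\ref{CYLSUPPPROP} restricts the support to a thin shell (and uniformly $s_d\ll\xi^{-2/d}$ gives the vanishing claim), and the first lower bound in Proposition~\ref{CYLINDER2PTSLOWBDPROP2} together with the obvious measure estimate gives \eqref{PHI0INTLB}. The exponent bookkeeping in your $\varphi$-integral checks out in all four regimes, so you have essentially expanded the paper's one-line proof into the computation it sketches.
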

Indeed, the two upper bounds follow from %
Theorem \ref{CYLINDER2PTSMAINTHM} combined with
Proposition \ref{CYLSUPPPROP};
the statement about vanishing follows from
Proposition \ref{CYLSUPPPROP} since $s_d(\xi,\varphi)\ll\xi^{-\frac2d}$
uniformly over $\varphi$; and finally
\eqref{PHI0INTLB} follows from the first lower bound in 
Proposition \ref{CYLINDER2PTSLOWBDPROP2} below.

Corollary \ref{PHI0INTCOR} immediately implies:
\begin{cor}
\begin{align*}
\overline{\Phi}_\bn(\xi)\asymp\xi^{-3}\qquad\text{and}\qquad
\Phi(\xi)\asymp\xi^{-2}\qquad\text{as }\:\xi\to\infty.
\end{align*}
\end{cor}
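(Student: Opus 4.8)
The plan is to derive both asymptotics directly from Corollary \ref{PHI0INTCOR} by carrying out the remaining integrations; no new idea is needed, which is why the statement was advertised as following immediately.

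First I would treat $\overline{\Phi}_\bn(\xi)$. By definition $\overline{\Phi}_\bn(\xi)$ equals $\vol_{d-1}(\scrB_1^{d-1})^{-1}$ times the integral over $\vecw\in\scrB_1^{d-1}$ of $\int_{\scrB_1^{d-1}}\Phi_\bn(\xi,\vecw,\vecz)\,d\vecz$. For the upper bound, Corollary \ref{PHI0INTCOR} tells us that this inner integral is $\ll\xi^{-3+\frac2d}$ and that it vanishes unless $1-\|\vecw\|\ll\xi^{-\frac2d}$; since the shell $\{\vecw\in\scrB_1^{d-1}\col\|\vecw\|>1-\delta\}$ has $(d-1)$-dimensional volume $\asymp\delta$ for $0<\delta\leq1$ (because $1-(1-\delta)^{d-1}\asymp\delta$ there), the $\vecw$-integration runs over a set of volume $\ll\xi^{-\frac2d}$, giving $\overline{\Phi}_\bn(\xi)\ll\xi^{-3+\frac2d}\cdot\xi^{-\frac2d}=\xi^{-3}$ for all $\xi\geq1$. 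For the matching lower bound I would use \eqref{PHI0INTLB}: the inner integral is $\gg\xi^{-3+\frac2d}$ for every $\vecw$ with $\|\vecw\|>1-c\xi^{-\frac2d}$, and once $\xi$ is large enough that $c\xi^{-\frac2d}<1$ this set of $\vecw$ has volume $\gg\xi^{-\frac2d}$; integrating over it yields $\overline{\Phi}_\bn(\xi)\gg\xi^{-3}$. Hence $\overline{\Phi}_\bn(\xi)\asymp\xi^{-3}$ as $\xi\to\infty$.

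Next I would pass to $\Phi(\xi)$. Combining $\Phi(\xi)=\int_{\scrB_1^{d-1}}\Phi(\xi,\vecw)\,d\vecw$ with the definition \eqref{PHIXIWDEF} of $\Phi(\xi,\vecw)$ and interchanging the (non-negative) integrals by Tonelli gives
\begin{align*}
\Phi(\xi)=\int_\xi^\infty\Bigl(\int_{\scrB_1^{d-1}}\int_{\scrB_1^{d-1}}\Phi_\bn(\eta,\vecw,\vecz)\,d\vecz\,d\vecw\Bigr)d\eta=\vol_{d-1}(\scrB_1^{d-1})\int_\xi^\infty\overline{\Phi}_\bn(\eta)\,d\eta,
\end{align*}
where the interchange is legitimate since $\Phi_\bn\geq0$ and, by the first part, $\int_\xi^\infty\overline{\Phi}_\bn(\eta)\,d\eta\ll\int_\xi^\infty\eta^{-3}\,d\eta<\infty$ for $\xi\geq1$. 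Plugging in $\overline{\Phi}_\bn(\eta)\asymp\eta^{-3}$—valid once $\eta$ exceeds a fixed constant, hence on all of $[\xi,\infty)$ for $\xi$ large—then gives $\Phi(\xi)\asymp\int_\xi^\infty\eta^{-3}\,d\eta\asymp\xi^{-2}$ as $\xi\to\infty$.

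I do not expect any genuine obstacle here; the only two points needing a little care are the elementary volume estimate for the near-boundary shell in $\scrB_1^{d-1}$—which is what converts the support constraint $1-\|\vecw\|\ll\xi^{-\frac2d}$ into exactly the extra factor $\xi^{-\frac2d}$, and so is responsible for the exponent dropping from $-3+\frac2d$ to $-3$—and the bookkeeping that the lower bound on $\overline{\Phi}_\bn$, established only for large argument, still controls the whole tail integral in the limit $\xi\to\infty$.
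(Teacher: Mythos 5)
Your argument is correct and matches what the paper has in mind; the paper states the corollary "immediately" follows from Corollary \ref{PHI0INTCOR} and provides no further proof, and the two integrations you carry out — the shell volume $\asymp\xi^{-2/d}$ in $\scrB_1^{d-1}$ to absorb the support condition, followed by Tonelli and $\int_\xi^\infty\eta^{-3}\,d\eta\asymp\xi^{-2}$ — are exactly the intended elementary steps.
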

Hence Corollary \ref{PHI0INTCOR} can be viewed as a refinement
of the upper and lower bounds
obtained by Bourgain, Golse and Wennberg \cite[Thm.\ A]{Bourgain98}, \cite[Thm.\ 1]{Golse00},
which correspond to the fact that
$\int_\xi^\infty\Phi(\eta)\,d\eta\asymp\xi^{-1}$ as $\xi\to\infty$.

In the paper \cite{partIV}, which makes strong use of %
the results and methods of the present paper, 
we will give an \textit{asymptotic formula} for $\Phi_\bn(\xi,\vecw,\vecz)$
when $\varphi$ is small and $\xi\to\infty$.
This also implies precise asymptotic formulas for
$\overline{\Phi}_\bn(\xi)$ and $\Phi(\xi)$ as $\xi\to\infty$.

\subsection{Organization of the paper}

The paper is organized as follows.
In Section \ref{GENCONVEXSEC} we discuss bounds on $p(\fC)$ for
arbitrary measurable sets and general convex sets,
proving in particular Proposition \ref{GENPRINCBOUNDPROP}
as well as more precise versions for convex sets,
Proposition \ref{GENCONVBOUNDPROP1}
and Proposition~\ref{GENCONVBOUNDPROP2}.
In Sections \ref{GENBALLTHMPROOFSEC}--\ref{MAINTHM2PFSEC}
we apply these methods to the special cases of $\fC$ a ball and
a cut ball, proving Theorem \ref{GENBALLTHM} and Theorem \ref{MAINTHM2}.
In Section \ref{X1RECOLLECTSEC} we recall the precise definition of
$p_\vecp(\fC)$ and introduce a useful parametrization of the associated
homogeneous space $X_1(\vecp)$.
Finally in Section \ref{PPCBOUNDCCONESEC} we prove Theorem \ref{CONEAPEXTHM}
and in Section \ref{CYLINDER2PTSEC} we prove 
Theorem \ref{CYLINDER2PTSMAINTHM} and Proposition~\ref{CYLSUPPPROP}.

\subsection{Acknowledgements}

This paper complements the joint work \cite{partIV} with Jens Marklof,
which is part of our series of papers
\cite{partI}--\cite{partIV} on 
the periodic Lorentz gas in the Boltzmann-Grad limit.
I am grateful to Jens for many inspiring discussions
and valuable comments on the present paper. 
I am also grateful to Tobias Ekholm and Christer Kiselman for 
inspiring and helpful discussions.

\section{Bounds on $p(\fC)$ for a general convex set $\fC$}
\label{GENCONVEXSEC}

\subsection{Preliminaries}
\label{SIEGELSEC}

Throughout this paper we write $G=\SL_d(\R)$ and $\Gamma=\SL_d(\Z)$.
For any $M\in G$, $\Z^dM$ is a $d$-dimensional lattice of covolume one.
This gives an identification of the space $X_1$ with the homogeneous space
$\Gamma\backslash G$.  %
We write $\mu$ 
for the measure on $X_1$ coming from Haar measure on
$G$, normalized to be a probability measure.
We will sometimes write $G^{(d)}$ and $\mu^{(d)}$ for $G$ and $\mu$,
if we need to emphasize the dimension.

Let $A$ be the subgroup of %
diagonal matrices with positive entries,
\begin{align}\label{AADEF}
\aa(a)=\begin{pmatrix} a_1 & & \\ & \ddots & \\ & & a_d \end{pmatrix}
\in G, \qquad a_j>0,
\end{align}
and let $N$ be the subgroup of upper triangular matrices,
\begin{align}
\nn(u)=\begin{pmatrix} 1 & u_{12} & \cdots & u_{1d}
\\ & \ddots & \ddots & \vdots 
\\ & & \ddots & u_{d-1,d} 
\\ & & & 1
\end{pmatrix} \in G.
\end{align}
Every element $M\in G$ has a unique Iwasawa decomposition
\begin{align}\label{IWASAWADEC}
M=\nn(u) \aa(a) \kk,
\end{align}
with $\kk\in \SO(d)$.
In these coordinates the Haar measure takes the form
(\cite[p.\ 172]{DRS})
\begin{align} \label{SLDZHAAR}
d\mu(M) = \frac{2^{d-1}\pi^{d(d+1)/4}}{\prod_{j=1}^{d}
\Gamma(\frac{j}2) \prod_{j=2}^d \zeta(j)} \rho(a) d\nn(u) d\aa(a) d\kk
\end{align}
where $d\nn$, $d\aa$, $d\kk$, are (left and right) 
Haar measures of $N$, $A$, $\SO(d)$,
normalized by $d\nn(u)=\prod_{1\leq j<k\leq d} du_{jk}$,
$d\aa(a)=\prod_{j=1}^{d-1} (a_j^{-1}\, da_j)$ and
$\int_{\SO(d)} d\kk=1$. For $\rho(a)$ one has
\begin{align}
\rho(a)=\prod_{1\leq i<j\leq d} \frac{a_j}{a_i}
=\prod_{j=1}^d a_j^{2j-d-1}.
\end{align}

We set $\mathcal{F}_N=\bigl\{u \col u_{jk} \in (-\sfrac 12,\sfrac 12], \:
1\leq j<k\leq d\bigr\}$; then
$\{\nn(u) \col u\in \mathcal{F}_N\}$
is a fundamental region for $(\Gamma\cap N)\backslash N$.
We define the following Siegel set:
\begin{align} \label{SIDEF}
\Si_d:=\Bigl\{\nn(u) \aa(a) \kk \col u\in \mathcal{F}_N,\:
0<a_{j+1} \leq \sfrac{2}{\sqrt 3}a_j \: (j=1,\ldots,d-1), 
\: \kk \in \SO(d) \Bigr\}.
\end{align}
It is known that $\Si_d$ contains a fundamental region for
$X_1=\Gamma\backslash G$, and on the other hand
$\Si_d$ is contained in a finite union of fundamental regions for $X_1$
(\cite{Borel}).

Given $M=\nn(u)\aa(a)\kk\in G$, its row vectors are
\begin{align} \label{MBASIS}
\vecb_k=(0,\ldots,0,a_k,a_{k+1}u_{k,k+1},\ldots,a_du_{k,d})\kk,
\qquad k=1,\ldots,d.
\end{align}
If $M\in\Si_d$ then we see that, for all $k$,
\begin{align}\label{MBASISINEQ}
||\vecb_k||
\leq \sum_{j=1}^d a_j \leq \bigl(\sum_{j=0}^{d-1} (2/\sqrt 3)^j\bigr) a_1
\ll a_1.
\end{align}
This bound implies that if $M\in\Si_d$ 
and if the lattice $\Z^dM$ has empty intersection with a large ball,
then $a_1$ must be large:
\begin{lem}\label{A1LARGELEM}
For any $M=\nn(u)\aa(a)\kk\in\Si_d$ 
such that the lattice $\Z^dM$ is disjoint from
some ball of radius $R$ in $\R^d$, we have $a_1\gg R$.
\end{lem}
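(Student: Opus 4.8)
The plan is to exploit that $\Z^d M$ is a unimodular lattice all of whose basis vectors are short when $M\in\Si_d$, so that its covering radius is $\ll a_1$; a ball that avoids the lattice can then have radius only $\ll a_1$. First I would recall from \eqref{MBASIS} that the row vectors $\vecb_1,\ldots,\vecb_d$ of $M$ form a $\Z$-basis of the lattice $\Z^d M$, and from \eqref{MBASISINEQ} that $\|\vecb_k\|\ll a_1$ for every $k=1,\ldots,d$, with implied constant depending only on $d$.

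Next I would bound the covering radius of $\Z^d M$ directly. Given an arbitrary point $\vecx\in\R^d$, expand $\vecx=\sum_{k=1}^d c_k\vecb_k$ with $c_k\in\R$, and choose $n_k\in\Z$ with $|c_k-n_k|\leq\tfrac12$; then $\sum_{k=1}^d n_k\vecb_k\in\Z^d M$ and
\begin{align*}
\dist\bigl(\vecx,\Z^d M\bigr)\leq\Bigl\|\sum_{k=1}^d(c_k-n_k)\vecb_k\Bigr\|
\leq\tfrac12\sum_{k=1}^d\|\vecb_k\|\ll a_1 .
\end{align*}
Hence every point of $\R^d$ lies within distance $\ll a_1$ of $\Z^d M$. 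Finally, if $\Z^d M$ is disjoint from a ball of radius $R$ centered at some point $\vecx_0$, then $\dist(\vecx_0,\Z^d M)\geq R$, and combined with the previous bound this gives $R\ll a_1$, i.e.\ $a_1\gg R$, which is the claim.

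I do not anticipate any genuine obstacle: this is an elementary covering-radius estimate, and the only point to keep track of is that the implied constant in $\|\vecb_k\|\ll a_1$ — which comes from the finite sum $\sum_{j=0}^{d-1}(2/\sqrt3)^j$ appearing in \eqref{MBASISINEQ} — depends only on $d$, which it does. Note that the covolume-one normalization plays no role in this direction; it would only be relevant for a matching reverse bound $a_1\ll R$, which is not being asserted.
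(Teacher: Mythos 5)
Your proof is correct and is essentially identical to the paper's: both round the coordinates of the ball's center (or an arbitrary point) in the basis $\vecb_1,\ldots,\vecb_d$ and use the bound $\|\vecb_k\|\ll a_1$ from \eqref{MBASISINEQ} to show the covering radius of $\Z^dM$ is $\ll a_1$. The only cosmetic difference is that you phrase it as a general covering-radius estimate before specializing to the ball's center, whereas the paper applies the rounding directly to the center.
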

\begin{proof}
Choose $c_1,\ldots,c_d\in\R$ so that
$\vecp=c_1\vecb_1+\ldots+c_d\vecb_d$ is the center of the given ball. 
Let $n_j$ be the integer nearest to $c_j$.
Then $n_1\vecb_1+\ldots+n_d\vecb_d$ is a lattice point of $\Z^dM$,
and has distance $\leq\frac12\bigl(\|\vecb_1\|+\ldots+\|\vecb_d\|\bigr)
\ll a_1$ to $\vecp$.
This distance must be $>R$; hence $a_1\gg R$.
\end{proof}

Next we will introduce a parametrization of $G$ which  will be useful
for us throughout the paper.
Let us fix a function $f$ (smooth except possibly at one point, say)
$\S^{d-1}_1\to\SO(d)$ such that
$\vece_1 f(\vecv)=\vecv$ for all $\vecv\in \S_1^{d-1}$
(where $\vece_1=(1,0,\ldots,0)$).
Given $M=\nn(u)\aa(a)\kk\in G$,
the matrices $\nn(u)$, $\aa(a)$ and $\kk$ can be split uniquely as
\begin{align}\label{NAKSPLITDEF}
\nn(u)=\matr 1\vecu{\trans\bn}{\nn(\tu)}; \qquad
\aa(a)=\matr{a_1}\bn{\trans\bn}{a_1^{-\frac 1{d-1}}\aa(\ta)}; \qquad
\kk=\matr 1\bn{\trans\bn}{\tkk} f(\vecv)
\end{align}
where $\vecu\in\R^{d-1}$, $\nn(\tu)\in N^{(d-1)}$,
$a_1>0$, $\aa(\ta)\in A^{(d-1)}$ and
$\tkk\in\SO(d-1)$, $\vecv\in\S^{d-1}_1$.
We set
\begin{align}\label{TMDEF}
\tM=\nn(\tu)\aa(\ta)\tkk\in G^{(d-1)}.
\end{align}
In this way we get a bijection between $G$ and
$\R_{>0}\times\S_1^{d-1}\times\R^{d-1}\times G^{(d-1)}$;
we write $M=[a_1,\vecv,\vecu,\tM]$ for the element in $G$ corresponding to
the 4-tuple
$\langle a_1,\vecv,\vecu,\tM\rangle\in\R_{>0}\times\S_1^{d-1}\times\R^{d-1}\times G^{(d-1)}$.
In particular note that
\begin{align}\notag
\Si_d =\Bigl\{[a_1,\vecv,\vecu,\tM]\in G\col
\tM\in\Si_{d-1},\:
\ta_1\leq \sfrac 2{\sqrt 3}a_1^{\frac d{d-1}}
,\:\vecu\in(-\sfrac 12,\sfrac 12]^{d-1}\Bigr\}\qquad
\\\label{SIDSUBSSIDM1}
\subset\Bigl\{[a_1,\vecv,\vecu,\tM]\in G\col
\tM\in\Si_{d-1},\:\vecu\in(-\sfrac 12,\sfrac 12]^{d-1}\Bigr\}.
\end{align}

One checks by a straightforward
computation using %
\eqref{SLDZHAAR}
that the Haar measure $\mu$ takes the following form
in the parametrization $M=[a_1,\vecv,\vecu,\tM]$:
\begin{align}\label{SLDRSPLITHAAR}
d\mu(M)=\zeta(d)^{-1} \,d\mu^{(d-1)}(\tM)\,
d\vecu\,
d\!\vol_{\S_1^{d-1}}(\vecv)\,
\frac{da_1}{a_1^{d+1}}.
\end{align}
Note that all of the above claims are valid also for $d=2$,
with the natural interpretation that $\Si_1=\SL(1,\R)=\{1\}$ 
with $\mu^{(1)}(\{1\})=1$.
We will also need to know the explicit expression of 
the lattice $\Z^dM$ in terms of $a_1,\vecv,\vecu,\tM$:
One computes that, for any $\vecm\in\Z^{d-1}$ and $n\in\Z$,
\begin{align}\label{LATTICEINPARAM}
(n,\vecm)M
=na_1\vecv+a_1^{-\frac1{d-1}}\bigl(0,n\vecu\aa(\ta)\tkk+\vecm\tM\bigr)
f(\vecv).
\end{align}
In particular we always have
\begin{align}\label{LATTICECONTAINEMENT}
\Z^dM\subset\bigsqcup_{n\in\Z} \bigl(na_1\vecv+\vecv^\perp\bigr).
\end{align}

\subsection{\texorpdfstring{General bounds on $p(\fC)$}{General bounds on p(C)}}

We start by recalling two well-known %
inequalities.
(Cf., e.g., \cite[p.\ 167]{Schmidt59}.)
\begin{lem}\label{TRIVPLOWBOUNDLEM}
For an arbitrary Borel measurable subset $\fC\subset\R^d$ we have
\begin{align*}
p(\fC)\geq1-|\fC|.
\end{align*}
\end{lem}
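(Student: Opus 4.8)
The plan is to deduce this from Siegel's mean value formula. Recall that for any nonnegative Borel function $f$ on $\R^d$ one has the Siegel integral identity
\[
\int_{X_1}\sum_{\vecx\in L\setminus\{\bn\}}f(\vecx)\,d\mu(L)=\int_{\R^d}f(\vecx)\,d\vecx,
\]
(cf.\ \cite[p.\ 167]{Schmidt59}). The argument is immediate once this is in hand, so the only "obstacle" is really just to invoke it correctly; there is no hard part. If $|\fC|\geq1$ the asserted bound is trivial since $p(\fC)\geq0$, so we may assume $|\fC|<1$, in particular $|\fC|<\infty$.

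First I would apply the identity with $f=\mathbf 1_\fC$, the indicator function of $\fC$; whether or not $\bn\in\fC$ is irrelevant, since the sum runs over $L\setminus\{\bn\}$. Writing $N(L):=\#\bigl(L\cap\fC\setminus\{\bn\}\bigr)$, this gives
\[
\int_{X_1}N(L)\,d\mu(L)=|\fC|.
\]
(The function $N$ is Borel measurable on $X_1$ because $\fC$ is Borel: under $L=\Z^dM$ it is a countable sum of indicator functions of Borel subsets of $\Gamma\backslash G$.)

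Next, since $N$ is nonnegative and integer-valued, Markov's inequality yields
\[
\mu\bigl(\bigl\{L\in X_1\col N(L)\geq1\bigr\}\bigr)\leq\int_{X_1}N(L)\,d\mu(L)=|\fC|.
\]
Finally, the event $\{L\cap\fC\setminus\{\bn\}=\emptyset\}$ is exactly the complement in $X_1$ of $\{L\col N(L)\geq1\}$, so
\[
p(\fC)=1-\mu\bigl(\bigl\{L\col N(L)\geq1\bigr\}\bigr)\geq1-|\fC|,
\]
as claimed. An alternative, if one wishes to avoid quoting Siegel's formula in this generality, would be to integrate $N(L)$ directly over a fundamental domain for $X_1=\Gamma\backslash G$ using the explicit Haar measure recorded in Section \ref{SIEGELSEC}, but quoting the mean value identity is by far the cleanest route.
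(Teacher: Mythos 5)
Your proof is correct and takes essentially the same approach as the paper: both rely on Siegel's mean value formula $\int_{X_1}\#(L\cap\fC\setminus\{\bn\})\,d\mu(L)=|\fC|$ and then deduce the bound via the pointwise inequality $I(N(L)=0)\geq 1-N(L)$ (which is exactly what your invocation of Markov's inequality amounts to). The only cosmetic difference is that the paper compresses the Markov step into a single displayed inequality rather than stating it separately.
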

\begin{proof}
$p(\fC)\geq\int_{X_1}(1-\#(L\cap \fC\setminus\{\bn\}))\,d\mu(L)=1-|\fC|$,
by Siegel's formula (\cite{siegel45}, \cite{siegel}).
\end{proof}

\begin{lem}\label{PCONTLEM}
For any two Borel measurable subsets $\fC,\fC'\subset\R^d$ 
we have
\begin{align*}
\bigl|p(\fC)-p(\fC')\bigr|\leq\max\bigl(\bigl|\fC\setminus\fC'\bigr|,
\bigl|\fC'\setminus\fC\bigr|\bigr).
\end{align*}
\end{lem}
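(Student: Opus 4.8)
The statement to prove is Lemma~\ref{PCONTLEM}: for Borel measurable $\fC,\fC'\subset\R^d$,
$$|p(\fC)-p(\fC')|\leq\max\bigl(|\fC\setminus\fC'|,|\fC'\setminus\fC|\bigr).$$

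\textbf{Plan of proof.} The plan is to compare the two events $\{L\cap\fC\setminus\{\bn\}=\emptyset\}$ and $\{L\cap\fC'\setminus\{\bn\}=\emptyset\}$ by estimating the measure of the symmetric difference of these events, and then bound that by a first-moment (Siegel's formula) argument applied to the sets $\fC\setminus\fC'$ and $\fC'\setminus\fC$. Concretely, if $L$ is disjoint from $\fC\setminus\{\bn\}$ but \emph{not} from $\fC'\setminus\{\bn\}$, then $L$ must contain a nonzero point of $\fC'\setminus\fC$; hence the set of such lattices $L$ has $\mu$-measure at most $\int_{X_1}\#\bigl(L\cap(\fC'\setminus\fC)\setminus\{\bn\}\bigr)\,d\mu(L)=|\fC'\setminus\fC|$ by Siegel's formula (as used already in the proof of Lemma~\ref{TRIVPLOWBOUNDLEM}). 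Symmetrically, the set of lattices disjoint from $\fC'\setminus\{\bn\}$ but not from $\fC\setminus\{\bn\}$ has measure $\leq|\fC\setminus\fC'|$.

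\textbf{Key steps in order.} First I would introduce the Borel sets
$$A=\{L\in X_1\col L\cap\fC\setminus\{\bn\}=\emptyset\},\qquad
A'=\{L\in X_1\col L\cap\fC'\setminus\{\bn\}=\emptyset\},$$
so that $p(\fC)=\mu(A)$ and $p(\fC')=\mu(A')$. Then $p(\fC)-p(\fC')=\mu(A\setminus A')-\mu(A'\setminus A)$, whence $|p(\fC)-p(\fC')|\leq\max\bigl(\mu(A\setminus A'),\mu(A'\setminus A)\bigr)$. Second, I would show $A\setminus A'\subseteq\{L\col L\cap(\fC'\setminus\fC)\setminus\{\bn\}\neq\emptyset\}$: indeed if $L\in A$ then $L$ has no nonzero point in $\fC$, and if $L\notin A'$ then $L$ has a nonzero point $\vecm$ in $\fC'$; since $\vecm\notin\fC$ and $\vecm\neq\bn$, we get $\vecm\in(\fC'\setminus\fC)\setminus\{\bn\}$. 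Third, by Siegel's formula
$$\mu(A\setminus A')\leq\int_{X_1}\#\bigl(L\cap(\fC'\setminus\fC)\setminus\{\bn\}\bigr)\,d\mu(L)=|\fC'\setminus\fC|.$$
By the symmetric argument, $\mu(A'\setminus A)\leq|\fC\setminus\fC'|$. Combining with the first step gives the claimed bound.

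\textbf{Main obstacle.} There is essentially no deep obstacle here; the only points requiring a little care are measurability of the events $A,A'$ (which follows since, e.g., $X_1\setminus A=\bigcup_{\vecm\neq\bn}\{L\col \vecm\in L\cap\fC\}$ can be handled as in the standard treatment of Siegel's formula, or one simply notes that $L\mapsto\#(L\cap\fC\setminus\{\bn\})$ is a measurable function and $A$ is the preimage of $\{0\}$), and the verification that Siegel's formula is applicable to the Borel sets $\fC'\setminus\fC$ and $\fC\setminus\fC'$, which it is since they are Borel measurable whenever $\fC,\fC'$ are. The argument is just a symmetric-difference comparison combined with the same first-moment identity already invoked in Lemma~\ref{TRIVPLOWBOUNDLEM}.
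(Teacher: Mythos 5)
Your proposal is correct and is essentially the same argument as the paper's: you bound $\mu(A\setminus A')\leq|\fC'\setminus\fC|$ via Siegel's formula (the same first-moment input behind Lemma \ref{TRIVPLOWBOUNDLEM}, which the paper invokes directly), and symmetrically for $\mu(A'\setminus A)$. The only cosmetic difference is that you phrase it through the symmetric difference of the two events rather than proving the two one-sided inequalities $p(\fC)\leq p(\fC')+|\fC'\setminus\fC|$ and $p(\fC')\leq p(\fC)+|\fC\setminus\fC'|$ as the paper does, but the inclusion $A\setminus A'\subseteq\{L\col L\cap(\fC'\setminus\fC)\setminus\{\bn\}\neq\emptyset\}$ is exactly the observation underlying the paper's proof.
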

\begin{proof}
Any $L\in X_1$ which is disjoint from $\fC\setminus\{\bn\}$ 
must either be disjoint from
$\fC'\setminus\{\bn\}$ or have a point in 
$(\fC'\setminus\fC)\setminus\{\bn\}$; 
hence
$p(\fC)\leq p(\fC')+(1-p(\fC'\setminus\fC))$,
and thus $p(\fC)\leq p(\fC')+|\fC'\setminus\fC|$ by 
Lemma \ref{TRIVPLOWBOUNDLEM}.
Similarly $p(\fC')\leq p(\fC)+|\fC\setminus\fC'|$.
\end{proof}

Next we give the simple proof of 
\eqref{ATHREYAMARGULISRES} in the special case of $\fC$ convex.

\begin{lem}\label{TRIVPCBOUNDLEM}
If $\fC\subset\R^d$ contains some $d$-dimensional ellipsoid of
volume $V$ then
\begin{align}\label{TRIVPCBOUNDLEMRES}
p(\fC)\ll V^{-1}.
\end{align}
\end{lem}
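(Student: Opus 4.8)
The plan is to reduce immediately to the case of a ball. By the invariance relation \eqref{PDINV}, $p(\fC)$ is unchanged if we apply any $M\in\SL_d(\R)$ to $\fC$; and since $\fC$ contains an ellipsoid of volume $V$, we may choose $M$ so that this ellipsoid becomes a ball $\scrB$ of the same volume $V$. Since $\scrB\subset\fC M$, monotonicity ($\fC'\subset\fC\Rightarrow p(\fC)\le p(\fC')$) gives $p(\fC)=p(\fC M)\le p(\scrB)$, so it suffices to prove $p(\scrB)\ll V^{-1}$ for a ball $\scrB$ of volume $V$. We may of course assume $V$ is large, since otherwise \eqref{TRIVPCBOUNDLEMRES} is trivial (as $p\le 1$).

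Next I would bound $p(\scrB)$ using reduction theory via the Siegel set $\Si_d$. Write $R\asymp V^{1/d}$ for the radius of $\scrB$. If $L=\Z^dM$ with $M=\nn(u)\aa(a)\kk\in\Si_d$ is disjoint from $\scrB\setminus\{\bn\}$, then in particular $L$ is disjoint from a ball of radius $R/2$ that avoids the origin (shrink $\scrB$ slightly and, if $\bn\in\scrB$, translate within $\scrB$), so Lemma \ref{A1LARGELEM} yields $a_1\gg R$. Hence, since $\Si_d$ is contained in a finite union of fundamental domains for $X_1$,
\begin{align*}
p(\scrB)\ll\mu\bigl(\{M\in\Si_d\col a_1\gg R\}\bigr)
=\int_{\S_1^{d-1}}\int_{(-1/2,1/2]^{d-1}}\int_{\Si_{d-1}}\int_{c R}^\infty
\zeta(d)^{-1}\,\frac{da_1}{a_1^{d+1}}\,d\mu^{(d-1)}(\tM)\,d\vecu\,d\!\vol_{\S_1^{d-1}}(\vecv)
\end{align*}
using the parametrization $M=[a_1,\vecv,\vecu,\tM]$, the form \eqref{SLDRSPLITHAAR} of $d\mu$, and the inclusion \eqref{SIDSUBSSIDM1}. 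The inner integral $\int_{cR}^\infty a_1^{-d-1}\,da_1\asymp R^{-d}$, and the remaining integrals over $\vecv$, $\vecu$, $\tM$ are over sets of finite measure (in particular $\mu^{(d-1)}(\Si_{d-1})<\infty$), so $p(\scrB)\ll R^{-d}\asymp V^{-1}$, which is the claim.

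The only mild subtlety is the step producing $a_1\gg R$: if $\bn\in\scrB$ one cannot directly apply Lemma \ref{A1LARGELEM} to $\scrB$ itself (the disjointness is only from $\scrB\setminus\{\bn\}$). This is handled by passing to a slightly smaller concentric ball and, if necessary, re-centering it inside $\scrB$ away from the origin: since $V$ is large, $\scrB$ contains a ball of radius $\gg V^{1/d}$ not containing $\bn$, and $L$ is disjoint from that ball. Everything else is a routine computation with the explicit Haar measure, so I expect no real obstacle.
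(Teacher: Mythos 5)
Your proof is correct and follows essentially the same route as the paper's: reduce via \eqref{PDINV} to a ball, find a subball of comparable volume avoiding $\bn$ so that Lemma \ref{A1LARGELEM} yields $a_1\gg V^{1/d}$ on the Siegel set, and then integrate $a_1^{-d-1}$ against the explicit Haar measure \eqref{SLDRSPLITHAAR}, using finiteness of $\mu^{(d-1)}(\Si_{d-1})$. The only cosmetic difference is that you pass to $p(\scrB)$ via monotonicity before integrating, whereas the paper keeps $\fC$ and works with a subball $B\subset\fC$ of volume $\geq 2^{-d}V$ directly; the substance and computations are identical.
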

\begin{proof}
(Cf.\ \cite[Lemma 8.15]{partI}.)
Using \eqref{PDINV} we may from start assume that
$\fC$ contains a \textit{ball} of volume $V$.
This ball in turn contains a ball $B$ of volume $\geq2^{-d}V$
which does not contain the origin.
Now $\Z^dM\cap\fC\setminus\{\bn\}=\emptyset$ implies
$\Z^dM\cap B=\emptyset$;
hence by Lemma \ref{A1LARGELEM} there is some
$A\gg V^{\frac1d}$ such that
$a_1>A$ holds for all $M=\nn(u)\aa(a)\kk\in\Si_d$ 
satisfying $\Z^dM\cap\fC\setminus\{\bn\}=\emptyset$.
We have
\begin{align*}
p(\fC)=
\mu\bigl(\bigl\{M\in X_1\col \Z^dM\cap\fC\setminus\{\bn\}
=\emptyset\bigr\}\bigr)
\leq\mu\bigl(\bigl\{M\in\Si_d\col \Z^dM\cap\fC\setminus\{\bn\}
=\emptyset\bigr\}\bigr),
\end{align*}
since $\Si_d$ contains a fundamental region for $X_1$.
Using now \eqref{SIDSUBSSIDM1} and \eqref{SLDRSPLITHAAR} we
conclude
\begin{align*}
p(\fC)
\leq
\zeta(d)^{-1}\int_A^\infty\int_{\S_1^{d-1}}\int_{(-\sfrac 12,\sfrac 12]^{d-1}}
\int_{\Si_{d-1}}d\mu^{(d-1)}(\tM)\,d\vecu\,d\vecv
\,\frac{da_1}{a_1^{d+1}}
\ll A^{-d}\ll V^{-1}.
\end{align*}
(From now on we write simply $d\vecv$ for the 
$(d-1)$-dimensional volume measure on $\S_1^{d-1}$.)
Here we used the fact that $\mu^{(d-1)}(\Si_{d-1})$ is finite,
since $\Si_{d-1}$ can be covered by a finite number of fundamental regions
for $\Gamma^{(d-1)}\backslash G^{(d-1)}$.
\end{proof}

\begin{lem}\label{TRIVPCBOUNDLEM2}
If $\fC\subset\R^d$ is convex then
\begin{align*}
p(\fC)\ll|\fC|^{-1}.
\end{align*}
(If $|\fC|=\infty$ this should be interpreted as $p(\fC)=0$.)
\end{lem}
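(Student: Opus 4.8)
The plan is to deduce Lemma \ref{TRIVPCBOUNDLEM2} from Lemma \ref{TRIVPCBOUNDLEM} by showing that any convex set $\fC\subset\R^d$ contains an ellipsoid whose volume is $\gg|\fC|$ (with implied constant depending only on $d$). If $|\fC|=0$ there is nothing to prove (indeed $p(\fC)=1\ll 0^{-1}$ with the stated interpretation does not quite make sense, so one treats this degenerate case, or the case where $\fC$ lies in a hyperplane, separately by noting $p(\fC)\leq1$ and $|\fC|^{-1}=\infty$ — here one should simply restrict to $|\fC|>0$, which forces $\fC$ to have nonempty interior since it is convex). So assume $0<|\fC|\le\infty$.

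The key tool is \textbf{John's theorem}: for any convex body $K\subset\R^d$ with nonempty interior there is an ellipsoid $E$ (the John ellipsoid of maximal volume inscribed in $K$) such that $E\subseteq K\subseteq \vecz_0+d(E-\vecz_0)$, where $\vecz_0$ is the center of $E$; hence $|K|\le d^d|E|$, i.e. $|E|\ge d^{-d}|K|$. If $|\fC|<\infty$ we apply this with $K=\overline{\fC}$ (noting $|\overline\fC|=|\fC|$ for convex $\fC$) to obtain an ellipsoid $E\subseteq\overline\fC$ of volume $\ge d^{-d}|\fC|$; shrinking $E$ slightly about its center we get an ellipsoid contained in the interior of $\fC$, hence in $\fC$ up to a loss of a harmless constant factor, of volume $\gg_d|\fC|$. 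If $|\fC|=\infty$, then $\fC$ contains balls of arbitrarily large volume (a convex set of infinite volume contains, e.g., an infinite cone or a slab, and in either case arbitrarily large balls), so by Lemma \ref{TRIVPCBOUNDLEM} we get $p(\fC)\ll V^{-1}$ for every $V$, whence $p(\fC)=0$, matching the stated interpretation.

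With the ellipsoid $E\subseteq\fC$ of volume $V\gg_d|\fC|$ in hand, Lemma \ref{TRIVPCBOUNDLEM} gives directly
\[
p(\fC)\ll V^{-1}\ll|\fC|^{-1},
\]
which is the claim. (If one prefers to avoid appealing to John's theorem, one can instead argue more elementarily: pick a point of maximal ``width'' and use iterated slicing, or use the fact that a convex body contains a translate of a scaled copy of its difference body, but invoking John's theorem is cleanest.)

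The only real obstacle is the standard but slightly fussy business of extracting a genuine inscribed ellipsoid of comparable volume; once John's theorem is quoted this is immediate, and the rest is bookkeeping about the degenerate cases ($|\fC|=0$, $|\fC|=\infty$). Everything else is a direct reduction to Lemma \ref{TRIVPCBOUNDLEM}, which has already been proved.
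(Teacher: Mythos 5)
Your overall approach is the same as the paper's: reduce to Lemma \ref{TRIVPCBOUNDLEM} by extracting a large inscribed ellipsoid via John's theorem, and handle the infinite-volume case by producing ellipsoids of arbitrarily large volume. There is, however, a genuine error in your treatment of the case $|\fC|=\infty$. You claim that a convex set of infinite volume ``contains balls of arbitrarily large volume,'' and you offer the slab as one of your motivating examples --- but the slab $\{\vecx\in\R^d : 0<x_1<1\}$ has infinite volume yet contains no ball of radius exceeding $\tfrac12$, so your parenthetical justification is self-refuting and the step as written fails.

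The fix is immediate and is exactly what the paper does: Lemma \ref{TRIVPCBOUNDLEM} is stated for \emph{ellipsoids}, not balls, and an unbounded convex set with nonempty interior does contain ellipsoids of arbitrarily large volume (for instance, intersect with a ball $\scrB_R^d$ to obtain a bounded convex body of volume $>d^d V$ and apply John's theorem to it, or in the slab example just take a long thin ellipsoid). With ``balls'' replaced by ``ellipsoids'' the argument goes through. The remainder of your proof --- John's theorem for $|\fC|<\infty$, the slight shrinking to pass from $\overline\fC$ to $\fC$, and the degenerate cases --- is correct and matches the paper's; note the shrinking step can also be sidestepped by Lemma \ref{PCONTLEM}, since $|\partial\fC|=0$ for a convex set of finite positive volume.
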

\begin{proof}
If $|\fC|<\infty$ then $\fC$ contains %
an ellipsoid of 
volume $\gg|\fC|$ (cf.\ \cite{John48});
if $|\fC|=\infty$ (viz.\ $\fC$ has non-empty interior and is unbounded)
then for every $V>0$ there is an ellipsoid $E\subset\fC$ 
of volume $>V$. 
Hence the lemma follows from Lemma \ref{TRIVPCBOUNDLEM}.
\end{proof}

To conclude this section we give the proof of 
Proposition \ref{GENPRINCBOUNDPROP}.
The idea is to use the parametrization $M=[a_1,\vecv,\vecu,\tM]\in\Si_d$
and note that if $\Z^dM\cap\fC\subset\{\bn\}$ then $a_1\gg r$ by 
Lemma \ref{A1LARGELEM}, and also by using
$(0,\vecm)M\notin\fC$ for all $\vecm\in\Z^{d-1}\setminus\{\bn\}$
we obtain a precise constraint on $\tM$ (cf.\ \eqref{LATTICEINPARAM}).

\begin{proof}[Proof of Proposition \ref{GENPRINCBOUNDPROP}]
Let $r$ be the radius of some ball contained in $\fC$.
Then $\fC\setminus\{\bn\}$ contains a ball of radius $\frac12r$,
and hence by Lemma~\ref{A1LARGELEM},
if $M=[a_1,\vecv,\vecu,\tM]\in\Si_d$ satisfies
$\Z^dM\cap\fC\subset\{\bn\}$ 
then $a_1>k_2r$, where $k_2$ is a positive constant
which only depends on $d$.
Hence, using \eqref{SIDSUBSSIDM1} and \eqref{SLDRSPLITHAAR},
\begin{align*}
&p^{(d)}(\fC)\leq\mu\bigl(\bigl\{M\in\Si_d\col \Z^dM\cap\fC\subset\{\bn\}\bigr\}\bigr)
\\
&\leq
\zeta(d)^{-1}\int_{k_2r}^\infty\int_{\S_1^{d-1}}
\int_{(-\sfrac 12,\sfrac 12]^{d-1}}
\mu^{(d-1)}\bigl(\bigl\{\tM\in\Si_{d-1}\col
\Z^d[a_1,\vecv,\vecu,\tM]
\cap\fC\subset\{\bn\}\bigr\}\bigr)
\,d\vecu\,d\vecv\,\frac{da_1}{a_1^{d+1}}.
\end{align*}
Using \eqref{LATTICEINPARAM} with $n=0$ we get
\begin{align}\notag
p^{(d)}(\fC)&\leq\zeta(d)^{-1}\int_{k_2r}^\infty\int_{\S_1^{d-1}}
\mu^{(d-1)}\bigl(\bigl\{\tM\in\Si_{d-1}\col
a_1^{-\frac1{d-1}}\iota(\Z^{d-1}\tM)f(\vecv)\cap\fC
\subset\{\bn\}\bigr\}\bigr)
\,d\vecv\,\frac{da_1}{a_1^{d+1}}
\\\notag
&=\zeta(d)^{-1}\int_{k_2r}^\infty\int_{\S_1^{d-1}}
\mu^{(d-1)}\bigl(\bigl\{\tM\in\Si_{d-1}\col
\Z^{d-1}\tM\cap a_1^{\frac1{d-1}}\fC_\vecv
\subset\{\bn\}\bigr\}\bigr)
\,d\vecv\,\frac{da_1}{a_1^{d+1}},
\end{align}
where $\iota$ denotes the embedding $\iota:\R^{d-1}\ni (x_1,\ldots,x_{d-1})
\mapsto (0,x_1,\ldots,x_{d-1})\in\R^d$, and
\begin{align}\label{CVDEFNEW}
\fC_\vecv:=\iota^{-1}(\fC f(\vecv)^{-1})\subset\R^{d-1}.
\end{align}
Now since $\Si_{d-1}$ is contained in a finite union of fundamental
regions for $X_1^{(d-1)}$, we obtain
\begin{align*}
p^{(d)}(\fC)\ll\int_{\S_1^{d-1}}\int_{k_2r}^\infty
p^{(d-1)}\Bigl(a_1^{\frac1{d-1}}\fC_\vecv\Bigr)\,\frac{da_1}{a_1^{d+1}}\,d\vecv.
\end{align*}
We have proved this for any $r$ which is the radius of some ball contained
in $\fC$; hence it also holds for the supremum of these radii.
Now \eqref{GENPRINCBOUNDPROPRES1} follows,
since $\fC\cap\vecv^\perp$ maps to $\fC_\vecv$ by the
volume preserving linear space isomorphism
$\vecv^\perp\ni\vecx\mapsto\iota^{-1}(\vecx f(\vecv)^{-1})\in\R^{d-1}$.
\end{proof}

\subsection{\texorpdfstring{Bounding $p(\fC)$ from above for $\fC$ convex}{Bounding p(C) from above for C convex}}

For \textit{convex} sets $\fC$ we have the following monotonicity property
which allows us to simplify the upper bound in 
Proposition \ref{GENPRINCBOUNDPROP}.

\begin{lem}\label{CONVPDMONOTONELEM}
For any convex set $\fC\subset\R^d$ ($d\geq2$) and any $\alpha>1$
we have $p(\alpha\fC)\leq p(\fC)$.
\end{lem}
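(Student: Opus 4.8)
The statement to prove is Lemma \ref{CONVPDMONOTONELEM}: for a convex set $\fC\subset\R^d$ and $\alpha>1$, one has $p(\alpha\fC)\leq p(\fC)$. The intuition is that $\alpha\fC$ is ``larger and more spread out'' than $\fC$, so it is harder for a random lattice to avoid $\alpha\fC$ than to avoid $\fC$. The natural strategy is to exhibit, for each unimodular lattice $L$ that meets $\fC\setminus\{\bn\}$, a companion lattice $L'$ that meets $\alpha\fC\setminus\{\bn\}$, in a measure-preserving way. The key algebraic observation is that multiplication of the ambient space $\R^d$ by the scalar $\alpha^{1/d}$ is a measure-preserving bijection between $X_1$ and itself (since $\alpha^{1/d}L$ again has covolume one), and it carries $\fC$ to $\alpha^{1/d}\fC$.

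First I would record the scaling symmetry: writing $\beta=\alpha^{1/d}>1$, the map $L\mapsto\beta^{-1}L$ is a $\mu$-preserving bijection of $X_1$, and under this correspondence $L\cap\fC\setminus\{\bn\}=\emptyset$ is equivalent to $(\beta^{-1}L)\cap(\beta^{-1}\fC)\setminus\{\bn\}=\emptyset$, whence $p(\fC)=p(\beta^{-1}\fC)$. Iterating, $p(\fC)=p(\beta^{-k}\fC)$ for all $k$; but this by itself only handles the discrete scalings $\alpha=\beta^d$ trivially and does not yet use convexity. The real content is to compare $p$ on two convex sets of \emph{different} volume. For that I would combine the scaling symmetry with the inclusion monotonicity $\fC''\subset\fC'\Rightarrow p(\fC')\leq p(\fC'')$ (already used repeatedly in the introduction): if I can show that some scalar multiple $\gamma\fC$ is \emph{contained} in $\alpha\fC$ while another issue is handled, I win. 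The clean statement: since $\fC$ is convex, for $1<\gamma\leq\alpha$ we have $\gamma\fC\subset\alpha\fC$ only after translating, which is not available; instead the honest route uses that $\alpha\fC\supset\fC$ is \emph{false} in general unless $\bn\in\fC$. So convexity must enter more subtly.

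The correct approach, I expect, is this. By \eqref{PDINV} we may translate $\fC$; but translation changes the relation of $\bn$ to $\fC$, so we cannot simply assume $\bn\in\fC$. Instead, fix $L\in X_1$ with $L\cap\alpha\fC\setminus\{\bn\}\neq\emptyset$, say $\vecx\in L\cap\alpha\fC$, $\vecx\neq\bn$; then $\alpha^{-1}\vecx\in\fC$ and $\alpha^{-1}\vecx\in\alpha^{-1}L$. Thus the $\mu$-preserving map $L\mapsto\alpha^{-1/d}\cdot(\text{suitable conjugate})$ — more precisely, one wants to show the set of $L$ avoiding $\alpha\fC\setminus\{\bn\}$ injects measure-preservingly into the set avoiding $\fC\setminus\{\bn\}$. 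Concretely: if $L'$ avoids $\fC\setminus\{\bn\}$, does $\alpha^{1/d}L'$ avoid $\alpha\fC\setminus\{\bn\}$? No — scaling $\fC$ by $\alpha$ and $L'$ by $\alpha^{1/d}$ are not matched. I therefore expect the intended proof to use Proposition \ref{GENPRINCBOUNDPROP} together with the observation that $r(\alpha\fC)=\alpha r(\fC)$ and that the slices $a_1^{1/(d-1)}(\alpha\fC)\cap\vecv^\perp$ relate to those of $\fC$ by a scalar, reducing the claim to an inductive/integral inequality — or, more likely, to use Lemma \ref{PCONTLEM} only as a technical aid and reduce via an explicit measure-preserving group element. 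The main obstacle, and the point I would focus my effort on, is precisely finding the correct measure-preserving correspondence between lattices avoiding $\alpha\fC$ and lattices avoiding $\fC$: the scaling of $\R^d$ alone does not do it, and making convexity do the work (presumably: every point of $\fC$ is an average of $\bn$ and a point of $\alpha\fC$ when $\bn\in\fC$, plus a translation argument to reduce to that case) is the heart of the lemma. Once that correspondence is in hand, $p(\alpha\fC)\leq p(\fC)$ follows immediately from $\mu$-invariance and the definition of $p$.
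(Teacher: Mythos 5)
Your proposal contains a genuine gap: you correctly diagnose that the heart of the matter is to produce a measure-preserving correspondence, and you correctly see that the case $\bn\in\overline\fC$ is easy (then $\fC\subset\alpha\overline\fC$ directly, so monotonicity gives the claim), but you never produce the construction in the remaining case $\bn\notin\overline\fC$, and the two tools you reach for do not work. First, your opening claim that $L\mapsto\beta^{-1}L$ with $\beta=\alpha^{1/d}$ is a $\mu$-preserving bijection of $X_1$ is false: $\beta^{-1}L$ has covolume $\beta^{-d}\neq1$, so scalar dilations do not act on $X_1$ at all; indeed $p(\beta^{-1}\fC)\neq p(\fC)$ in general, which is exactly why the lemma has content. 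Second, the ``translation argument'' you gesture at is unavailable, since \eqref{PDINV} gives invariance only under linear maps in $\SL_d(\R)$, not under affine translations; translating $\fC$ changes $p(\fC)$ (it changes the position of $\bn$ relative to $\fC$, which you yourself note). Once these two are taken away, your proposal has no mechanism left to handle $\bn\notin\overline\fC$.

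The paper's proof supplies precisely the missing piece via an auxiliary result (Lemma \ref{CONVPARALLELHYPPLANESLEM}): for a compact convex $\fC$ with $\bn\notin\fC$, there exist nonzero vectors $\vecv,\vecw$ and two points $\vecq_1,\vecq_2\in\fC\cap\R\vecv$ such that the two parallel hyperplanes through $\vecq_1$ and $\vecq_2$ normal to $\vecw$ support $\fC$. One then picks $U\in\GL_d(\R)$ with $\vece_1U\in\R\vecv$ and $\vece_jU\in\vecw^\perp$ for $j\geq2$, and sets $T=U^{-1}\diag[\alpha,\alpha^{-1/(d-1)},\dots,\alpha^{-1/(d-1)}]\,U\in\SL_d(\R)$. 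Writing any $\vecp\in\fC$ as $\vecp=t\vecv+\vecu$ with $\vecu\in\vecw^\perp$, the parallel-support condition forces $t\vecv$ to lie between $\vecq_1$ and $\vecq_2$, hence $t\vecv\in\overline\fC$; convexity then gives $t\vecv+s\vecu\in\overline\fC$ for all $0\le s\le1$, and in particular $\alpha^{-1}\vecp T=t\vecv+\alpha^{-d/(d-1)}\vecu\in\overline\fC$. This shows $\fC T\subset\alpha\overline\fC$, and then $p(\alpha\fC)=p(\alpha\overline\fC)\le p(\fC T)=p(\fC)$ by \eqref{PDINV}, monotonicity under inclusion, and Lemma \ref{PCONTLEM}. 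So the essential idea you were looking for is a stretch along a distinguished axis $\R\vecv$ (which passes through $\fC$, not through $\bn$) compensated by a contraction in the transverse directions $\vecw^\perp$, with the special ``antipodal'' support-plane pair guaranteeing that the stretch stays inside $\alpha\overline\fC$; without that geometric lemma the argument does not close.
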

The proof is by finding an element $T\in G$ such that 
$\fC T\subset\alpha\overline\fC$.
For the construction we need the following auxiliary lemma.
\begin{lem}\label{CONVPARALLELHYPPLANESLEM}
Let $\fC\subset\R^d$ be a compact convex set with $\bn\notin\fC$.
Then there exist two non-zero vectors $\vecv,\vecw\in\R^d$ 
and two points $\vecq_1,\vecq_2\in\fC\cap\R\vecv$ such that
$\vecq_1\cdot\vecw\leq\vecp\cdot\vecw\leq\vecq_2\cdot\vecw$ for all
$\vecp\in\fC$.
\end{lem}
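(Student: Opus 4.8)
The plan is to find a single direction $\vecw$ along which $\fC$ is "pinched between two parallel hyperplanes through points of $\fC$ that are collinear with the origin." First I would note that the statement as phrased asks for $\vecq_1,\vecq_2$ lying on a common line $\R\vecv$ through the origin; the natural way to produce such a line is to pick any chord of $\fC$ that is "radial," i.e. contained in a line through $\bn$. Since $\bn\notin\fC$ and $\fC$ is compact convex, not every line through $\bn$ meets $\fC$, but the set of directions $\vecv$ for which $\R\vecv\cap\fC\neq\emptyset$ is nonempty (take any point of $\fC$), so fix one such $\vecv$ and let $I=\R\vecv\cap\fC$, a compact segment; let $\vecq_1,\vecq_2$ be its two endpoints (if $I$ is a single point, take $\vecq_1=\vecq_2$).

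Next I would produce the vector $\vecw$. The point is that $\vecq_1$ and $\vecq_2$ are boundary points of the segment $I=\R\vecv\cap\fC$; I claim one can choose a supporting functional $\vecw$ of $\fC$ that is simultaneously a supporting functional at $\vecq_1$ and at $\vecq_2$, with $\vecq_1$ a minimizer and $\vecq_2$ a maximizer of $\vecx\mapsto\vecx\cdot\vecw$ over $\fC$. The cleanest route is to slice: consider the orthogonal complement $\vecv^\perp$ and the projection $\pi:\R^d\to\vecv^\perp$. Actually the more robust approach is via separation of $\bn$ from $\fC$ plus a limiting/compactness argument. Alternatively — and this is the approach I would actually carry out — use that $\fC$ is compact convex with $\bn\notin\fC$, hence there is a hyperplane through $\bn$ not meeting $\fC$; intersecting $\fC$ with a suitable affine $2$-plane containing $\R\vecv$ reduces matters to the planar case, where one checks directly that through the two endpoints $\vecq_1,\vecq_2$ of the radial chord one can pass two parallel supporting lines of the planar convex body (e.g. both parallel to the line $\bn\vecq_1$ suitably rotated, or obtained as the two parallel supporting lines realizing the width in an appropriate direction). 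Having found such a direction $\vecw$ in the plane, extend it arbitrarily to $\R^d$ by declaring it to vanish on the complementary directions within $\vecv^\perp$; one then verifies $\vecq_1\cdot\vecw\le\vecp\cdot\vecw\le\vecq_2\cdot\vecw$ for all $\vecp\in\fC$.

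The main obstacle is precisely the construction of $\vecw$: it is easy to get a supporting functional at $\vecq_2$ (the "far" endpoint, where $\R\vecv$ exits $\fC$ on the side away from — or toward — $\bn$) and separately one at $\vecq_1$, but one must arrange that \emph{the same} $\vecw$ works at both endpoints with the correct signs. I expect to handle this by a continuity/degree argument: parametrize supporting hyperplanes of $\fC$ by their outward normals $\vecw\in\S_1^{d-1}$, let $h_1(\vecw)$ and $h_2(\vecw)$ be the signed gaps between the supporting hyperplane with normal $\vecw$ and the points $\vecq_1$, $\vecq_2$ respectively, and show that the continuous map $\vecw\mapsto(h_1(\vecw)+h_2(-\vecw))$ or a similar combination must vanish somewhere by an intermediate-value argument (comparing $\vecw$ and $-\vecw$). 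I would use the compactness of $\fC$ to guarantee all suprema are attained and that the support function is finite and continuous. Once $\vecw$ is in hand the two inequalities $\vecq_1\cdot\vecw\le\vecp\cdot\vecw\le\vecq_2\cdot\vecw$ are immediate from the definition of supporting hyperplane, completing the proof.
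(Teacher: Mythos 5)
Your proposal has a fundamental gap: you fix an arbitrary radial direction $\vecv$ (``fix one such $\vecv$'') and then try to produce a $\vecw$ that works for the endpoints of that particular chord. But for a generic chord through the origin, no such $\vecw$ exists. Concretely, let $d=2$ and $\fC=[1,3]\times[1,3]$, with $\vecv=(1,2)$. The chord $\R\vecv\cap\fC$ runs from $\vecq_1=(1,2)$ (interior point of the left edge $x=1$) to $\vecq_2=(3/2,3)$ (interior point of the top edge $y=3$). The only supporting line at $\vecq_1$ is $x=1$ and the only one at $\vecq_2$ is $y=3$; these are perpendicular, so no $\vecw$ makes $\vecq_1$ a minimizer and $\vecq_2$ a maximizer of $\vecx\mapsto\vecx\cdot\vecw$. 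Your intermediate-value candidate $h_1(\vecw)+h_2(-\vecw)$ is a sum of two nonnegative terms and so is nonnegative everywhere; vanishing would require both gaps to be zero simultaneously, which is exactly the thing you cannot get for a generic chord, so there is nothing for a sign change to detect.

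The missing idea is that $\vecv$ must be chosen \emph{extremally}, not arbitrarily. The paper sets $\ell_1(\veca)=\inf\{t>0:t\veca\in\fC\}$, $\ell_2(\veca)=\sup\{t>0:t\veca\in\fC\}$ and takes $\vecv$ to maximize the ratio $\ell_2(\veca)/\ell_1(\veca)$ over all radial directions meeting $\fC$. At such a critical direction, a first-order argument (after first assuming all boundary points and support planes of $\fC$ are regular, then passing to the limit for general compact convex $\fC$) forces the two support planes at $\vecq_1=\ell_1(\vecv)\vecv$ and $\vecq_2=\ell_2(\vecv)\vecv$ to be parallel. Your planar-reduction idea also does not salvage the argument, since the obstruction above is already planar. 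If you want to repair the proof along your own lines, you would need to let $\vecv$ vary and extract it from a variational problem (or equivalently parametrize parallel support slabs by direction and intersect their inner normal line with $\fC$ in a continuous way), rather than freezing $\vecv$ at the outset.
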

\begin{proof}
We first assume %
that $\fC$ has only regular boundary points and support planes
(viz.\ to each boundary point there corresponds exactly one
support plane, and each support plane has only one point in common
with $\fC$; cf.\ \cite[Sec.\ 3.9]{bonnesen87}).
Let $K$ be the set of $\veca\in\R^d$ such that
$\fC\cap\R_{>0}\veca\neq\emptyset$. %
For each $\veca\in K$ set $\ell_1(\veca)=\inf\{t>0\col t\veca\in\fC\}$
and $\ell_2(\veca)=\sup\{t>0\col t\veca\in\fC\}$.
Let $\vecv\in K$ be a point where 
$\sup_{\veca\in K}\ell_2(\veca)/\ell_1(\veca)$ is attained;
such a point clearly exists, and in fact $\vecv\in K^\circ$,
and $\ell_2(\vecv)/\ell_1(\vecv)>1$.
Set $\vecq_j=\ell_j(\vecv)\vecv$ ($j=1,2$) and let
$\vecw_1,\vecw_2\neq\bn$ be unit vectors normal to the unique support planes
of $\fC$ at $\vecq_1,\vecq_2$, chosen so that
$\vecq_1\cdot\vecw_1\leq\vecp\cdot\vecw_1$ and
$\vecp\cdot\vecw_2\leq\vecq_2\cdot\vecw_2$ for all $\vecp\in\fC$.
Using the fact that these two support planes are regular and 
$\vecq_1\neq\vecq_2$ it follows that $\vecv\cdot\vecw_j\neq0$ for $j=1,2$.
Hence for each $\vecb\in\R^d$ there exist unique
$x_1,x_2\in\R$ such that $(x_j\vecv+\vecb)\cdot\vecw_j=0$,
viz.\ (if $\vecb\notin\R\vecv$)
the support plane for $\fC$ at $\vecq_j$ contains
the line $\vecq_j+\R(x_j\vecv+\vecb)$.
Since $\vecq_1,\vecq_2$ are regular boundary points this implies that
$\ell_j(\vecv+\ve\vecb)=\ell_j(\vecv)(1+x_j\ve+o(\ve))$ as $\ve\to0$ ($j=1,2$).
Hence by our choice of $\vecv$, $x_1=x_2$ must hold.
Since this is true for every $\vecb\in\R^d$
it follows that $\vecw_1=\vecw_2$, and we are done.

In the case of a general compact convex set $\fC$ with $\bn\notin\fC$
we take a sequence $\fC^{(1)},\fC^{(2)},\ldots$ of compact convex sets
converging to $\fC$ such that each $\fC^{(n)}$ has only regular boundary
points and support planes, and $\bn\notin\fC^{(n)}$
(such a sequence exists by \cite[Sec.\ 6.27]{bonnesen87}).
Applying the above to each $\fC^{(n)}$ we obtain vectors
$\vecv^{(n)},\vecw^{(n)},\vecq_1^{(n)},\vecq_2^{(n)}$
satisfying the required conditions for $\fC^{(n)}$.
We may assume that all $\vecv^{(n)},\vecw^{(n)}$ have length $1$.
By passing to a subsequence we may assume that the four limits
$\vecv=\lim_{n\to\infty}\vecv^{(n)}$,
$\vecw=\lim_{n\to\infty}\vecw^{(n)}$ and
$\vecq_j=\lim_{n\to\infty}\vecq_j^{(n)}$ ($j=1,2$) exist.
These $\vecv,\vecw,\vecq_1,\vecq_2$ are easily seen to satisfy the
required conditions for $\fC$.
\end{proof}

\begin{proof}[Proof of Lemma \ref{CONVPDMONOTONELEM}]
If $\fC$ has no interior points then $\fC$ is contained in some affine subspace
of $\R^d$, and thus $p(\alpha\fC)=p(\fC)=1$ 
(cf.\ Lemma \ref{TRIVPLOWBOUNDLEM}).
Hence from now on we may assume that $\fC$ has interior points.
If $\fC$ is unbounded then $|\fC|=\infty$ and $p(\alpha\fC)=p(\fC)=0$
by Lemma \ref{TRIVPCBOUNDLEM2}.
Hence from now on we may assume that $\fC$ is bounded.
If $\bn\in\overline\fC$ then $\fC\subset\alpha\overline{\fC}$ for any 
$\alpha>1$,
and thus $p(\alpha\overline{\fC})\leq p(\fC)$;
but also $p(\alpha\fC)=p(\alpha\overline{\fC})$ by
Lemma \ref{PCONTLEM}; hence $p(\alpha\fC)\leq p(\fC)$.

Hence it only remains to deal with the case 
$\bn\notin\overline{\fC}$.
Let $\vecv,\vecw,\vecq_1,\vecq_2$ be as in 
Lemma~\ref{CONVPARALLELHYPPLANESLEM} applied to $\overline\fC$.
Note that $\vecv\cdot\vecw\neq0$, since $\fC$ has interior points.
Take any $U\in\GL(d,\R)$ such that
$\vece_1U\in\R\vecv$ and 
$\vece_jU\in\vecw^\perp$ for $j=2,\ldots,d$,
where $\vece_j=(0,\ldots,1,\ldots,0)$ is the $j$th standard basis vector of
$\R^d$.
Set 
$T=U^{-1}\diag[\alpha,\alpha^{-\frac1{d-1}},\ldots,\alpha^{-\frac1{d-1}}]U
\in G$.
We now claim $\fC T\subset\alpha\overline\fC$.
Indeed, consider an arbitrary point $\vecp\in\fC$.
Then there are unique $t\in\R$ and $\vecu\in\vecw^\perp$ such that
$\vecp=t\vecv+\vecu$ and thus
$\vecp T=\alpha t\vecv+\alpha^{-\frac1{d-1}}\vecu$.
But $\vecq_1\cdot\vecw\leq\vecp\cdot\vecw\leq\vecq_2\cdot\vecw$
implies $\vecq_1\cdot\vecw\leq t\vecv\cdot\vecw\leq\vecq_2\cdot\vecw$;
hence $t\vecv$ lies on the line segment between $\vecq_1$ and $\vecq_2$,
and so $t\vecv\in\overline\fC$.
Hence by convexity,
$s\vecp+(1-s)t\vecv=t\vecv+s\vecu$ lies in $\overline\fC$ for all 
$0\leq s\leq1$.
In particular $\alpha^{-1}\vecp T=t\vecv+\alpha^{-\frac d{d-1}}\vecu
\in\overline{\fC}$, thus proving $\fC T\subset\alpha\overline\fC$.
It follows that
$p(\alpha\fC)=p(\alpha\overline\fC)
\leq p(\fC T)=p(\fC)$, and Lemma \ref{CONVPDMONOTONELEM} is proved.
\end{proof}

\begin{remark}\label{CONVPDMONOTONELEMREM}
The convexity assumption in Lemma \ref{CONVPDMONOTONELEM}
cannot be skipped altogether.
For example, for any given $\alpha>1$ with $\alpha^d\notin\Z$,
if $\ve>0$ is sufficiently small then the set
\begin{align*}
\fC:=\scrB_{1/\ve}^d\setminus
\bigcup_{\vecm\in\Z^d\setminus\{\bn\}}\bigl(\alpha^{-1}\vecm+\scrB_\ve^d\bigr)
\end{align*}
satisfies $p(\alpha\fC)>0$ and $p(\fC)=0$.
Indeed, $p(\alpha\fC)>0$ holds since every $L\in X_1$ sufficiently near
$\Z^d$ is disjoint from $\alpha\fC\setminus\{\bn\}$.
On the other hand, assume $L\in X_1$ is disjoint from
$\fC\setminus\{\bn\}$.
Write $L=\Z^dM$ with $M=\nn(u) \aa(a) \kk\in\Si_d$ and take $\vecb_k$ as in
\eqref{MBASIS}; thus $L=\Z\vecb_1+\cdots+\Z\vecb_d$.
Now $L\cap\fC\setminus\{\bn\}=\emptyset$ forces
$\|\vecb_k\|>(2\alpha)^{-1}$ provided $\ve$ is sufficiently small;
hence also $a_k\gg\alpha^{-1}$, by 
the same type of computation as in \eqref{MBASISINEQ}.
It follows that $\|\vecb_k\|\ll a_1=(a_2\cdots a_d)^{-1}\ll\alpha^{d-1}$ 
and hence if $\ve$ is sufficiently small then
$\vecb_k\in\scrB_{1/\ve}^d$ and thus
$\vecb_k\in\alpha^{-1}(\Z^d\setminus\{\bn\})+\scrB_\ve^d$ for all $k$.
By the determinant formula for the covolume of $L=\Z\vecb_1+\cdots+\Z\vecb_d$,
this forces $\alpha^d$ to be ``$\ve$-near'' an integer, and we get a 
contradiction if $\ve$ is taken sufficiently small.
Hence $p(\fC)=0$.

On the other hand for any $\alpha>1$ with $\alpha^d\in\Z$ we have
$p(\alpha\fC)\leq p(\fC)$ for \textit{every measurable set $\fC\subset\R^d$,}
as is easily seen using the modular correspondence $T(\alpha^d)$ on
$X_1$ (cf.,\ e.g., \cite[Ch.\ 3]{Shimura}).
Indeed, if $\scrF\subset G$ is a fundamental region for 
$X_1=\Gamma\backslash G$ and
$\beta_1,\ldots,\beta_r\in M_d(\Z)$ are representatives such that
$T(\alpha^d)=\sqcup_{j=1}^r\Gamma \beta_j$ then
\begin{align*}
p(\alpha\fC)
=\int_\scrF I\bigl(\bigl\{
\Z^dM\cap\alpha\fC\subset\{\bn\}\bigr\}\bigr)\,d\mu(M)
\leq r^{-1}\sum_{j=1}^r\int_\scrF I\bigl(\bigl\{
\alpha^{-1}\Z^d\beta_jM\cap\fC\subset\{\bn\}\bigr\}\bigr)\,d\mu(M)
\\
=r^{-1}\sum_{j=1}^r\int_{\alpha^{-1}\beta_j\scrF} I\bigl(\bigl\{
\Z^dM\cap\fC\subset\{\bn\}\bigr\}\bigr)\,d\mu(M)
=p(\fC).
\end{align*}
(We used $\Z^d \beta_j\subset\Z^d$
and the fact that $\sqcup_{j=1}^r\alpha^{-1}\beta_j\scrF$ 
is an $r$-fold fundamental region for $\Gamma\backslash G$.)
\end{remark}

We next use Lemma \ref{CONVPDMONOTONELEM} to simplify the
bound in Proposition \ref{GENPRINCBOUNDPROP}.
Recall that if $\fC\subset\R^d$ is a bounded convex set then 
the supremum of the 
radii of all $d$-dimensional open balls contained in $\fC$ is attained,
although not necessarily for a unique ball;
we call this supremum the \textit{inradius} of $\fC$.  %
\begin{prop}\label{GENCONVBOUNDPROP1}
Given $d\geq2$ there exist constants $k_1,k_2>0$ such that
for every bounded convex set $\fC\subset\R^d$ we have
\begin{align}\label{GENCONVBOUNDPROP1RES}
p^{(d)}(\fC)\leq
\min\biggl\{1,k_1r^{-d}
\int_{\S_1^{d-1}}
p'_{d-1}\Bigl(k_2r^{\frac1{d-1}}\fC\cap\vecv^\perp\Bigr)\,d\vecv\biggr\},
\end{align}
where $r$ is the 
inradius of $\fC$
and where $p'_{d-1}(\fZ)$ for $\fZ\subset\R^{d-1}$ is defined by
\begin{align}\label{GENCONVBOUNDPROP1DEF}
p'_{d-1}(\fZ)=\begin{cases}p^{(d-1)}(\fZ)&\text{if }\: d\geq3;
\\
I(|\fZ|\leq1)&\text{if }\: d=2.
\end{cases}
\end{align}
\end{prop}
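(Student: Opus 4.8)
The plan is to deduce Proposition~\ref{GENCONVBOUNDPROP1} from Proposition~\ref{GENPRINCBOUNDPROP} by exploiting the fact that, for convex $\fC$, we may first apply a monotone scaling to make the quantity $r$ in Proposition~\ref{GENPRINCBOUNDPROP} exactly equal to the inradius. First I would observe that, for a bounded convex set $\fC$ with inradius $r_0$, the supremum appearing in Proposition~\ref{GENPRINCBOUNDPROP} is precisely $r_0$ (this is immediate from the definition of inradius, together with the fact that the supremum of radii of closed balls and of open balls in a convex set agree). Hence Proposition~\ref{GENPRINCBOUNDPROP} applied directly to $\fC$ gives
\begin{align*}
p^{(d)}(\fC)\leq\min\biggl\{1,k_1\int_{\S_1^{d-1}}\int_{k_2 r_0}^\infty
p^{(d-1)}\Bigl(a_1^{\frac1{d-1}}\fC\cap\vecv^\perp\Bigr)\,\frac{da_1}{a_1^{d+1}}\,d\vecv\biggr\}.
\end{align*}

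Next I would handle the inner $a_1$-integral. The key point is the monotonicity statement proved in Lemma~\ref{CONVPDMONOTONELEM}: for each fixed $\vecv$, the slice $\fC\cap\vecv^\perp$ is a convex subset of the hyperplane $\vecv^\perp\cong\R^{d-1}$, so $a_1\mapsto p^{(d-1)}\bigl(a_1^{1/(d-1)}(\fC\cap\vecv^\perp)\bigr)$ is non-increasing in $a_1$ on the range $a_1\geq k_2 r_0$ (since $a_1^{1/(d-1)}$ is increasing and Lemma~\ref{CONVPDMONOTONELEM} applies to the convex set $\fC\cap\vecv^\perp$ in dimension $d-1$; note $d-1\geq1$, and for $d-1=1$ monotonicity is trivial since $p^{(1)}(\fZ)=I(\fZ\cap(\R\setminus\{0\})=\emptyset)$ which is non-increasing under dilation). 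Therefore we may bound the integrand from above by its value at the left endpoint $a_1=k_2 r_0$:
\begin{align*}
\int_{k_2 r_0}^\infty p^{(d-1)}\Bigl(a_1^{\frac1{d-1}}\fC\cap\vecv^\perp\Bigr)\,\frac{da_1}{a_1^{d+1}}
\leq p^{(d-1)}\Bigl((k_2 r_0)^{\frac1{d-1}}\fC\cap\vecv^\perp\Bigr)\int_{k_2 r_0}^\infty\frac{da_1}{a_1^{d+1}}
=\frac{(k_2 r_0)^{-d}}{d}\,p^{(d-1)}\Bigl((k_2 r_0)^{\frac1{d-1}}\fC\cap\vecv^\perp\Bigr).
\end{align*}
Absorbing the constants into fresh $k_1,k_2>0$ depending only on $d$, and writing $r$ for $r_0$, yields exactly \eqref{GENCONVBOUNDPROP1RES} in the case $d\geq3$.

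For the case $d=2$ the reduction is to $X_1^{(1)}=\SL(1,\R)=\{1\}$, the trivial lattice $\Z^1=\Z$, so $p^{(1)}(\fZ)=I\bigl(\Z\cap(\fZ\setminus\{0\})=\emptyset\bigr)$ for $\fZ\subset\R$; since for a convex (hence interval) set $\fZ\subset\R$ this equals $1$ precisely when $\fZ$ is contained in an interval of length $\leq1$ containing no nonzero integer, one checks that $I(|\fZ|\leq1)$ is an upper bound for $p^{(1)}(\fZ)$ after a harmless further dilation by a bounded constant (any interval of length $\leq\tfrac12$, say, lies in $[n-\tfrac12,n+\tfrac12]$ for some integer $n$, and avoiding $0$ forces $n\neq0$ unless the interval actually meets a nonzero integer; in all cases the indicator $I(|\fZ|\leq 1)$ dominates $p^{(1)}$ up to adjusting $k_2$). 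Running the same monotonicity argument with $p'_1$ in place of $p^{(1)}$ — and noting $p'_1(\fZ)=I(|\fZ|\leq1)$ is itself non-increasing under dilation of the convex set $\fZ$ — gives \eqref{GENCONVBOUNDPROP1RES} with $p'_{d-1}$ as defined in \eqref{GENCONVBOUNDPROP1DEF}. The main obstacle I anticipate is purely bookkeeping: verifying carefully that the slice $\fC\cap\vecv^\perp$ is genuinely convex and that Lemma~\ref{CONVPDMONOTONELEM}'s hypothesis (a convex set, with scaling factor $>1$) applies uniformly in $\vecv$ and down to dimension $1$, and tracking how the constants $k_1,k_2$ transform; none of this is deep, but it must be stated cleanly so that the constants remain dependent only on $d$.
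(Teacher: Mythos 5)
Your proof is correct and follows essentially the same route as the paper's: reduce via Proposition~\ref{GENPRINCBOUNDPROP} with $r$ equal to the inradius, then for $d\geq3$ use Lemma~\ref{CONVPDMONOTONELEM} to bound the inner $a_1$-integral by the value of the integrand at the left endpoint times $\int_{k_2r}^\infty a_1^{-d-1}\,da_1$, and for $d=2$ replace this with a direct observation about long line segments and nonzero integers.

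One caution worth flagging: the parenthetical assertion that ``for $d-1=1$ monotonicity is trivial since $p^{(1)}$ is non-increasing under dilation'' is \textbf{false}. For a convex $\fZ\subset\R$ one has $p^{(1)}(\fZ)=I\bigl(\Z\cap(\fZ\setminus\{0\})=\emptyset\bigr)$, and this is \emph{not} monotone under dilation: for example $\fZ=(3.9,4.1)$ gives $p^{(1)}(\fZ)=0$ since $4\in\fZ$, while $\tfrac98\fZ=(4.3875,4.6125)$ contains no integer so $p^{(1)}(\tfrac98\fZ)=1$. (This is also why Lemma~\ref{CONVPDMONOTONELEM} is only stated for $d\geq2$.) The correct way to handle $d=2$ is exactly what the paper does and what your final $d=2$ paragraph in effect does: observe that if $|k_2r\fC\cap\vecv^\perp|>2$ then the dilated segment $a_1\fC\cap\vecv^\perp$ has length $>2$ for \emph{all} $a_1>k_2r$ and thus contains a nonzero integer, so $p^{(1)}(a_1\fC\cap\vecv^\perp)=0$ for all such $a_1$; equivalently, the integrand is bounded by the genuinely monotone quantity $I(|a_1\fC\cap\vecv^\perp|\leq2)\leq I(|k_2r\fC\cap\vecv^\perp|\leq2)$. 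Since your closing step uses the monotone indicator $p'_1(\fZ)=I(|\fZ|\leq1)$ rather than $p^{(1)}$ itself, the argument goes through, but the incorrect intermediate remark about $p^{(1)}$ should be removed; the related claim that, for intervals, $p^{(1)}(\fZ)=1$ ``precisely when $\fZ$ is contained in an interval of length $\leq1$ containing no nonzero integer'' is also not quite right (e.g.\ $\fZ=(-0.9,0.9)$), though again only the one-sided implication you actually use is needed.
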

\begin{proof}
Given $d$ we let $k_1,k_2$ be as in Proposition \ref{GENPRINCBOUNDPROP}.
Now let $\fC\subset\R^d$ be a bounded convex set,
and let $r$ be its inradius.
If $d\geq3$ then by Lemma \ref{CONVPDMONOTONELEM},
for all $\vecv\in\S_1^{d-1}$ and all $a_1\geq k_2r$ we have
$p^{(d-1)}(a_1^{\frac1{d-1}}\fC\cap\vecv^\perp)\leq
p^{(d-1)}((k_2r)^{\frac1{d-1}}\fC\cap\vecv^\perp)$
(note that this is true also when
$a_1^{\frac1{d-1}}\fC\cap\vecv^\perp=\emptyset$).
Hence \eqref{GENCONVBOUNDPROP1RES}, with new $k_1,k_2$,
follows directly from Proposition \ref{GENPRINCBOUNDPROP}.
On the other hand if $d=2$ then for any $\vecv\in\S_1^1$ with
$|k_2r\fC\cap\vecv^\perp|>2$ we have
$p^{(1)}(a_1\fC\cap\vecv^\perp)=0$ for all $a_1>k_2r$,
since $a_1\fC\cap\vecv^\perp$ is a line segment of length $>2$ and hence,
after identifying $\vecv^\perp$ with $\R$
this line segment must have non-empty intersection with $\Z\setminus\{\bn\}$.
Hence \eqref{GENCONVBOUNDPROP1RES} (with new $k_1,k_2$)
again follows from Proposition \ref{GENPRINCBOUNDPROP}.
\end{proof}

\subsection{\texorpdfstring{Bounding $p(\fC)$ from below for $\fC$ convex}{Bounding p(C) from below for C convex}}

We next prove that for convex $\fC$,
$p(\fC)$ is bounded from below
by a similar expression as in 
the upper bound in Proposition \ref{GENCONVBOUNDPROP1}.
Recall that if $\fC\subset\R^d$ is a bounded convex set then 
the infimum of the 
radii of all $d$-dimensional closed balls containing $\fC$ is attained
for a unique ball;
we call this infimum the \textit{circumradius} of $\fC$. %
\begin{prop}\label{GENCONVBOUNDPROP2}
Given $d\geq2$ there exist constants $k_3,k_4>0$ such that
for every bounded convex set $\fC\subset\R^d$,
\begin{align}\label{GENCONVBOUNDPROP2RES}
p^{(d)}(\fC)\geq
\min\biggl\{\frac12,k_3r^{-d}
\int_{\S_1^{d-1}}
p'_{d-1}\Bigl(k_4r^{\frac1{d-1}}\fC\cap\vecv^\perp\Bigr)\,d\vecv\biggr\},
\end{align}
where $r$ is the circumradius of $\fC$,
and where $p'_{d-1}(\fZ)$ is as in \eqref{GENCONVBOUNDPROP1DEF}.
\end{prop}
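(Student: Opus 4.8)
The plan is to mirror the upper-bound argument of Proposition~\ref{GENCONVBOUNDPROP1}, but now bounding $p^{(d)}(\fC)$ from below rather than above. The starting point is the parametrization $M=[a_1,\vecv,\vecu,\tM]$ introduced in \eqref{NAKSPLITDEF}--\eqref{LATTICEINPARAM}, together with the fact (from \eqref{SIDSUBSSIDM1} and the surrounding discussion) that $\Si_d$ \emph{contains} a fundamental domain for $X_1$. Since $\Si_d$ meets any $\Gamma$-orbit in at most boundedly many points, one has the reverse-type estimate $p^{(d)}(\fC)\geq c\,\mu\bigl(\{M\in\Si_d:\Z^dM\cap\fC\subset\{\bn\}\}\bigr)$ for a constant $c>0$ depending only on $d$; more precisely, I would instead work with an explicit sub-collection of matrices in $\Si_d$ on which distinct $M$ give distinct lattices, so that the Haar integral over that collection is a genuine lower bound for $p^{(d)}(\fC)$.

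First I would fix a direction $\vecv\in\S_1^{d-1}$ and choose $a_1$ comparable to $k_4 r$ (with $r$ the circumradius), large enough that \eqref{LATTICECONTAINEMENT} forces every lattice point $(n,\vecm)M$ with $n\neq0$ to have $\|na_1\vecv\|\geq a_1>2r\geq\operatorname{diam}\fC$, hence to lie outside $\fC$ automatically (using that $\fC$ is contained in its circumball, whose center we may assume placed appropriately after applying the invariance \eqref{PDINV}; if $\bn$ is far from that ball then $p(\fC)=0$ and there is nothing to prove, while if $\bn$ is inside or near the ball we shift so that the ``$n=0$ slab'' $\vecv^\perp$ is the only slab that can meet $\fC$). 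With the $n\neq0$ points disposed of, the condition $\Z^dM\cap\fC\subset\{\bn\}$ reduces, via \eqref{LATTICEINPARAM} with $n=0$, to the statement that the $(d-1)$-dimensional lattice $\Z^{d-1}\tM$ avoids $a_1^{1/(d-1)}\fC_\vecv$, where $\fC_\vecv$ is the slice defined in \eqref{CVDEFNEW}, identified with a subset of $\R^{d-1}$. Integrating over $\tM\in\Si_{d-1}$ with the factorized Haar measure \eqref{SLDRSPLITHAAR} and over $\vecu\in(-\tfrac12,\tfrac12]^{d-1}$ (which contributes a factor $1$), and using that $\Si_{d-1}$ contains a fundamental domain for $X_1^{(d-1)}$, yields precisely a lower bound of the shape $p^{(d)}(\fC)\gg r^{-d}\int_{\S_1^{d-1}}p^{(d-1)}\bigl(k_4 r^{1/(d-1)}\fC\cap\vecv^\perp\bigr)\,d\vecv$ after integrating $a_1$ over a dyadic range $[k_4 r,\,2k_4 r]$, which contributes $\asymp r^{-d}$ from the density $a_1^{-d-1}\,da_1$. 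For $d=2$ the inner probability $p^{(1)}$ is replaced by $I(|\,\cdot\,|\leq1)$ exactly as in \eqref{GENCONVBOUNDPROP1DEF}, since a line segment of length $\leq1$ in $\R$, positioned so as to contain at most one point, can be made to avoid $\Z\setminus\{\bn\}$ on a set of $\tM$ (trivial here) of full measure, whereas a segment of length $>1$ cannot; one must be slightly careful and use the circumradius normalization so that the relevant segment length is controlled. The $\min\{\tfrac12,\cdot\}$ in \eqref{GENCONVBOUNDPROP2RES} is harmless because $p^{(d)}(\fC)\leq1$ always and because Lemma~\ref{TRIVPLOWBOUNDLEM} already gives $p(\fC)>\tfrac12$ when $|\fC|<\tfrac12$, so we only need the asserted bound when $|\fC|\geq\tfrac12$, i.e.\ when $r$ is bounded below.

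The main obstacle, and the point where the lower-bound argument genuinely differs from the upper-bound one, is the passage ``$\Si_{d-1}$ contains a fundamental domain, hence the Haar integral over $\{\tM\in\Si_{d-1}:\ldots\}$ is $\gg p^{(d-1)}(\ldots)$'': for an \emph{upper} bound one only needs $\Si_{d-1}$ to be contained in finitely many fundamental domains, but for a \emph{lower} bound one needs to exhibit, inside the full measure set $\{M\in G:\Z^dM\cap\fC\subset\{\bn\}\}$, a piece whose image in $\Gamma\backslash G$ has measure bounded below by the right-hand side, without double-counting. I would handle this by intersecting with a genuine fundamental domain $\scrF\subset\Si_d$ for $X_1$: restricting the $a_1$-integral and the $\vecu$-integral to small enough fixed ranges (and possibly shrinking the $\vecv$-range locally and summing), one can arrange that the map $[a_1,\vecv,\vecu,\tM]\mapsto\Gamma[a_1,\vecv,\vecu,\tM]$ is injective, so that distinct slices $\tM$ ranging over a fundamental domain for $X_1^{(d-1)}$ produce distinct cosets; the measure bookkeeping then goes through with an extra harmless constant. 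A secondary technical point is checking that after the initial normalization via \eqref{PDINV} the inradius/circumradius of the normalized $\fC$ is still $\asymp r$ and that $\bn$'s position relative to the circumball is preserved, but this is routine since linear maps of determinant one do not change the relevant combinatorics. I expect that once the injectivity/fundamental-domain issue is set up cleanly, the remainder is a direct computation paralleling the proof of Proposition~\ref{GENCONVBOUNDPROP1}.
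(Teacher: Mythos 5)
Your plan correctly identifies the skeleton of the argument — reverse the inequality from the upper-bound proof, use the parametrization $M=[a_1,\vecv,\vecu,\tM]$, restrict $a_1$ to a dyadic range $\asymp r$ so that $n\neq0$ slabs miss $\fC$, and apply the factorized Haar measure \eqref{SLDRSPLITHAAR} — but there are two genuine gaps, one of which is exactly the technical heart of the paper's proof.

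\textbf{The constraint $\ta_1\leq\sfrac{2}{\sqrt3}a_1^{d/(d-1)}$ is not optional.}
Your sketch says to integrate over $\tM\in\Si_{d-1}$ and invoke the fact that $\Si_{d-1}$ contains a fundamental domain to conclude that the $\tM$-integral is $\gg p^{(d-1)}(\ldots)$. But the condition $M=[a_1,\vecv,\vecu,\tM]\in\Si_d$ does not simply say $\tM\in\Si_{d-1}$: by the exact formula in \eqref{SIDSUBSSIDM1} it also requires $\ta_1\leq\frac{2}{\sqrt3}a_1^{d/(d-1)}$. For the \emph{upper} bound this constraint can be dropped (enlarging the set only helps), which is precisely why Proposition~\ref{GENCONVBOUNDPROP1} goes through smoothly. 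For the \emph{lower} bound you are forbidden from dropping it, and this is not a cosmetic issue: by Lemma~\ref{A1LARGELEM}, the set of $\tM$ with $\Z^{d-1}\tM\cap a_1^{1/(d-1)}\fC_\vecv=\emptyset$ necessarily lives at large $\ta_1$ when $a_1^{1/(d-1)}\fC_\vecv$ is large, and one must show that the Siegel cutoff does not kill the bulk of that measure. The paper devotes an auxiliary lemma (Lemma~\ref{GENCONVBOUNDPROP2LEM}) to exactly this point, showing that among $\tM\in\Si_{d-1}$ with $\Z^{d-1}\tM$ avoiding a convex set inside $\overline{\scrB_{R}^{d-1}}$, a positive proportion have $\ta_1<2R$; one then checks that for $a_1$ in the chosen range this is compatible with $\ta_1\leq\frac{2}{\sqrt3}a_1^{d/(d-1)}$. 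This lemma (proved by its own separate argument, with a nontrivial case analysis at $d=2$) is the real content of the lower-bound direction, and your proposal does not contain it or a substitute.

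\textbf{The case when the circumball is far from the origin is handled incorrectly.}
You write that ``if $\bn$ is far from that ball then $p(\fC)=0$ and there is nothing to prove''; this is false, indeed backwards. If $\bn$ is far from $\fC$, then $\fC$ is a distant set and a random lattice avoids it with substantial probability — one expects $p(\fC)\gg r^{-d}$, which still has to be \emph{proved}, and the paper does so with a separate direct computation (the second half of the proof of Proposition~\ref{GENCONVBOUNDPROP2}, using the sets $A_\vecv$ of admissible $a_1$). You also cannot ``shift'' $\fC$ to place the origin conveniently: \eqref{PDINV} is invariance under $\SL_d(\R)$, which fixes $\bn$, so the position of the origin relative to $\fC$ is invariant data.

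As a side remark, the injectivity concern you flag is overthought in the wrong direction: the inequality $p^{(d)}(\fC)\gg\mu\bigl(\{M\in\Si_d\col\Z^dM\cap\fC\subset\{\bn\}\}\bigr)$ does \emph{not} require carving out a one-to-one fundamental domain; it follows immediately from the standard fact that $\Si_d$ is contained in a \emph{finite} union of fundamental regions, so that any $\Gamma$-saturated set meets $\Si_d$ in measure at most a bounded multiple of its measure downstairs. That containment (rather than $\Si_d\supset\scrF$) is what gives the lower bound with a harmless constant.
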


The starting-point of the proof is to use
$p^{(d)}(\fC)\gg\mu(\{M\in\Si_d\col\Z^dM\cap\fC\subset\{\bn\}\})$,
which holds since $\Si_d$ is contained in a finite union
of fundamental regions for $X_1$, and then integrate over $M\in\Si_d$
using the parametrization $M=[a_1,\vecv,\vecu,\tM]$;
cf.\ \eqref{SIDSUBSSIDM1}.
The restriction $\ta_1\leq \sfrac 2{\sqrt 3}a_1^{d/(d-1)}$ in
\eqref{SIDSUBSSIDM1} leads to a technical problem when considering bounds
from below; to handle this we first prove the following
auxiliary lemma, which we will apply with $d-1$ in place of $d$.
The point of the lemma is to show that if 
a convex set $\fC$ is contained in a ball of radius $r$ centered at the origin,
then among all $M\in\Si_d$ satisfying $\Z^dM\cap\fC\subset\{\bn\}$,
a positive proportion (w.r.t.\ $\mu$) actually have $a_1<2r$,
where $a_1=a_1(M)$ as always refers to the $a_1$ occurring in 
the Iwasawa decomposition $M=\nn(u)\aa(a)\kk$,
cf.\ \eqref{IWASAWADEC} and \eqref{AADEF}.
The lemma is proved by using, once more, the parametrization
$M=[a_1,\vecv,\vecu,\tM]$, and noticing that whenever $a_1>r$ the
intersection $\Z^dM\cap\fC$ is in fact contained in
$(0,\Z^{d-1})M\cap\fC$, i.e.\ is independent of $\vecu$
(cf.\ \eqref{LATTICEINPARAM}).
\begin{lem}\label{GENCONVBOUNDPROP2LEM}
For every $r\geq1$ and every convex set
$\fC\subset\overline{\scrB_r^d}$ ($d\geq2$),
we have
\begin{align*}
p^{(d)}(\fC)\ll
\mu\bigl(\bigl\{M\in\Si_d\col a_1<2r,\:\Z^dM\cap\fC\subset\{\bn\}
\bigr\}\bigr).
\end{align*}
\end{lem}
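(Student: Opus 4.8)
The plan is to reduce the statement to the Siegel set and then to an estimate for an iterated integral. Put $S:=\{M\in\Si_d\col\Z^dM\cap\fC\subset\{\bn\}\}$. Since $\Si_d$ contains a fundamental region for $X_1=\Gamma\backslash G$ we have $p^{(d)}(\fC)\le\mu(S)$, so it suffices to prove $\mu(S)\ll\mu(S\cap\{a_1<2r\})$, i.e.\ $\mu(S\cap\{a_1\ge2r\})\ll\mu(S\cap\{a_1<2r\})$. I would work in the parametrization $M=[a_1,\vecv,\vecu,\tM]$ of \eqref{SIDSUBSSIDM1}--\eqref{SLDRSPLITHAAR} and use the observation indicated before the lemma: when $a_1>r$, by \eqref{LATTICEINPARAM} every lattice point $(n,\vecm)M$ with $n\neq0$ has norm $\ge|n|a_1>r$ and hence lies outside $\fC\subset\overline{\scrB_r^d}$, so $\Z^dM\cap\fC=(0,\Z^{d-1})M\cap\fC$ is independent of $\vecu$, and $\Z^dM\cap\fC\subset\{\bn\}$ is equivalent to $\Z^{d-1}\tM\cap a_1^{1/(d-1)}\fC_\vecv\subset\{\bn\}$ with $\fC_\vecv\subset\overline{\scrB_r^{d-1}}$ as in \eqref{CVDEFNEW}. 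Integrating out $\vecu$ (a factor $1$), this gives, with
\[
h(\vecv,a_1):=\mu^{(d-1)}\Bigl(\bigl\{\tM\in\Si_{d-1}\col\ta_1\le\tfrac2{\sqrt3}a_1^{d/(d-1)},\ \Z^{d-1}\tM\cap a_1^{1/(d-1)}\fC_\vecv\subset\{\bn\}\bigr\}\Bigr),
\]
the identities $\mu(S\cap\{a_1>r\})=\zeta(d)^{-1}\int_{\S_1^{d-1}}\int_r^\infty h(\vecv,a_1)\,a_1^{-d-1}\,da_1\,d\vecv$ and the same with $\int_r^{2r}$ for $\mu(S\cap\{r<a_1<2r\})$. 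Since $\mu(S\cap\{a_1\le r\})\le\mu(S\cap\{a_1<2r\})$ trivially, everything reduces to showing, after integrating in $\vecv$, that $\int_r^\infty h(\vecv,a_1)\,a_1^{-d-1}\,da_1\ll\int_r^{2r}h(\vecv,a_1)\,a_1^{-d-1}\,da_1$ up to a small error.

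For the comparison I would exploit that the $\fC_\vecv$-condition only gets harder as $a_1$ grows; to make this literally monotone I would replace $\fC_\vecv$ by $\fC_\vecv^+:=\operatorname{conv}(\{\bn\}\cup\fC_\vecv)$, which strengthens the condition (as $\fC_\vecv\subset\fC_\vecv^+$) and, being convex with $\bn\in\fC_\vecv^+$, satisfies $a_1'^{1/(d-1)}\fC_\vecv^+\subset a_1^{1/(d-1)}\fC_\vecv^+$ for $a_1'<a_1$; thus for fixed $\vecv,\tM$ the set of admissible $a_1>r$ is an interval $\bigl(\max(r,L(\tM)),\beta(\vecv,\tM)\bigr]$, where $L(\tM):=(\tfrac{\sqrt3}2\ta_1)^{(d-1)/d}$ is the least $a_1$ permitted by the Siegel inequality $\ta_1\le\tfrac2{\sqrt3}a_1^{d/(d-1)}$. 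It is precisely this Siegel inequality --- the technical point flagged before the statement --- that I would handle by a dichotomy on $\tM$. If $L(\tM)\le r$ (equivalently $\ta_1\le\tfrac2{\sqrt3}r^{d/(d-1)}$), the Siegel inequality holds for all $a_1\ge r$, the admissible interval is $(r,\beta]$, and an elementary estimate ($\int_r^{\beta}a_1^{-d-1}\,da_1$ equals $\int_r^{\min(2r,\beta)}a_1^{-d-1}\,da_1$ when $\beta\le2r$, and is at most $(1-2^{-d})^{-1}$ times it when $\beta>2r$, using $r<2r$) shows this part of $\int_r^\infty$ is $\ll$ the corresponding part of $\int_r^{2r}$. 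If $L(\tM)>r$ (so $\ta_1\gg r^{d/(d-1)}$; vacuous for $d=2$ since $\Si_1=\{1\}$) I would discard the $\fC_\vecv$-condition and use $\int_{L(\tM)}^\infty a_1^{-d-1}\,da_1\asymp L(\tM)^{-d}\asymp\ta_1^{-(d-1)}$ together with the tail bound $\mu^{(d-1)}(\{\tM\in\Si_{d-1}\col\ta_1>T\})\ll T^{-(d-1)}$ (immediate from \eqref{SLDRSPLITHAAR} in dimension $d-1$); this bounds that part by $\ll\int_{r^{d/(d-1)}}^\infty\ta_1^{-(2d-1)}\,d\ta_1\asymp r^{-2d}$.

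The two steps together give $p^{(d)}(\fC)\ll\mu(S\cap\{a_1<2r\})+O(r^{-2d})$, and the remaining --- and I expect genuinely hardest --- task is to absorb the error term, i.e.\ to rule out that $\mu(S\cap\{a_1<2r\})$ is much smaller than $r^{-2d}$. Here one argues directly. If $r\asymp1$ the claim follows from the direct comparison $\mu(S\cap\{a_1<2r\})\asymp\mu(S)\ge p^{(d)}(\fC)$ (when $r=O(1)$ the sets $S$ and $S\cap\{a_1<2r\}$ differ by a controlled amount, since $a_1\asymp\lambda_d(\Z^dM)$ on $\Si_d$). If $r$ is large and $\fC$ has a central slice $\fC\cap\vecv_0^\perp$ of small inradius (which is the case whenever $S\neq\emptyset$, as one shows by a Minkowski/density argument: if every central slice were ``fat'' then every unimodular lattice --- balanced or skewed --- would meet $\fC$), then, using the $\vecu$-independence once more, for each $a_1\in(r,2r)$ and each $\tM\in\Si_{d-1}$ with $\ta_1\le\tfrac2{\sqrt3}r^{d/(d-1)}$ whose transverse lattice avoids the thin scaled slice $a_1^{1/(d-1)}(\fC\cap\vecv_0^\perp)$ --- a set contained in a tube of volume $\ll r^{d/(d-1)}\epsilon^{d-2}$, so avoided by a proportion $\gg1$ of such $\tM$ by Lemma \ref{TRIVPLOWBOUNDLEM} --- the point $[a_1,\vecv_0,\vecu,\tM]$ lies in $S\cap\{a_1<2r\}$; estimating the $\mu$-measure of this full-dimensional family via \eqref{SLDRSPLITHAAR} yields $\mu(S\cap\{a_1<2r\})\gg r^{-d}$, which absorbs the error. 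The real obstacle throughout is this interplay between the Siegel constraint $\ta_1\le\tfrac2{\sqrt3}a_1^{d/(d-1)}$ and the scaling $a_1\mapsto a_1^{1/(d-1)}\fC_\vecv$, which is what forces both the dichotomy on $\tM$ and the separate lower bound for $\mu(S\cap\{a_1<2r\})$.
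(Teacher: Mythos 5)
Your strategy is sound in outline but breaks at the point you flag as merely technical. The replacement of $\fC_\vecv$ by $\fC_\vecv^+:=\operatorname{conv}(\{\bn\}\cup\fC_\vecv)$ is not a harmless "literal monotonization": since $\fC_\vecv\subset\fC_\vecv^+$, the avoidance condition with $\fC_\vecv^+$ is \emph{strictly stronger}, so the modified integrand $h^+(\vecv,a_1)$ satisfies $h^+\le h$, not $h^+\gg h$. Your interval argument then bounds $\int_r^\infty h^+a_1^{-d-1}\,da_1$ by $\int_r^{2r}h^+a_1^{-d-1}\,da_1$ (plus the $O(r^{-2d})$ tail), but both sides have been diminished, and in particular the upper bound on the left does not dominate $\int_r^\infty h\,a_1^{-d-1}\,da_1$, which is what you actually need. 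The underlying issue is that the $\fC_\vecv$-condition is genuinely \emph{not} monotone in $a_1$ when $\bn\notin\fC_\vecv$ (e.g.\ in $\R^1$, $\Z\cap a[3/4,5/4]=\emptyset$ holds for $a<4/5$ and again for $a\in(4/3,8/5)$), and conditioning on $\vecp\cdot\vecv$ the relevant slices $\fC_\vecv$ typically do miss the origin. There is no containment monotonicity here to exploit.

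The paper avoids this entirely via Lemma \ref{CONVPDMONOTONELEM}, which gives the non-obvious fact that $\alpha\mapsto p^{(d-1)}(\alpha\fC_\vecv)$ is decreasing for \emph{arbitrary} convex $\fC_\vecv$ (proved by constructing $T\in G$ with $\fC_\vecv T\subset\alpha\overline{\fC_\vecv}$, via Lemma \ref{CONVPARALLELHYPPLANESLEM}). Combined with the upper bound $h\ll p^{(d-1)}(a_1^{1/(d-1)}\fC_\vecv)$ (from $\Si_{d-1}$ being a finite union of fundamental domains) and an inductive lower bound $p^{(d-1)}(a_1^{1/(d-1)}\fC_\vecv)\ll h$ valid on $a_1\in[\sqrt3 r,2r]$ (reducing the Siegel constraint $\ta_1\le\tfrac2{\sqrt3}a_1^{d/(d-1)}$ to $\ta_1<2r'$ and invoking the lemma in dimension $d-1$), the comparison goes through with no additive error term and hence no need for a separate lower bound on $\mu(S\cap\{a_1<2r\})$. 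Your final paragraph --- where you try to show $\mu(S\cap\{a_1<2r\})\gg r^{-2d}$ to absorb the error --- is a further gap: the ``Minkowski/density argument'' showing a thin central slice exists whenever $S\ne\emptyset$ is not spelled out and is far from immediate, and the paper's inductive structure is precisely what makes such a separate lower bound unnecessary. (For $d=2$ the paper instead does a direct interval calculation on $[\alpha x,\beta x]$ with $\alpha,\beta$ of arbitrary sign, again without assuming $\bn$ lies in the slice.)
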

\begin{proof}
Since $p(\fC)\leq\mu(\{M\in\Si_d\col\Z^dM\cap\fC\subset\{\bn\}\})$,
it suffices to prove
\begin{align}\notag
\mu\bigl(\bigl\{M\in\Si_d\col a_1>2r,\:\Z^dM\cap\fC\subset\{\bn\}
\bigr\}\bigr)
\hspace{100pt}
\\\label{GENCONVBOUNDPROP2LEMPF1}
\ll\mu\bigl(\bigl\{M\in\Si_d\col r<a_1<2r,\:
\Z^dM\cap\fC\subset\{\bn\}
\bigr\}\bigr).
\end{align}
Writing $M=[a_1,\vecv,\vecu,\tM]$, and noticing that 
$a_1>r$ implies $(na_1\vecv+\vecv^\perp)\cap\overline{\scrB_r^d}=\emptyset$
for all $\vecv\in\S_1^{d-1}$ and $n\in\Z\setminus\{0\}$,
we see from 
\eqref{SIDSUBSSIDM1}--\eqref{LATTICECONTAINEMENT} 
that \eqref{GENCONVBOUNDPROP2LEMPF1} will follow if we can prove
that, for each $\vecv\in\S_1^{d-1}$,
\begin{align}\notag
\int_{2r}^\infty
\mu^{(d-1)}\bigl(\bigl\{\tM\in\Si_{d-1}\col
\ta_1\leq\sfrac2{\sqrt3}a_1^{\frac d{d-1}},\:
\Z^{d-1}\tM\cap a_1^{\frac1{d-1}}\fC_\vecv\subset\{\bn\}\bigr\}
\bigr)\,\frac{da_1}{a_1^{d+1}}%
\hspace{30pt}
\\\label{GENCONVBOUNDPROP2LEMPF2}
\ll %
\int_r^{2r}
\mu^{(d-1)}\bigl(\bigl\{\tM\in\Si_{d-1}\col
\ta_1\leq\sfrac2{\sqrt3}a_1^{\frac d{d-1}},\:
\Z^{d-1}\tM\cap a_1^{\frac1{d-1}}\fC_\vecv\subset\{\bn\}\bigr\}
\bigr)\,\frac{da_1}{a_1^{d+1}} %
\end{align}
where $\fC_\vecv$ is as in \eqref{CVDEFNEW}
(in particular $\fC_\vecv$ is a bounded convex set).
If $\fC_\vecv=\emptyset$ then \eqref{GENCONVBOUNDPROP2LEMPF2} holds,
since $\mu^{(d-1)}(\{\tM\in\Si_{d-1}\col\ta_1\leq2/\sqrt3\})
\asymp \mu^{(d-1)}(\Si_{d-1})\asymp1$;
hence from now on we may assume $\fC_\vecv\neq\emptyset$.

First assume $d=2$.
Using $\Si_1=G^{(1)}=\{1\}$ and $r\geq1$, 
and writing $r\overline{\fC_\vecv}=[\alpha,\beta]$
we see that our task is to prove
\begin{align}\label{GENCONVBOUNDPROP2LEMPF4}
\int_{2}^\infty I\bigl(\Z\cap(\alpha x,\beta x)\subset\{0\}\bigr)\,
\frac{dx}{x^3}
\ll
\int_1^{2}I\bigl(\Z\cap [\alpha x,\beta x]\subset\{0\}\bigr)\,
\frac{dx}{x^3},
\end{align}
for any $\alpha\leq\beta$.
This bound is verified by a case by case analysis.
Indeed, if $\alpha\leq0\leq\beta$ then 
$\Z\cap [\alpha x,\beta x]\subset\{0\}$ is equivalent with
$\max(|\alpha|,|\beta|)|x|<1$ and thus
\eqref{GENCONVBOUNDPROP2LEMPF4} follows easily.
Also if $\beta-\alpha\geq\frac12$ then the bound holds since
the left hand side vanishes.
Hence by symmetry it remains to consider the case 
$0<\alpha\leq\beta<\alpha+\frac12$.
Now for every integer $n\in[\alpha,2\alpha)$ the interval
$I_n=(\frac n\alpha,\frac{n+1}{\alpha+\frac12})$ is contained in $[1,2]$,
and each $x\in I_n$ satisfies
$\Z\cap[\alpha x,\beta x]\subset\Z\cap(n,n+1)=\emptyset$.
Hence the right hand side of \eqref{GENCONVBOUNDPROP2LEMPF4} is
$\geq8^{-1}\sum_{n\in[\alpha,2\alpha)}|I_n|
=\sum_{n\in[\alpha,2\alpha)}\frac{2\alpha-n}{8\alpha(2\alpha+1)}$,
and if $\alpha\geq\frac{11}{10}$ then this is $\gg1$ so that 
the bound \eqref{GENCONVBOUNDPROP2LEMPF4} holds.
If $\frac35\leq\alpha<\frac{11}{10}$ then we get the same conclusion
since the right hand side of \eqref{GENCONVBOUNDPROP2LEMPF4} is
$\geq8^{-1}|I_1\cap(1,2)|\gg1$.
Also if $\beta\leq\frac9{10}$ the 
right hand side of \eqref{GENCONVBOUNDPROP2LEMPF4} is seen to be $\gg1$
so that \eqref{GENCONVBOUNDPROP2LEMPF4} holds.
Hence from now on we may assume $0<\alpha<\frac35$ and 
$\frac9{10}<\beta<\alpha+\frac12$.
If $\alpha x>2$ then
$x(\beta-\alpha)>\frac2\alpha(\frac9{10}-\frac35)>1$;
hence only $x$ with $\beta x<2$ can contribute to the integral in the
left hand side of \eqref{GENCONVBOUNDPROP2LEMPF4}.
It follows that this integral is $\ll\max(0,2(\beta^{-1}-1))$.
On the other hand if $\beta<1$ then
$\Z\cap[\alpha x,\beta x]\subset\{0\}$ holds for all
$x\in(1,\beta^{-1})$, so that the right hand side of
\eqref{GENCONVBOUNDPROP2LEMPF4} is $\gg\beta^{-1}-1$.
This completes the proof of the bound \eqref{GENCONVBOUNDPROP2LEMPF4},
and hence also of \eqref{GENCONVBOUNDPROP2LEMPF2} in the case $d=2$.

Next assume $d\geq3$.
We first bound the left hand side of \eqref{GENCONVBOUNDPROP2LEMPF2}
using
\begin{align*}
\mu^{(d-1)}(\{\tM\in\Si_{d-1}\col
\ta_1\leq\sfrac2{\sqrt3}a_1^{\frac d{d-1}},\:
\Z^{d-1}\tM\cap a_1^{\frac1{d-1}}\fC_\vecv\subset\{\bn\}\})
\ll p^{(d-1)}(a_1^{\frac1{d-1}}\fC_\vecv),
\end{align*}
which holds since $\Si_{d-1}$ can be covered by a finite number of 
fundamental regions for $X_1^{(d-1)}$. %
Also the function
$a_1\mapsto p^{(d-1)}(a_1^{\frac1{d-1}}\fC_\vecv)$ (for $a_1>0$)
is decreasing, because of Lemma \ref{CONVPDMONOTONELEM}.
Hence \eqref{GENCONVBOUNDPROP2LEMPF2} will follow if we prove that
for every $a_1\in[\sqrt3r,2r]$,
\begin{align}\label{GENCONVBOUNDPROP2LEMPF5}
p^{(d-1)}\bigl(a_1^{\frac1{d-1}}\fC_\vecv\bigr)
\ll\mu^{(d-1)}\bigl(\bigl\{\tM\in\Si_{d-1}\col
\ta_1\leq\sfrac2{\sqrt3}a_1^{\frac d{d-1}},\:
\Z^{d-1}\tM\cap a_1^{\frac1{d-1}}\fC_\vecv\subset\{\bn\}\bigr\}\bigr).
\end{align}
But $a_1^{\frac1{d-1}}\fC_\vecv\subset\overline{\scrB_{r'}^d}$
with $r'=a_1^{\frac1{d-1}}r$,
and because of $a_1\geq\sqrt3r$ we have
$r'>1$ and 
$\sfrac2{\sqrt3}a_1^{\frac d{d-1}}\geq
\sfrac2{\sqrt3}a_1^{\frac 1{d-1}}\sqrt3r=2r'$.
It follows that the right hand side of \eqref{GENCONVBOUNDPROP2LEMPF5} 
does not increase if we replace the condition
$\ta_1\leq\sfrac2{\sqrt3}a_1^{\frac d{d-1}}$ therein
by $\ta_1<2r'$.
Hence \eqref{GENCONVBOUNDPROP2LEMPF5} is true by induction.
\end{proof}

\begin{proof}[Proof of Proposition \ref{GENCONVBOUNDPROP2}]
Let $B$ be the unique closed ball of radius $r$ containing $\fC$.
If $r\leq\frac13$ then $|\fC|<\frac12$
and thus $p^{(d)}(\fC)>\frac12$ by Lemma \ref{TRIVPLOWBOUNDLEM}.
Hence from now on we may assume $r>\frac13$.

Let us first assume that $B$ lies within distance $r$ from the origin.
Then $B\subset\overline{\scrB_{3r}^d}$. Hence if
$a_1>3r$ then $B\cap(na_1\vecv+\vecv^\perp)=\emptyset$
for all $n\in\Z\setminus\{0\}$ and $\vecv\in\S_1^{d-1}$,
so that, by \eqref{LATTICEINPARAM},
\begin{align*}
\Z^d[a_1,\vecv,\vecu,\tM]\cap\fC
=a_1^{-\frac1{d-1}}\iota(\Z^{d-1}\tM)f(\vecv)\cap\fC,
\qquad %
\forall
\vecv\in\S_1^{d-1},\:\vecu\in\R^{d-1},\:\tM\in G^{(d-1)}.
\end{align*}
Using also \eqref{SIDSUBSSIDM1} and \eqref{SLDRSPLITHAAR} we get:
\begin{align}\notag
&p^{(d)}(\fC)\gg\mu\bigl(\bigl\{M\in\Si_d\col \Z^dM\cap\fC\subset\{\bn\}\bigr\}\bigr)
\\\label{GENCONVBOUNDPROP2PF1}
&\gg\int_{\S_1^{d-1}}\int_{3r}^\infty
\mu^{(d-1)}\bigl(\bigl\{\tM\in\Si_{d-1}\col
\ta_1\leq\sfrac2{\sqrt3}a_1^{\frac d{d-1}},\:
\Z^{d-1}\tM\cap a_1^{\frac1{d-1}}\fC_\vecv\subset\{\bn\}\bigr\}\bigr)
\,\frac{da_1}{a_1^{d+1}}\,d\vecv,
\end{align}
where $\fC_\vecv$ is as in \eqref{CVDEFNEW}.
We have $a_1^{\frac1{d-1}}\fC_\vecv\subset\overline{\scrB_{r'}^d}$
with $r'=3ra_1^{\frac1{d-1}}$.
Now if $a_1>3\sqrt3r$ then
$\sfrac2{\sqrt3}a_1^{\frac d{d-1}}> %
2r'$ and $r'>1$, and thus by Lemma \ref{GENCONVBOUNDPROP2LEM}, if also 
$d\geq3$,
\begin{align*}
\mu^{(d-1)}\bigl(\bigl\{\tM\in\Si_{d-1}\col
\ta_1\leq\sfrac2{\sqrt3}a_1^{\frac d{d-1}},\:
\Z^{d-1}\tM\cap a_1^{\frac1{d-1}}\fC_\vecv\subset\{\bn\}\bigr\}\bigr)
\gg p^{(d-1)}\bigl(a_1^{\frac1{d-1}}\fC_\vecv\bigr).
\end{align*}
Using this in \eqref{GENCONVBOUNDPROP2PF1} and recalling that the function
$a_1\mapsto p^{(d-1)}(a_1^{\frac1{d-1}}\fC_\vecv)$ is decreasing by
Lemma~\ref{CONVPDMONOTONELEM},
it follows that \eqref{GENCONVBOUNDPROP2RES} holds
for any fixed choice of $k_4>(3\sqrt3)^{\frac1{d-1}}$, and a
corresponding appropriate constant $k_3>0$.
On the other hand if $d=2$ then if we write 
$3r\overline{\fC_\vecv}=[\alpha_\vecv,\beta_\vecv]$
(set $\alpha_\vecv=\beta_\vecv=0$ when $\fC_\vecv=\emptyset$),
\eqref{GENCONVBOUNDPROP2PF1} takes the form
\begin{align}\label{GENCONVBOUNDPROP2PF1a}
p^{(2)}(\fC)
\gg r^{-2}\int_{\S_1^1}\int_1^\infty
I\bigl(\Z\cap [\alpha_\vecv x,\beta_\vecv x]\subset\{0\}\bigr)
\,\frac{dx}{x^3}\,d\vecv.
\end{align}
However, by using some of the steps from the proof of
\eqref{GENCONVBOUNDPROP2LEMPF4},
the inner integral in \eqref{GENCONVBOUNDPROP2PF1a} 
(even when restricted to $x\in[1,2]$)
is seen to be $\gg1$ whenever
$\beta_\vecv\leq\alpha_\vecv+\frac13$.
Hence \eqref{GENCONVBOUNDPROP2RES} holds with $k_4=9$.

It now remains to treat the case when $B$ has distance
$>r$ from the origin. We may assume that the center of $B$ is $t\vece_1$,
where $t>2r$. 
Given $\vecv\in\S_1^{d-1}$ we note that $na_1\vecv+\vecv^\perp$
has nonempty intersection with $B$ 
only if $|na_1-t\vecv\cdot\vece_1|\leq r$,
and hence $(na_1\vecv+\vecv^\perp)\cap B=\emptyset$ holds for all
$n\in\Z$ whenever $a_1>0$ satisfies
$a_1\Z\cap[t\vecv\cdot\vece_1-r,t\vecv\cdot\vece_1+r]=\emptyset$.
Let $A_\vecv$ be the set of all $a_1>0$ satisfying this condition.
By \eqref{LATTICECONTAINEMENT} 
we have %
$\Z^dM\cap B=\emptyset$
whenever $M=[a_1,\vecv,\vecu,\tM]$ with
$\vecv\in S$ and $a_1\in A_\vecv$.
Hence by \eqref{SIDSUBSSIDM1} and \eqref{SLDRSPLITHAAR},
and since $r>\frac13$,
\begin{align*}
p^{(d)}(\fC)\gg
\int_{\S_1^{d-1}}\int_{A_\vecv}\mu^{(d-1)}\bigl(\bigl\{\tM\in\Si_{d-1}\col
\ta_1\leq\sfrac2{\sqrt3}a_1^{\frac d{d-1}}\bigr\}\bigr)
\,\frac{da_1}{a_1^{d+1}}\,d\vecv
\gg\int_{\S_1^{d-1}}\int_{A_\vecv\cap(3r,\infty)}\frac{da_1}{a_1^{d+1}}\,d\vecv
\\
\gg r^{-d-1}\int_{\S_1^{d-1}}\bigl|A_\vecv\cap(3r,30r)\bigr|\,d\vecv
=r^{-d}\int_{\S_1^{d-1}}
\int_3^{30}I\bigl(x\Z\cap[\alpha_\vecv-1,\alpha_\vecv+1]=\emptyset\bigr)\,dx
\,d\vecv,
\end{align*}
where in the last step we wrote $\alpha_\vecv:=r^{-1}t\vecv\cdot\vece_1$.
Now for every $\vecv\in\S_1^{d-1}$ with 
$|\vecv\cdot\vece_1|>\frac12$ we have $|\alpha_\vecv|>1$,
and this is easily seen to imply that the inner integral is $\gg1$.
It follows that 
$p^{(d)}(\fC)\gg r^{-d}$, and thus \eqref{GENCONVBOUNDPROP2RES} holds.
\end{proof}

\begin{remark}\label{GENCONVEXPLREM}
Proposition \ref{GENCONVBOUNDPROP2} 
(together with Lemma \ref{CONVPDMONOTONELEM}) implies that
the bound in Proposition~\ref{GENCONVBOUNDPROP1} 
is sharp if we require the convex set $\fC$ to have a 
bounded ratio between its circumradius and its inradius.
However, for \textit{every} bounded convex set $\fC$ with nonempty interior
there is some $M\in G$ such that the ``John ellipsoid'' of $\fC M$
(viz.\  the unique $d$-dimensional ellipsoid of maximal volume 
contained in $\fC M$)
is a ball, and the ratio between the circumradius and the inradius of $\fC M$
is then $\leq d$; cf.\ \cite{John48}.
Also recall that $p(\fC)=p(\fC M)$.
Hence by induction on $d$ we obtain a fairly explicit, %
sharp upper bound on $p(\fC)$ for $\fC$ convex,
which we now state.

For any bounded convex set $\fC\subset\R^d$ with non-empty interior,
define $F_d(\fC)$ as follows.
Set $F_1(\fC)=1$ if $|\fC|\leq1$, otherwise $F_1(\fC)=0$.
Then for $d\geq2$, define $F_d(\fC)$ recursively
by taking $M\in\SL_d(\R)$ so that 
the John ellipsoid of $\fC M$
is a \textit{ball}, and setting
\begin{align}\label{FDCDEF}
F_d(\fC)=\min\biggl\{1,|\fC|^{-1}\int_{\S_1^{d-1}}
F_{d-1}\Bigl(|\fC|^{\frac1{d(d-1)}}\fC M\cap\vecv^\perp\Bigr)\,d\vecv\biggr\}.
\end{align}
(This makes $F_d(\fC)$ well-defined, although $M$ is determined
only up to multiplication from the right by an arbitrary
element of $\SO_d(\R)$.)
We extend the definition to arbitrary convex sets $\fC\subset\R^d$
by setting $F_d(\fC)=1$ if $\fC$ has empty interior,
and $F_d(\fC)=0$ if $\fC$ has non-empty interior and is unbounded.

\textit{Then for any $d\geq2$ there exist constants $k_1>k_2>0$ 
such that for every convex set $\fC\subset\R^d$, %
\begin{align}\label{GENCONVEXTHMRES}
\min\bigl\{\sfrac12,F_d(k_1\fC)\bigr\}\leq p(\fC)\leq F_d(k_2\fC).
\end{align}}

\vspace{-10pt}

It is %
an interesting question whether $F_d$ as defined in
\eqref{FDCDEF} can
be replaced by some %
geometrically more transparent
function, so that \eqref{GENCONVEXTHMRES} still holds.
\end{remark}

To conclude this section,
we point out some situations when
the Athreya--Margulis bound $p(\fC)\ll|\fC|^{-1}$ is sharp.
The following is immediate from Proposition \ref{GENCONVBOUNDPROP2}.
\begin{cor}\label{TRIVAMSHARPCOR}
Given $d\geq2$ and $\ve>0$ there is a constant $c=c(d,\ve)>0$ 
such that the following
holds.
For any convex set $\fC\subset\R^d$ which is 
contained in a $d$-dimensional ball of volume
$\leq\ve^{-1}|\fC|$ and which satisfies
$\vol_{\S_1^{d-1}}\{\vecv\in\S_1^{d-1}\col\vecv^\perp\cap\fC=\emptyset\}
\geq\ve$, we have
\begin{align*}
p(\fC)\geq\min\bigl(\sfrac12,c|\fC|^{-1}\bigr).
\end{align*}
\end{cor}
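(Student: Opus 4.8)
The plan is to read off Corollary \ref{TRIVAMSHARPCOR} directly from Proposition \ref{GENCONVBOUNDPROP2} by making the two factors on its right-hand side explicit for the sets $\fC$ under consideration. First I would dispose of the trivial range: if $|\fC|<\tfrac12$ then Lemma \ref{TRIVPLOWBOUNDLEM} already gives $p(\fC)>\tfrac12\geq\min\!\bigl(\tfrac12,c|\fC|^{-1}\bigr)$, so from now on we may assume $|\fC|\geq\tfrac12$. In particular $\fC$ has positive volume, hence is a bounded convex set with non-empty interior, so its circumradius $r$ is a positive real number and Proposition \ref{GENCONVBOUNDPROP2} applies.

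Next I would bound $r$ from above. By hypothesis $\fC$ is contained in some $d$-dimensional ball of volume $\leq\ve^{-1}|\fC|$, and the circumradius ball is by definition the smallest ball containing $\fC$; hence $\vol(\scrB_1^d)\,r^d\leq\ve^{-1}|\fC|$, i.e.\ $r^{-d}\geq\ve\,\vol(\scrB_1^d)\,|\fC|^{-1}$. The main observation is then that the integrand in \eqref{GENCONVBOUNDPROP2RES} equals $1$ for every $\vecv$ with $\vecv^\perp\cap\fC=\emptyset$: for such $\vecv$, since $\vecv^\perp$ is a linear subspace and $k_4r^{\frac1{d-1}}>0$, we also have $\vecv^\perp\cap\bigl(k_4r^{\frac1{d-1}}\fC\bigr)=\emptyset$, so the slice $k_4r^{\frac1{d-1}}\fC\cap\vecv^\perp$ is the empty subset of $\R^{d-1}$, and $p'_{d-1}(\emptyset)=1$ (indeed $p^{(d-1)}(\emptyset)=1$ when $d\geq3$, and $I(|\emptyset|\leq1)=1$ when $d=2$). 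Since $p'_{d-1}\geq0$ everywhere, restricting the integral in \eqref{GENCONVBOUNDPROP2RES} to the set $\{\vecv\in\S_1^{d-1}\col\vecv^\perp\cap\fC=\emptyset\}$ and invoking the hypothesis $\vol_{\S_1^{d-1}}\{\vecv\col\vecv^\perp\cap\fC=\emptyset\}\geq\ve$ gives
\[
\int_{\S_1^{d-1}}p'_{d-1}\Bigl(k_4r^{\frac1{d-1}}\fC\cap\vecv^\perp\Bigr)\,d\vecv\ \geq\ \ve .
\]
Feeding these two estimates into Proposition \ref{GENCONVBOUNDPROP2} yields $p(\fC)\geq\min\bigl\{\tfrac12,\,k_3\ve^2\,\vol(\scrB_1^d)\,|\fC|^{-1}\bigr\}$, so the corollary holds with $c(d,\ve):=k_3\,\vol(\scrB_1^d)\,\ve^2$.

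I do not anticipate any real obstacle: the argument is essentially a substitution into Proposition \ref{GENCONVBOUNDPROP2}. The only points needing (routine) care are the degenerate case $|\fC|<\tfrac12$, handled above by Lemma \ref{TRIVPLOWBOUNDLEM}, and the bookkeeping that an empty slice of $\fC$ produces an empty subset of $\R^{d-1}$ and hence the value $1$ for $p'_{d-1}$ in both cases $d=2$ and $d\geq3$.
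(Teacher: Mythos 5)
Your proof is correct and follows exactly the route the paper intends: the paper states that Corollary \ref{TRIVAMSHARPCOR} is ``immediate from Proposition \ref{GENCONVBOUNDPROP2}'', and you have simply spelled out the two substitutions (bounding the circumradius $r$ from above via the containing ball, and lower-bounding the integral by $\ve$ using the fact that the slice $k_4r^{\frac1{d-1}}\fC\cap\vecv^\perp$ is empty, hence $p'_{d-1}=1$, whenever $\vecv^\perp\cap\fC=\emptyset$), together with the routine reduction to $|\fC|\geq\tfrac12$ via Lemma \ref{TRIVPLOWBOUNDLEM}.
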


Note that Corollary \ref{TRIVAMSHARPCOR} 
(just as Proposition \ref{GENCONVBOUNDPROP2}) does not
take into account the invariance $p(\fC M)=p(\fC)$.
We next give a simple criterion %
which is invariant under 
$\fC\mapsto \fC M$ ($\forall M\in G$):

\begin{cor}\label{GENCONVAMSHARPCOR}
Given $d\geq2$ and $\ve>0$ there is a constant $c=c(d,\ve)>0$
such that for any convex set $\fC\subset\R^d$,
if $t\vecq\notin\fC$ for all $0\leq t\leq\ve$, 
where $\vecq$ is the centroid of $\fC$,
then
\begin{align}\label{GENCONVAMSHARPCORRES}
p(\fC)\geq\min\bigl(\sfrac12,c|\fC|^{-1}\bigr).
\end{align}
\end{cor}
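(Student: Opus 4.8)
The plan is to reduce Corollary \ref{GENCONVAMSHARPCOR} to Corollary \ref{TRIVAMSHARPCOR} by using the invariance $p(\fC)=p(\fC M)$ to put $\fC$ into a favourable position, and then to check that the two hypotheses of Corollary \ref{TRIVAMSHARPCOR} hold with a new $\ve$ depending only on $d$ and the original $\ve$. Throughout we may clearly assume $|\fC|<\infty$ and that $\fC$ has nonempty interior, since otherwise \eqref{GENCONVAMSHARPCORRES} is trivial (in the unbounded case $p(\fC)=0$ is impossible because the hypothesis forces $\bn\notin\fC$, so actually $|\fC|=\infty$ would contradict nothing — but then $p(\fC)=0=\min(\sfrac12,0)$ vacuously holds only if $|\fC|^{-1}=0$; in fact if $\fC$ is unbounded convex with $\bn\notin\fC$ one still has $p(\fC)=0$ by Lemma \ref{TRIVPCBOUNDLEM2}, and the claimed bound reads $0\geq 0$, fine).

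\textbf{Positioning via an affine-volume normalisation.} First I would translate the John-ellipsoid idea: by \cite{John48} there is $M\in\GL_d(\R)$, which we may take in $\SL_d(\R)$ after scaling, such that the John ellipsoid of $\fC M$ is a ball $\overline{\scrB_\rho^d}(\vecc)$, and then $\fC M\subset \overline{\scrB_{d\rho}^d}(\vecc)$, so the circumradius of $\fC M$ is at most $d$ times its inradius. Since $M$ is area preserving the centroid of $\fC M$ is $\vecq M$, and the hypothesis $t\vecq\notin\fC$ for $0\le t\le\ve$ transforms into $t(\vecq M)\notin \fC M$ for $0\le t\le\ve$. Also $|\fC M|=|\fC|$ and $\fC M$ sits in a ball of volume $\le (2d)^d\vol(\scrB_1^d)\,\rho^d$; since that ball contains the John ellipsoid of volume $\vol(\scrB_1^d)\rho^d\le|\fC M|$, we get $|\text{ball}|\le (2d)^d|\fC|$, so the first hypothesis of Corollary \ref{TRIVAMSHARPCOR} holds with $\ve_1:=(2d)^{-d}$.

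\textbf{The measure of blocked directions.} The remaining, and main, point is to show that $\vol_{\S_1^{d-1}}\{\vecv\in\S_1^{d-1}\col\vecv^\perp\cap\fC M=\emptyset\}\ge\ve_2$ for some $\ve_2=\ve_2(d,\ve)>0$; then Corollary \ref{TRIVAMSHARPCOR} with $\ve'=\min(\ve_1,\ve_2)$ gives $p(\fC M)\ge\min(\sfrac12,c|\fC M|^{-1})$ and we are done. A hyperplane $\vecv^\perp$ misses $\fC M$ iff $\fC M$ lies strictly on one side of it, i.e.\ iff $\vecx\cdot\vecv$ has constant sign on $\fC M$. Let $\vecc'=\vecq M$ be the centroid and write $\fC M\subset\overline{\scrB_{d\rho}^d}(\vecc')$? — not quite, the ball is centred at the John centre, but since the centroid of a convex body lies in the body and the body has diameter $\le 2d\rho$, we have $\|\vecc'\|\le$ (distance from $\bn$ to $\fC M$) $+2d\rho$, and the hypothesis gives that distance $\ge$ something like $\ve\|\vecc'\|/(1+\ve)$ — more carefully: $t\vecc'\notin\fC M$ for $t\in[0,\ve]$ means the segment from $\bn$ to $\ve\vecc'$ avoids $\fC M$, so $\dist(\bn,\fC M)\ge\min(\ve\|\vecc'\|,\,\dist(\bn,\text{line }\R\vecc'\setminus[\,0,\ve\vecc'\,]))$; by convexity the closest point of $\fC M$ to $\bn$ projects into the ray through $\vecc'$ beyond $\ve\vecc'$ or onto $\ve\vecc'$ itself, giving $\dist(\bn,\fC M)\gg_\ve\|\vecc'\|$ and also $\dist(\bn,\fC M)\ge\ve\|\vecc'\|$ when $\|\vecc'\|$ is the relevant scale. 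Consequently $\rho\asymp_d\|\vecc'\|$ with constants also depending on $\ve$, because $\fC M$ is sandwiched between two balls of radii $\asymp\rho$ and has centroid at distance $\asymp_{d,\ve}\rho$ from $\bn$. Then the set of unit $\vecv$ with $\vecv\cdot\vecx$ of constant sign on $\fC M$ contains a spherical cap: pick $\vecv_0=\vecc'/\|\vecc'\|$; for $\vecx\in\fC M$ we have $\vecx\cdot\vecv_0\ge \dist(\bn,\fC M)\gg_{d,\ve}\rho$, while $|\vecx|\le\|\vecc'\|+d\rho\ll_{d,\ve}\rho$, so any $\vecv$ within angle $\delta=\delta(d,\ve)$ of $\vecv_0$ still has $\vecx\cdot\vecv>0$ on $\fC M$. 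That cap has $\vol_{\S_1^{d-1}}$-measure $\ge\ve_2(d,\ve)>0$, as required.

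\textbf{Main obstacle.} The routine parts are the John-ellipsoid positioning and the final cap estimate; the genuinely delicate bookkeeping is making the chain ``$\dist(\bn,\fC M)\gg_{d,\ve}\rho$'' rigorous from the sole hypothesis $t\vecq\notin\fC$ for $0\le t\le\ve$ — one must rule out the possibility that $\fC M$ is a very long thin slab passing close to $\bn$ off the segment direction, but this is exactly prevented by the bounded circumradius/inradius ratio secured in the positioning step, so the two steps must be run together. Once that quantitative comparison is in hand, invoking Corollary \ref{TRIVAMSHARPCOR} and using $p(\fC)=p(\fC M)$ finishes the proof.
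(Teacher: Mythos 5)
Your overall plan is the same as the paper's: use $p(\fC)=p(\fC M)$ and the John ellipsoid to reduce to Corollary~\ref{TRIVAMSHARPCOR}, checking both of its hypotheses. The first hypothesis (containment in a ball of comparable volume) is handled correctly. The real work is the second hypothesis, and here the proposal has a genuine gap, which you in fact half-acknowledge in the final paragraph.

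The specific geometric claim you make — that ``by convexity the closest point of $\fC M$ to $\bn$ projects into the ray through $\vecc'$ beyond $\ve\vecc'$, giving $\dist(\bn,\fC M)\gg_\ve\|\vecc'\|$'' — is false. Take $d=2$ and $\fC$ the square $[1,3]\times[0,2]$, with centroid $\vecq=(2,1)$. The hypothesis $t\vecq\notin\fC$ holds for all $t<\tfrac12$, so we may take $\ve=0.49$, say. The closest point of $\fC$ to the origin is $\vecp=(1,0)$, and its orthogonal projection onto $\R\vecq$ is $\tfrac25(2,1)$, i.e.\ at parameter $0.4<\ve$. So the projection does \emph{not} land beyond $\ve\vecq$. (The paper's conclusion $\|\vecp\|\geq\ve r$, with $r$ the radius of the inscribed ball at the centroid, does hold here: $\|\vecp\|=1=r$. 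But your intermediate claim, and the argument sketched for it, do not.) There is also a secondary gap: you need an inscribed ball of radius $\asymp\rho$ \emph{centered at the centroid}, whereas the John ellipsoid's centre is generally displaced from the centroid; the paper supplies this via Lemma~\ref{JOHNBALLATCENTROIDLEM}, which you do not invoke or reprove.

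The paper closes exactly this gap with a different and more careful projection argument. Taking $\vecp$ the point of $\overline\fC$ nearest $\bn$ and $\veca$ the point of $\R\vecq\cap\overline\fC$ nearest $\bn$, it projects $\vecq$ and $\bn$ orthogonally onto the line $\ell$ through $\vecp$ and $\veca$ (not onto $\R\vecq$), obtaining points $\vecb,\vecc$. Ordering on $\ell$ and the fact that $\vecp,\veca\in\partial\fC$ force $\vecb\notin\fC^\circ$, hence $\|\vecb-\vecq\|\geq r$, and a similar-triangles comparison $\triangle\vecc\veca\bn\sim\triangle\vecb\veca\vecq$ yields $\|\vecp\|\geq\|\vecc\|>\frac{r}{\|\vecq\|}\|\veca\|\geq\ve r$. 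This is the ``delicate bookkeeping'' your proposal identifies but does not actually carry out, and the fix is not merely a matter of tightening constants — it requires choosing the right line to project onto.
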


For the proof we need the following simple geometric lemma.
\begin{lem}\label{JOHNBALLATCENTROIDLEM}
Given $d\geq1$ and $\ve>0$ there is some $c=c(d,\ve)>0$ such that
for any $d$-dimensional ball $B$ and any convex set $\fC\subset B$ of
volume $|\fC|\geq\ve|B|$, there exists a ball $B'$ with center at the
centroid of $\fC$ such that $B'\subset\fC$ and 
$|B'|\geq c|B|$.     %
\end{lem}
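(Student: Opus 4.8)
The plan is to prove Lemma~\ref{JOHNBALLATCENTROIDLEM} by a compactness and rescaling argument, exploiting the fact that the statement is invariant under affine similarities. By applying an affine map we may assume $B$ is the unit ball $\scrB_1^d$ centered at $\bn$. Then $\fC$ ranges over the (compact, in the Hausdorff metric) set of convex subsets of $\overline{\scrB_1^d}$ with $|\fC|\geq\ve|\scrB_1^d|$. For each such $\fC$ let $\vecc(\fC)$ be the centroid and let $R(\fC)$ be the largest radius of a ball centered at $\vecc(\fC)$ contained in $\fC$; I would show $R(\fC)$ is bounded below by a positive constant depending only on $d$ and $\ve$, which gives the lemma with $c=(R(\fC))^d$ (after undoing the affine rescaling, using that volumes scale by the same factor for $B'$ and $B$). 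Since $\vecc(\cdot)$ and $R(\cdot)$ are continuous on the relevant Hausdorff-compact set of convex bodies, and $R(\fC)>0$ for every $\fC$ with nonempty interior (a convex body always contains a ball around its centroid), the infimum of $R$ is attained and is positive.

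The only subtlety is continuity and the fact that $R(\fC)>0$ genuinely holds for \emph{every} admissible $\fC$; this needs $\fC$ to have nonempty interior, which is guaranteed by $|\fC|\geq\ve|\scrB_1^d|>0$. For the quantitative version without invoking abstract compactness, an alternative is to give a direct estimate: if $B=\scrB_1^d$ and $|\fC|\geq\ve|\scrB_1^d|$, then by convexity $\fC$ contains the convex hull of any of its points with a suitable sub-body, and one can show directly that the ball of radius $\ve/(2d)$ (say) about the centroid lies in $\fC$, using the standard fact that for a convex body $K$ with centroid at the origin one has $-\tfrac1d K\subseteq K$ (so $K$ is somewhat ``balanced'' about its centroid), combined with a lower bound on the inradius of $\fC$ coming from John's theorem applied to $\fC$ (which gives an inscribed ellipsoid of volume $\gg|\fC|\gg\ve$, hence of inradius bounded below after noting $\fC\subseteq\scrB_1^d$ forces the ellipsoid's axes to be comparable). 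One then translates that inscribed ball by a bounded amount to be centered at the centroid, using $-\tfrac1d\fC\subseteq\fC$ to stay inside.

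Concretely I would argue: by John's theorem $\fC$ contains an ellipsoid $E$ with $|E|\geq\kappa_d^{-1}|\fC|\geq\kappa_d^{-1}\ve|\scrB_1^d|$ for a dimensional constant $\kappa_d$; since $E\subseteq\fC\subseteq\scrB_1^d$ all semi-axes of $E$ are $\leq1$, so the product of the semi-axes is $\geq c_1(d,\ve)$ forces each semi-axis to be $\geq c_1(d,\ve)$, hence $E$ contains a ball $B_0$ of radius $\rho:=c_1(d,\ve)$. Let $\vecx_0$ be the center of $B_0$ and $\vecc$ the centroid of $\fC$; both lie in $\overline{\scrB_1^d}$, so $\|\vecc-\vecx_0\|\leq2$. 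Using the inclusion $\vecc-\tfrac1d(\fC-\vecc)\subseteq\fC$ (the reflected-through-centroid, shrunk body lies in $\fC$; this is the classical centroid inequality for convex bodies), the ball $\vecc-\tfrac1d(B_0-\vecc)$ of radius $\rho/d$ about $\vecc-\tfrac1d(\vecx_0-\vecc)=\tfrac{d+1}d\vecc-\tfrac1d\vecx_0$ lies in $\fC$; iterating this shrink-and-reflect (or taking a convex combination of $B_0$ with its reflection and with $\vecc$) produces a ball genuinely centered at $\vecc$ of radius $\gg_{d,\ve}1$ inside $\fC$. Taking $B'$ to be that ball and $c=c(d,\ve)=(\text{its radius})^d/|\scrB_1^d|$ completes the proof after rescaling back.

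The main obstacle is making the last ``shrink-and-reflect to recenter'' step clean and explicit: one must produce a ball \emph{exactly} at the centroid, not merely near it, from an off-center inscribed ball, while controlling the radius loss by a dimensional constant. The cleanest route is probably: given $B_0=\vecx_0+\rho\scrB_1^d\subseteq\fC$ and $\vecc\in\fC$, for any $s\in(0,1)$ the set $s B_0+(1-s)\vecc = s\vecx_0+(1-s)\vecc+s\rho\scrB_1^d$ lies in $\fC$ by convexity; choosing $s$ and combining with the reflected copy $s\bigl(\vecc-\tfrac1d(B_0-\vecc)\bigr)+(1-s)\vecc$ — whose center is a different affine combination of $\vecc$ and $\vecx_0$ — one can solve for parameters making the center land on $\vecc$ while keeping radius $\asymp_{d}\rho$. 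I will carry out this bookkeeping, but it is entirely elementary; no deeper input than John's theorem and the classical centroid inclusion is needed.
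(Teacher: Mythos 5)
Your proposal is correct, but it takes a genuinely different route from the paper's proof, which is shorter and more elementary. The paper normalizes so the centroid of $\fC$ is at the origin, lets $r$ be the largest radius with $\scrB_r^d\subset\fC$, picks a touching point $\vecp\in\partial\fC$ with $\|\vecp\|=r$, say $\vecp=r\vece_1$ after a rotation, and observes that the supporting hyperplane of $\fC$ at $\vecp$ must also support $\scrB_r^d$, so $\fC\subset\{\vecx\col\vecx\cdot\vece_1\leq r\}$. The centroid condition $\int x\,|\fC_x|\,dx=0$ for the cross-sectional areas $|\fC_x|$ then yields $\int_{-\infty}^{-r}|\fC_x|\,dx\leq\int_{-r}^r|\fC_x|\,dx$, hence $|\fC|\leq2\int_{-r}^r|\fC_x|\,dx\leq4r\vol_{d-1}(\scrB_R^{d-1})\ll rR^{d-1}$; combined with $|\fC|\geq\ve|B|\gg\ve R^d$ this gives $r\gg\ve R$ in one stroke, with no John's theorem, no centroid-reflection inclusion, and no compactness. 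Your quantitative route (John's theorem for an off-center inscribed ball, then recentering via $\vecc-\tfrac1d(\fC-\vecc)\subseteq\fC$) is sound and gives comparable constants, but invokes more machinery. One caveat on your last step: the ``iterate shrink-and-reflect'' alternative you float does \emph{not} work on its own, because the radius contracts by the factor $1/d$ at each step, exactly as fast as the center converges to $\vecc$, so the limit is a single point. The convex-combination option is the correct one and the bookkeeping is clean: mixing $B_0=\vecx_0+\rho\scrB_1^d$ and $\vecc-\tfrac1d(B_0-\vecc)$ with weights $\tfrac1{d+1}$ and $\tfrac d{d+1}$ places the center exactly at $\vecc$ and yields a ball of radius $\tfrac{2\rho}{d+1}\subset\fC$. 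The compactness argument in your first paragraph is also valid in principle (Blaschke selection plus continuity of volume and of the centroid on bodies with nonempty interior), but it is non-quantitative and needs more care than it appears; your concrete version is the better of your two options.
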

\begin{proof}
Let $\fC$ and $B$ be given satisfying the assumptions.
We may assume that the centroid of $\fC$ is $\bn$.
Let $R$ be the radius of $B$, and let
$r>0$ be maximal with the property $B':=\scrB_r^d\subset\fC$.
Then there is a point $\vecp\in\partial\fC$ with $\|\vecp\|=r$.
After a rotation we may assume $\vecp=r\vece_1$;
then $\fC\subset\{\vecx\col\vecx\cdot\vece_1\leq r\}$.
For $x\in\R$ we write $\fC_x=\{\vecy\in\R^{d-1}\col(x,\vecy)\in\fC\}$
and $|\fC_x|=\vol_{d-1}(\fC_x)$.
Now since the centroid of $\fC$ is $\bn$ we have
$0=\int_{-\infty}^r x|\fC_x|\,dx
\leq -r\int_{-\infty}^{-r}|\fC_x|\,dx+r\int_{-r}^r|\fC_x|\,dx$,
and hence
\begin{align*}
|\fC|=\int_{-\infty}^{-r}|\fC_x|\,dx+\int_{-r}^r|\fC_x|\,dx
\leq2\int_{-r}^r|\fC_x|\,dx
\leq4r\vol_{d-1}(\scrB_R^{d-1})\ll rR^{d-1}.
\end{align*}
But also $|\fC|\geq\ve|B|\gg\ve R^d$; thus
$r\gg\ve R$ and $|B'|\gg r^d\gg\ve^dR^d\gg\ve^d|B|$.   %
\end{proof}
\begin{proof}[Proof of Corollary \ref{GENCONVAMSHARPCOR}]
If $\fC$ has no interior points then 
$\fC$ is contained in some affine subspace
of $\R^d$ and thus $p(\fC)=1$ by Lemma \ref{TRIVPLOWBOUNDLEM}.
If $\fC$ has interior points and is unbounded then $|\fC|=\infty$.
Hence from now on we may assume that $\fC$ has interior points and
is bounded.
Then by \cite{John48}, after replacing $\fC$ with $\fC M$ for 
an appropriate $M\in G$ (note that both the assumption and the
conclusion of Corollary \ref{GENCONVAMSHARPCOR}
are invariant under any such replacement),
there is a $d$-dimensional ball $B$ satisfying
$\fC\subset B$ and $|\fC|\gg|B|$.
Hence by Lemma \ref{JOHNBALLATCENTROIDLEM} there is some 
$r>0$ with $r^d\gg|B|$ and $\vecq+\scrB_r^d\subset\fC$.

Now let $\vecp$ be the point in $\overline\fC$ lying closest to $\bn$,
and let $\veca$ be the point on the line segment 
$\R\vecq\cap\overline\fC$ lying closest to $\bn$.
Assume $\vecp\neq\veca$.
Let $\ell$ be the line through $\vecp$ and $\veca$,
and let $\vecb$ and $\vecc$ be the orthogonal projections onto $\ell$ of
$\vecq$ and $\bn$, respectively.
Then $\veca$ lies on the line segment between $\vecb$ and $\vecc$,
since $\veca$ lies on the line segment between $\vecq$ and $\bn$.
Also $\|\vecc\|\leq\|\vecp\|\leq\|\veca\|$ by the definition of $\vecc$
and $\vecp$;
thus $\vecp$ and $\veca$ must lie on the same side of $\vecb$ along $\ell$.
Combining this with the fact that $\vecp,\veca\in\partial\fC$
it follows that $\vecb$ cannot belong to the interior of $\fC$,
and in particular $\vecb\notin\vecq+\scrB_r^d$,
viz.\ $\|\vecb-\vecq\|\geq r$.
Using also $\triangle\vecc\veca\bn\sim\triangle\vecb\veca\vecq$ we obtain:
\begin{align*}
\|\vecp\|\geq\|\vecc\|=\frac{\|\vecb-\vecq\|}{\|\veca-\vecq\|}\|\veca\|
>\frac{r}{\|\vecq\|}\|\veca\|\geq\ve r.
\end{align*}
The same conclusion, $\|\vecp\|\geq\ve r$, trivially holds also when
$\vecp=\veca$.
Now if $R$ is the radius of $B$ then $R\ll r$ and 
$\fC$ is contained in $\vecp+\overline{\scrB_{2R}^d}$
and also in the half space
$\{\vecx\col\vecx\cdot\vecp\geq\|\vecp\|^2\}$.
It follows that $\fC\cap\vecv^\perp=\emptyset$ for every
$\vecv\in\S_1^{d-1}$ with $\varphi(\vecv,\vecp)<\arctan(\frac{\ve r}{2R})$.
Hence \eqref{GENCONVAMSHARPCORRES} now follows from 
Corollary \ref{TRIVAMSHARPCOR}.
\end{proof}

Note that the criterion given in Corollary \ref{GENCONVAMSHARPCOR} is
optimal for $\fC$ running through the family of all 
$d$-dimensional ellipsoids.
Namely, if $\fC$ is a $d$-dimensional ellipsoid with center $\vecq\neq\bn$
and volume $\gg1$, then, by Theorem \ref{GENBALLTHM} which we will prove below,
$p(\fC)\gg|\fC|^{-1}$ holds \textit{if and only if}
the distance from $\bn$ to $\R\vecq\cap\fC$ is $\gg\|\vecq\|$.
On the other hand, $p(\fC)\gg|\fC|^{-1}$ certainly holds
also for many convex sets which do not fulfill the criterion in
Corollary \ref{GENCONVAMSHARPCOR}.
For example it %
holds when $\fC$ is an arbitrary 
convex cone %
of volume $\gg1$ with $\bn$ as apex:
\begin{cor}\label{ATHREYAMFROMBELOWPROP1}
Given $d\geq2$ there is a constant $c=c(d)>0$ such that
if $\fC$ is the cone which is the convex hull of $\bn$ and
some convex subset of a hyperplane $\Pi\subset\R^d$ with $\bn\notin\Pi$,
then
\begin{align}\label{ATHREYAMFROMBELOWPROP1RES}
p(\fC)\geq\min(\sfrac12,c|\fC|^{-1}).
\end{align}
\end{cor}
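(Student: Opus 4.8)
The plan is to combine the invariance $p(\fC)=p(\fC M)$ with the lower bound of Proposition~\ref{GENCONVBOUNDPROP2}, exploiting that a hyperplane through $\bn$ which is nearly parallel to $\Pi$ meets the cone $\fC$ only at $\bn$. First I would dispose of the degenerate cases: if $\fC$ has empty interior then $p(\fC)=1$ by Lemma~\ref{TRIVPLOWBOUNDLEM}, and if $\fC$ has nonempty interior but is unbounded then $|\fC|=\infty$ and $p(\fC)=0$ by Lemma~\ref{TRIVPCBOUNDLEM2}; in both cases \eqref{ATHREYAMFROMBELOWPROP1RES} holds. So henceforth $\fC$ is a bounded $d$-dimensional cone, the convex hull of $\bn$ and a $(d-1)$-dimensional convex body $D\subset\Pi$, where $\Pi=\{\vecx:\vecx\cdot\vecn=h\}$ for a unit vector $\vecn$ and $h=\dist(\bn,\Pi)>0$. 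Note $\fC=\{s\vecy:0\le s\le1,\ \vecy\in D\}\subset\{\vecx:0\le\vecx\cdot\vecn\le h\}$, together with the elementary self-similarity $\{\vecx\in\fC:\vecx\cdot\vecn\le\lambda h\}=\lambda\fC$ for $0<\lambda\le1$.

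Next I would normalize. By John's theorem \cite{John48} there is $M\in G$ such that the maximal-volume ellipsoid inscribed in $\fC M$ is a genuine ball $B'=\vecc+\rho\scrB_1^d$, and then $\fC M\subset\vecc+d\rho\scrB_1^d=:B$. Since every $M\in\GL(d,\R)$ fixes $\bn$ and maps a hyperplane missing $\bn$ to a hyperplane missing $\bn$, $\fC M$ is again a cone of the type considered, and both the hypothesis and the conclusion of the corollary are unchanged under $\fC\mapsto\fC M$ (recall $|\fC M|=|\fC|$); so I may assume $\fC$ itself has these properties. Then $\fC$ has inradius $\ge\rho$ and circumradius $r\le d\rho$, and from $\rho^d\vol(\scrB_1^d)\le|\fC|\le(d\rho)^d\vol(\scrB_1^d)$ one gets $\rho\asymp r\asymp|\fC|^{1/d}$, so $r^{-d}\asymp|\fC|^{-1}$. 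Moreover $B'$ lies in the slab $\{0\le\vecx\cdot\vecn\le h\}$, forcing $h\ge2\rho$, while $\fC$ meets both $\{\vecx\cdot\vecn=0\}$ and $\{\vecx\cdot\vecn=h\}$ and lies in the ball $B$ of radius $R:=d\rho$, forcing $h\le2R$; thus $h\asymp r$, and in particular $h/(2R)\ge c_0$ for some $c_0=c_0(d)>0$. Also $\bn\in\fC\subset B$ gives $\|\vecc\|\le R$, hence $\|\vecx\|\le2R$ for every $\vecx\in\fC$.

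The geometric core is the claim that $\fC\cap\vecv^\perp\subseteq\{\bn\}$ whenever $\vecv\in\S_1^{d-1}$ satisfies $|\vecv\cdot\vecn|>(1+c_0^2)^{-1/2}$. For such $\vecv$, write $\vecv=\pm\vecn\cos\theta+\vecu\sin\theta$ with $\vecu\perp\vecn$, $\|\vecu\|=1$ and $\tan\theta<c_0$. If $\vecx\in\fC\cap\vecv^\perp$ then $0=\vecx\cdot\vecv$ together with $\vecx\cdot\vecn\ge0$ gives $\vecx\cdot\vecn=|\vecx\cdot\vecu|\tan\theta\le2R\tan\theta<h$, so $\vecx$ lies in the sub-cone $\{\vecx\in\fC:\vecx\cdot\vecn\le\tau\}$ with $\tau:=2R\tan\theta<h$. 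Using the self-similarity with $\lambda:=\tau/h\in(0,1)$ and the fact that $\vecv^\perp$ is a linear subspace, $\fC\cap\vecv^\perp=(\lambda\fC)\cap\vecv^\perp=\lambda(\fC\cap\vecv^\perp)$; since $\fC\cap\vecv^\perp$ is bounded, invariance under scaling by $\lambda<1$ forces it to be contained in $\{\bn\}$.

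Finally I would conclude. The set of $\vecv\in\S_1^{d-1}$ with $|\vecv\cdot\vecn|>(1+c_0^2)^{-1/2}$ is a pair of spherical caps of total $\vol_{\S_1^{d-1}}$-measure $\gg1$, and for each such $\vecv$ the slice $\fC\cap\vecv^\perp$, and hence any rescaling of it, is contained in $\{\bn\}$, so $p'_{d-1}(k_4r^{\frac1{d-1}}\fC\cap\vecv^\perp)=1$ (for $d\ge3$ since $p^{(d-1)}(\emptyset)=p^{(d-1)}(\{\bn\})=1$, and for $d=2$ since $I(0\le1)=1$). Hence $\int_{\S_1^{d-1}}p'_{d-1}(k_4r^{\frac1{d-1}}\fC\cap\vecv^\perp)\,d\vecv\gg1$, and Proposition~\ref{GENCONVBOUNDPROP2} gives $p(\fC)\ge\min(\tfrac12,k_3r^{-d}\cdot\mathrm{const})=\min(\tfrac12,c|\fC|^{-1})$ for a suitable $c=c(d)>0$. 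I expect the main obstacle to be the middle part — establishing after the John normalization the position/scale relations $h\asymp r\asymp|\fC|^{1/d}$ and the collapse of near-apical hyperplane slices to $\{\bn\}$; plugging into Proposition~\ref{GENCONVBOUNDPROP2} and estimating the cap measure are routine.
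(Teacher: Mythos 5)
Your proposal is correct. The strategy is the same as the paper's in outline — normalize via John's theorem so that the relevant radii are comparable to $|\fC|^{1/d}$, show that a set of directions $\vecv$ of measure $\gg1$ gives trivial hyperplane slices $\fC\cap\vecv^\perp$, and feed this into the lower bound from Proposition~\ref{GENCONVBOUNDPROP2} — but the execution differs in two ways worth noting. First, the paper normalizes the \emph{$(d-1)$-dimensional base} $\fZ\subset\Pi$ to a round ball centered at the foot of perpendicular from $\bn$ to $\Pi$, whereas you normalize the full \emph{$d$-dimensional cone} $\fC$; your version has the advantage of producing all the needed proportionalities ($h\asymp\rho\asymp r\asymp|\fC|^{1/d}$) in one stroke and without the extra positioning requirement on the apex. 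Second, the paper shows that slices by hyperplanes making a small angle with the cone axis are empty via a cone-angle/triangle-inequality argument and then invokes Corollary~\ref{TRIVAMSHARPCOR}, whereas you work directly from Proposition~\ref{GENCONVBOUNDPROP2} and prove the slices are trivial via the dilation/self-similarity identity $\{\vecx\in\fC\col\vecx\cdot\vecn\le\lambda h\}=\lambda\fC$, concluding from $\fC\cap\vecv^\perp=\lambda(\fC\cap\vecv^\perp)$ with $\lambda<1$ and boundedness that the slice is $\subset\{\bn\}$. This self-similarity observation is a clean alternative; it also sidesteps the small imprecision in the paper, which writes ``$\fC\cap\vecv^\perp=\emptyset$'' even though $\bn\in\fC$ forces $\bn\in\fC\cap\vecv^\perp$ (what the paper of course means, and what suffices, is exactly your $\fC\cap\vecv^\perp\subset\{\bn\}$, since $p'_{d-1}$ of a subset of $\{\bn\}$ equals $1$).
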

\begin{proof}
Assume that $\fC$ is the convex hull of $\bn$ and the convex set 
$\fZ\subset\Pi$.
As usual we may assume that $\fC$ is bounded and has interior points,
i.e. that $\fZ$ is bounded and has a nonempty interior relative to $\Pi$.
Using \eqref{PDINV} and \cite{John48} we may furthermore assume that 
$\fZ$ is contained in a $(d-1)$-dimensional ball $B$ with
$\vol_{d-1}(B)\ll\vol_{d-1}(\fZ)$,
that the orthogonal projection of $\bn$ onto $\Pi$ equals the center
$\vecq$ of $B$, and that $\|\vecq\|$ equals the radius of $B$.
It then follows that
$\fC\cap\vecv^\perp=\emptyset$ for all $\vecv\in\S_1^{d-1}$ with
$\varphi(\vecv,\vecq)<\frac\pi4$
and also that $\fC$ is contained in a ball of volume $\ll|\fC|$.
Hence \eqref{ATHREYAMFROMBELOWPROP1RES} now follows from 
Corollary \ref{TRIVAMSHARPCOR}.
\end{proof}

\section{\texorpdfstring{The case of $\fC$ a ball; proof of Theorem \ref*{GENBALLTHM}}{The case of $C$ a ball; proof of Theorem 1.2}}
\label{GENBALLTHMPROOFSEC}

We now give the proof of Theorem \ref{GENBALLTHM}.
Let $\fC$, $r$ and $\vecq$ be as in Theorem \ref{GENBALLTHM},
viz.\ $\fC$ is a $d$-dimensional ball of volume $\geq\frac12$,
with radius $r$ and center $\vecq$.
After a rotation we may assume $\vecq=\|\vecq\|\vece_1$.
We will use the bounds in Propositions \ref{GENCONVBOUNDPROP1} and
\ref{GENCONVBOUNDPROP2}.
Note that it is clear from start that these bounds are sharp,
since the inradius and the circumradius of $\fC$ are the same ($=r$);
cf.\ the discussion in Remark \ref{GENCONVEXPLREM}.
Note also that since the intersection of $\fC$ with any hyperplane
is again a ball (of dimension $d-1$) or empty,
the integrands appearing in Propositions \ref{GENCONVBOUNDPROP1} and
\ref{GENCONVBOUNDPROP2} can be bounded in a sharp way simply
by induction, using Theorem \ref{GENBALLTHM} with $d-1$ in place of $d$.

To treat the integrals over $\S_1^{d-1}$
we parametrize a dense open subset of $\S_1^{d-1}$ as follows:
\begin{align}\label{VSHORTPARA}
\vecv&=(v_1,\ldots,v_d)
=\bigl(\cos\varpi,(\sin\varpi)\alpha_1,
(\sin\varpi)\alpha_2,\ldots,(\sin\varpi)\alpha_{d-1}\bigr)
\in\S_1^{d-1},
\end{align}
where $\varpi\in(0,\pi)$ and
$\vecalf=(\alpha_1,\ldots,\alpha_{d-1})\in\S_1^{d-2}$.
Thus $\varpi$ is the angle between $\vecv$ and $\vece_1$.
In this parametrization the $(d-1)$-dimensional volume measure on 
$\S_1^{d-1}$ takes the following form:
\begin{align}\label{DVINSHORTPARA}
d\vecv=(\sin\varpi)^{d-2}\,d\varpi\,d\vecalf,
\end{align}
where $d\vecalf$ is the $(d-2)$-dimensional volume measure on $\S_1^{d-2}$
(if $d=2$, $d\vecalf$ is the counting measure on $\S_1^0=\{-1,1\}$).

Let us first assume $\bn\in\overline\fC$.
Then $\tau=\frac{\|\vecq\|-r}r\in[-1,0]$ in \eqref{GENBALLTHMRES}.
Given $\vecv$ as in \eqref{VSHORTPARA} and using $\vecq=\|\vecq\|\vece_1$,
we compute that $\fC\cap\vecv^\perp$
is a $(d-1)$-dimensional ball of radius
\begin{align}\notag
r'=\sqrt{r^2-\|\vecq\|^2\cos^2\varpi}=r\sqrt{1-(1+\tau)^2\cos^2\varpi}
=r\sqrt{\sin^2\varpi-\tau(2+\tau)\cos^2\varpi}
\hspace{30pt}
\\\label{GENBALLTHMPFR}
\asymp r\bigl(\sin\varpi+\sqrt{|\tau|}\bigr)
\end{align}
This ball contains $\bn$ in its closure, 
and the distance from $\bn$ to its
boundary (relative to $\vecv^\perp$) is
$r'-\|\vecq\|\sin\varpi$.
Hence if we write $\tau'$ for the 
``$\tau$-ratio'' (the analogue of $\tau$ in \eqref{GENBALLTHMRES})
of this $(d-1)$-dimensional ball then 
$\tau'\in[-1,0]$ and 
\begin{align}\notag
-\tau'=\frac{r'-\|\vecq\|\sin\varpi}{r'}
\asymp\frac{{r'}^2-\|\vecq\|^2\sin^2\varpi}{{r'}^2}
\asymp
\frac{1-(1+\tau)^2\cos^2\varpi-(1+\tau)^2\sin^2\varpi}{\sin^2\varpi+|\tau|}
\hspace{30pt}
\\\label{GENBALLTHMPFTAU}
\asymp\frac{|\tau|}{\sin^2\varpi+|\tau|}.
\end{align}

Now in the case $d=2$, Proposition \ref{GENCONVBOUNDPROP1} gives
\begin{align}\label{GENBALLTHMPF1}
p^{(2)}(\fC)\ll r^{-2}\int_0^{\pi} I(krr'\leq1)\,d\varpi,
\end{align}
where $k>0$ is an absolute constant.
By \eqref{GENBALLTHMPFR}, $krr'\leq1$ holds only if
$\sin\varpi+\sqrt{|\tau|}\ll r^{-2}$.
Hence $p^{(2)}(\fC)\ll r^{-4}\ll |\fC|^{-2}$,
and if $r^4|\tau|$ is sufficiently large we even have $p^{(2)}(\fC)=0$.
Similarly using Proposition \ref{GENCONVBOUNDPROP2}
and the assumption that $|\fC|\geq\frac12$,
we conclude that $p^{(2)}(\fC)\gg r^{-4}\gg|\fC|^{-2}$ whenever
$r^4|\tau|$ is sufficiently small.
Hence \eqref{GENBALLTHMRES} holds when $d=2$ and $\bn\in\overline\fC$.

Next assume $d\geq3$. Then Proposition \ref{GENCONVBOUNDPROP1} gives
\begin{align}\label{GENBALLTHMPF1a}
p^{(d)}(\fC)\ll r^{-d}\int_0^\pi 
p^{(d-1)}\Bigl(kr^{\frac1{d-1}}\fC\cap\vecv^\perp\Bigr)\,
(\sin\varpi)^{d-2}\,d\varpi,
\end{align}
where $k>0$ is a constant which only depends on $d$.
Here $kr^{\frac1{d-1}}\fC\cap\vecv^\perp$ is a $(d-1)$-dimensional
ball of radius $kr^{\frac1{d-1}}r'$ which contains $\bn$ in
its closure and which has $\tau$-ratio $\tau'$ as in \eqref{GENBALLTHMPFTAU}.
By induction we may assume that 
\eqref{GENBALLTHMRES} holds for this ball;
it follows that the integrand in \eqref{GENBALLTHMPF1a}
vanishes whenever
$|\tau'|(r^{\frac1{d-1}}r')^{\frac{2(d-1)}{d-2}}$ is sufficiently large,
viz.\ (by \eqref{GENBALLTHMPFR}, \eqref{GENBALLTHMPFTAU})
whenever $|\tau|(\sin^2\varpi+|\tau|)^{\frac1{d-2}}r^{\frac{2d}{d-2}}$
is sufficiently large.
In particular $p^{(d)}(\fC)=0$ holds whenever 
$|\tau| r^{\frac{2d}{d-1}}$ is sufficiently large.
Furthermore for arbitrary $\tau\leq0$ it follows that
\begin{align}\label{GENBALLTHMPF1b}
p^{(d-1)}(kr^{\frac1{d-1}}\fC\cap\vecv^\perp)
\ll(r^{\frac1{d-1}}r')^{-2(d-1)}
\ll r^{-2d}(\sin\varpi)^{-2(d-1)}.
\end{align}
\begin{remark}\label{TRIVHALFREM}
Here we used the trivial fact that, %
for suitable $k_1$, the \textit{right}
inequality in \eqref{GENBALLTHMRES} holds also when the 
given body has volume \textit{less} than $\frac12$,
since $p(\fC)\leq1$ by definition.
\end{remark}
Now in \eqref{GENBALLTHMPF1a} we may restrict the range of integration
to $\varpi\in(0,\frac\pi2)$ by symmetry
($\varpi\leftrightarrow\pi-\varpi$).
For $\varpi\leq r^{-\frac d{d-1}}$ we use the
trivial bound $p^{(d-1)}(kr^{\frac1{d-1}}\fC\cap\vecv^\perp)\leq1$
and for $\varpi$ larger we use \eqref{GENBALLTHMPF1b}. This gives
\begin{align*}
p^{(d)}(\fC)\ll r^{-d}\biggl(\int_0^{r^{-\frac d{d-1}}}\varpi^{d-2}\,d\varpi
+\int_{r^{-\frac d{d-1}}}^\infty r^{-2d}\varpi^{-d}\,d\varpi\biggr)
\ll r^{-2d}.
\end{align*}
By a similar computation
using Proposition \ref{GENCONVBOUNDPROP2} 
(and the assumption $|\fC|\geq\frac12$)
we obtain $p^{(d)}(\fC)\gg r^{-2d}$
whenever $\tau\leq0$ and
$|\tau| r^{\frac{2d}{d-1}}$ is sufficiently small.
Hence we have proved that \eqref{GENBALLTHMRES} holds when
$d\geq3$ and $\bn\in\overline\fC$.

We now turn to the remaining case, $\bn\notin\overline\fC$.
Now $\tau=\frac{\|\vecq\|-r}{\|\vecq\|}\in(0,1)$ in \eqref{GENBALLTHMRES}.
For any $\vecv\in\S_1^{d-1}$ as in \eqref{VSHORTPARA}
we note that $\overline{\fC}\cap\vecv^\perp$ is nonempty if and only if
$|\cos\varpi|\leq\frac r{\|\vecq\|}=1-\tau$,
or equivalently if and only if
$\varpi\in[\varpi_\tau,\pi-\varpi_\tau]$, where
$\varpi_\tau:=\arccos(1-\tau)\in(0,\frac\pi2)$.
In particular, by Proposition \ref{GENCONVBOUNDPROP2}
(and since $|\fC|\geq\frac12$),
$p^{(d)}(\fC)\gg r^{-d}\asymp|\fC|^{-1}$ whenever $\tau\geq\frac1{10}$,
in agreement with \eqref{GENBALLTHMRES}.
Hence from now on we may assume $0<\tau<\frac1{10}$
(and thus $0<\varpi_\tau<\arccos\frac9{10}<\frac\pi6$).

Now for any $\vecv\in\S_1^{d-1}$ with 
$\varpi\in[\varpi_\tau,\pi-\varpi_\tau]$
we note that $\overline{\fC}\cap\vecv^\perp$ is a
$(d-1)$-dimensional ball of radius
\begin{align*}
r'=\sqrt{r^2-\|\vecq\|^2\cos^2\varpi}
=r\sqrt{1-(1-\tau)^{-2}\cos^2\varpi}
\asymp r\sqrt{(1-\tau)^2-\cos^2\varpi}.
\end{align*}
Setting $\varpi_1:=\min(\varpi-\varpi_\tau,\pi-\varpi_\tau-\varpi)
\in[0,\frac\pi2-\varpi_\tau]$ we may continue:
\begin{align}\label{GENBALLTHMPFR2}
r'\asymp r\sqrt{1-\tau-\cos(\varpi_\tau+\varpi_1)}
=r\sqrt{(1-\tau)(1-\cos\varpi_1)+\sin\varpi_\tau\sin\varpi_1}
\asymp r\sqrt{(\varpi_1+\varpi_\tau)\varpi_1}.
\end{align}
Furthermore the ball $\overline{\fC}\cap\vecv^\perp$ does not contain $\bn$,
and the distance from $\bn$ to its boundary is
$\|\vecq\|\sin\varpi-r'$.
Hence the $\tau$-ratio of $\overline{\fC}\cap\vecv^\perp$ is
\begin{align}\label{GENBALLTHMPFTAU2}
\tau'=\frac{\|\vecq\|\sin\varpi-r'}{\|\vecq\|\sin\varpi}
\asymp\frac{\|\vecq\|^2\sin^2\varpi-{r'}^2}{\|\vecq\|^2\sin^2\varpi}
=\frac{\|\vecq\|^2-{r}^2}{\|\vecq\|^2\sin^2\varpi}
=\frac{(1-\tau)^{-2}-1}{(1-\tau)^{-2}\sin^2\varpi}
\asymp\frac{\tau}{\sin^2\varpi}.
\end{align}

Now in the case $d=2$, Proposition \ref{GENCONVBOUNDPROP1} gives
\begin{align}\label{GENBALLTHMPF2}
p^{(2)}(\fC)\ll r^{-2}\biggl(\varpi_\tau+\int_0^{\frac\pi2-\varpi_\tau} 
I(krr'\leq1)\,d\varpi_1\biggr),
\end{align}
where $k>0$ is an absolute constant.
The contribution from $\varpi_1\leq\varpi_\tau$
in the integral is subsumed by the term $\varpi_\tau$;
hence it suffices to consider
$\varpi_1\in(\varpi_\tau,\frac\pi2-\varpi_\tau)$.
For any such $\varpi_1$ we have $r'\asymp r\varpi_1$ by \eqref{GENBALLTHMPFR2}.
Hence, using also $\varpi_\tau\asymp\tau^{\frac12}$,
we conclude that $p^{(2)}(\fC)\ll r^{-2}(\tau^{\frac12}+r^{-2})$.
Similarly using Proposition \ref{GENCONVBOUNDPROP2} (and $|\fC|\geq\frac12$)
we obtain $p^{(2)}(\fC)\gg r^{-2}(\tau^{\frac12}+r^{-2})$.
Hence \eqref{GENBALLTHMRES} holds when $d=2$ and $\bn\notin\overline\fC$.

Next assume $d\geq3$.
Then Proposition \ref{GENCONVBOUNDPROP1} gives
\begin{align*}
p^{(d)}(\fC)\ll r^{-d}\biggl(\int_0^{\varpi_\tau}(\sin\varpi)^{d-2}\,d\varpi
+\int_{\varpi_\tau}^{\frac\pi2}
p^{(d-1)}\Bigl(kr^{\frac1{d-1}}\fC\cap\vecv^\perp\Bigr)
(\sin\varpi)^{d-2}\,d\varpi\biggr)
\hspace{50pt}&
\\
\ll r^{-d}\biggl(\varpi_\tau^{d-1}
+\int_{2\varpi_\tau}^{\frac\pi2}
p^{(d-1)}\Bigl(kr^{\frac1{d-1}}\fC\cap\vecv^\perp\Bigr)
\varpi^{d-2}\,d\varpi\biggr),&
\end{align*}
where $k>0$ is a constant which only depends on $d$.
Here $kr^{\frac1{d-1}}\fC\cap\vecv^\perp$ is a $(d-1)$-dimensional
ball of radius $kr^{\frac1{d-1}}r'$ which does not contain
$\bn$ in its closure and which has $\tau$-ratio $\tau'$ as in
\eqref{GENBALLTHMPFTAU2}.
By induction we may assume that \eqref{GENBALLTHMRES} holds for
this ball (also recall Remark~\ref{TRIVHALFREM}); it follows that
\begin{align*}
p^{(d)}(\fC)\ll r^{-d}\biggl(\varpi_\tau^{d-1}+
\int_{2\varpi_\tau}^{\frac\pi2}
\min\Bigl\{1,(r^d\varpi^{d-1})^{-1}(\tau/\varpi^2)^{\frac{d-2}2}
+(r^d\varpi^{d-1})^{-2}\Bigr\}
\varpi^{d-2}\,d\varpi\biggr)
\hspace{30pt}&
\\
\ll r^{-d}\biggl(\tau^{\frac{d-1}2}+
\int_{2\varpi_\tau}^{\infty}r^{-d}\tau^{\frac{d-2}2}\varpi^{1-d}\,d\varpi
+\int_{2\varpi_\tau}^{\infty}\min\Bigl\{1,r^{-2d}\varpi^{2-2d}\Bigr\}
\varpi^{d-2}\,d\varpi\biggr)
\hspace{30pt}&
\\
\ll r^{-d}\biggl(\tau^{\frac{d-1}2}+r^{-d}+
\int_{2\varpi_\tau}^{\max(2\varpi_\tau,r^{-\frac d{d-1}})}
\varpi^{d-2}\,d\varpi
+\int_{r^{-\frac d{d-1}}}^\infty r^{-2d}\varpi^{-d}\,d\varpi\biggr)
\hspace{30pt}&
\\
\ll r^{-d}\Bigl(\tau^{\frac{d-1}2}+r^{-d}\Bigr).&
\end{align*}
This proves that the right inequality in \eqref{GENBALLTHMRES} holds for 
$d\geq3$, $\bn\notin\overline\fC$.

The proof of the left inequality in \eqref{GENBALLTHMRES} is similar:
Note $r\gg1$, since $|\fC|\geq\frac12$.
Now Proposition \ref{GENCONVBOUNDPROP2} gives, 
with a new constant $k>0$ (which only depends on $d$),
\begin{align}\label{GENBALLTHMPF3}
p^{(d)}(\fC)\gg r^{-d}\biggl(\int_0^{\varpi_\tau}(\sin\varpi)^{d-2}\,d\varpi
+\int_{\varpi_\tau}^{\frac\pi2}
p^{(d-1)}\Bigl(kr^{\frac1{d-1}}\fC\cap\vecv^\perp\Bigr)
(\sin\varpi)^{d-2}\,d\varpi\biggr).
\end{align}
It follows from \eqref{GENBALLTHMPFR2} that there is a constant $c>0$
which only depends on $d$ such that $cr^{-\frac d{d-1}}<\frac\pi2$ and
for all $\varpi\in(0,cr^{-\frac d{d-1}})$ the intersection
$kr^{\frac1{d-1}}\fC\cap\vecv^\perp$ is either empty or is a 
$(d-1)$-dimensional ball of radius $<\frac12$,
so that $p^{(d-1)}(kr^{\frac1{d-1}}\fC\cap\vecv^\perp)>\frac12$
by Lemma \ref{TRIVPLOWBOUNDLEM}.
Hence the right hand side of \eqref{GENBALLTHMPF3} is $\gg r^{-2d}$.
But also, by just considering the first integral, 
the right hand side of \eqref{GENBALLTHMPF3} is 
$\gg\tau^{\frac{d-1}2}r^{-d}$.
This concludes the proof of the left inequality in 
\eqref{GENBALLTHMRES} for $d\geq3$, $\bn\notin\overline\fC$,
and hence the proof of Theorem \ref{GENBALLTHM} is complete.
\hfill $\square$ $\square$ $\square$

\begin{remark}\label{MINKOWSKIREMARK}
Let us note that the case of the right inequality in \eqref{GENBALLTHMRES}
which says that $p(\fC)=0$ 
whenever $\bn\in\fC$ and $|\tau| |\fC|^{\frac{2}{d-1}}$ is sufficiently
large, may alternatively be proved by a simple application
of Minkowski's Theorem:
We may assume that $\fC$ is the open ball
$\fC=(r-t)\vece_1+\scrB_r^d$ ($0<t\leq r$, $r>0$; thus $\tau=-t/r$).
We then claim that the box
\begin{align*}
F=\bigl[-\sfrac12r,\sfrac12r\bigr]
\times\underbrace{\biggl[-\frac{\sqrt{|\tau|}}{\sqrt{2d}} r,
\frac{\sqrt{|\tau|}}{\sqrt{2d}} r\biggr]
\times\cdots\times\biggl[-\frac{\sqrt{|\tau|}}{\sqrt{2d}} r,
\frac{\sqrt{|\tau|}}{\sqrt{2d}} r\biggr]}_{\text{$d-1$ copies}}
\end{align*}
is contained in the union $\fC\cup(-\fC)$.
To verify this it clearly suffices to check that every point
$(x_1,\ldots,x_d)\in F$ with $x_1\geq0$ lies in $\fC$.
For such a point we have
\begin{align*}
(x_1-(r-t))^2+x_2^2+\ldots+x_d^2
<\max\bigl((r-t)^2,(\sfrac12r-(r-t))^2\bigr)+\sfrac12tr
\\
=\max\bigl(r^2+t(t-\sfrac32r),\sfrac14r^2+t(t-\sfrac12r)\bigr)<r^2,
\end{align*}
thus proving the claim.

Now assume $p(\fC)>0$.
Then there exists a lattice $L\in X_1$ satisfying $L\cap\fC=\{\bn\}$.
Also $L\cap(-\fC)=\{\bn\}$, since $L=-L$.
Hence $L\cap F=\{\bn\}$, and since $F$ is convex and symmetric about the 
origin we conclude via Minkowski's Theorem 
(cf., e.g., \cite[Thm.\ 10]{siegel}) that the volume of $F$ is
$|F|\leq2^d$.
But $|F|\asymp |\tau|^{\frac{d-1}2}r^d$.
Hence $p(\fC)>0$ implies $|\tau| r^{\frac{2d}{d-1}}\ll1$,
viz.\ $|\tau| |\fC|^{\frac2{d-1}}\ll1$,
as desired.
\end{remark}

To conclude this section let us prove that 
Theorem~\ref{GENBALLTHM} leads to a simple explicit sharp 
bound on $p(\fC)$ for any convex body $\fC$
such that $\partial\fC$ has pinched positive curvature
(or, more generally, for any $\fC$ which can be transformed to such a 
body by a map in $\SL_d(\R)$; cf.\ \eqref{PDINV}).
For any given convex body $\fC$ of
class $\C^1$ and any $\vect$ in $T^1(\partial\fC)$,
the unit tangent bundle of $\partial\fC$,
we denote by $\rho_i(\vect)$ and $\rho_s(\vect)$ 
the lower and upper radius of curvature of $\partial\fC$ 
at the base point of $\vect$, in direction $\vect$
(cf.\ \cite[Sec.\ 2.5]{schneider}). %

\begin{cor}\label{GENCURVCOR}
Given $d\geq2$ and $C>1$ there exist
constants $0<k_1<k_2$ such that the following holds.
For any convex body $\fC\subset\R^d$ of class $\C^1$,
volume $|\fC|\geq\frac12$, and 
satisfying 
$\sup_{\vect\in T^1(\partial\fC)}\rho_s(\vect)
/\inf_{\vect\in T^1(\partial\fC)}\rho_i(\vect)
\leq C$,
we have
\begin{align}\label{GENCURVCORRES}
F_{ball}^{(d)}(\tau;k_2|\fC|)\leq
p(\fC)\leq F_{ball}^{(d)}(\tau;k_1|\fC|),
\qquad\text{with }\:\tau=\frac{\delta}{|\delta|+|\fC|^{1/d}},
\end{align}
where $\delta$ is the signed distance from $\bn$ to $\partial\fC$
(negative if $\bn\in\fC^\circ$, positive if $\bn\notin\overline\fC$),
and $F_{ball}^{(d)}$ is the function defined in \eqref{FBALLDEF}.
\end{cor}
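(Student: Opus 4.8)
All implied constants in this sketch may depend on $C$ as well as on $d$. The plan is to sandwich $\fC$ between two balls of comparable radii and prescribed tangency, and then invoke Theorem \ref{GENBALLTHM} together with the monotonicity $\fC'\subset\fC\Rightarrow p(\fC)\leq p(\fC')$ and an elementary scaling robustness of $F_{ball}^{(d)}$. Set $\rho_-:=\inf_{\vect\in T^1(\partial\fC)}\rho_i(\vect)$ and $\rho_+:=\sup_{\vect\in T^1(\partial\fC)}\rho_s(\vect)$, so $0<\rho_-\leq\rho_+\leq C\rho_-$. By Blaschke's rolling theorem, in the form valid for bodies of class $\C^1$ through the generalized radii of curvature (cf.\ \cite[Sec.\ 2.5]{schneider} and the references there), a ball of radius $\rho_-$ rolls freely inside $\fC$ while $\fC$ rolls freely inside a ball of radius $\rho_+$; that is, to every $\vecp\in\partial\fC$ there correspond a closed ball $B_\vecp^-\subset\fC$ of radius $\rho_-$ and a closed ball $B_\vecp^+\supset\fC$ of radius $\rho_+$, each having $\vecp$ on its boundary. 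Since $\fC$ contains a ball of radius $\rho_-$ and lies in a ball of radius $\rho_+\leq C\rho_-$, we get $|\fC|\asymp\rho_-^d\asymp\rho_+^d$, so $\rho_-$, $\rho_+$ and $|\fC|^{1/d}$ are all comparable; in particular $\rho_-\gg1$ as $|\fC|\geq\frac12$.

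Next, let $\vecp_0\in\partial\fC$ be a point nearest to $\bn$ and let $\vecn$ be the inward unit normal to $\partial\fC$ at $\vecp_0$, so that $\bn$, $\vecp_0$ and the centres of $B^\pm:=B_{\vecp_0}^\pm$ all lie on the line $\vecp_0+\R\vecn$. Computing the distance from $\bn$ to the centre of each ball, one finds: if $\bn\notin\fC^\circ$ then the ``$\tau$-parameter'' of $B^-$ in the sense of Theorem \ref{GENBALLTHM} equals $\tau^-=\delta/(\delta+\rho_-)$ and that of $B^+$ equals $\tau^+=\delta/(\delta+\rho_+)$; while if $\bn\in\fC^\circ$ with $|\delta|\leq\rho_-$ then $\tau^-=\delta/\rho_-$ and $\tau^+=\delta/\rho_+$. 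In all of these cases the comparability of $\rho_-,\rho_+,|\fC|^{1/d}$ (using $|\delta|\lesssim\rho_-$ in the second case) shows that $\tau^\pm$ has the same sign as, and is comparable to, $\tau=\delta/(|\delta|+|\fC|^{1/d})$, while $|B^\pm|\asymp|\fC|$.

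I would then obtain the two inequalities as follows. \emph{Upper bound.} If $\bn\in\fC^\circ$ with $|\delta|>\rho_-$, then $\scrB_{\rho_-}^d\subset\fC$ (recall $\scrB_\rho^d$ is centred at $\bn$), so $p(\fC)\leq p(\scrB_{\rho_-}^d)$; since $\scrB_{\rho_-}^d$ is symmetric about $\bn$ and has volume $\asymp|\fC|$, Minkowski's theorem (cf.\ \cite[Thm.\ 10]{siegel}) gives $p(\scrB_{\rho_-}^d)=0$ once $|\fC|$ exceeds a constant, and for the remaining bounded range of $|\fC|$ one checks $\tau\leq-c_0<0$, so $F_{ball}^{(d)}(\tau;k_1|\fC|)=(k_1|\fC|)^{-2}\geq1\geq p(\fC)$ for $k_1$ small enough. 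In the complementary case ($\bn\notin\fC^\circ$, or $\bn\in\fC^\circ$ with $|\delta|\leq\rho_-$) we have $B^-\subset\fC$, hence $p(\fC)\leq p(B^-)$, and the upper inequality of Theorem \ref{GENBALLTHM} — valid for $B^-$ whether or not $|B^-|\geq\frac12$, by Remark \ref{TRIVHALFREM} — gives $p(\fC)\leq F_{ball}^{(d)}(\tau^-;k_1|B^-|)$. \emph{Lower bound.} In the range $\bn\in\fC^\circ$ with $|\delta|>\rho_-$ one has $\tau\leq-c_0$, hence $F_{ball}^{(d)}(\tau;k_2|\fC|)=0$ for $k_2$ large enough and the inequality is trivial; otherwise $\fC\subset B^+$ gives $p(\fC)\geq p(B^+)$, and the lower inequality of Theorem \ref{GENBALLTHM} (applicable since $|B^+|\geq|\fC|\geq\frac12$) gives $p(\fC)\geq F_{ball}^{(d)}(\tau^+;k_2|B^+|)$.

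Finally, to pass from these $(\tau^\pm,|B^\pm|)$-bounds to the asserted $(\tau,|\fC|)$-bounds I would use the following robustness of the function in \eqref{FBALLDEF}, immediate from its explicit form: $F_{ball}^{(d)}(\cdot;v)$ is non-decreasing, $F_{ball}^{(d)}(\tau;\cdot)$ is non-increasing, and for each $\lambda\geq1$ there is $\kappa=\kappa(\lambda,d)\geq1$ with $F_{ball}^{(d)}(\tau';\kappa v)\leq F_{ball}^{(d)}(\tau;v)\leq F_{ball}^{(d)}(\tau';\kappa^{-1}v)$ whenever $\sgn\tau'=\sgn\tau$ and $\lambda^{-1}|\tau|\leq|\tau'|\leq\lambda|\tau|$ (one may take $\kappa=\lambda^{(d-1)/2}$). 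Together with $\tau^\pm\asymp\tau$ and $|B^\pm|\asymp|\fC|$ this yields \eqref{GENCURVCORRES} with suitable $k_1,k_2$. I expect the main obstacle to be the second paragraph, namely tracking the $\tau$-parameters of the inscribed and circumscribed balls across all positional cases of $\bn$ relative to $\fC$ and matching them to the piecewise definition of $F_{ball}^{(d)}$; in particular, the ``$\bn$ deep inside $\fC$'' range must be handled via Minkowski's theorem rather than by a ball-comparison, the latter only delivering a small positive upper bound where the value $0$ is required. The invocations of Blaschke's rolling theorem and of the robustness of $F_{ball}^{(d)}$ are routine.
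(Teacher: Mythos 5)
Your proof is correct and follows essentially the same strategy as the paper: sandwich $\fC$ between the inscribed and circumscribed tangent balls at the boundary point nearest $\bn$, apply Theorem~\ref{GENBALLTHM} to the two balls, and then transfer the bounds to the pair $(\tau,|\fC|)$ using comparability of the respective $\tau$-parameters and volumes together with the elementary scaling robustness of $F_{ball}^{(d)}$ (which you state correctly). Your computations of $\tau^\pm$ check out, and the overall architecture matches the paper's.

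The one genuine deviation is in the ``deep inside'' range $\bn\in\fC^\circ$, $|\delta|>\rho_-$. You handle it by noting $\scrB_{\rho_-}^d\subset\fC$ and invoking Minkowski's theorem directly to get $p(\fC)=0$ for $|\fC|$ large, then treating the remaining bounded range separately. The paper instead substitutes the origin-centered ball $\scrB_{\|\vecp\|}^d=\scrB_{|\delta|}^d$ in place of the tangent inscribed ball $\fC'$ (still contained in $\fC$, and with $\tau$-ratio exactly $-1$), and then applies Theorem~\ref{GENBALLTHM} uniformly as in the other cases. Both routes work; yours is slightly more elementary at that step, the paper's has the advantage of avoiding a case split in the final transfer. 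However, your concluding remark that ``the `$\bn$ deep inside $\fC$' range must be handled via Minkowski's theorem rather than by a ball-comparison, the latter only delivering a small positive upper bound where the value $0$ is required'' is not accurate: a ball comparison \emph{does} deliver the vanishing, as long as one compares with the origin-centered ball rather than with the tangent ball $B^-$, since $F_{ball}^{(d)}(-1;k_1 v)=0$ once $k_1 v\geq1$. (That vanishing statement in Theorem~\ref{GENBALLTHM} is, of course, itself established via a Minkowski-type argument, cf.\ Remark~\ref{MINKOWSKIREMARK}, so the two approaches are closely related.)
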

\begin{proof}
Let $\fC\subset\R^d$ be any convex body satisfying the assumptions,
and set
$r_1=\inf_{\vect\in T^1(\partial\fC)}\rho_i(\vect)$
and $r_2=\sup_{\vect\in T^1(\partial\fC)}\rho_s(\vect)$
(thus $r_2/r_1\leq C$).
Let $\vecp$ be the point in $\partial\fC$ lying closest to $\bn$,
and let $\fC'$, $\fC''$ be the open %
balls of 
radii $r_1$ and $r_2$, respectively,
which are tangent to $\partial\fC$ at $\vecp$
and lie on the same side of the tangent plane as $\fC$.
Then $\fC'\subset\fC\subset\overline{\fC''}$.
If $\bn\in\fC$ and $\|\vecp\|>r_1$ then we replace $\fC'$
with the ball $\scrB_{\|\vecp\|}^d$; 
note that $\fC'\subset\fC$ %
is still true.
Let $\tau_{\fC'}$ and $\tau_{\fC''}$ be the $\tau$-ratios of
$\fC'$ and $\fC''$, as in \eqref{GENBALLTHMRES}.
Now by Theorem \ref{GENBALLTHM} we have,
for some constants $0<k_1<k_2$ which only depend on $d$,
\begin{align}\label{GENCURVCORPF1}
F_{ball}^{(d)}(\tau_{\fC''},k_2|\fC''|)\leq
p(\fC'')\leq p(\fC)\leq p(\fC')\leq 
F_{ball}^{(d)}(\tau_{\fC'},k_1|\fC'|).
\end{align}
(Here note that $|\fC''|\geq|\fC|\geq\frac12$,
and also recall Remark~\ref{TRIVHALFREM}.)
It follows from our construction that 
$|\fC'|\asymp_{_C}|\fC''|\asymp_{_C}|\fC|$,
and if $\tau=0$ then $\tau_{\fC'}=\tau_{\fC''}=0$;
if $\tau>0$ then $\tau_{\fC'},\tau_{\fC''}>0$ and
$\tau_{\fC'}\asymp_{_C}\tau_{\fC''}\asymp_{_C}\tau$;
if $\tau<0$ then $\tau_{\fC'},\tau_{\fC''}<0$ and
$|\tau_{\fC'}|\asymp_{_C}|\tau_{\fC''}|\asymp_{_C}|\tau|$.
(Here the implied constant in each
``$\asymp_{_C}$'' %
depends only on $d$ and $C$.)
Hence there exist constants $k_1'\in(0,k_1)$ and $k_2'>k_2$ which only
depend on $d$ and $C$ such that
$F_{ball}^{(d)}(\tau_{\fC'},k_1|\fC'|)
\leq F_{ball}^{(d)}(\tau,k_1'|\fC|)$
and $F_{ball}^{(d)}(\tau,k_2'|\fC|)\leq 
F_{ball}^{(d)}(\tau_{\fC''},k_2|\fC''|)$.
Combining this with \eqref{GENCURVCORPF1}, the proof is complete.
\end{proof}

\section{\texorpdfstring{The case of $\fC$ a cut ball; proof of Theorem \ref*{MAINTHM2}}{The case of C a cut ball; proof of Theorem 1.3}}
\label{MAINTHM2PFSEC}

\subsection{Reduction to the case of small cut ratio}

We now start preparing for the proof of Theorem \ref{MAINTHM2}.
In the following, %
by a ``cut ball'', we will always mean a set $\fC\subset\R^d$
of the form in Theorem \ref{MAINTHM2}, viz.
\begin{align}\label{CUTBALLRECDEF}
\fC:=%
B\cap\{\vecx\in\R^d\col \vecw\cdot\vecx>0\},
\end{align}
where $B$ is a $d$-dimensional ball with $\bn\in\overline B$
and $\vecw$ is a unit vector such that $\fC$ has nonempty interior.
Then if $r$ and $\vecp$ are the radius and center of $B$,
we will say that $\fC$ has \textit{radius} $r$
and \textit{cut ratio}
$t=1-r^{-1}(\vecw\cdot\vecp)\in[0,2)$.
Also if $r'$ and $\vecq$ are the radius and center of the
$(d-1)$-dimensional ball $B\cap\vecw^\perp\subset\partial\fC$
then $r'-\|\vecq\|$ is the distance between $\bn$ and the relative
boundary of $B\cap\vecw^\perp$, and we will refer to the
ratio $e=\frac{r'-\|\vecq\|}{r'}$ as the \textit{edge ratio} of $\fC$.
(We leave $e$ undefined when $r'=0$, viz.\ when $t=0$.)

Similarly if $\fC$ is as in Corollary \ref{CONECYLCOR},
viz.\ a cone or a cylinder containing $\bn$ in its base,
then we define the
\textit{edge ratio} of $\fC$ to be
$e=\frac{r'-\|\vecq\|}{r'}$, where $\vecq$ and $r'$ are the center and 
radius of the base %
which contains $\bn$.

To start with,
using simple containment arguments, we will prove a lemma which
reduces our task to that of proving Theorem \ref{MAINTHM2}
in the case of small cut ratio.
Specifically:
\begin{lem}\label{MAINTHM2REDLEM}
Let $d\geq2$. Assume that there exist constants $0<k_1<k_2$
such that 
\begin{align}\label{MAINTHM2REDLEMASS}
F_{\substack{cut\\ball}}^{(d)}\Bigl(e;\sfrac15;k_2|\fC|\Bigr)\leq
p(\fC)\leq F_{\substack{cut\\ball}}^{(d)}\Bigl(e;\sfrac15;k_1|\fC|\Bigr)
\end{align}
holds for any $d$-dimensional cut ball $\fC$ with $|\fC|\geq\frac12$,
edge ratio $e\in[0,1]$ and cut ratio $t=\frac15$.
Then there also exist constants $0<k_1'<k_2'$
such that 
\begin{align}\label{MAINTHM2REDLEMRES}
F_{\substack{cut\\ball}}^{(d)}\Bigl(e;t;k_2'|\fC|\Bigr)\leq
p(\fC)\leq F_{\substack{cut\\ball}}^{(d)}\Bigl(e;t;k_1'|\fC|\Bigr)
\end{align}
holds for any $d$-dimensional cut ball $\fC$ with
$|\fC|\geq\frac12$,
edge ratio $e\in[0,1]$ and cut ratio $\frac15\leq t<2$.
\end{lem}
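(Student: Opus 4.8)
The plan is to sandwich $\fC$ between cut balls of cut ratio exactly $\tfrac15$, invoke the hypothesis \eqref{MAINTHM2REDLEMASS}, and then compare the two explicit values of $F_{\substack{cut\\ball}}^{(d)}$. After a rotation we may assume $\vecw=\vece_1$, so that $\fC=B\cap\{x_1>0\}$; write $r,\vecp$ for the radius and center of $B$, let $D:=\overline B\cap\vecw^\perp$ be the flat disk (radius $r'$, center $\vecq$), and set $H:=(2-t)r$ for the cap height, so that $|\fC|\asymp(r')^{d-1}H$. The geometric fact that drives everything is: keeping the flat disk $D$ fixed and varying the ball, the cut balls $\fC_s:=B_s\cap\{x_1>0\}$ (with $B_s$ the ball containing $D$ whose center is offset by $(1-s)(\text{radius})$ from $\vecw^\perp$) form a nested family, $\fC_s\supseteq\fC_{s'}$ for $\tfrac15\le s\le s'<2$; this is an elementary computation from $r'=r\sqrt{t(2-t)}$, the cross-sectional radius at height $x_1>0$ being monotone in $s$. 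Since $t\ge\tfrac15$ this yields $\fC=\fC_t\subseteq\fC_{1/5}$, where $\fC_{1/5}$ is a cut ball of cut ratio $\tfrac15$ with the \emph{same} flat disk $D$, hence the same edge ratio $e$, and $|\fC_{1/5}|\ge|\fC|$.

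For the \textbf{upper bound}, by the obvious monotonicity ($\fC'\subseteq\fC\Rightarrow p(\fC)\le p(\fC')$) and \eqref{MAINTHM2REDLEMASS},
\[
p(\fC)\le p(\fC_{1/5})\le F_{\substack{cut\\ball}}^{(d)}\bigl(e;\tfrac15;k_1|\fC_{1/5}|\bigr),
\]
and it remains to check that the right-hand side is $\le F_{\substack{cut\\ball}}^{(d)}(e;t;k_1'|\fC|)$ for a suitable $k_1'=k_1 10^{-(d-1)/2}$. This is a short bookkeeping computation: the map $x\mapsto x^{-2}+c\,x^{-2+2/d}$ is decreasing, one has $k_1'|\fC|\le k_1'|\fC_{1/5}|<k_1|\fC_{1/5}|$, and since $\tfrac15\le t<2$ one checks that the support threshold $e\le(k_1'|\fC|)^{-2/d}t^{1/d-1}$ is weaker than $e\le(k_1|\fC_{1/5}|)^{-2/d}(1/5)^{1/d-1}$; the two facts combine to give the inequality.

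For the \textbf{lower bound} one first disposes of the case when $t$ stays bounded away from $2$, say $t\le\tfrac32$. Then $H\asymp r'$, hence $|\fC_{1/5}|\asymp(r')^d\asymp|\fC|$, and the same chain run downward,
\[
p(\fC)\ge p(\fC_{1/5})\ge F_{\substack{cut\\ball}}^{(d)}\bigl(e;\tfrac15;k_2|\fC_{1/5}|\bigr)\ge F_{\substack{cut\\ball}}^{(d)}\bigl(e;t;k_2'|\fC|\bigr),
\]
works, the last step being the analogous bookkeeping check, now using that $|\fC_{1/5}|$ and $|\fC|$ are comparable and $\tfrac15\le t\le\tfrac32$.

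The remaining case --- $t$ close to $2$, so that $\fC$ is a thin ``flat cap'' --- is the crux, since then $|\fC_{1/5}|\asymp(r')^d$ can be far larger than $|\fC|\asymp(r')^{d-1}H$ and the containment in $\fC_{1/5}$ loses too much. Here one instead uses that such a $\fC$ is contained in the solid cylinder $\{\,0<x_1<H\,\}\times D$ over its flat disk, which has $\bn$ on its base, edge ratio $e$, and volume $\asymp|\fC|$; thus $p(\fC)\ge p(\text{cylinder})$, and it suffices to bound $p(\text{cylinder})$ from below by the quantity $F_{\substack{cut\\ball}}^{(d)}(e;t;k_2'|\fC|)\asymp|\fC|^{-2+2/d}$ (nonzero only when $e\ll|\fC|^{-2/d}$). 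Since the cone/cylinder Corollary \ref{CONECYLCOR} depends on Theorem \ref{MAINTHM2} and so is unavailable here, this last bound must come directly from the machinery of Section \ref{GENCONVEXSEC}: apply the recursive lower bound of Proposition \ref{GENCONVBOUNDPROP2} to the cylinder, note that its hyperplane sections $\vecv^\perp\cap(\text{cylinder})$ are again thin lower-dimensional cylinders (or, at the bottom of the induction, short segments handled by the one-dimensional case in \eqref{GENCONVBOUNDPROP1DEF}), and track the exponents through the recursion. Verifying that this recursion reproduces exactly the order $|\fC|^{-2+2/d}$ --- and not something larger or smaller --- is the main obstacle; everything else is elementary geometry together with the two bookkeeping estimates above.
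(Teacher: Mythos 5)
Your upper bound, and your lower bound in the range $t\le\frac32$, are correct and take a somewhat cleaner route than the paper: you observe that cut balls sharing the same flat disk $D$ and having cut ratios $s\geq\frac15$ are nested, $\fC_{s'}\subseteq\fC_s$ for $\frac15\leq s\leq s'$ (your monotonicity of cross-sectional radii is verified by the identity $\rho_s(h)^2=(r')^2-h^2+2hr'\,\frac{1-s}{\sqrt{s(2-s)}}$, and $\frac{1-s}{\sqrt{s(2-s)}}$ is decreasing in $s$), and then compare $F^{(d)}_{\mathrm{cut\,ball}}$ values using monotonicity in the volume argument and the fact $t\ge\frac15$. The paper instead inscribes/circumscribes cones and cylinders for these steps, but your argument works and is fine.

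The genuine gap is in the lower bound when $t$ is close to $2$ (thin flat cap). There you correctly identify the containment $\fC\subset$ right cylinder over $D$ as the natural move, and correctly observe that what is needed is $p(\text{cylinder})\gg|\fC|^{-2+2/d}$ when $e$ is small. But you then assert that Corollary \ref{CONECYLCOR} ``depends on Theorem \ref{MAINTHM2} and so is unavailable here''; that is wrong. The hypothesis \eqref{MAINTHM2REDLEMASS} \emph{is} Theorem \ref{MAINTHM2} in the case $t=\frac15$, and the paper's Lemma \ref{MAINTHM2REDAUXLEM} shows that the $t=\frac15$ case alone already implies Corollary \ref{CONECYLCOR}. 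The key observation there is purely a containment: one can apply a volume-preserving linear map to turn the cylinder into a right cylinder whose radius equals its height, and such a cylinder is contained in the cut ball with cut ratio exactly $\frac15$ sharing the same flat base (and having comparable volume and the same edge ratio); the $t=\frac15$ hypothesis then gives the cylinder lower bound directly. Having missed this, you propose instead to re-derive the cylinder lower bound from scratch by pushing the recursion of Proposition \ref{GENCONVBOUNDPROP2} through by hand, and you explicitly leave the verification --- that the recursion yields exactly the order $|\fC|^{-2+2/d}$ --- as an unchecked ``main obstacle''. That unchecked step is precisely the crux of the lemma; without it the proof is incomplete.
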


It is convenient to first prove an auxiliary result which says that
Theorem \ref{MAINTHM2} in the special case $t=\frac15$
implies Corollary \ref{CONECYLCOR}. In precise terms:

\begin{lem}\label{MAINTHM2REDAUXLEM}
Let $d\geq2$. Assume that there exist constants $0<k_1<k_2$
such that 
\begin{align}\label{MAINTHM2REDAUXLEMASS}
F_{\substack{cut\\ball}}^{(d)}\Bigl(e;\sfrac15;k_2|\fC|\Bigr)\leq
p(\fC)\leq F_{\substack{cut\\ball}}^{(d)}\Bigl(e;\sfrac15;k_1|\fC|\Bigr)
\end{align}
holds for any $d$-dimensional cut ball $\fC$ with
$|\fC|\geq\frac12$,
edge ratio $e\in[0,1]$ and cut ratio $t=\frac15$.
Then there also exist constants $0<k_1'<k_2'$ such that
for any $d$-dimensional cone or cylinder $\fC$ as in
Corollary \ref{CONECYLCOR},
\begin{align}\label{MAINTHM2REDAUXLEMRES}
F_{cone}^{(d)}(e;k_2'|\fC|)\leq p(\fC)\leq F_{cone}^{(d)}(e;k_1'|\fC|),
\end{align}
where $e$ is the edge ratio of $\fC$.
\end{lem}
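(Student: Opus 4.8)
The plan is to prove Lemma~\ref{MAINTHM2REDAUXLEM} by sandwiching the cone or cylinder $\fC$ between cut balls of cut ratio $\tfrac15$, exploiting the monotonicity $\fZ_1\subseteq\fZ_2\Rightarrow p(\fZ_2)\leq p(\fZ_1)$ together with the elementary fact that, after a dimensional rescaling of the volume variable, $F_{\substack{cut\\ball}}^{(d)}(e;\tfrac15;\cdot)$ and $F_{cone}^{(d)}(e;\cdot)$ agree up to constants: both are $\asymp v^{-2+\frac2d}$ when $e\lesssim v^{-\frac2d}$ and both vanish when $e\gtrsim v^{-\frac2d}$, the only difference being the implied constant in the threshold for $e$ (which is absorbed into $k_1',k_2'$). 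As a preliminary I would use $p(\fC)=p(\fC M)$ for $M\in\SL_d(\R)$ and $p(\fC)=p(\fC\cap\{\vecw\cdot\vecx>0\})$ (Lemma~\ref{PCONTLEM}) to reduce to a ``normalized'' situation: a right cone/cylinder whose base lies in $\vece_1^\perp$ with $\bn$ in the base, whose axis is $\RR\vece_1$, and whose height is comparable to the base radius $r$, so that $|\fC|\asymp r^d$; the degenerate cases ($\fC$ with empty interior or unbounded) are handled by Lemma~\ref{TRIVPLOWBOUNDLEM} and Lemma~\ref{TRIVPCBOUNDLEM2}, and when $|\fC|$ is bounded both bounds are vacuous (the right-hand side of \eqref{MAINTHM2REDAUXLEMRES} exceeding $1$), so one may assume $|\fC|$, hence $r$, large.

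\textbf{The lower bound.} For $p(\fC)\geq F_{cone}^{(d)}(e;k_2'|\fC|)$ I would produce a single outer cut ball. In the normalized coordinates (height $=r$, say) take $B^{\mathrm{out}}$ to be the ball with center on the axis at height $\tfrac43 r$ and radius $\tfrac53 r$. One checks directly that $B^{\mathrm{out}}$ contains the base, the apex (or the far base, in the cylinder case) and hence all of $\fC$; that $B^{\mathrm{out}}\cap\vece_1^\perp$ is exactly the base of $\fC$, so that $\bn$ lies on the flat part of $\fC^{\mathrm{out}}:=B^{\mathrm{out}}\cap\{\vece_1\cdot\vecx>0\}$ with edge ratio precisely $e$; and that $\fC^{\mathrm{out}}$ has cut ratio $1-\tfrac{4/3}{5/3}=\tfrac15$ and volume $\asymp|\fC|$. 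Since $\fC\subseteq\fC^{\mathrm{out}}$ and $|\fC^{\mathrm{out}}|\geq|\fC|\geq\tfrac12$, the hypothesis \eqref{MAINTHM2REDAUXLEMASS} gives $p(\fC)\geq p(\fC^{\mathrm{out}})\geq F_{\substack{cut\\ball}}^{(d)}(e;\tfrac15;k_2|\fC^{\mathrm{out}}|)$, and by the rescaling remark above this is $\geq F_{cone}^{(d)}(e;k_2'|\fC|)$ for a suitable $k_2'$.

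\textbf{The upper bound.} Here inscribing a cut ball is, by itself, not enough: since for $e>0$ the cutting hyperplane of any inscribed cut ball with $\bn$ in the relative interior of its flat part must coincide with the base hyperplane of $\fC$, and the cone narrows with height, such an inscribed cut ball of cut ratio $\tfrac15$ has volume only $\lesssim e^d|\fC|$, which for the relevant range of $e$ is $\lesssim1$ and yields nothing. I would therefore obtain the upper bound by applying Proposition~\ref{GENCONVBOUNDPROP1} directly to $\fC$ (whose inradius is $\asymp r$) and inducting on $d$: the slices $\fC\cap\vecv^\perp$ are again, for each direction $\vecv$, lower-dimensional cones, simplices, or empty, and the inductive form of the bound for these controls the integrand. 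For $d=2$ this bottoms out on $p'_1(\fZ)=I(|\fZ|\le1)$. The point is then to evaluate $r^{-d}\int_{\S_1^{d-1}} p'_{d-1}\!\bigl(k_2r^{\frac1{d-1}}\fC\cap\vecv^\perp\bigr)\,d\vecv$ sharply: one splits the $\vecv$-integration according to the angle with $\vece_1$, notes that for $\vecv$ away from $\pm\vece_1$ the slice is a lower-dimensional cone whose edge ratio is comparable to $e$ and whose volume is $\asymp r^{d/(d-1)}$, and uses that the inductive bound vanishes for such slices as soon as $e$ exceeds the appropriate threshold. (For part of the range the vanishing can alternatively be obtained from Minkowski's theorem applied to the bicone spanned by the sub-cone of $\fC$ with base the symmetric disc of radius $er$ centered at $\bn$, which gives $p(\fC)=0$ already when $e\gtrsim|\fC|^{-1/d}$.)

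\textbf{Main obstacle.} The delicate step is the upper bound, and specifically the vanishing: showing $p(\fC)=0$ throughout the full range $e\gtrsim|\fC|^{-2/d}$, which is genuinely finer than what either a direct inscription argument or the bicone--Minkowski argument delivers. This forces one to carry out the evaluation of the reduction-theory integral above with some care, keeping track of the (lower-dimensional) edge ratios of the slices and of the regime in which $p'_{d-1}$ of a slice is already forced to be $0$; the cylinder case is entirely analogous, the only change being that the slices are cones or simplices of a slightly different shape.
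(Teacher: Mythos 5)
Your lower bound is essentially the paper's argument, slightly streamlined: the paper first circumscribes a normalized right cylinder by a cut ball of cut ratio $\tfrac15$ (giving the cylinder lower bound), then gets the cone lower bound by inscribing the cone in a cylinder with the same base and $d$ times the volume; you simply circumscribe the cone directly. Either route is fine.

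The upper bound has a genuine gap, and it stems from the mistaken claim that any cut ball of cut ratio $\tfrac15$ inscribed in the cone with $\bn$ in the relative interior of its flat part has volume $\lesssim e^d|\fC|$. That would be so if the inscribed cut ball were forced to have flat-part radius comparable to the cone's base radius, or to be roughly centred at $\bn$ — but neither is required. The paper's construction takes the flat part to be a small disc (radius about $r/20$ when the cone's base has radius $r$), places $\bn$ at relative distance $\min(1,20e)$ from the rim of that disc, and slides the cut ball laterally toward the cone's axis so as to remain inside; a cut ball of cut ratio $\tfrac15$ is shallow, so there is plenty of room. Concretely, with $\fC_0$ the convex hull of $\iota(\overline{\scrB_1^{d-1}})$ and $\vece_1-3\vece_2$, and $\fC_1=\bigl(\tfrac1{15}\vece_1-\tfrac{19}{20}\vece_2+\overline{\scrB_{1/12}^d}\bigr)\cap\{x_1\geq0\}$, one checks $s\vece_2+\fC_1\subset\fC_0$ for all $s\in[0,\tfrac{19}{20}]$; writing $\overline\fC M=c((1-e)\vece_2+\fC_0)$ and taking $s=\max(0,e-\tfrac1{20})$ gives an inscribed cut ball of cut ratio $\tfrac15$, edge ratio $\min(1,20e)$, and volume $\asymp|\fC|$. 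Plugging this into \eqref{MAINTHM2REDAUXLEMASS} gives the sharp upper bound, including the vanishing for $e\gg|\fC|^{-2/d}$, with no further work. Your substitute — applying Proposition~\ref{GENCONVBOUNDPROP1} and inducting — never uses the hypothesis \eqref{MAINTHM2REDAUXLEMASS} and so defeats the purpose of the lemma, which is precisely to \emph{reduce} Corollary~\ref{CONECYLCOR} to the $t=\tfrac15$ case of Theorem~\ref{MAINTHM2}; moreover, the slices $\fC\cap\vecv^\perp$ of a cone are not in general lower-dimensional cones or simplices (this holds only when $\vecv^\perp$ passes through the apex), so the induction as you describe it would not close without a fresh geometric analysis comparable to the proof of Theorem~\ref{MAINTHM2} itself.
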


\begin{proof}[Proof of Lemma \ref{MAINTHM2REDAUXLEM}]
First assume that $\fC$ is an arbitrary
\textit{cylinder} as in Corollary \ref{CONECYLCOR}, with edge ratio $e$.
Then there is some $M\in G$ such that the two bases of $\fC M$
are balls (just as for $\fC$ itself),
$\fC M$ is right
(viz.\ the line between the centers of its bases is orthogonal to the bases),
and the radius of $\fC M$ equals its height.
Note that $\fC M$ has the same edge ratio as $\fC$.
Let $B$ be the base of $\fC M$ which contains $\bn$.
Now %
one checks by a quick computation
that if we let $\fC'$ be the unique cut ball with cut ratio $t=\frac15$ 
for which the flat part of $\partial\fC'$ equals $\overline B$,
with $\fC'$ lying on the same side of $B$ as $\fC M$,
then $\fC M\subset\overline{\fC'}$.
(If $\fC M$ has radius and height $r$ then the radius of $\fC'$ is $\frac53r$.)
Also $|\fC'|\geq|\fC|\geq\frac12$.
Hence by \eqref{MAINTHM2REDAUXLEMASS}
(using also Lemma \ref{PCONTLEM}),
\begin{align}\label{CYLFROMBELOW}
p(\fC)=p(\fC M)\geq p(\fC')\geq 
F_{\substack{cut\\ball}}^{(d)}(e;\sfrac15;k|\fC'|)
\geq F_{cone}^{(d)}(e;k'|\fC|)
\end{align}
for certain constants $k,k'>0$ which only depend on $d,k_1,k_2$.

For our next construction it is convenient to use explicit
coordinates.
Let $\fC_0$ be the (closed) cone which is the convex hull of
$\iota(\overline{\scrB_1^{d-1}})$ and $\vece_1-3\vece_2$.
Let $\fC_1$ be the (open) cut ball
\begin{align*}
\fC_1=\bigl(\sfrac1{15}\vece_1-\sfrac{19}{20}\vece_2+
\overline{\scrB_{1/12}^d}\bigr)
\cap\{\vecx\col\vecx\cdot\vece_1\geq0\}.
\end{align*}
One then checks by a straightforward computation that
$(s\vece_2+\fC_1)\subset\fC_0$ for all $0\leq s\leq\sfrac{19}{20}$.
Now if $\fC$ is an arbitrary \textit{cone} as in Corollary \ref{CONECYLCOR},
with edge ratio $e\in[0,1]$,
then there are some $M\in G$ and $c>0$ such that
$\overline\fC M=c((1-e)\vece_2+\fC_0)$,
and hence we conclude
\begin{align*}
c((1-e+s)\vece_2+\fC_1)\subset\overline\fC M,\qquad
\forall s\in[0,\sfrac{19}{20}].
\end{align*}
We apply this with $s=\max(0,e-\frac1{20})$;
then $c((1-e+s)\vece_2+\fC_1)$ is a cut ball with
cut ratio $t=\frac15$, edge ratio $\min(1,20e)$,
and volume $|c\fC_1|\gg |c\fC_0|=|\fC|$.
Hence by the right inequality in \eqref{MAINTHM2REDAUXLEMASS}
(which we may assume holds also when the cut ball has volume
$<\frac12$, similarly as in Remark~\ref{TRIVHALFREM}), we get:
\begin{align}\label{CONEFROMABOVE}
p(\fC)=p(\overline\fC M)\leq p\bigl(c((1-e+s)\vece_2+\fC_1)\bigr)
\leq 
F_{\substack{cut\\ball}}^{(d)}(\min(1,20e);\sfrac15;k|c\fC_1|)
\leq F_{cone}^{(d)}(e;k'|\fC|)
\end{align}
for certain constants $k,k'>0$ which only depend on $d,k_1,k_2$.
Hence we have proved that the
right inequality in \eqref{MAINTHM2REDAUXLEMRES} holds for $\fC$.
Furthermore we may inscribe $\fC$ in a cylinder $\fC'$ 
which has the same base as $\fC$ and thus edge ratio $e$,
and which has volume $|\fC'|=d|\fC|\ll|\fC|$.
Hence using $p(\fC)\geq p(\fC')$ and applying \eqref{CYLFROMBELOW} to $\fC'$,
we conclude that also the 
left inequality in \eqref{MAINTHM2REDAUXLEMRES} holds for $\fC$.

On the other hand, if $\fC$ is an arbitrary \textit{cylinder} as in 
Corollary \ref{CONECYLCOR}, then the
left inequality in \eqref{MAINTHM2REDAUXLEMRES} holds because of
\eqref{CYLFROMBELOW},
and the right inequality in \eqref{MAINTHM2REDAUXLEMRES} follows by
inscribing a cone $\fC'$ in $\fC$ with the same edge ratio as $\fC$
and volume $|\fC'|=d^{-1}|\fC|\gg|\fC|$,
using $p(\fC)\leq p(\fC')$ and applying 
\eqref{CONEFROMABOVE} to $\fC'$.
\end{proof}

\begin{proof}[Proof of Lemma \ref{MAINTHM2REDLEM}]
Let $\fC$ be an arbitrary cut ball with $|\fC|\geq\frac12$,
cut ratio $\frac15\leq t<2$ and edge ratio $e$.
If $1\leq t<2$ 
then let $\fC'$ be the unique closed right cylinder which has the 
flat part of $\partial\fC$ as one base, and has minimal height subject to the
condition $\fC\subset\fC'$.
Then $p(\fC)\geq p(\fC')$,
$\frac12\leq|\fC|\leq|\fC'|\ll|\fC|$, and $\fC'$ has 
the same edge ratio as $\fC$.
Hence we obtain, using %
Lemma \ref{MAINTHM2REDAUXLEM}:
\begin{align*}
p(\fC)\geq p(\fC')\geq F_{cone}^{(d)}(e,k|\fC'|)
\geq F_{\substack{cut\\ball}}^{(d)}(e;t;k'|\fC|),
\end{align*}
where $k,k'>0$ depend only on $d,k_1,k_2$.
Thus the left inequality in \eqref{MAINTHM2REDLEMRES} holds.

A variant of this construction also works if $\frac15\leq t<1$.
Namely, with $B$ and $\vecw$ as in \eqref{CUTBALLRECDEF},
let $\veca$ be a non-zero vector with
$\varphi(\veca,\vecw)=\arcsin(1-t)$;
if $\vecp\notin\R\vecw$ then we furthermore assume
$\veca\in\R\vecw+\R\vecp$ and that $\veca$ and $\vecp$ lie on
distinct sides of the line $\R\vecw$ in the plane $\R\vecw+\R\vecp$.
Let $\fZ$ be the infinite cylinder consisting of all points which
have distance $\leq r$ to the line $\vecp+\R\veca$
(then clearly $B\subset\fZ$), and set
\begin{align*}
\fC':=\fZ\cap\{\vecx\in\R^d\col 0\leq\vecw\cdot\vecx\leq(2-t)r\}.
\end{align*}
Then $\fC\subset\fC'$, 
$\frac12\leq|\fC|\leq|\fC'|\ll|\fC|$,
and there is some $M\in G$ such that
$\fC' M$ is a cylinder as in Corollary \ref{CONECYLCOR}, 
with edge ratio $\asymp e$;
thus the left inequality in \eqref{MAINTHM2REDLEMRES} again holds.

The right inequality in \eqref{MAINTHM2REDLEMRES} is proved in a similar
but easier way by inscribing a cone in $\fC$ and using
Lemma \ref{MAINTHM2REDAUXLEM}.
\end{proof}

\subsection{Some more lemmas}
Let %
\begin{align}\label{CUTBALLPRES}
\fC:=%
B\cap\{\vecx\in\R^d\col \vecw\cdot\vecx>0\}
\end{align}
be a %
cut ball of radius $r>0$, %
cut ratio $t\in[0,2)$ and edge ratio $e\in[0,1]$.
By Lemma~\ref{MAINTHM2REDLEM}, if we can prove Theorem \ref{MAINTHM2}
whenever $t\in[0,\frac15]$ then Theorem \ref{MAINTHM2} holds in general.
We may furthermore assume $\bn\notin\partial B$,
since then the remaining cases with $\bn\in\partial B$ follow 
by a limit argument, using Lemma \ref{PCONTLEM}.
\textit{Hence from now on we will assume
$0<t\leq\frac15$ and $0<e\leq1$.}

In order to prove Theorem~\ref{MAINTHM2} we will use the bounds in
Propositions \ref{GENCONVBOUNDPROP1} and \ref{GENCONVBOUNDPROP2}.
Just as in the proof of Theorem~\ref{GENBALLTHM}
it is clear from start that these give sharp bounds on $p(\fC)$,
since the inradius and the circumradius of $\fC$ are comparable
because of our assumption $t\leq\frac15$.
Furthermore, the intersection of $\fC$ with a hyperplane 
$\vecv^\perp$ ($\vecv\neq\pm\vece_1$)
is again a cut ball (of dimension $d-1$);
hence we will again be able to use induction
to bound the integrands appearing in 
Propositions \ref{GENCONVBOUNDPROP1} and \ref{GENCONVBOUNDPROP2}.
However first we need to make a careful study of 
how the size and shape of this cut ball $\fC\cap\vecv^\perp$
varies with $\vecv$.

Let $\vecp$ be the center of the ball $B$ in \eqref{CUTBALLPRES}.
After a rotation we may assume 
\begin{align}\label{CUTBALLFIXEDCHOICE}
\vecw=\vece_1\quad\text{and}\quad
\vecp=r(1-t)\vece_1+rs\vece_2,\quad
\text{where }\:s=(1-e)\sqrt{t(2-t)}.
\end{align}
We write $r'=r\sqrt{t(2-t)}$ and $\vecq=rs\vece_2$ 
for the radius and the center of 
the $(d-1)$-dimensional ball $B\cap\vecw^\perp$.

In the integrals in 
Propositions \ref{GENCONVBOUNDPROP1} and \ref{GENCONVBOUNDPROP2},
we note that we may restrict to $\vecv\in(v_1,\ldots,v_d)\in\S_1^{d-1}$
satisfying $v_2>0$, since $(-\vecv)^\perp=\vecv^\perp$.
If $d\geq3$ then 
we parametrize a dense open subset of $\S_1^{d-1}\cap\{v_2>0\}$ as follows:
\begin{align}\notag
\vecv&=(v_1,\ldots,v_d)
\\\label{VPARA}
&=\bigl(\cos\varpi,\sin\varpi\cos\omega,
(\sin\varpi\sin\omega)\alpha_1,(\sin\varpi\sin\omega)\alpha_2,
\ldots,(\sin\varpi\sin\omega)\alpha_{d-2}\bigr)
\in\S_1^{d-1},
\end{align}
where $\varpi\in(0,\pi)$, $\omega\in(0,\frac\pi2)$ and
$\vecalf=(\alpha_1,\ldots,\alpha_{d-2})\in\S_1^{d-3}$.
Thus $\varpi$ is the angle between $\vecv$ and $\vece_1$,
and $\omega$ is the angle between $\vecv':=(v_2,\ldots,v_d)$
and $\vece_1$ in $\R^{d-1}$.
Note in particular that $\vecv\neq\pm\vece_1$, since $\varpi\in(0,\pi)$.
The $(d-1)$-dimensional volume measure on $\S_1^{d-1}$ takes the following
form in our parametrization:
\begin{align}\label{DVINPARA}
d\vecv=(\sin\varpi)^{d-2}(\sin\omega)^{d-3}\,d\varpi\,d\omega\,d\vecalf,
\end{align}
where $d\vecalf$ is the $(d-3)$-dimensional volume measure on $\S_1^{d-3}$
(if $d=3$: $d\vecalf$ is the counting measure on $\S_1^0=\{-1,1\}$).

As noted above, 
for any $\vecv\in\S_1^{d-1}\setminus\{\pm\vece_1\}$
the intersection $\fC\cap\vecv^\perp$
is a $(d-1)$-dimensional cut ball inside $\vecv^\perp$.
To express the radius and cut ratio of $\fC\cap\vecv^\perp$
in a convenient way we
introduce the following functions,
for $0<t\leq\frac15$,
$u\in\bigl[0,\sqrt{t(2-t)}\bigr)$ and $\varpi\in[0,\pi]$,
\begin{align}
&h(u,\varpi)=(1-t)\sin\varpi-u\cos\varpi;
\\\label{GDEF}
&g(u,\varpi)=\sqrt{t(2-t)-u^2+h(u,\varpi)^2};
\\\label{FDEF}
&f(u,\varpi)=g(u,\varpi)+h(u,\varpi)
=\frac{t(2-t)-u^2}{g(u,\varpi)-h(u,\varpi)};
\\\label{TDEF}
&T(u,\varpi)=\frac{g(u,\varpi)-h(u,\varpi)}{g(u,\varpi)}
=\frac{t(2-t)-u^2}{g(u,\varpi)f(u,\varpi)}.
\end{align}
(All four functions depend on $t$ but we leave this out from the
notation.)
We compute that, for any $\vecv$ with
$\varpi\in(0,\pi)$ and $\omega\in(0,\frac\pi2)$,
\textit{the $(d-1)$-dimensional cut ball $\fC\cap\vecv^\perp$ has radius
$rg(s\cos\omega,\varpi)$ and cut ratio $T(s\cos\omega,\varpi)$.}

In the remaining case $d=2$
we parametrize $\vecv\in\S_1^1$ as
$\vecv=(\cos\varpi,\sin\varpi)$, where we may restrict to $\varpi\in(0,\pi)$.
We then compute that
\textit{$\fC\cap\vecv^\perp$ is a line segment of length $rf(s,\varpi)$;}
in fact this line segment may be viewed as a 1-dimensional cut ball
of radius $rg(s,\varpi)$ and cut ratio $T(s,\varpi)$.

In the next four lemmas we give some
information on the size of the functions $h,g,f,T$.
Set
\begin{align*}
\varpi_0(u)=\arctan\Bigl(\frac{u}{1-t}\Bigr)\in[0,\sfrac\pi2).
\end{align*}

\begin{lem}\label{LEM5}
For any $u\in\bigl[0,\sqrt{t(2-t)}\bigr)$
and $\varpi\in(\frac\pi2,\pi]$ we have
\begin{align*}
h(u,\varpi)=\bigl|h(u,\varpi)\bigr|\geq\bigl|h(u,\pi-\varpi)\bigr|,
\quad
g(u,\varpi)\geq g(u,\pi-\varpi),
\:\text{ and }\:
f(u,\varpi)\geq f(u,\pi-\varpi).
\end{align*}
\end{lem}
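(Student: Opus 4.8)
The plan is to reduce all three inequalities to the elementary fact that $a+b\ge|a-b|$ for any reals $a,b\ge 0$, after making the substitution $\varpi\mapsto\pi-\varpi$. Concretely, I would set $\varpi':=\pi-\varpi\in[0,\frac\pi2)$, so that $\sin\varpi=\sin\varpi'\ge 0$ and $\cos\varpi=-\cos\varpi'$ with $\cos\varpi'>0$. Substituting into the definition of $h$ gives
\[
h(u,\varpi)=(1-t)\sin\varpi'+u\cos\varpi',\qquad
h(u,\pi-\varpi)=(1-t)\sin\varpi'-u\cos\varpi'.
\]
Since $0<t\le\frac15$ we have $1-t>0$, and $u\ge 0$ by hypothesis, so the two numbers $a:=(1-t)\sin\varpi'$ and $b:=u\cos\varpi'$ are both nonnegative. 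Hence $h(u,\varpi)=a+b\ge 0$, which is exactly the claim $h(u,\varpi)=|h(u,\varpi)|$, and $h(u,\varpi)=a+b\ge|a-b|=|h(u,\pi-\varpi)|$. This disposes of the first assertion.

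For $g$ I would simply square. By \eqref{GDEF} we have $g(u,\varpi)^2=t(2-t)-u^2+h(u,\varpi)^2$, and the same with $\varpi$ replaced by $\pi-\varpi$; both radicands are positive since $u<\sqrt{t(2-t)}$. From $h(u,\varpi)\ge|h(u,\pi-\varpi)|\ge 0$ we get $h(u,\varpi)^2\ge h(u,\pi-\varpi)^2$, hence $g(u,\varpi)^2\ge g(u,\pi-\varpi)^2$, and taking nonnegative square roots gives $g(u,\varpi)\ge g(u,\pi-\varpi)$. Finally, since $f=g+h$ by \eqref{FDEF}, adding the inequality just obtained to $h(u,\varpi)\ge|h(u,\pi-\varpi)|\ge h(u,\pi-\varpi)$ yields $f(u,\varpi)\ge f(u,\pi-\varpi)$.

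I do not expect any real obstacle here; the lemma is essentially a sign-bookkeeping exercise. The only points that need a moment's attention are that the quantities under the square roots defining $g$ are nonnegative — which is guaranteed by the standing hypotheses $0<t\le\frac15$ and $0\le u<\sqrt{t(2-t)}$ — and that $\cos(\pi-\varpi)>0$ for $\varpi\in(\frac\pi2,\pi]$, so that the sign of the term $u\cos(\pi-\varpi)$ is unambiguous in the computation above.
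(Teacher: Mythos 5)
Your proof is correct, and it is precisely the routine sign-bookkeeping that the paper compresses into ``Clear by inspection'': the substitution $\varpi'=\pi-\varpi$ together with $a+b\ge|a-b|$ for $a,b\ge 0$ gives the $h$-inequality, and the $g$- and $f$-inequalities follow mechanically from \eqref{GDEF} and \eqref{FDEF}.
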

\begin{proof}
Clear by inspection.
\end{proof}

\begin{lem}\label{LEM1}
For any fixed $u\in\bigl[0,\sqrt{t(2-t)}\bigr)$,
$h(u,\varpi)$ is a strictly increasing function of $\varpi\in[0,\frac\pi2]$
with $h(u,\varpi_0(u))=0$;
also $f(u,\varpi)$ is a strictly increasing function of 
$\varpi\in[0,\frac\pi2]$,
and $T(u,\varpi)$ is a strictly decreasing function of
$\varpi\in[0,\frac\pi2]$ with $T(u,\varpi_0(u))=1$.
\end{lem}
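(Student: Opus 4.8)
The plan is to reduce the whole statement to the behaviour of the single function $h(u,\cdot)$, since for fixed $u$ the quantities $g$, $f$ and $T$ depend on $\varpi$ only through $h(u,\varpi)$. First I would rewrite $h$ in amplitude--phase form: setting $R:=\sqrt{(1-t)^2+u^2}$, one has $\cos\varpi_0(u)=(1-t)/R$ and $\sin\varpi_0(u)=u/R$, hence $h(u,\varpi)=R\sin\bigl(\varpi-\varpi_0(u)\bigr)$. Because $u\ge0$ and $0<t\le\frac15$, we have $\varpi_0(u)=\arctan\bigl(u/(1-t)\bigr)\in[0,\frac\pi2)$ (indeed $u/(1-t)\le\frac35/\frac45<1$ here), so as $\varpi$ runs over $[0,\frac\pi2]$ the argument $\varpi-\varpi_0(u)$ runs over a subinterval of $(-\frac\pi2,\frac\pi2)$, on which $\sin$ is strictly increasing. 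This gives at once that $h(u,\varpi)$ is strictly increasing in $\varpi\in[0,\frac\pi2]$ and that $h(u,\varpi_0(u))=R\sin0=0$, which is the first assertion.

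Next I would put $c:=t(2-t)-u^2$, which is strictly positive exactly because $u<\sqrt{t(2-t)}$; in particular $g(u,\varpi)=\sqrt{c+h(u,\varpi)^2}$ is well defined and positive, and $g>|h|$ so the denominators $g-h$ and $g$ in \eqref{FDEF}--\eqref{TDEF} are positive. Writing $\phi(x):=\sqrt{c+x^2}+x$ and $\psi(x):=x/\sqrt{c+x^2}$, one has $f(u,\varpi)=\phi\bigl(h(u,\varpi)\bigr)$ and, since $T=(g-h)/g=1-h/g$, also $T(u,\varpi)=1-\psi\bigl(h(u,\varpi)\bigr)$. A direct differentiation gives $\phi'(x)=1+x/\sqrt{c+x^2}>0$ for all $x$ (as $|x|<\sqrt{c+x^2}$) and $\psi'(x)=c\,(c+x^2)^{-3/2}>0$, so $\phi$ and $\psi$ are strictly increasing on $\R$. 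Composing with the strictly increasing map $\varpi\mapsto h(u,\varpi)$ from the first step shows that $f(u,\cdot)$ is strictly increasing on $[0,\frac\pi2]$ and $T(u,\cdot)=1-\psi(h(u,\cdot))$ is strictly decreasing on $[0,\frac\pi2]$. Finally, evaluating at $\varpi=\varpi_0(u)$, where $h=0$, yields $T(u,\varpi_0(u))=1-\psi(0)=1$, completing the proof.

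There is no genuine obstacle here; the lemma is an elementary calculus computation once the phase-shift identity $h(u,\varpi)=R\sin(\varpi-\varpi_0(u))$ is in hand. The only points that require a moment's care are verifying that $\varpi_0(u)$ stays strictly below $\frac\pi2$ (so the sine-monotonicity argument covers all of $[0,\frac\pi2]$) and that $c>0$; both are immediate from the standing hypotheses $0<t\le\frac15$ and $0\le u<\sqrt{t(2-t)}$.
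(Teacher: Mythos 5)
Your proof is correct, and the overall strategy is the same as the paper's: establish that $h(u,\cdot)$ is strictly increasing on $[0,\frac\pi2]$ with zero at $\varpi_0(u)$, and then observe that $f$ and $T$ depend on $\varpi$ only through $h$. The difference is in how you handle the monotonicity of $f$. The paper splits into two cases, using $f=g+h$ on $[\varpi_0(u),\frac\pi2]$ (where both terms are nondecreasing) and the alternative formula $f=\bigl(t(2-t)-u^2\bigr)/(g-h)$ on $[0,\varpi_0(u)]$ (where $g-h$ is decreasing); your argument avoids this case split entirely by noting that the single function $\phi(x)=\sqrt{c+x^2}+x$ (with $c=t(2-t)-u^2>0$) has derivative $\phi'(x)=1+x/\sqrt{c+x^2}>0$ on all of $\R$, so that $f=\phi(h)$ is increasing without any sign distinction on $h$. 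This is a small but genuine simplification. Your treatment of $T$ via $\psi(x)=x/\sqrt{c+x^2}$ with $\psi'(x)=c(c+x^2)^{-3/2}>0$ is just a spelled-out version of the paper's remark that $1-h/\sqrt{c+h^2}$ is decreasing in $h$, and the amplitude--phase rewriting $h=R\sin(\varpi-\varpi_0(u))$ is a clean way to make explicit what the paper dismisses as ``clear''. One cosmetic slip: when $u=0$ the image of $[0,\frac\pi2]$ under $\varpi\mapsto\varpi-\varpi_0(u)$ is $[0,\frac\pi2]$ itself, not a subinterval of the open interval $(-\frac\pi2,\frac\pi2)$; this is harmless since $\sin$ is strictly increasing on the closed interval $[-\frac\pi2,\frac\pi2]$.
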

\begin{proof}
The statements about $h$ are clear. The statement about
$f$ follows from this, using the first formula in \eqref{FDEF}
for $\varpi\in[\varpi_0(u),\frac\pi2]$ and the second for
$\varpi\in[0,\varpi_0(u)]$.
Also the statement about $T$ follows, since 
$1-\frac h{\sqrt{t(2-t)-u^2+h^2}}$ is a strictly decreasing 
function of $h\in\R$.
\end{proof}

\begin{lem}\label{LEM3}
For any $u\in\bigl[0,\sqrt{t(2-t)}\bigr)$ and $\varpi\in(0,\frac12\varpi_0(u)]$
we have
\begin{align*}
g(u,\varpi)\asymp\sqrt t
\quad\text{and}\quad 
f(u,\varpi)\asymp\sqrt{t(2-t)}-u.
\end{align*}
\end{lem}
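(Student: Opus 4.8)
The plan is to establish that, throughout the range $\varpi\in(0,\frac12\varpi_0(u)]$, the quantity $h(u,\varpi)$ is negative and satisfies $|h(u,\varpi)|\asymp u$, and then to read off the two estimates directly from the definitions \eqref{GDEF} and \eqref{FDEF}. One may assume $u>0$, since $\varpi_0(0)=0$ makes the range $(0,\frac12\varpi_0(u)]$ empty when $u=0$; and throughout one uses that $t\le\frac15$ gives $2-t\asymp1$, hence $t(2-t)\asymp t$. This is the only place the hypothesis on $t$ enters.

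First I would bound $|h|$ from both sides. Since $u^2<t(2-t)$ we have $(1-t)^2+u^2<(1-t)^2+t(2-t)=1$, so $\cos\varpi_0(u)=\frac{1-t}{\sqrt{(1-t)^2+u^2}}>1-t\ge\frac45$. For $\varpi$ in our range, $0<\varpi\le\frac12\varpi_0(u)<\varpi_0(u)<\frac\pi2$, hence $\cos\varpi>\cos\varpi_0(u)>\frac45$, and by convexity of $\tan$ on $[0,\frac\pi2)$ also $\tan\varpi\le\tan(\tfrac12\varpi_0(u))\le\tfrac12\tan\varpi_0(u)=\frac{u}{2(1-t)}$. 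Writing $h(u,\varpi)=-\cos\varpi\bigl(u-(1-t)\tan\varpi\bigr)$, this gives
\begin{align*}
\tfrac{2u}{5}\le\cos\varpi\cdot\tfrac{u}{2}\le\bigl|h(u,\varpi)\bigr|=-h(u,\varpi)\le u\cos\varpi\le u ,
\end{align*}
so in particular $h(u,\varpi)<0$ and $|h(u,\varpi)|\asymp u$.

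Next I would deduce $g\asymp\sqrt t$. From \eqref{GDEF}, $g(u,\varpi)^2=\bigl(t(2-t)-u^2\bigr)+h(u,\varpi)^2$; the upper bound $g^2\le t(2-t)-u^2+u^2=t(2-t)\ll t$ is immediate, and for the lower bound I would split into cases: if $u^2\le\frac12 t(2-t)$ then $g^2\ge t(2-t)-u^2\ge\frac12 t(2-t)$, while if $u^2>\frac12 t(2-t)$ then $g^2\ge h(u,\varpi)^2\ge\frac{4}{25}u^2\ge\frac{2}{25}t(2-t)$. In all cases $g^2\gg t$, so $g(u,\varpi)\asymp\sqrt t\asymp\sqrt{t(2-t)}$, which is the first assertion.

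Finally, for $f$ I would use the second expression in \eqref{FDEF}. Since $h(u,\varpi)<0$, one has $\sqrt{t(2-t)}\ll g(u,\varpi)\le g(u,\varpi)-h(u,\varpi)=g(u,\varpi)+|h(u,\varpi)|\le\sqrt{t(2-t)}+u<2\sqrt{t(2-t)}$, so $g(u,\varpi)-h(u,\varpi)\asymp\sqrt{t(2-t)}$; and $t(2-t)-u^2=(\sqrt{t(2-t)}-u)(\sqrt{t(2-t)}+u)$ with $\sqrt{t(2-t)}+u\asymp\sqrt{t(2-t)}$ since $0<u<\sqrt{t(2-t)}$. Hence
\begin{align*}
f(u,\varpi)=\frac{t(2-t)-u^2}{g(u,\varpi)-h(u,\varpi)}\asymp\frac{(\sqrt{t(2-t)}-u)\sqrt{t(2-t)}}{\sqrt{t(2-t)}}=\sqrt{t(2-t)}-u .
\end{align*}
The only mildly delicate point is the two-sided bound on $|h|$ in the second paragraph, in particular the use of convexity of $\tan$ to pass from $\varpi\le\frac12\varpi_0(u)$ to $\tan\varpi\le\frac12\tan\varpi_0(u)$; everything after that is routine algebra, so I do not expect a real obstacle.
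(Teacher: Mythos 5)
Your proposal is correct and follows the same route as the paper: the paper's proof is a two-line sketch that asserts $u\ll -h(u,\varpi)\le u$ on this range, deduces $g\asymp\sqrt t$, and then obtains the $f$-estimate from the second formula in \eqref{FDEF}. Your argument supplies the missing details (the convexity of $\tan$ to get $\tan\varpi\le\frac12\tan\varpi_0(u)$, the lower bound $\cos\varpi>\frac45$ via $t\le\frac15$, the case split for the lower bound on $g^2$, and the factorization $t(2-t)-u^2=(\sqrt{t(2-t)}-u)(\sqrt{t(2-t)}+u)$), all of which are sound.
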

\begin{proof}
For these $u,\varpi$ we have $u\ll -h(u,\varpi)\leq u$, and thus
$g(u,\varpi)\asymp\sqrt t$.
Now the claim about $f$ follows using the second formula in \eqref{FDEF}.
\end{proof}

\begin{lem}\label{LEM4}
For any $u\in\bigl[0,\sqrt{t(2-t)}\bigr)$ with
$2\varpi_0(u)\leq\frac\pi2$ we have
\begin{align*}
g(u,\varpi)\asymp f(u,\varpi)\asymp\sqrt t+\varpi,
\qquad\text{for all }\:\varpi\in[2\varpi_0(u),\sfrac\pi2].
\end{align*}
\end{lem}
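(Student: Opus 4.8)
The plan is to handle the equivalence $g\asymp f$ and the two-sided estimate $g,f\asymp\sqrt t+\varpi$ separately, the crucial preliminary being a clean closed form for $h$. First I would rewrite $h$ using $\tan\varpi_0(u)=u/(1-t)$: the angle-subtraction formula gives
$h(u,\varpi)=\frac{1-t}{\cos\varpi_0(u)}\sin\bigl(\varpi-\varpi_0(u)\bigr)$.
Since $t\le\frac15$ forces $u<\sqrt{t(2-t)}<\frac35$ and hence $\tan\varpi_0(u)=u/(1-t)<\frac34$, the angle $\varpi_0(u)$ stays bounded away from $\frac\pi2$, so $1-t\asymp1$ and $\cos\varpi_0(u)\asymp1$. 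For $\varpi\in[2\varpi_0(u),\frac\pi2]$ one has $\varpi-\varpi_0(u)\in[\tfrac12\varpi,\varpi]\subset[0,\tfrac\pi2]$, on which $\sin x\asymp x$; this yields $h(u,\varpi)\asymp\varpi$ and, in particular, $h(u,\varpi)\ge0$ throughout the interval (in agreement with Lemma~\ref{LEM1}).

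Next, for $g\asymp f$: from \eqref{FDEF} we have $f=g+h\ge g$ because $h\ge0$ on the interval; and $f\le2g$ is equivalent to $h\le g$, i.e.\ (squaring, using \eqref{GDEF}) to $0\le t(2-t)-u^2$, which holds by the standing hypothesis $u<\sqrt{t(2-t)}$. Hence $g\le f\le2g$, and it remains only to prove $g\asymp\sqrt t+\varpi$. For the upper bound, $g^2=(t(2-t)-u^2)+h^2\le t(2-t)+h^2\ll t+\varpi^2\le(\sqrt t+\varpi)^2$. For the lower bound I would split into two cases. If $u^2\le\frac12t(2-t)$, then $g^2\ge t(2-t)-u^2\gg t$, while always $g^2\ge h^2\gg\varpi^2$, so $g\gg\sqrt t+\varpi$. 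If instead $u^2>\frac12t(2-t)$, then $u\asymp\sqrt t$, so $\tan\varpi_0(u)=u/(1-t)\asymp\sqrt t$, and since $\varpi_0(u)$ is bounded, $\varpi_0(u)\asymp\sqrt t$; hence $\varpi\ge2\varpi_0(u)\gg\sqrt t$ and $g\ge h\asymp\varpi\gg\sqrt t$, so again $g\gg\sqrt t+\varpi$.

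The only point needing any care is this case distinction for the lower bound $g\gg\sqrt t$: either $u$ is a fixed fraction below $\sqrt{t(2-t)}$, so that the term $t(2-t)-u^2$ in \eqref{GDEF} supplies the missing $\sqrt t$, or $u\asymp\sqrt t$, so that $\varpi_0(u)$ — hence the entire admissible range of $\varpi$ — is $\gg\sqrt t$ and the $h^2$ term does the job. Everything else is bookkeeping with the definitions \eqref{GDEF}--\eqref{FDEF} and the elementary comparison $\sin x\asymp x$ on $[0,\tfrac\pi2]$; no further ideas are required beyond the identity for $h$.
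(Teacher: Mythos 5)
Your proof is correct and follows essentially the same structure as the paper's: first establish $h(u,\varpi)\asymp\varpi$ on the range $[2\varpi_0(u),\frac\pi2]$, then obtain the two-sided bound on $g$ by a case split according to whether $u$ is close to $\sqrt{t(2-t)}$ or bounded away from it, and finally read off $f\asymp g$ from $f=g+h$ with $0\le h\le g$. The one stylistic difference is your derivation of $h\asymp\varpi$: you use the closed form $h=\frac{1-t}{\cos\varpi_0}\sin(\varpi-\varpi_0)$ together with $\varpi-\varpi_0\in[\frac12\varpi,\varpi]$, whereas the paper reaches the same conclusion by the direct inequality $\sin\varpi\geq(1-2^{-1/2})\sin\varpi+\sin\varpi_0$; your version is arguably a bit more transparent, but the content is the same.
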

\begin{proof}
Assume $2\varpi_0\leq\varpi\leq\frac\pi2$,
where we write $\varpi_0=\varpi_0(u)$.
Then $\sin\varpi\geq(1-2^{-\frac12})\sin\varpi+\sin\varpi_0$,
and thus $h(u,\varpi)\asymp\sin\varpi\asymp\varpi$.
Hence $g(u,\varpi)\asymp\sqrt{t(2-t)-u^2}+\varpi$,
and now either $u\leq\frac12\sqrt{t(2-t)}$ in which case
$g(u,\varpi)\asymp\sqrt{t}+\varpi$ is clear,
or else $u>\frac12\sqrt{t(2-t)}$ in which case
$\varpi\geq2\varpi_0\gg u\gg\sqrt t$ so that
$g(u,\varpi)\asymp\sqrt{t}+\varpi$ still holds.
Now also $f(u,\varpi)\asymp\sqrt t+\varpi$ follows.
\end{proof}

Finally in the next lemma we give some bounds on $|\fC_\vecv|$,
i.e.\ the $(d-1)$-dimensional volume of $\fC\cap\vecv^\perp$
(recall \eqref{CVDEFNEW}), in the case $d\geq3$.
\begin{lem}\label{LEM2}
If $d\geq3$ then for any $\vecv\in\S_1^{d-1}$ with
$\omega\in(0,\frac\pi2)$ and 
$\varpi\in(0,\frac\pi2]$ we have
\begin{align}\label{LEM2RES1}
|\fC_\vecv|\gg r^{d-1}t^{\frac{d-1}2}(e+\omega^2)^{\frac d2}.
\end{align}
If $0<\varpi\leq\frac12\varpi_0(s\cos\omega)$ then this bound is sharp, i.e.
\begin{align}\label{LEM2RES2}
|\fC_\vecv|\asymp r^{d-1}t^{\frac{d-1}2}(e+\omega^2)^{\frac d2}.
\end{align}
Finally if $2\varpi_0(s\cos\omega)\leq\varpi\leq\frac\pi2$ then
\begin{align}\label{LEM2RES3}
|\fC_\vecv|\asymp r^{d-1}(\sqrt t+\varpi)^{d-1}.
\end{align}
\end{lem}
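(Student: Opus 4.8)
The plan is to combine the shape data for the slice $\fC\cap\vecv^\perp$ recalled just before the lemma with an elementary volume estimate for a $(d-1)$-dimensional cut ball: such a set of radius $\rho>0$ and cut ratio $\sigma\in[0,2)$ has volume $\asymp\rho^{d-1}\min\{1,(2-\sigma)^{d/2}\}$, since for $\sigma\le1$ it contains more than half of the full ball of radius $\rho$, while for $\sigma\in(1,2)$ it is a spherical cap of height $(2-\sigma)\rho$, of volume $\asymp\rho^{(d-2)/2}\bigl((2-\sigma)\rho\bigr)^{d/2}$ (and $\asymp\rho^{d-1}$ when $2-\sigma$ is bounded away from $0$). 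Writing $u:=s\cos\omega$, the slice $\fC\cap\vecv^\perp$ is a $(d-1)$-dimensional cut ball of radius $rg(u,\varpi)$ and cut ratio $T(u,\varpi)$, and $2-T(u,\varpi)=f(u,\varpi)/g(u,\varpi)$ by \eqref{GDEF}--\eqref{TDEF}; moreover $g-f=-h$, and by Lemma \ref{LEM1} $h(u,\cdot)$ increases on $[0,\frac\pi2]$ with $h(u,\varpi_0(u))=0$, so $f\le g$ exactly for $\varpi\le\varpi_0(u)$. Hence (recall $d\ge3$)
\begin{align*}
|\fC_\vecv|\;\asymp\;r^{d-1}g(u,\varpi)^{(d-2)/2}f(u,\varpi)^{d/2}\quad(0<\varpi\le\varpi_0(u)),\qquad |\fC_\vecv|\;\asymp\;\bigl(rg(u,\varpi)\bigr)^{d-1}\quad(\varpi_0(u)\le\varpi\le\tfrac\pi2).
\end{align*}

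I would then record the elementary estimate $1-(1-e)\cos\omega\asymp e+\omega^2$ for $\omega\in(0,\frac\pi2]$ (lower bound because it is $\ge\max(e,1-\cos\omega)$, upper bound because it equals $(1-\cos\omega)+e\cos\omega$); with $s=(1-e)\sqrt{t(2-t)}$ and $t\le\frac15$ this gives $\sqrt{t(2-t)}-u\asymp\sqrt t\,(e+\omega^2)$ and $t(2-t)-u^2\asymp t\,(e+\omega^2)$, hence $g(u,\varpi_0(u))=\sqrt{t(2-t)-u^2}\asymp\sqrt t\,\sqrt{e+\omega^2}$, $f(u,0)=\sqrt{t(2-t)}-u\asymp\sqrt t\,(e+\omega^2)$ and $g(u,0)=\sqrt{t(2-t)}\asymp\sqrt t$. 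Now \eqref{LEM2RES3} is immediate from the second case above together with Lemma \ref{LEM4} ($g(u,\varpi)\asymp\sqrt t+\varpi$ for $2\varpi_0(u)\le\varpi\le\frac\pi2$), and \eqref{LEM2RES2} is immediate from the first case together with Lemma \ref{LEM3} ($g\asymp\sqrt t$, $f\asymp\sqrt{t(2-t)}-u\asymp\sqrt t\,(e+\omega^2)$ for $0<\varpi\le\frac12\varpi_0(u)$), giving $|\fC_\vecv|\asymp r^{d-1}t^{(d-2)/4}\bigl(\sqrt t\,(e+\omega^2)\bigr)^{d/2}=r^{d-1}t^{(d-1)/2}(e+\omega^2)^{d/2}$.

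The remaining work is the lower bound \eqref{LEM2RES1} over all of $(0,\frac\pi2]$. On $[\varpi_0(u),\frac\pi2]$ it follows from $|\fC_\vecv|\asymp(rg(u,\varpi))^{d-1}$ and $g(u,\varpi)\ge g(u,\varpi_0(u))\asymp\sqrt t\,\sqrt{e+\omega^2}$ --- here $g(u,\cdot)$ decreases on $[0,\varpi_0(u)]$ and increases afterwards, since $g^2=t(2-t)-u^2+h^2$ and $\sign(\partial_\varpi g^2)=\sign(h)$ --- which yields $|\fC_\vecv|\gg r^{d-1}(t(e+\omega^2))^{(d-1)/2}\gg r^{d-1}t^{(d-1)/2}(e+\omega^2)^{d/2}$ since $e+\omega^2$ is bounded. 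On $(0,\varpi_0(u)]$ the point is that $|\fC_\vecv|$ is nondecreasing in $\varpi$: there $|\fC_\vecv|\asymp r^{d-1}\bigl(f(u,\varpi)g(u,\varpi)\bigr)^{(d-2)/2}f(u,\varpi)$ (a rewriting of $g^{(d-2)/2}f^{d/2}$), the factor $f(u,\cdot)$ increases by Lemma \ref{LEM1}, and the product $f(u,\cdot)g(u,\cdot)$ increases because, with $c^2:=t(2-t)-u^2$, formulas \eqref{GDEF}--\eqref{FDEF} give $fg=c^2g/(g-h)=c^2(1-h/g)^{-1}$ with $h/g=h/\sqrt{c^2+h^2}$ strictly increasing in $h$, hence in $\varpi$. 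Therefore $|\fC_\vecv|$ is $\gg$ its limit as $\varpi\to0^+$, namely $r^{d-1}\bigl(f(u,0)g(u,0)\bigr)^{(d-2)/2}f(u,0)\asymp r^{d-1}\bigl(t(e+\omega^2)\bigr)^{(d-2)/2}\sqrt t\,(e+\omega^2)=r^{d-1}t^{(d-1)/2}(e+\omega^2)^{d/2}$, which establishes \eqref{LEM2RES1}.

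I expect this monotonicity on $(0,\varpi_0(u)]$ --- equivalently the joint growth of $fg$ --- to be the only delicate point: bounding $f$ and $g$ there separately from below loses a factor comparable to $(e+\omega^2)^{(d-2)/4}$ and is therefore too weak, so one genuinely needs the product estimate. Everything else is routine bookkeeping with Lemmas \ref{LEM1}, \ref{LEM3}, \ref{LEM4} and the cut-ball volume formula.
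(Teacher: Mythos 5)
Your proof is correct and follows essentially the same route as the paper's: both reduce to the cut-ball volume $\asymp r^{d-1}g^{(d-2)/2}f^{d/2}$ in the regime $\varpi\le\varpi_0(u)$ and $\asymp(rg)^{d-1}$ otherwise, use $g$'s growth on $[\varpi_0(u),\frac\pi2]$, and establish \eqref{LEM2RES1} on $(0,\varpi_0(u)]$ by monotonicity down to $\varpi\to0$. The only difference is cosmetic: you re-derive from \eqref{GDEF}--\eqref{FDEF} that $fg$ increases in $\varpi$, whereas the paper reads the same fact off Lemma \ref{LEM1}'s assertion that $T=\frac{t(2-t)-u^2}{fg}$ is decreasing, by writing the key expression as $r^{d-1}\bigl(t(2-t)-u^2\bigr)^{\frac d2-1}T^{1-\frac d2}f$ with both variable factors increasing.
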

\begin{proof}
Recall that $\fC_\vecv$ is a $(d-1)$-dimensional cut ball of radius
$rg(s\cos\omega,\varpi)$ and cut ratio $T(s\cos\omega,\varpi)$.
If $\varpi\leq\varpi_0(s\cos\omega)$ then $T(s\cos\omega,\varpi)\geq1$
(cf.\ Lemma \ref{LEM1}) and hence, writing $f=f(s\cos\omega,\varpi)$,
$g=g(s\cos\omega,\varpi)$ and $T=T(s\cos\omega,\varpi)$,
\begin{align}\notag
|\fC_\vecv|\asymp(2-T)^{\frac d2}(rg)^{d-1}
=\Bigl(\frac fg\Bigr)^{\frac d2}r^{d-1}g^{d-1}
&=r^{d-1}(fg)^{\frac d2-1}f
\\\label{LEM2PF1}
&=r^{d-1}\bigl(t(2-t)-s^2\cos^2\omega\bigr)^{\frac d2-1}T^{1-\frac d2}f.
\end{align}
If $\varpi\in(0,\frac12\varpi_0(s\cos\omega)]$ then using the next to
last of these expressions together with
Lemma~\ref{LEM3} and the fact that 
\begin{align}\label{LEM3PF2}
\sqrt{t(2-t)}-s\cos\omega
=\sqrt{t(2-t)}\bigl(e+(1-e)(1-\cos\omega)\bigr)
\asymp\sqrt t(e+\omega^2),
\end{align}
we obtain \eqref{LEM2RES2}.
Furthermore the last expression in \eqref{LEM2PF1} 
is an increasing function of $\varpi\in[0,\frac\pi2]$,
since $f$ is increasing and $T$ is decreasing (cf.\ Lemma \ref{LEM1}).
Hence \eqref{LEM2RES1} holds for all $\varpi\in(0,\varpi_0(s\cos\omega)]$.
Finally for $\varpi\in[\varpi_0(s\cos\omega),\frac\pi2]$
we have $T\leq1$ and hence
$|\fC_\vecv|\asymp(rg)^{d-1}$.
But $g=g(s\cos\omega,\varpi)$ is strictly increasing as a function of
$\varpi\in[\varpi_0(s\cos\omega),\frac\pi2]$
(by \eqref{GDEF} and Lemma \ref{LEM1}); thus
$g\geq\sqrt{t(2-t)-s^2\cos^2\omega}
\asymp\sqrt{t(e+\omega^2)}$ (cf.\ \eqref{LEM3PF2})
and $|\fC_\vecv|\gg\bigl(r\sqrt{t(e+\omega^2)}\bigr)^{d-1}$.
Hence \eqref{LEM2RES1} is valid (but crude) also when
$\varpi\in[\varpi_0(s\cos\omega),\frac\pi2]$.
Finally if $2\varpi_0(s\cos\omega)\leq\varpi\leq\frac\pi2$ then
we have $g\asymp\sqrt t+\varpi$ by
Lemma \ref{LEM4}, and hence \eqref{LEM2RES3} holds.
\end{proof}

\subsection{\texorpdfstring{Proof of Theorem \ref*{MAINTHM2}}{Proof of Theorem 1.3}}
\label{PROOFOFTHMMAINTHM2SEC}

We keep the notation from the previous section.
We will start by proving the \textit{right} inequality in \eqref{MAINTHM2RES}.
First assume $d=2$.
Note that the inradius of $\fC$ is $>\frac12r$, since $t\leq\frac15$; 
hence by Proposition~\ref{GENCONVBOUNDPROP1}
(and using $(-\vecv)^\perp=\vecv^\perp$)
there exists an absolute constant $k>0$ such that
\begin{align}\label{BALLMAINTHMD2PF1}
p^{(2)}(\fC)\ll r^{-2}\int_{\substack{\vecv\in\S_1^{1}\\ (v_2>0)}}
I\Bigl(kr|\fC_\vecv|\leq1\Bigr)\,d\vecv
=r^{-2}\int_0^\pi I\Bigl(kr^2 f(s,\varpi)\leq1\Bigr)\,d\varpi.
\end{align}
Using Lemma \ref{LEM5} and Lemma \ref{LEM4}
and the fact that $\varpi_0(s)\ll\sqrt t$,
it follows that there exists an absolute constant $c>0$ such that 
$kr^2f(s,\varpi)>1$ holds
whenever $c(r^{-2}+\sqrt t)\leq\varpi\leq\pi-c(r^{-2}+\sqrt t)$.
Hence
$p^{(2)}(\fC)\ll r^{-2}(r^{-2}+t^{\frac12})
\ll|\fC|^{-2}(1+|\fC|t^{\frac12})$.
Furthermore, by Lemma~\ref{LEM3}, if $er^2t^{\frac12}$ is larger than a 
certain absolute constant then $kr^2f(s,\varpi)>1$ holds for all 
$\varpi\in(0,\frac12\varpi_0(s)]$, and hence by Lemma \ref{LEM5}
and Lemma \ref{LEM1} it actually holds for all $\varpi\in(0,\pi)$;
thus $p^{(2)}(\fC)=0$.
Hence we have proved the right inequality in \eqref{MAINTHM2RES}
for $d=2$.

\vspace{5pt}

Next assume $d\geq3$.
Now by Proposition~\ref{GENCONVBOUNDPROP1}
there exists a constant $k>0$ which only depends on $d$ such that
\begin{align}\label{BALLMAINTHMPF1}
p^{(d)}(\fC)\ll r^{-d}
\int_{\substack{\vecv\in\S_1^{d-1}\\ (v_2>0)}}
p^{(d-1)}\Bigl(kr^{\frac1{d-1}}\fC\cap\vecv^\perp\Bigr)\,d\vecv.
\end{align}
By induction we have, for any $\vecv\in\S_1^{d-1}$
with $\omega\in(0,\frac\pi2)$ and $\varpi\in(0,\pi)$,
\begin{align}\label{BALLMAINTHMPF2}
p^{(d-1)}\Bigl(kr^{\frac1{d-1}}\fC\cap\vecv^\perp\Bigr)
\ll\min\Bigl(1,\bigl(r|\fC_\vecv|\bigr)^{-2}
+\bigl(r|\fC_\vecv|\bigr)^{-2+\frac2{d-1}}
T(s\cos\omega,\varpi)^{\frac{d-2}{d-1}}\Bigr).
\end{align}
If $\varpi\in(0,\frac\pi2]$ then 
we use $T(s\cos\omega,\varpi)\leq2$ and \eqref{LEM2RES1} %
to see that \eqref{BALLMAINTHMPF2} implies
\begin{align}\label{BALLMAINTHMPF4}
p^{(d-1)}\Bigl(kr^{\frac1{d-1}}\fC\cap\vecv^\perp\Bigr)
\ll\min\Bigl(1,\bigl(r^dt^{\frac{d-1}2}\omega^d\bigr)^{-2+\frac2{d-1}}\Bigr).
\end{align}
If furthermore $2\varpi_0(s\cos\omega)\leq\varpi\leq\frac\pi2$ then
we get a stronger bound by using \eqref{LEM2RES3} %
and
\begin{align*}
T(s\cos\omega,\varpi)\asymp\frac{t(2-t)-s^2\cos^2\omega}{t+\varpi^2}
\ll\frac t{t+\varpi^2}
\end{align*}
(cf.\ \eqref{TDEF} and Lemma \ref{LEM4}), %
namely:
\begin{align}\label{BALLMAINTHMPF5}
p^{(d-1)}\Bigl(kr^{\frac1{d-1}}\fC\cap\vecv^\perp\Bigr)
\ll\min\Bigl(1, r^{-2d}(t+\varpi^2)^{1-d}
+r^{-\frac{2d(d-2)}{d-1}}(t+\varpi^2)^{-\frac{d(d-2)}{d-1}}
t^{\frac{d-2}{d-1}}\Bigr).
\end{align}

Finally we consider the case $\varpi\in[\frac\pi2,\pi)$.
For these $\varpi$ we have,
with notation as in the proof of Lemma \ref{LEM2},
$T\leq1$ (since $h(s\cos\omega,\varpi)>0$)
and hence $|\fC_\vecv|\asymp(rg)^{d-1}$.
Thus in view of Lemma \ref{LEM5} we certainly have
$|\fC_\vecv|\gg|\fC_{\tilde\vecv}|$,
where $\tilde{\vecv}$ is the unit vector corresponding to
$\pi-\varpi$ in place of $\varpi$, but the same $\omega$ and $\vecalf$
(cf.\ \eqref{VPARA}).
Also
$T(s\cos\omega,\varpi)\leq T(s\cos\omega,\pi-\varpi)$,
by \eqref{TDEF} and Lemma \ref{LEM5}.
Hence the right hand side of \eqref{BALLMAINTHMPF2} is %
(possibly up to a constant factor which only depends on $d$)
\textit{smaller} for $\vecv$ than for $\tilde\vecv$.
Hence when bounding the integral in \eqref{BALLMAINTHMPF1} using
\eqref{BALLMAINTHMPF2} it suffices to consider $\varpi\in(0,\frac\pi2)$.

Using now \eqref{BALLMAINTHMPF1}, \eqref{BALLMAINTHMPF4}
and \eqref{BALLMAINTHMPF5} we get
\begin{align}\notag
p^{(d)}(\fC)\ll & r^{-d}
\int_0^{\frac\pi2}
\int_0^{2\varpi_0(s\cos\omega)}
\min\Bigl(1,\bigl(r^dt^{\frac{d-1}2}\omega^d\bigr)^{-2+\frac2{d-1}}\Bigr)
\,\varpi^{d-2}\,d\varpi\,\omega^{d-3}\,d\omega
\\\label{BALLMAINTHMPF6}
&+r^{-d}\int_0^{\frac\pi2}
\int_{0}^\infty \min\Bigl(1,r^{-2d}(t+\varpi^2)^{1-d}\Bigr)
\,\varpi^{d-2}\,d\varpi\,\omega^{d-3}\,d\omega
\\\notag
&+r^{-d}\int_0^{\frac\pi2}
\int_{0}^\infty\min\Bigl(1,
r^{-\frac{2d(d-2)}{d-1}}(t+\varpi^2)^{-\frac{d(d-2)}{d-1}}
t^{\frac{d-2}{d-1}}\Bigr)
\,\varpi^{d-2}\,d\varpi\,\omega^{d-3}\,d\omega.
\end{align}
Using $\varpi_0(s\cos\omega)\ll\sqrt t$
we see that the first term is
\begin{align*}
\ll r^{-d}t^{\frac{d-1}2}\int_0^{\frac\pi2}
\min\Bigl(1,\bigl(r^dt^{\frac{d-1}2}\omega^d\bigr)^{-2+\frac2{d-1}}\Bigr)
\,\omega^{d-3}\,d\omega
\asymp\left.\begin{cases}r^{-d}t^{\frac{d-1}2}
&\text{if }\:t\leq r^{-\frac{2d}{d-1}}
\\r^{2-2d}t^{1-\frac1d}&\text{if }\:t\geq r^{-\frac{2d}{d-1}}
\end{cases}\right\}.
\end{align*}
The second term in \eqref{BALLMAINTHMPF6} is
\begin{align*}
\ll r^{-d}\int_0^{\sqrt t}
\min\Bigl(1,r^{-2d}t^{1-d}\Bigr)\,\varpi^{d-2}\,d\varpi
+r^{-d}\int_{\sqrt t}^\infty\min\Bigl(1,r^{-2d}\varpi^{2-2d}\Bigr)\,
\varpi^{d-2}\,d\varpi \hspace{30pt}&
\\
\asymp 
\left.\begin{cases}r^{-d}t^{\frac{d-1}2}+r^{-2d}
&\text{if }\:t\leq r^{-\frac{2d}{d-1}}
\\r^{-3d}t^{\frac{1-d}2}&\text{if }\:t\geq r^{-\frac{2d}{d-1}}
\end{cases}\right\}
\asymp
\left.\begin{cases}r^{-2d}
&\text{if }\:t\leq r^{-\frac{2d}{d-1}}
\\r^{-3d}t^{\frac{1-d}2}&\text{if }\:t\geq r^{-\frac{2d}{d-1}}
\end{cases}\right\}.
\end{align*}
The third term in \eqref{BALLMAINTHMPF6} is
\begin{align*}
\ll r^{-d}\int_0^{\sqrt t}
\min\Bigl(1,r^{-\frac{2d(d-2)}{d-1}}t^{2-d}\Bigr)\,\varpi^{d-2}\,d\varpi
+r^{-d}\int_{\sqrt t}^\infty
\min\Bigl(1,r^{-\frac{2d(d-2)}{d-1}}
\varpi^{-\frac{2d(d-2)}{d-1}}t^{\frac{d-2}{d-1}}\Bigr)\,\varpi^{d-2}\,d\varpi
\\
\asymp\left.\begin{cases}r^{-d}t^{\frac{d-1}2}+r^{1-2d}t^{\frac{d-1}{2d}}
&\text{if }\:t\leq r^{-\frac{2d}{d-1}}
\\r^{-3d+\frac{2d}{d-1}}t^{\frac{3-d}2}&\text{if }\:t\geq r^{-\frac{2d}{d-1}}
\end{cases}\right\}
\asymp\left.\begin{cases}r^{1-2d}t^{\frac{d-1}{2d}}
&\text{if }\:t\leq r^{-\frac{2d}{d-1}}
\\r^{-3d+\frac{2d}{d-1}}t^{\frac{3-d}2}&\text{if }\:t\geq r^{-\frac{2d}{d-1}}
\end{cases}\right\}.
\end{align*}
Collecting these three bounds we conclude that
$p^{(d)}(\fC)\ll r^{-2d}$ if $t\leq r^{-\frac{2d}{d-1}}$,
and $p^{(d)}(\fC)\ll r^{2-2d}t^{1-\frac1d}$ if $t\geq r^{-\frac{2d}{d-1}}$.
In other words, for general $r$ and $t$:
\begin{align}\label{BALLMAINTHMPF7}
p^{(d)}(\fC)\ll r^{-2d}+r^{2-2d}t^{1-\frac1d}
\asymp
|\fC|^{-2}\Bigl(1+|\fC|^{\frac2d}t^{\frac{d-1}d}\Bigr).
\end{align}

Next we wish to prove that $p^{(d)}(\fC)=0$ holds whenever
$e|\fC|^{\frac2d}t^{1-\frac1d}$ 
is larger than a certain constant which only depends on
$d$.
This can of course be proved using \eqref{BALLMAINTHMPF1} and induction;
however we get a simpler proof by using Minkowski's Theorem
in a similar way as in Remark~\ref{MINKOWSKIREMARK}.
Namely, recalling $r'=r\sqrt{t(2-t)}$, let us note that the box
\begin{align*} 
F=[-r,r]\times\bigl[-\sfrac12er',\sfrac12er'\bigr]\times
\underbrace{\Bigl[-\frac{\sqrt e}{2\sqrt d}r',\frac{\sqrt e}{2\sqrt d}r'\Bigr]
\times\cdots\times
\Bigl[-\frac{\sqrt e}{2\sqrt d}r',\frac{\sqrt e}{2\sqrt d}r'\Bigr]}_{\text{$d-2$ copies}}
\end{align*}
is contained in the union $\fC\cup\vece_1^\perp\cup(-\fC)$.
To verify this, 
it clearly suffices to check that every point
$(x_1,\ldots,x_d)\in F$ with $x_1>0$ lies in $\fC$.
For such a point we have 
\begin{align*}
(x_1-r(1-t))^2+(x_2-(1-e)r')^2+x_3^2+\ldots+x_d^2
<(1-t)^2r^2+(1-\sfrac12e)^2{r'}^2+\sfrac14e{r'}^2&
\\
\leq(1-t)^2r^2+{r'}^2&=r^2,
\end{align*}
and this implies $(x_1,\ldots,x_d)\in\fC$
(cf.\ \eqref{CUTBALLPRES}, \eqref{CUTBALLFIXEDCHOICE}).
Having thus verified
$F\subset\fC\cup\vece_1^\perp\cup(-\fC)$,
the argument in Remark~\ref{MINKOWSKIREMARK} 
(and using Lemma \ref{PCONTLEM} to see $p^{(d)}(\fC)
=p^{(d)}(\fC\cup\vece_1^\perp)$)
now gives that
if $p^{(d)}(\fC)>0$ then $|F|\leq 2^d$.
But $|F|\asymp e^{\frac d2}t^{\frac{d-1}2}r^d
\asymp e^{\frac d2}t^{\frac{d-1}2}|\fC|$.
In other words $p^{(d)}(\fC)=0$ holds whenever
$e|\fC|^{\frac2d}t^{1-\frac1d}$ is sufficiently large.
Taking this fact together with \eqref{BALLMAINTHMPF7}, we have now proved 
that the right inequality in \eqref{MAINTHM2RES} holds for $d\geq3$.

\begin{remark}\label{LASTDISCREM}
The last discussion applies also when $d=2$, i.e.\ it gives an alternative
proof of the fact that
$p^{(2)}(\fC)=0$ whenever $e|\fC|t^{\frac12}$ is sufficiently large.
\end{remark}

Next we prove the left inequality in \eqref{MAINTHM2RES}.
We give the proof for $d\geq3$;
the case $d=2$ can be treated in a similar %
way.

By Proposition~\ref{GENCONVBOUNDPROP2} there is a constant $k>0$
which only depends on $d$ such that
\begin{align}\label{CUTBALLLOWBOUNDPF1}
p^{(d)}(\fC)\gg
\min\biggl\{1,r^{-d}
\int_{\S_1^{d-1}}
p^{(d-1)}\Bigl(kr^{\frac1{d-1}}\fC\cap\vecv^\perp\Bigr)\,d\vecv\biggr\}.
\end{align}

By \eqref{LEM2RES3} %
we have
$|\fC_\vecv|\asymp r^{d-1}(\sqrt t+\varpi)^{d-1}$ whenever
$0<\omega<\frac\pi2$ and 
$2\varpi_0\leq\varpi\leq\frac\pi2$, where $\varpi_0:=\varpi_0(s\cos\omega)$.
Also $r\gg1$ since $|\fC|\geq\frac12$,
and $\varpi_0\ll\sqrt t$ for all $\omega\in(0,\frac\pi2)$.
Hence there exist some constants $c_1,c_2>0$ %
which only depend on $d$ and $k$ such
that $2c_2r^{-\frac d{d-1}}<\frac\pi2$ holds, and also,
if $t\leq c_1r^{-\frac{2d}{d-1}}$, then
$|\fC_\vecv|<(2k^{d-1}r)^{-1}$ holds for %
all $\vecv$ with $\omega\in(0,\frac\pi2)$ and 
$\varpi\in [c_2r^{-\frac d{d-1}},2c_2r^{-\frac d{d-1}}]$.
Note that, by Lemma \ref{TRIVPLOWBOUNDLEM},
$|\fC_\vecv|<(2k^{d-1}r)^{-1}$ implies that
\begin{align*}
p^{(d-1)}\Bigl(kr^{\frac1{d-1}}\fC\cap\vecv^\perp\Bigr)>\sfrac12.
\end{align*}
Hence if $t\leq c_1r^{-\frac{2d}{d-1}}$, then
\begin{align}\label{CUTBALLLOWBOUNDPF4}
p(\fC)\gg
r^{-d}\vol_{\S_1^{d-1}}\Bigl(\Bigl\{\vecv\in\S_1^{d-1}\col
\omega\in(0,\sfrac\pi2),\:
\varpi\in[c_2r^{-\frac d{d-1}},2c_2r^{-\frac d{d-1}}]\Bigr\}\Bigr)
\gg r^{-2d}. %
\end{align}

On the other hand, for all $\varpi\in(0,\frac12\varpi_0]$ we have 
\begin{align*}
|\fC_\vecv|\asymp r^{d-1}t^{\frac{d-1}2}(e+\omega^2)^{\frac d2},
\end{align*}
by \eqref{LEM2RES2}. %
Hence there exist some constants $c_3\in(0,\frac12c_1^{\frac{d-1}d})$ and 
$c_4>0$ which only depend on $d$, $k$, $c_1$,
such that if
$t>c_1r^{-\frac{2d}{d-1}}$ and $e<c_3r^{-2}t^{\frac{1-d}d}$, then
$c_4r^{-1}t^{\frac{1-d}{2d}}<\frac\pi4$, and 
$|\fC_\vecv|<(2k^{d-1}r)^{-1}$ holds for all 
$\vecv$ with $\omega\in(0,c_4r^{-1}t^{\frac{1-d}{2d}})$
and $\varpi\in(0,\frac12\varpi_0]$.
In this situation
we also have
$e<\frac12(c_1r^{-\frac{2d}{d-1}}t^{-1})^{\frac{d-1}d}<\frac12$ 
and $\omega<\frac\pi4$, and thus
$\varpi_0\gg s=(1-e)\sqrt{t(2-t)}\gg\sqrt t$,
i.e.\ $\frac12\varpi_0>c_5\sqrt t$ where $c_5>0$ is an absolute constant.
Hence, arguing as before,
the following holds whenever $t>c_1r^{-\frac{2d}{d-1}}$
and $e<c_3r^{-2}t^{\frac{1-d}d}$:
\begin{align}\label{CUTBALLLOWBOUNDPF5}
p(\fC)\gg r^{-d}\vol_{\S_1^{d-1}}\Bigl(\Bigl\{
\vecv\in\S_1^{d-1}\col
\omega\in(0,c_4r^{-1}t^{\frac{1-d}{2d}}),\:
\varpi\in(0,c_5\sqrt t)\Bigr\}\Bigr)
\gg r^{2-2d}t^{\frac{d-1}d}.
\end{align}
Taking \eqref{CUTBALLLOWBOUNDPF4} and \eqref{CUTBALLLOWBOUNDPF5} together,
we conclude that $p(\fC)\gg|\fC|^{-2}(1+|\fC|^{\frac2d}t^{\frac{d-1}d})$
holds whenever $e<c_3r^{-2}t^{\frac{1-d}d}$.
In other words, we have proved the left inequality in \eqref{MAINTHM2RES}.

This concludes the proof of Theorem \ref{MAINTHM2}.
\hfill$\square$ $\square$ $\square$

\subsection{\texorpdfstring{Proof of Corollary \ref*{CONECYLCOR} and Corollary \ref*{DCBCOR}}{Proof of Corollary 1.4 and Corollary 1.5}}

\begin{proof}[Proof of Corollary \ref{CONECYLCOR}]
This follows from Theorem \ref{MAINTHM2} and 
Lemma \ref{MAINTHM2REDAUXLEM}.
\end{proof}

\begin{proof}[Proof of Corollary \ref{DCBCOR}]
Also this result will be derived from Theorem \ref{MAINTHM2} by a 
containment argument, i.e.\ using the fact that
$p(\fC)\leq p(\fC')$ whenever $\fC'\subset\fC$.

As before we may assume $\bn\notin\partial B$, thus $\mytA>0$.
If $\mytA\geq\frac15$ then 
\eqref{DCBCORRES} follows from Corollary \ref{CONECYLCOR},
by a similar argument as in the proof of Lemma \ref{MAINTHM2REDLEM}.
Hence we may assume $0<\mytA<\frac15$.
After a rotation we may write
\begin{align*}
\fC=\Bigl\{(x_1,\ldots,x_d)\in\R^d\col 0<x_1<(\mytB-\mytA)r,\:
\hspace{200pt}
\\
(x_1-(1-\mytA)r)^2+(x_2-(1-e)r')^2+x_3^2+\ldots+x_d^2<r^2\Bigr\},
\end{align*}
where $r'=r\sqrt{\mytA(2-\mytA)}$, and $0<e\leq1$.
Note that $|\fC|\asymp (\mytB-\mytA)\mytB^{\frac{d-1}2}r^d$.
Now let $E$ be the ellipsoid
\begin{align}\label{BALLMAINTHMGENPF1}
E=\Bigl\{(x_1,\ldots,x_d)\in\R^d\col
\alpha(x_1-\beta r)^2+(x_2-(1-e)r')^2+x_3^2+\ldots+x_d^2
<\gamma r^2\Bigr\}
\end{align}
where
\begin{align*}
\alpha=\frac{\mytA^2-2\mytA\mytB+2\mytB}{(\mytB-\mytA)^2};
\qquad
\beta=\frac{(1-\mytA)(\mytB-\mytA)^2}{\mytA^2-2\mytA\mytB+2\mytB};
\qquad
\gamma=\frac{(\mytA+\mytB-\mytA\mytB)^2}{\mytA^2-2\mytA\mytB+2\mytB}.
\end{align*}
Noticing $\mytA^2-2\mytA\mytB+2\mytB=(\mytB-\mytA)^2+\mytB(2-\mytB)>0$ 
and $\mytA+\mytB-\mytA\mytB=1-(1-\mytA)(1-\mytB)>0$
we see that $\alpha,\beta,\gamma$ are well-defined and positive.

We claim that $E\subset B$. A sufficient condition for this is clearly
\begin{align*}
\gamma r^2-\alpha(x_1-\beta r)^2\leq r^2-(x_1-(1-\mytA)r)^2,
\qquad\forall x_1\in\R.
\end{align*}
However noticing that 
$\alpha\beta=1-\mytA$ and $\gamma-\alpha\beta^2=1-(1-\mytA)^2$ we see that
the above inequality simplifies to
$(\alpha-1)x_1^2\geq0$, and this is true for all $x_1\in\R$ since
$\alpha-1=\frac{\mytB(2-\mytB)}{(\mytB-\mytA)^2}\geq0$.
Hence we indeed have $E\subset B$.
Note also that $E\subset\{x_1<(\mytB-\mytA)r\}$, since
$\beta+\sqrt{\gamma/\alpha}=\mytB-\mytA$.
Hence $\fC'\subset\fC$, where
\begin{align*}
\fC':=E\cap\{x_1>0\}. %
\end{align*}
Set 
\begin{align}\label{BALLMAINTHMGENPF2}
M_0:=\text{diag}[\alpha^{\frac{d-1}{2d}},\alpha^{-\frac1{2d}}
,\alpha^{-\frac1{2d}},\ldots,\alpha^{-\frac1{2d}}]\in G.
\end{align}
Then $\fC'M_0$ is a cut ball of radius
$\alpha^{-\frac1{2d}}\gamma^{\frac12}r$
and cut ratio 
\begin{align*}
\tau=1-\frac{\beta}{\sqrt{\gamma/\alpha}}
=\frac{\mytA(2-\mytA)}{\mytA+(1-\mytA)\mytB}\asymp\frac{\mytA}{\mytB}.
\end{align*}
Also its edge ratio is $e$ %
since $B\cap\vece_1^\perp=E\cap\vece_1^\perp$.
Since $\tau<1$ we have
\begin{align*}
|\fC'|=|\fC'M_0|\asymp\alpha^{-\frac12}\gamma^{\frac d2}r^d
\asymp (\mytB-\mytA)\mytB^{\frac{d-1}2}r^d,
\end{align*}
where the last relation follows since
$\alpha\asymp \mytB(\mytB-\mytA)^{-2}$ and $\gamma\asymp \mytB$.
Thus $|\fC'|\asymp |\fC|$.
Now by Theorem \ref{MAINTHM2} we have
\begin{align*}
p(\fC)\leq p(\fC')=p(\fC'M_0)
\leq F_{\substack{cut\\ball}}^{(d)}\Bigl(e;\tau;k|\fC'|\Bigr)
\leq F_{\substack{cut\\ball}}^{(d)}\Bigl(e;\frac\mytA\mytB;k'|\fC|\Bigr),
\end{align*}
for some constants $k,k'>0$ which only depend on $d$.
Hence the right inequality in \eqref{DCBCORRES} holds.

We now turn to proving the left inequality in \eqref{DCBCORRES}.
If $1\leq \mytB\leq2$ then the desired inequality follows directly from
the left inequality in \eqref{MAINTHM2RES} in Theorem \ref{MAINTHM2},
applied to the cut ball 
$\fC'=\{\vecx\in B\col\vecw\cdot\vecx>0\}$, since $\fC\subset\fC'$,
$|\fC|\asymp|\fC'|$ and $\mytA\asymp \mytA/\mytB$ in this case.
Hence from now on we may assume $\mytB<1$
(and $0<\mytA<\frac15$ as before).
Furthermore we may assume $\mytB>2\mytA$, since otherwise
the desired statement again follows from an application of
Corollary \ref{CONECYLCOR} similar to what we did 
in the proof of Lemma \ref{MAINTHM2REDLEM}.
Now let $E$ be the ellipsoid \eqref{BALLMAINTHMGENPF1}, but with
\begin{align*}
\alpha=\frac{2-\mytA-\mytB}{\mytB-\mytA};\qquad\beta=\mytB-\mytA;\qquad\gamma=\mytB(2-\mytB).
\end{align*}
We then claim $\fC\subset E$. A sufficient condition for this is clearly
\begin{align*}
\gamma r^2-\alpha(x_1-\beta r)^2\geq r^2-(x_1-(1-\mytA)r)^2,
\qquad\forall x_1\in(0,(\mytB-\mytA)r).
\end{align*}
But the inequality is equivalent with
$(1-\mytB)x_1(\frac{x_1}{\mytB-\mytA}-r)\leq0$, which is indeed true for all
$x_1\in(0,(\mytB-\mytA)r)$, since $1-\mytB>0$. Hence $\fC\subset E$, and thus also
\begin{align*}
\fC\subset\fC':=E\cap\{x_1>0\}.
\end{align*}
Take $M_0$ as in \eqref{BALLMAINTHMGENPF2}, but with our new $\alpha$.
Then $\fC'M_0$ is again a cut ball of radius
$\alpha^{-\frac1{2d}}\gamma^{\frac12}r$,
cut ratio $\tau=1-\frac{\beta}{\sqrt{\gamma/\alpha}}
=1-\sqrt{\frac{(\mytB-\mytA)(2-\mytA-\mytB)}{\mytB(2-\mytB)}}
\asymp1-\frac{(\mytB-\mytA)(2-\mytA-\mytB)}{\mytB(2-\mytB)}
=\frac{\mytA(2-\mytA)}{\mytB(2-\mytB)}\asymp\frac{\mytA}{\mytB}$,
and edge ratio $e$.
As before we find 
$|\fC'M_0|\asymp|\fC|$
(for this we use $\mytB>2\mytA$ to see that
$\alpha\asymp \mytB^{-1}$ and $\gamma\asymp \mytB$), and 
now the left inequality in \eqref{DCBCORRES}
follows from Theorem \ref{MAINTHM2} applied to $\fC'M_0$,
since $p(\fC)\geq p(\fC')=p(\fC'M_0)$.
\end{proof}

\section{\texorpdfstring{Basic facts about the conditional probabilities $\nu_\vecp$ and $p_\vecp$}{Basic facts about the conditional probabilities * and *}}
\label{X1RECOLLECTSEC}

\subsection{Definitions and basic properties}

We start by recollecting some definitions from \cite{partI}.
Recall that we have identified $X_1$ with the homogeneous space
$\Gamma\backslash G$ (where $\Gamma=\SL_d(\Z)$ and $G=\SL_d(\R)$).
Given any $\vecp\in\R^d\setminus\{\bn\}$ we define $X_1(\vecp)$ to be 
the subset
\begin{align*}
X_1(\vecp)=\bigl\{M\in X_1\col \vecp\in\Z^dM\bigr\}.
\end{align*}
For any $\veck\in\Z^d\setminus\{\bn\}$ we set
\begin{align*}
X_1(\veck,\vecp)=\Gamma\backslash\Gamma G_{\veck,\vecp},
\end{align*}
where
\begin{align*}
G_{\veck,\vecp}:=\bigl\{M\in G\col\veck M=\vecp\bigr\}.
\end{align*}
Then $X_1(\veck,\vecp)$ is a connected embedded submanifold of $X_1$
of codimension $d$,
and $X_1(\vecp)$ can be expressed as a countable disjoint union,
\begin{align}\label{X1YDISJUNION}
X_1(\vecp)=\bigsqcup_{k=1}^\infty X_1(k\vece_1,\vecp).
\end{align}
(Cf.\ \cite[Sec.\ 7.1]{partI} for details.)

For every $\vecp\in\R^d\setminus\{\bn\}$ we fix some 
$M_\vecp\in G$ such that $\vecp=\vece_1M_\vecp$.
Then
\begin{align*}
G_{\veck,\vecp}=M_\veck^{-1}HM_\vecp,
\end{align*}
where $H$ is the closed subgroup of $G$ given by
\begin{align}\label{HAV}
H=\bigl\{g\in G \col \vece_1 g =\vece_1\bigr\}
=\Bigl\{\begin{pmatrix} 1 & \bn \\ \trans\vecv & A\end{pmatrix} \col
\vecv\in\R^{d-1}, \: A\in G^{(d-1)} \Bigr\},
\end{align}
Let $\mu_H$ be the Haar measure on $H$ given by
$d\mu_H(g)=d\mu^{(d-1)}(A)\,d\vecv$, with $A,\vecv$ as in \eqref{HAV},
and let $\nu_\vecp$ be the Borel measure on $G_{\veck,\vecp}$
which corresponds to $\zeta(d)^{-1}\mu_H$ under the diffeomorphism
$h\mapsto M_\veck^{-1}hM_\vecp$ from $H$ onto $G_{\veck,\vecp}$.
This measure is independent of the choices of $M_\veck$ and $M_\vecp$.
Finally we use the same notation $\nu_\vecp$ also for the measure on
$X_1(\veck,\vecp)$ corresponding to $\nu_\vecp$ via the covering map
$G_{\veck,\vecp}\mapsto X_1(\veck,\vecp)$,
and also for the measure on $X_1(\vecp)$ obtained via \eqref{X1YDISJUNION}.
(Again cf.\ \cite[Sec.\ 7.1]{partI} for details.)

The measure $\nu_\vecp$ on $X_1(\vecp)$ is a probability
measure (\cite[Prop.\ 7.5]{partI}).
In fact,
as the following proposition shows,
$\nu_\vecp$ may be viewed as determining the \textit{conditional}
distribution of a ($\mu-$)random lattice $L\subset\R^d$ of covolume one,
given that $\vecp\in L$.

\begin{prop}\label{CONDPROP}
Let $\scrE\subset X_1$ be any Borel set.
Then $\vecp\mapsto\nu_\vecp(\scrE\cap X_1(\vecp))$ is a measurable
function of $\vecp\in\R^d\setminus\{\bn\}$, and
for any (Lebesgue) measurable set $U\subset\R^d\setminus\{\bn\}$ we have
\begin{align}\label{CONDPROPPRES}
\int_\scrE\#(L\cap{U})\,d\mu(L)
=\int_{U}\nu_\vecp(\scrE\cap X_1(\vecp))\,d\vecp.
\end{align}
\end{prop}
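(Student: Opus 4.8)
The plan is to prove Proposition~\ref{CONDPROP} by a disintegration/unfolding argument on the homogeneous space $X_1 = \Gamma\backslash G$, reducing the identity \eqref{CONDPROPPRES} to the definition of the measures $\nu_\vecp$ together with Siegel-type unfolding of the sum over lattice points. First I would reduce to the case where $\scrE$ is replaced by all of $X_1$ and where $U$ is an arbitrary bounded measurable set; the general case then follows by taking $\scrE$ into account via the indicator $\mathbf{1}_\scrE$ inside the integrals and by monotone convergence in $U$. The core observation is that for $L=\Z^dM\in X_1$, each point of $L\cap U$ is of the form $\veck M$ for a unique $\veck\in\Z^d\setminus\{\bn\}$ (using that $U$ does not contain $\bn$), so that
\begin{align*}
\int_{X_1}\#(L\cap U)\,d\mu(L)
=\sum_{\veck\in\Z^d\setminus\{\bn\}}\int_{\Gamma\backslash G}\mathbf{1}_U(\veck M)\,d\mu(M).
\end{align*}
Grouping the $\veck$ into $\Gamma$-orbits and unfolding each orbit sum against the quotient integral collapses the sum to a single integral over $G$ itself (or, after fixing primitive representatives, to $\SL_d(\Z)$-orbit representatives, but the cleanest route is to unfold the full $\Z^d\setminus\{\bn\}$ sum directly to an integral over $\Gamma\backslash G$ of a $\Gamma$-periodic function, which equals an integral over $G$ of $\mathbf{1}_U(\vece_1 M)$-type integrands after choosing coset representatives).

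Next I would decompose the Haar measure on $G$ along the orbit map $M\mapsto \vece_1 M$. Concretely, fix the subgroup $H=\{g\in G\colon \vece_1 g=\vece_1\}$ as in \eqref{HAV}, and for each $\vecp\neq\bn$ the coset $G_{\vece_1,\vecp}=HM_\vecp$. The map $G\to\R^d\setminus\{\bn\}$, $M\mapsto\vece_1 M$, is a submersion with fibres the cosets $G_{\vece_1,\vecp}$, and one has a product decomposition of $\mu$ (suitably normalized) as $d\mu_H$ on the fibre times Lebesgue measure $d\vecp$ on the base — this is precisely the content that makes the normalization $\zeta(d)^{-1}\mu_H$ in the definition of $\nu_\vecp$ the correct one, and it can be checked in the Iwasawa coordinates $M=[a_1,\vecv,\vecu,\tM]$ of Section~\ref{SIEGELSEC} (cf.\ \eqref{SLDRSPLITHAAR} and \eqref{LATTICEINPARAM} with $n=1$), since $\vece_1 M = a_1\vecv\,f(\vecv)$ up to the lower block and $da_1/a_1^{d+1}\,d\vecv$ transforms to Lebesgue measure on $\R^d\setminus\{\bn\}$. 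Feeding this disintegration into the unfolded integral gives
\begin{align*}
\int_{X_1}\#(L\cap U)\,d\mu(L)
=\int_{U}\Bigl(\int_{X_1(\vecp)}1\,d\nu_\vecp\Bigr)d\vecp,
\end{align*}
and inserting $\mathbf{1}_\scrE$ throughout (legitimate because $\scrE\cap X_1(\vecp)$ is exactly the fibre of $\scrE$ over $\vecp$) yields \eqref{CONDPROPPRES}.

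For the measurability of $\vecp\mapsto\nu_\vecp(\scrE\cap X_1(\vecp))$, I would note that this is a standard consequence of the disintegration theorem: since $\mu$ disintegrates over Lebesgue measure on $\R^d\setminus\{\bn\}$ with fibre measures $\nu_\vecp$, the conditional measures of any fixed Borel set depend measurably on the base point. Alternatively one can argue directly using the explicit parametrization of $X_1(k\vece_1,\vecp)$ via $H$ and the fact that $\vecp\mapsto M_\vecp$ can be chosen measurably (indeed smoothly away from one point, as with the function $f$ in Section~\ref{SIEGELSEC}). The main obstacle, and the step deserving the most care, is the bookkeeping in the unfolding: one must correctly handle the decomposition $X_1(\vecp)=\bigsqcup_{k\geq1}X_1(k\vece_1,\vecp)$ from \eqref{X1YDISJUNION} against the sum over \emph{all} $\veck\in\Z^d\setminus\{\bn\}$ (not just primitive ones), verifying that the multiplicities and the normalizing constant $\zeta(d)^{-1}$ match up exactly — this is essentially where Siegel's formula (Lemma~\ref{TRIVPLOWBOUNDLEM}) is encoded, and one should check the bookkeeping is consistent with it by taking $\scrE=X_1$ and $U$ arbitrary, recovering $\int_{X_1}\#(L\cap U)\,d\mu(L)=|U|$, i.e.\ $\nu_\vecp(X_1(\vecp))=1$ as asserted. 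Much of this is already available in \cite[Sec.\ 7.1]{partI}, so in the write-up I would lean on those results for the construction of $\nu_\vecp$ and concentrate on the identity \eqref{CONDPROPPRES} itself.
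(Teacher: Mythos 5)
Your proposal is correct and follows essentially the same route as the paper. The paper's proof is very short because it outsources the central identity --- $\int_U \nu_\vecp(\scrE\cap X_1(\vecp))\,d\vecp = \sum_{\veck\in\Z^d\setminus\{\bn\}}\mu(\scrF\cap\scrE_\veck)$ with $\scrE_\veck=\{M\in G\col \Gamma M\in\scrE,\ \veck M\in U\}$ --- and the measurability statement to \cite[Prop.~7.3]{partI}, after which \eqref{CONDPROPPRES} follows by a single interchange of sum and integral; your sketch of the disintegration of Haar measure along $M\mapsto\vece_1 M$ and the bookkeeping of $\Z^d\setminus\{\bn\}$ against $\bigsqcup_{k\geq1}X_1(k\vece_1,\vecp)$ is exactly the content hidden inside that citation, and you rightly conclude you would lean on \cite[Sec.~7.1]{partI} for it in the write-up.
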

\begin{proof}
The measurability of the map $\vecp\mapsto\nu_\vecp(\scrE\cap X_1(\vecp))$
was proved in \cite[Prop.\ 7.3]{partI}, and it was seen in the proof of the
same proposition that
\begin{align}\label{CONDPROPPF1}
\int_{U}\nu_\vecp(\scrE\cap X_1(\vecp))\,d\vecp
=\sum_{\veck\in\Z^d\setminus\{\bn\}}\mu(\scrF\cap\scrE_\veck),
\end{align}
where $\scrF\subset G$ is a fixed (measurable) fundamental domain for
$\Gamma\backslash G$ and $\scrE_\veck$ is the set of all
$M\in G$ satisfying both $\Gamma M\in\scrE$ and $\veck M\in{U}$.
Using 
$\mu(\scrF\cap\scrE_\veck)=\int_\scrF I(\Gamma M\in\scrE,\:\veck M\in U)
\,d\mu(M)$
in the right hand side of \eqref{CONDPROPPF1} 
and then changing order of summation and integration,
we obtain \eqref{CONDPROPPRES}.
\end{proof}
\begin{remark}\label{PALMREMARK}
For a general point process $\xi$, %
the precise concept which %
corresponds to the intuitive notion of conditioning on 
$\xi$ having a point at a given position,
is the
\textit{Palm distribution} 
(cf., e.g., \cite{Kallenberg1984}, \cite{Kallenberg}, \cite{Daleyverejones}).
In the special case of the point process $L$
(a $\mu$-random lattice $\subset\R^d$),
it follows from Proposition \ref{CONDPROP} that
the measures $\nu_\vecy$ for $\vecy\in\R^d\setminus\{\bn\}$,
together with $\mu$ itself for $\vecy=\bn$,
give a version of the local Palm distributions.  %
Note in this connection that,
by Siegel's formula (cf.\ \cite{siegel45}, \cite{siegel}),
the intensity $\mathbb EL$ of the point process $L$
equals the sum of the standard Lebesgue measure on $\R^d$
and the Dirac measure $\delta_\bn$ assigning unit mass to $\bn$.
\end{remark}

\vspace{5pt}

Having defined the conditional measure $\nu_\vecp$,
we now define $p_\vecp(\fC)$:
For any $\vecp\in\R^d\setminus\{\bn\}$
and any measurable set $\fC\subset\R^d$ we set
\begin{align*}
p_\vecp(\fC):=\nu_\vecp
\bigl(\bigl\{M\in X_1(\vecp)\col \Z^dM\cap\fC\setminus\{\bn,\vecp\}=\emptyset
\bigr\}\bigr).
\end{align*}
We record that we have the natural invariance relation
\begin{align}\label{PPDINV}
p_\vecp(\fC)=p_{\vecp M}(\fC M), \qquad\forall M\in G,
\end{align}
due to a similar relation for $\nu_\vecp$,
cf.\ \cite[Lemma 7.2]{partI}.
We will furthermore have use for the symmetry relation
\begin{align}\label{PPFSYMM}
p_\vecp(\fC)=p_\vecp(\vecp-\fC),
\end{align}
which holds since $L=\vecp-L$ for all $L\in X_1(\vecp)$.

\subsection{\texorpdfstring{A useful parametrization of $X_1(\vecp)$}{A useful parametrization of X1(p)}}

We will now 
give a parametrization of $X_1(\vecp)$
which allows us to use the (Siegel-)reduced lattice bases 
introduced in Section \ref{SIEGELSEC}
when bounding $p_\vecp(\fC)$.
We will assume $d\geq3$ for the remainder of this section.\footnote{Lemma \ref{GKYMEASLEM1} and its proof is in fact valid also for $d=2$, but we will not make direct use of this fact.}
We start by parametrizing $G_{\veck,\vecp}$, where 
$\veck=(k_1,\ldots,k_d)\in\Z^d\setminus\{\bn\}$ and
$\vecp\in\R^d\setminus\{\bn\}$ are fixed and arbitrary.
Let us write $\veck'=(k_2,\ldots,k_d)\in\Z^{d-1}$.
Then by \eqref{LATTICEINPARAM} we have
$[a_1,\vecv,\vecu,\tM]\in G_{\veck,\vecp}$ if and only if
\begin{align}\label{VECKM}
\vecp=\veck [a_1,\vecv,\vecu,\tM]=k_1a_1\vecv
+\iota\Bigl(k_1a_1^{-\frac1{d-1}}\vecu \aa(\ta)\tkk
+a_1^{-\frac1{d-1}}\veck'\tM\Bigr)f(\vecv),
\end{align}
where $\ta$ and $\tkk$ (and $\tu$ in \eqref{UFORMULAIFK1NEQ0} below)
are defined through the Iwasawa decomposition of $\tM$, cf.\ \eqref{TMDEF},
and $\iota$ denotes the embedding $\iota:\R^{d-1}\ni (x_1,\ldots,x_{d-1})
\mapsto (0,x_1,\ldots,x_{d-1})\in\R^d$.

Let us first assume $k_1\neq0$.
Now \eqref{VECKM} implies
$\vecp\cdot\vecv=k_1a_1$, and thus $\vecv$ must lie in the open half sphere
\begin{align*}
\S^{d-1}_{k_1\vecp+}:=\S_1^{d-1}\cap\R^d_{k_1\vecp+},
\quad\text{where }\:
\R_{\vecw+}^d:=\bigl\{\vecx\in\R^d\col\vecw\cdot\vecx>0\bigr\}.
\end{align*}
Conversely, for any $\tM\in{G^{(d-1)}}$ and any 
$\vecv\in\S^{d-1}_{k_1\vecp+}$ there is a unique choice of
$a_1>0$ and $\vecu\in\R^{d-1}$ such that
$[a_1,\vecv,\vecu,\tM]\in G_{\veck,\vecp}$, namely:
\begin{align}\label{UFORMULAIFK1NEQ0}  
a_1=k_1^{-1}(\vecp\cdot\vecv);\qquad
\vecu
&=k_1^{-1}a_1^{\frac1{d-1}}
\iota^{-1}\bigl((\vecp-k_1a_1\vecv)f(\vecv)^{-1}\bigr)
\tkk^{-1}\aa(\ta)^{-1}-k_1^{-1}\veck'\nn(\tu).
\end{align}
Let us write $[\vecv,\tM]_{\veck,\vecp}$
for this element
$[a_1,\vecv,\vecu,\tM]\in G_{\veck,\vecp}$.
We have thus exhibited a bijective map
\begin{align*}
\S^{d-1}_{k_1\vecp+}\times{G^{(d-1)}}\ni\langle\vecv,\tM\rangle
\mapsto[\vecv,\tM]_{\veck,\vecp}\in G_{\veck,\vecp}.
\end{align*}
Note that this map depends on our fixed choice of $f:\S_1^{d-1}\to\SO(d)$
made in Section \ref{SIEGELSEC}.

We now express the measure $\nu_\vecp$ in the parameters $\vecv,\tM$:
\begin{lem}\label{GKYMEASLEM1}
Given $\veck\in\Z^d$ with $k_1\neq0$ and $\vecp\in\R^d\setminus\{\bn\}$,
the measure $\nu_\vecp$ on $G_{\veck,\vecp}$ takes the following form
in the $[\vecv,\tM]_{\veck,\vecp}$-parametrization:
\begin{align}\label{GKYMEASLEM1RES}
d\nu_\vecp %
=\zeta(d)^{-1}|\vecp\cdot\vecv|^{-d}\,d\mu^{(d-1)}(\tM)\,d\vecv.
\end{align}
\end{lem}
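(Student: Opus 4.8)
The plan is to compute the pullback of $\nu_\vecp$ to $H$ under the composition of diffeomorphisms
\[
H\;\xrightarrow{\;h\mapsto M_\veck^{-1}hM_\vecp\;}\;G_{\veck,\vecp}
\;\xleftarrow{\;\langle\vecv,\tM\rangle\mapsto[\vecv,\tM]_{\veck,\vecp}\;}\;
\S^{d-1}_{k_1\vecp+}\times G^{(d-1)},
\]
and compare it with the explicit formula $d\mu_H(h)=d\mu^{(d-1)}(A)\,d\vecv'$ (in the coordinates $\vecv'\in\R^{d-1}$, $A\in G^{(d-1)}$ of \eqref{HAV}), recalling that $\nu_\vecp$ is $\zeta(d)^{-1}\mu_H$ transported to $G_{\veck,\vecp}$. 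First I would reduce to the case $\veck=\vece_1$: left multiplication by a suitable $\Gamma$-element, or rather the fact that $\nu_\vecp$ on $G_{\veck,\vecp}$ is defined intrinsically via $M_\veck^{-1}HM_\vecp$ and is independent of the choice of $M_\veck$, lets one replace $\veck$ by any convenient integer vector of the same ``type''; but actually it is cleaner to keep $\veck$ general and simply track the scalar $k_1$ through the formulas \eqref{VECKM}--\eqref{UFORMULAIFK1NEQ0}, since only $k_1$ (not $\veck'$) enters the Jacobian.

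The key computational step is the following. Using the global parametrization $M=[a_1,\vecv,\vecu,\tM]$ of $G$ from Section~\ref{SIEGELSEC}, the Haar measure on $G$ is \eqref{SLDRSPLITHAAR}:
\[
d\mu(M)=\zeta(d)^{-1}\,d\mu^{(d-1)}(\tM)\,d\vecu\,d\vecv\,\frac{da_1}{a_1^{d+1}}.
\]
The submanifold $G_{\veck,\vecp}$ is cut out inside $G$ by the $d$ equations \eqref{VECKM}; in the coordinates $(a_1,\vecv,\vecu,\tM)$ the map $\langle\vecv,\tM\rangle\mapsto[\vecv,\tM]_{\veck,\vecp}$ expresses $a_1$ and $\vecu$ as the explicit functions of $\vecv,\tM$ given in \eqref{UFORMULAIFK1NEQ0}. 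The plan is to view $\nu_\vecp$ as (a constant times) the natural ``conditional'' or disintegration measure: concretely, one can characterise $\nu_\vecp$ on $G_{\veck,\vecp}$ by the coarea/disintegration identity coming from Proposition~\ref{CONDPROP}, namely $\int_{\R^d}\big(\int_{G_{\veck,\vecp}}\phi\,d\nu_\vecp\big)\psi(\vecp)\,d\vecp$ matches the corresponding integral against $\mu$ over the set where $\veck M\in U$; equivalently, writing the change of variables $M\mapsto(\veck M,\vecv,\tM)$ on $G$ and comparing with \eqref{SLDRSPLITHAAR}, the measure $\nu_\vecp$ is obtained from $d\mu^{(d-1)}(\tM)\,d\vecu\,d\vecv\,a_1^{-d-1}da_1$ by dividing out the Lebesgue factor $d\vecp=d(\veck M)$ and evaluating along the fibre. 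So I would compute the Jacobian of the map $(a_1,\vecu)\mapsto \veck[a_1,\vecv,\vecu,\tM]=\vecp$ with $\vecv,\tM$ held fixed: from \eqref{VECKM}, $\partial\vecp/\partial a_1$ has $\vecv$-component $k_1$ (plus terms in $\vecv^\perp$), and $\partial\vecp/\partial\vecu$ is $k_1a_1^{-1/(d-1)}\aa(\ta)\tkk$ composed with $f(\vecv)$ and $\iota$, which is a linear isomorphism $\R^{d-1}\to\vecv^\perp$ of determinant $\pm k_1^{d-1}a_1^{-1}\prod\ta_j=\pm k_1^{d-1}a_1^{-1}$ (since $\det\aa(\ta)=\prod_{j=1}^{d-1}\tilde a_j$ with the $A^{(d-1)}$-normalisation, $=1$ up to the convention). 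Hence the full Jacobian of $(a_1,\vecu)\mapsto\vecp$ is $\pm k_1^d a_1^{-1}$ in absolute value. Dividing the density $\zeta(d)^{-1}a_1^{-d-1}$ (the $d\mu^{(d-1)}(\tM)\,d\vecv$ part being exactly the surviving factors) by this Jacobian, and substituting $a_1=k_1^{-1}(\vecp\cdot\vecv)$ from \eqref{UFORMULAIFK1NEQ0}, gives density
\[
\zeta(d)^{-1}\,\frac{a_1^{-d-1}}{|k_1|^d a_1^{-1}}
=\zeta(d)^{-1}|k_1|^{-d}a_1^{-d}
=\zeta(d)^{-1}|k_1|^{-d}\big(|k_1|^{-1}|\vecp\cdot\vecv|\big)^{-d}
=\zeta(d)^{-1}|\vecp\cdot\vecv|^{-d},
\]
which is \eqref{GKYMEASLEM1RES}. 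It then remains only to check that this disintegration-characterisation of $\nu_\vecp$ really agrees with the definition via $\mu_H$ and the diffeomorphism $h\mapsto M_\veck^{-1}hM_\vecp$ --- this is where one invokes Proposition~\ref{CONDPROP} (or \cite[Prop.\ 7.3, 7.5]{partI}), whose construction of $\nu_\vecp$ is precisely arranged so that \eqref{CONDPROPPRES} holds, i.e.\ so that $\nu_\vecp$ is the conditional measure obtained from $\mu$ by the above fibering.

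\textbf{Main obstacle.} The routine but somewhat delicate point is bookkeeping the Jacobian of $(a_1,\vecu)\mapsto\veck[a_1,\vecv,\vecu,\tM]$ correctly, in particular making sure the $\vecu$-block, which lands in $\vecv^\perp$ rather than in a fixed coordinate subspace, is handled so that the orthogonal splitting $\R^d=\R\vecv\oplus\vecv^\perp$ is used consistently (the rotation $f(\vecv)$ is orthogonal, so it contributes no Jacobian factor, and $\iota$ is an isometry onto $\{0\}\times\R^{d-1}$, so again no factor; the only genuine contributions are $k_1$ from the $a_1$-direction, $k_1^{d-1}$ from the $d-1$ components of $\vecu$, and $a_1^{-1}=\det\aa(\ta)/a_1\cdot(\text{const})$ from the diagonal scaling in \eqref{UFORMULAIFK1NEQ0}). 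Equivalently and more safely, one can avoid computing this Jacobian by hand and instead argue abstractly: both sides of \eqref{GKYMEASLEM1RES} are smooth measures on $G_{\veck,\vecp}$, and by Proposition~\ref{CONDPROP} it suffices to verify \eqref{CONDPROPPRES} with $\nu_\vecp$ replaced by the right-hand side of \eqref{GKYMEASLEM1RES} for, say, $\scrE=X_1$ and arbitrary $U$; expanding $\mu$ via \eqref{SLDRSPLITHAAR} and the decomposition \eqref{X1YDISJUNION}, changing variables from $(a_1,\vecu)$ to $\vecp=\veck M$ on each piece $X_1(k\vece_1,\vecp)$ and carrying out the integral over $k$ reproduces exactly $\int_U|\vecp\cdot\vecv|^{-d}\zeta(d)^{-1}\,d\mu^{(d-1)}(\tM)\,d\vecv$, integrated over $X_1(\vecp)$ --- matching \eqref{CONDPROPPRES} and hence, by the uniqueness in the disintegration, identifying the two measures. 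I expect this second route to be the cleanest to write up.
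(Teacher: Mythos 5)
Your proposal is correct and, in its second route, takes essentially the same approach as the paper: reduce by a smoothness argument to verifying an integral identity against $C_c$-test functions, expand $\mu$ via \eqref{SLDRSPLITHAAR}, and change variables $(a_1,\vecu)\mapsto\vecp=\veck M$ with Jacobian $|k_1|^d a_1^{-1}$; your Jacobian bookkeeping (the $k_1$ from the $a_1$-direction, $k_1^{d-1}a_1^{-1}$ from the $\vecu$-block, the orthogonal factors $f(\vecv)$ and $\tkk$ contributing nothing) and the resulting density $\zeta(d)^{-1}|\vecp\cdot\vecv|^{-d}$ match the paper exactly.

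One small refinement worth noting: the characterizing identity the paper actually invokes is \cite[Lemma 7.4]{partI}, which states $\int_{\R^d\setminus\{\bn\}}\bigl(\int_{G_{\veck,\vecp}}\rho\,d\nu_\vecp\bigr)\,d\vecp=\int_G\rho\,d\mu$ for each \emph{fixed} $\veck$, whereas Proposition~\ref{CONDPROP} (which you lean on) is formulated at the level of $X_1(\vecp)=\bigsqcup_k X_1(k\vece_1,\vecp)$, i.e.\ after summing over $\veck$. The summed identity does determine each piece individually, since the measures for different $\veck$ are supported on disjoint embedded submanifolds $G_{\veck,\vecp}$, but you should say this explicitly (or just quote \cite[Lemma 7.4]{partI}, which makes the $\veck$-by-$\veck$ step unnecessary and avoids having to carry the decomposition \eqref{X1YDISJUNION} and the sum over $k$ through the change of variables).
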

\begin{proof}
Recall that $\nu_\vecp$ %
is the Borel measure which corresponds
to $\zeta(d)^{-1}\mu_H$ under the diffeomorphism
$h\mapsto M_\veck^{-1}hM_\vecp$ from $H$ onto $G_{\veck,\vecp}$,
and this measure is independent of the choices of $M_\veck$ and $M_\vecp$.
Take $\vecv_0\in\S_1^{d-1}$ so that $f:\S_1^{d-1}\to\SO(d)$ is smooth on
$\S_1^{d-1}\setminus\{\vecv_0\}$.
Then $[\vecv,\tM]_{\veck,\vecp}$ depends smoothly on
$\vecv,\tM,\vecp$ in the set
\begin{align*}
S=\bigl\{\langle \vecv,\tM,\vecp\rangle\in
\S_1^{d-1}\times{G^{(d-1)}}\times(\R^d\setminus\{\bn\})\col
k_1\vecp\cdot\vecv>0,\:
\vecv\neq\vecv_0\bigr\}.
\end{align*}
Given any point $\vecp_0\in\R^d\setminus\{\bn\}$ we 
may assume $M_\vecp$ to be chosen so as to depend smoothly on $\vecp$
in a neighborhood of $\vecp_0$;
then also $M_\veck[\vecv,\tM]_{\veck,\vecp}M_\vecp^{-1}\in H$
depends smoothly on $\vecv,\tM,\vecp$ when
$\langle\vecv,\tM,\vecp\rangle\in S$ and $\vecp$ is near $\vecp_0$.
Hence, since $\vecp_0$ is arbitrary, it follows that there is a 
smooth function $\alpha: S\to\R_{\geq0}$ such that
$d\nu_\vecp=\alpha(\vecv,\tM,\vecp)\,d\mu^{(d-1)}(\tM)\,d\vecv$
on all of $S$.
Hence to prove \eqref{GKYMEASLEM1RES} it suffices to prove that
\begin{align}\label{GKYMEASLEM1PF1}
\int_{\R^d\setminus\{\bn\}}
\biggl(\int_{G_{\veck,\vecp}}\rho(M)\,d\nu_\vecp(M)\biggr)\,d\vecp
=\zeta(d)^{-1}
\int_S
\rho\bigl([\vecv,\tM]_{\veck,\vecp}\bigr)
|\vecp\cdot\vecv|^{-d}\,d\mu^{(d-1)}(\tM)\,d\vecv\,d\vecp,
\end{align}
holds for any $\rho\in C_c(G)$
(the space of continuous functions on $G=\SL_d(\R)$ with compact support).

But the left hand side of \eqref{GKYMEASLEM1PF1} equals
$\int_G\rho(M)\,d\mu(M)$, by 
the definition of $\nu_\vecp$ and \cite[Lemma 7.4]{partI},
and using \eqref{SLDRSPLITHAAR} this is
\begin{align*}
=\zeta(d)^{-1}\int_{\S_1^{d-1}}\int_{{G^{(d-1)}}}
\int_0^\infty\int_{\R^{d-1}}
\rho\bigl([a_1,\vecv,\vecu,\tM]\bigr)\,
d\vecu\,\frac{da_1}{a_1^{d+1}}\,d\mu^{(d-1)}(\tM)\,d\vecv.
\end{align*}
Here for any fixed $\vecv,\tM$ we note that
the formula \eqref{VECKM} gives a diffeomorphism between
$\langle a_1,\vecu\rangle\in\R_{>0}\times\R^{d-1}$
and $\vecp\in\R_{k_1\vecv+}^d$, under which
$d\vecp=|k_1|^da_1^{-1}\,d\vecu da_1
=\frac{|\vecp\cdot\vecv|^d}{a_1^{d+1}}\,d\vecu da_1$.
Hence we get
\begin{align*}
=\zeta(d)^{-1}\int_{\S_1^{d-1}}\int_{{G^{(d-1)}}}\int_{\R^d_{k_1\vecv+}}
\rho\bigl([\vecv,\tM]_{\veck,\vecp}\bigr)\,
|\vecp\cdot\vecv|^{-d}\,d\vecp\,d\mu^{(d-1)}(\tM)\,d\vecv,
\end{align*}
which agrees with the right hand side of \eqref{GKYMEASLEM1PF1}.
\end{proof}

We next turn to the case $k_1=0$, i.e.\ $\veck=(0,\veck')$.
In this case $[a_1,\vecv,\vecu,\tM]\in G_{\veck,\vecp}$
implies $\vecv\in\vecp^\perp$ and
\begin{align}\label{RA1V}
\veck'\tM=\vecy,\qquad\text{where }\:
\vecy=\vecy(a_1,\vecv)
:=a_1^{\frac1{d-1}}\iota^{-1}\bigl(\vecp f(\vecv)^{-1}\bigr)
\end{align}
(cf.\ \eqref{VECKM}).
In other words $\tM\in G_{\veck',\vecy}$, where
of course $G_{\veck',\vecy}$ is a subset of ${G^{(d-1)}}$
(recall that we are assuming $d\geq3$).
Conversely, for any $a_1\in\R_{>0}$,
$\vecv\in\S_1^{d-1}\cap\vecp^\perp$, $\vecu\in\R^{d-1}$
and $\tM\in G_{\veck',\vecy(a_1,\vecv)}$
we have $[a_1,\vecv,\vecu,\tM]\in G_{\veck,\vecp}$.
We thus have a bijective map between $G_{\veck,\vecp}$ and the subset of
$\langle a_1,\vecv,\vecu,\tM\rangle$ in 
$\R_{>0}\times(\S_1^{d-1}\cap\vecp^\perp)\times\R^{d-1}\times{G^{(d-1)}}$
for which $\tM\in G_{\veck',\vecy(a_1,\vecv)}$.
The following lemma expresses the measure $\nu_\vecp$ in this parametrization.
Let us write $d_\vecp(\vecv)$ for the Lebesgue measure on the
$(d-2)$-dimensional unit sphere $\S_1^{d-1}\cap\vecp^\perp$.
Furthermore, $G_{\veck',\vecy}$ is of course endowed with a Borel measure
$\nu_\vecy$, defined as in the beginning of this section.
\begin{lem}\label{GKYMEASLEM2}
Given $\veck=(0,\veck')$
($\veck'\in\Z^{d-1}$, $\veck'\neq\bn$) and $\vecp\in\R^d\setminus\{\bn\}$,
we have
\begin{align*}
\int_{G_{\veck,\vecp}}\rho\,d\nu_\vecp
=\frac1{\zeta(d)\|\vecp\|}%
\int_{\R_{>0}\times(\S_1^{d-1}\cap\vecp^\perp)\times\R^{d-1}}
\biggl(\int_{G_{\veck',\vecy}} 
\rho\bigl([a_1,\vecv,\vecu,\tM]\bigr)\,d\nu_{\vecy}(\tM)\biggr)\,
a_1^{-d}\,da_1\,d_\vecp(\vecv)\,d\vecu
\end{align*}
for all $\rho\in L^1(G_{\veck,\vecp},\nu_\vecp)$.
Here $\vecy=\vecy(a_1,\vecv)$ is as in \eqref{RA1V}.
\end{lem}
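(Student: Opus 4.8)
The plan is to adapt the argument used for Lemma~\ref{GKYMEASLEM1}. Both sides of the asserted identity define Borel measures on $G_{\veck,\vecp}$, and the restrictions to $G_{\veck,\vecp}$ of functions in $C_c(G)$ are dense in $C_c(G_{\veck,\vecp})$; hence it suffices to prove the identity for $\rho\in C_c(G)$, and then the general case $\rho\in L^1(G_{\veck,\vecp},\nu_\vecp)$ follows by standard measure theory once the two measures are seen to coincide. To pin down $\nu_\vecp$ I would, exactly as in the proof of Lemma~\ref{GKYMEASLEM1}, multiply $\rho$ by a test function of the base point: for $\psi\in C_c(\R^d\setminus\{\bn\})$ apply the disintegration identity $\int_{\R^d\setminus\{\bn\}}\bigl(\int_{G_{\veck,\vecp}}\sigma\,d\nu_\vecp\bigr)\,d\vecp=\int_G\sigma\,d\mu$ --- which holds by the definition of $\nu_\vecp$ together with \cite[Lemma 7.4]{partI}, as already used in the proof of Lemma~\ref{GKYMEASLEM1} --- to $\sigma(M)=\psi(\veck M)\rho(M)\in C_c(G)$. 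Since $\veck M=\vecp$ on $G_{\veck,\vecp}$, the left side equals $\int_{\R^d}\psi(\vecp)\bigl(\int_{G_{\veck,\vecp}}\rho\,d\nu_\vecp\bigr)\,d\vecp$, so everything reduces to computing $\int_G\psi(\veck M)\rho(M)\,d\mu(M)$ and checking that it equals $\int_{\R^d}\psi(\vecp)\cdot(\text{claimed right-hand side})\,d\vecp$; since $\psi$ is arbitrary this yields the identity for a.e.\ $\vecp$, and then for all $\vecp$ by a continuity argument as in the proof of Lemma~\ref{GKYMEASLEM1}.

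For the computation I would write $M=[a_1,\vecv,\vecu,\tM]$ and use \eqref{SLDRSPLITHAAR}, so that (assuming $\rho\geq0$, which we may, to justify the Fubini interchanges) $\int_G\psi(\veck M)\rho(M)\,d\mu$ becomes
\[
\zeta(d)^{-1}\int_{\S_1^{d-1}}\int_{G^{(d-1)}}\int_0^\infty\int_{\R^{d-1}}\psi\bigl(\veck[a_1,\vecv,\vecu,\tM]\bigr)\,\rho\bigl([a_1,\vecv,\vecu,\tM]\bigr)\,d\vecu\,\frac{da_1}{a_1^{d+1}}\,d\mu^{(d-1)}(\tM)\,d\vecv,
\]
and by \eqref{VECKM} with $k_1=0$ one has $\veck[a_1,\vecv,\vecu,\tM]=\iota\bigl(a_1^{-\frac1{d-1}}\veck'\tM\bigr)f(\vecv)$. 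Next I would apply the $(d-1)$-dimensional version of the disintegration identity (i.e.\ \cite[Lemma 7.4]{partI} for $\SL_{d-1}(\R)$, applied to $\veck'\neq\bn$) to rewrite $\int_{G^{(d-1)}}(\,\cdot\,)\,d\mu^{(d-1)}(\tM)$ as $\int_{\R^{d-1}\setminus\{\bn\}}\int_{G_{\veck',\vecw}}(\,\cdot\,)\,d\nu_\vecw(\tM)\,d\vecw$. Then, for fixed $a_1$ and $\vecv$, I would substitute $\vecp=\iota\bigl(a_1^{-\frac1{d-1}}\vecw\bigr)f(\vecv)$; this is a linear isomorphism of $\R^{d-1}$ onto $\vecv^\perp$ with $d\vecw=a_1\,d\vecp$ and, crucially, $\vecw=\vecy(a_1,\vecv)$ as in \eqref{RA1V}, so that $G_{\veck',\vecw}=G_{\veck',\vecy(a_1,\vecv)}$ and $\rho([a_1,\vecv,\vecu,\tM])$ is then a function on $G_{\veck,\vecp}$; the measure factor becomes $a_1\cdot a_1^{-(d+1)}\,da_1=a_1^{-d}\,da_1$, matching the statement.

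It then remains to convert the outer integral $\int_{\S_1^{d-1}}\int_{\vecv^\perp}(\,\cdot\,)\,d\vecp\,d\vecv$ into $\int_{\R^d\setminus\{\bn\}}\|\vecp\|^{-1}\int_{\S_1^{d-1}\cap\vecp^\perp}(\,\cdot\,)\,d_\vecp(\vecv)\,d\vecp$, after which $\psi(\vecp)$ together with the remaining bracketed $(a_1,\vecu,\tM)$-integral can be read off as $\psi(\vecp)$ times the claimed right-hand side. This rests on the integral-geometry identity
\[
\int_{\S_1^{d-1}}\int_{\vecv^\perp}g(\vecv,\vecp)\,d\vecp\,d\vecv=\int_{\R^d\setminus\{\bn\}}\frac1{\|\vecp\|}\int_{\S_1^{d-1}\cap\vecp^\perp}g(\vecv,\vecp)\,d_\vecp(\vecv)\,d\vecp,
\]
valid for every nonnegative measurable $g$, which I would prove by introducing polar coordinates $\vecp=t\vecomega$ on $\vecv^\perp$ (so $d\vecp=t^{d-2}\,dt\,d_\vecv(\vecomega)$), using the $\SO(d)$-invariance --- hence the symmetry $\vecv\leftrightarrow\vecomega$ --- of the measure $d_\vecv(\vecomega)\,d\vecv$ on the Stiefel manifold of orthonormal $2$-frames in $\R^d$, and noting that $t^{d-2}\,dt\,d\vecomega$ on $\R_{>0}\times\S_1^{d-1}$ is $\|\vecp\|^{-1}$ times Lebesgue measure on $\R^d$. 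I expect the main obstacle --- beyond the bookkeeping of Jacobians and orders of integration --- to be exactly this integral-geometry step together with the a.e.-to-everywhere upgrade of the resulting identity; the rest is a direct unwinding of \eqref{SLDRSPLITHAAR}, \eqref{VECKM} and \eqref{RA1V}.
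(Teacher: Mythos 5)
Your proposal is correct and essentially reproduces the paper's proof: both rest on the same Stiefel-manifold integral-geometry identity $\int_{V_2(\R^d)}\psi\,d_\vecw(\vecv)\,d\vecw=\int_{V_2(\R^d)}\psi\,d_\vecv(\vecw)\,d\vecv$, the same change of variables between $\vecw$ (equivalently $(\vecw,\ell)$) and $\vecy(a_1,\vecv)$, and the same appeal to the $(d-1)$-dimensional disintegration \cite[Lemma 7.4]{partI} together with \eqref{SLDRSPLITHAAR}. You run the computation in the opposite direction (from $\int_G\psi(\veck M)\rho(M)\,d\mu$ to the claimed formula, rather than from the integrated claim back to $\int_G\rho\,d\mu$) and phrase the reduction to $C_c(G)$ as a test-function/almost-everywhere argument where the paper uses smoothness of the density $\alpha$, but these are cosmetic differences.
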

\begin{proof}
By a similar argument as in the proof of Lemma \ref{GKYMEASLEM1} we see that
it suffices to prove that
\begin{align}\notag
\zeta(d)^{-1}\int_{\R^d\setminus\{\bn\}}\biggl(
\int_{\R_{>0}\times(\S_1^{d-1}\cap\vecp^\perp)\times\R^{d-1}}
\Bigl(\int_{G_{\veck',\vecy}} 
\rho\bigl([a_1,\vecv,\vecu,\tM]\bigr)\,d\nu_{\vecy}(\tM)\Bigr)\,
a_1^{-d}\,da_1\,d_\vecp(\vecv)\,d\vecu\biggr)\,\frac{d\vecp}{\|\vecp\|}
\\\label{GKYMEASLEM2PF1}
=\int_G\rho(M)\,d\mu(M)
\end{align}
holds for any $\rho\in C_c(G)$.

In the left hand side of \eqref{GKYMEASLEM2PF1} we express $\vecp$ in
polar coordinates as $\vecp=\ell\vecw$
($\ell>0$, $\vecw\in\S_1^{d-1}$).
Then note that the pair $\langle\vecv,\vecw\rangle$ runs through the
set of all orthonormal 2-frames in $\R^d$,
viz.\ the Stiefel manifold $V_2(\R^d)=\{\langle\vecv,\vecw\rangle\in
\S_1^{d-1}\times\S_1^{d-1}\col\vecv\cdot\vecw=0\}$,
and we have
\begin{align*}
\int_{V_2(\R^d)}\psi\,d_{\vecw}(\vecv)\,d\vecw
=\int_{V_2(\R^d)}\psi\,d_\vecv(\vecw)\,d\vecv
\end{align*}
for any $\psi\in C(V_2(\R^d))$, 
this being the unique $O(d)$-invariant measure on $V_2(\R^d)$ up to a constant.
Hence the left hand side of \eqref{GKYMEASLEM2PF1} equals
\begin{align*}
\zeta(d)^{-1}\int_0^\infty\int_{\R^{d-1}}
\int_{\S_1^{d-1}}\int_0^\infty\int_{\S_1^{d-1}\cap\vecv^\perp}
\Bigl(\int_{G_{\veck',\vecy}} 
\rho\bigl([a_1,\vecv,\vecu,\tM]\bigr)\,d\nu_{\vecy}(\tM)\Bigr)\,
d_\vecv(\vecw)\,\ell^{d-2}\,d\ell\,d\vecv\,d\vecu\,\frac{da_1}{a_1^d}.
\end{align*}
Here
$\vecy=a_1^{\frac1{d-1}}\iota^{-1}(\ell\vecw f(\vecv)^{-1})$,
and we note that (for any fixed $a_1,\vecv$) 
this formula gives a diffeomorphism between
$\langle\vecw,\ell\rangle\in(\S_1^{d-1}\cap\vecv^\perp)\times\R_{>0}$
and $\vecy\in\R^{d-1}$. Hence we get
\begin{align*}
=\zeta(d)^{-1}\int_0^\infty\int_{\R^{d-1}}
\int_{\S_1^{d-1}}\int_{\R^{d-1}}
\Bigl(\int_{G_{\veck',\vecy}} 
\rho\bigl([a_1,\vecv,\vecu,\tM]\bigr)\,d\nu_{\vecy}(\tM)\Bigr)\,d\vecy
\,d\vecv\,d\vecu\,\frac{da_1}{a_1^{d+1}}.
\end{align*}
Using now the definition of $\nu_\vecy$ and \cite[Lemma 7.4]{partI}
(with $d-1$ in place of $d$) this is
\begin{align*}
=\zeta(d)^{-1}\int_0^\infty\int_{\R^{d-1}}
\int_{\S_1^{d-1}}\int_{{G^{(d-1)}}}
\rho\bigl([a_1,\vecv,\vecu,\tM]\bigr)\,d\mu^{(d-1)}(\tM)
\,d\vecv\,d\vecu\,\frac{da_1}{a_1^{d+1}},
\end{align*}
and by \eqref{SLDRSPLITHAAR} this equals
$\int_G\rho(M)\,d\mu(M)$,
i.e.\ \eqref{GKYMEASLEM2PF1} is proved.
\end{proof}

\section{\texorpdfstring{Bounding $p_\vecp(\fC)$ for $\fC$ a cone; proof of Theorem \ref*{CONEAPEXTHM}}{Bounding pp(C) for C a cone; proof of Theorem 1.6}}

\label{PPCBOUNDCCONESEC}

\subsection{\texorpdfstring{Bounding $p_\vecp(\fC)$ from above}{Bounding pp(C) from above}}

Using the parametrization from last section 
we will now prove Theorem \ref{CONEAPEXTHM}.
Thus let $d\geq2$ and let $\fC\subset\R^d$ 
be the open cone which is the interior of the 
convex hull of a point $\vecp$ and a
relatively open $(d-1)$-dimensional ball $B$ with $\bn\in B$.
We will start by proving the bound
\begin{align}\label{CONEAPEXBOUNDABOVE}
p_\vecp(\fC)\ll|\fC|^{-2+\frac2d}.
\end{align}
Using \eqref{PPDINV}
we may assume
\begin{align}\label{CONEAPEXTHMPF1}
B=t\vece_2+\iota\bigl(\scrB_r^{d-1}\bigr)
\quad\text{and}\quad
\vecp=r\vece_1+t\vece_2,
\quad\text{for some }\:0\leq t<r,
\end{align}
We may furthermore assume that $r\geq1$, since otherwise
the bound \eqref{CONEAPEXBOUNDABOVE} is trivial.

By definition we have, since $\bn,\vecp\notin\fC$,
\begin{align*}
p_\vecp(\fC)=\nu_\vecp\bigl(\bigl\{
M\in X_1(\vecp)\col \Z^dM\cap\fC=\emptyset\bigr\}\bigr).
\end{align*}
Recall the splitting \eqref{X1YDISJUNION}.
If $k\geq2$ then every lattice $\Z^dM$ with $k\vece_1M=\vecp$
has $\vece_1M=k^{-1}\vecp\in\fC$; i.e.\ every lattice
in $X_1(k\vece_1,\vecp)$ has non-empty intersection with $\fC$.
Hence
\begin{align*}
p_\vecp(\fC)=\nu_\vecp\bigl(\bigl\{
M\in X_1(\vece_1,\vecp)\col \Z^dM\cap\fC=\emptyset\bigr\}\bigr).
\end{align*}
In the special case $d=2$ we now find directly from the definitions
(cf.\ the proof of Theorem~2 in \cite{partIII}) that $p_\vecp(\fC)$ is
less than or equal to $(6/\pi^2)$ times the Lebesgue measure of the
set of $\nu\in[0,1)$ for which $(0,r^{-1})+\nu(r,t)\notin\fC$,
viz.\ $\leq\frac6{\pi^2}r^{-1}(t+r)^{-1}\ll r^{-2}$.
Hence the bound \eqref{CONEAPEXBOUNDABOVE} 
is proved for $d=2$, and from now on we %
assume $d\geq3$.

Recall from Section \ref{X1RECOLLECTSEC} that
$X_1(\vece_1,\vecp)=\Gamma\backslash\Gamma G_{\vece_1,\vecp}$.
For any $\gamma\in\Gamma$ we have
$\gamma G_{\vece_1,\vecp}=G_{\vece_1\gamma^{-1},\vecp}$,
and $\{\vece_1\gamma^{-1}\col\gamma\in\Gamma\}$ equals the subset
$\widehat\Z^d$ of primitive vectors in $\Z^d$; thus
\begin{align}\label{X1E1P}
X_1(\vece_1,\vecp)=\Gamma\backslash \Bigl(\bigsqcup_{\veck\in\widehat\Z^d}
G_{\veck,\vecp}\Bigr).
\end{align}
Furthermore the measure
``$\nu_\vecp$ on $G_{\vece_1\gamma^{-1},\vecp}$''
(as defined just below \eqref{HAV})
corresponds to ``$\nu_\vecp$ on $G_{\vece_1,\vecp}$'' under
the diffeomorphism 
$G_{\vece_1,\vecp}\ni M\mapsto\gamma M\in G_{\vece_1\gamma^{-1},\vecp}$.
Hence since $\Si_d$ contains a fundamental domain for $\Gamma\backslash G$, 
we get
\begin{align}\label{CONEAPEXTHMPF3}
p_\vecp(\fC)
\leq\sum_{\veck\in\widehat\Z^d}
\nu_\vecp\bigl(\bigl\{
M\in G_{\veck,\vecp}\cap\Si_d\col \Z^dM\cap\fC=\emptyset\bigr\}\bigr)
={\sum}_0+{\sum}_1,
\end{align}
where ${\sum}_0$ and ${\sum}_1$ are the sums corresponding to $k_1=0$ and 
$k_1\neq0$, respectively.
By Lemma~\ref{A1LARGELEM},
if $M=[a_1,\vecv,\vecu,\tM]\in\Si_d$ satisfies
$\Z^dM\cap\fC=\emptyset$ then $a_1>Cr$, where $C$ is a positive constant
which only depends on $d$.
Also recall that $M\in\Si_d$ forces 
$\vecu\in(-\sfrac12,\sfrac12]^{d-1}$.
We first consider $\sum_1$.
By \eqref{UFORMULAIFK1NEQ0} only $\veck$ with 
$|k_1|\leq\|\vecp\|/(Cr)<2/C$ contribute to this sum
(cf.\ \eqref{CONEAPEXTHMPF1} for the last inequality).
Hence, using \eqref{SIDSUBSSIDM1}, \eqref{LATTICEINPARAM} with $n=0$,
and Lemma \ref{GKYMEASLEM1}, we get
\begin{align}\notag
{\sum}_1\leq
\zeta(d)^{-1}
\sum_{\substack{|k_1|<2/C\\k_1\neq0}}
\int_S %
\sum_{\veck'\in\Z^{d-1}}
\mu^{(d-1)}\Bigl(\Bigl\{\tM %
\in\Si_{d-1}\col
\vecu(\vecv,\veck,\tM)\in(-\sfrac12,\sfrac12]^{d-1}\:\text{ and} 
\hspace{45pt}&
\\\label{CONEAPEXTHMPF4}
\Z^{d-1}\tM\cap a_1^{\frac1{d-1}}\fC_\vecv=\emptyset\Bigr\}\Bigr)
\,\frac{d\vecv}{|\vecp\cdot\vecv|^d}&,
\end{align}
wherein
\begin{align*}
S=S^{(k_1)}=\{\vecv\in\S_1^{d-1}\col k_1^{-1}(\vecp\cdot\vecv)>Cr\}
\end{align*}
and $a_1=k_1^{-1}(\vecp\cdot\vecv)$, %
and $\vecu(\vecv,\veck,\tM)$ is given by \eqref{UFORMULAIFK1NEQ0}.
Also $\fC_\vecv=\iota^{-1}(\fC f(\vecv)^{-1})$ as in \eqref{CVDEFNEW}.

Note that in \eqref{UFORMULAIFK1NEQ0} the term 
$\vecz=k_1^{-1}a_1^{\frac1{d-1}}
\iota^{-1}\bigl((\vecp-k_1a_1\vecv)f(\vecv)^{-1}\bigr)\tkk^{-1}a(\ta)^{-1}$
is independent of $\veck'$, and $(-\sfrac12,\sfrac12]^{d-1}$
is a fundamental domain for $\R^{d-1}/\Z^{d-1}\nn(\tu)$;
hence for any fixed $\vecv\in S$ and $\tM\in\Si_{d-1}$ there are exactly
$|k_1|^{d-1}$ choices of $\veck'\in\Z^{d-1}$ for which
$\vecu(\vecv,\veck,\tM)=\vecz-k_1^{-1}\veck'n(\tu)$ lies in $(-\sfrac12,\sfrac12]^{d-1}$.
Thus
\begin{align}\label{SUM1BOUNDFIRSTTIMEEVER}
{\sum}_1\ll
\sum_{\substack{|k_1|<2/C\\k_1\neq0}}
|k_1|^{d-1}\int_{S}
\mu\Bigl(\Bigl\{\tM\in\Si_{d-1}\col
\Z^{d-1}\tM\cap a_1^{\frac1{d-1}}\fC_\vecv=\emptyset\Bigr\}\Bigr)
\,\frac{d\vecv}{(|k_1|r)^d}.
\end{align}
We will use the following lemma to bound the integrand in the right hand side.
\begin{lem}\label{CONEINTERSECTLEM}
For any $\vecv$ as in \eqref{VPARA} with $\varpi,\omega\in(0,\pi)$,
$\fC_\vecv$ 
contains a $(d-1)$-dimensional cone of
volume $\gg r^{d-1}(\sin\omega)^d$,
the base of which contains $\bn$.
\end{lem}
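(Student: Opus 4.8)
The plan is to construct an explicit $(d-1)$-dimensional cone inside $\fC\cap\vecv^\perp$ and carry it to $\fC_\vecv$ by the volume-preserving linear isometry $\vecy\mapsto\iota^{-1}(\vecy f(\vecv)^{-1})$, which maps $\fC\cap\vecv^\perp$ onto $\fC_\vecv$ and fixes $\bn$ (cf.\ the proof of Proposition~\ref{GENPRINCBOUNDPROP}). Since $\fC$, in the coordinates of \eqref{CONEAPEXTHMPF1}, is invariant under rotations of the coordinates $x_3,\ldots,x_d$, and the assertion of the lemma is invariant under isometries fixing $\bn$, I may assume $\vecalf=\vece_1$; then
\[
\vecv=(\cos\varpi,\ \sin\varpi\cos\omega,\ \sin\varpi\sin\omega,\ 0,\ldots,0),\qquad
\fC=\bigl\{\vecx\col 0<x_1<r,\ (x_2-t)^2+x_3^2+\cdots+x_d^2<(r-x_1)^2\bigr\}.
\]

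I would take as the base of the cone the $(d-2)$-dimensional ball $B^\circ:=(\text{base disc of }\fC)\cap\{x_1=0\}\cap\vecv^\perp$, which lies in the hyperplane $\Pi:=\{x_1=0\}\cap\vecv^\perp$ of $\vecv^\perp$ and is contained in $\overline\fC$. A direct computation gives that $B^\circ$ has centre $\vecm=(0,\,t\sin^2\omega,\,-t\sin\omega\cos\omega,\,0,\ldots,0)$ and radius $\sqrt{r^2-t^2\cos^2\omega}$, which is $\ge r\sin\omega$ because $t<r$, and that $\bn$ lies at distance $t\sin\omega<\sqrt{r^2-t^2\cos^2\omega}$ from $\vecm$, so $\bn\in B^\circ$. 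For the apex I would move out of $\vecm$ along the unit vector
\[
\vecn:=\frac{\vece_1-(\cos\varpi)\,\vecv}{\sin\varpi}=(\sin\varpi,\ -\cos\varpi\cos\omega,\ -\cos\varpi\sin\omega,\ 0,\ldots,0),
\]
which spans the orthogonal complement of $\Pi$ inside $\vecv^\perp$, setting $\veca:=\vecm+s\,\vecn$ with $s:=\tfrac1{2\sqrt2}\,r\sin^2\omega$. One computes $a_1=s\sin\varpi$ and $(a_2-t)^2+a_3^2=(t\cos\omega+s\cos\varpi)^2$, so that the inclusion $\veca\in\fC$ is equivalent to $|t\cos\omega+s\cos\varpi|+s\sin\varpi<r$; this follows from
\[
|t\cos\omega|+s\,(|\cos\varpi|+\sin\varpi)\le t|\cos\omega|+\sqrt2\,s=t|\cos\omega|+\tfrac12 r\sin^2\omega\le r\Bigl(1-\tfrac12\bigl(1-|\cos\omega|\bigr)^2\Bigr)<r,
\]
using $|\cos\varpi|+\sin\varpi\le\sqrt2$, $t\le r$, and $|\cos\omega|<1$ (as $\omega\in(0,\pi)$). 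The cone $\fD:=\operatorname{conv}(B^\circ\cup\{\veca\})$ then lies in $\vecv^\perp$, is contained in $\overline\fC$ by convexity, is genuinely $(d-1)$-dimensional since $\veca\notin\Pi$, has base $B^\circ$ (which contains $\bn$), and has height $\operatorname{dist}(\veca,\Pi)=|\langle\veca,\vecn\rangle|=s$, whence its volume is $\asymp(\text{radius of }B^\circ)^{d-2}\cdot s\gg(r\sin\omega)^{d-2}\cdot r\sin^2\omega=r^{d-1}(\sin\omega)^d$. Transporting $\fD$ by the isometry above yields the cone asserted in the lemma (its base, carrying $\bn$, lands on $\partial\fC_\vecv$ and the rest in $\fC_\vecv$, which is all that is used).

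The one step requiring genuine care is verifying $\veca\in\fC$ uniformly over all sign combinations of $\cos\varpi,\cos\omega$ and all $t\in[0,r)$: here it is essential that one moves out of the centre $\vecm$ of $B^\circ$ rather than out of $\bn$ itself (moving out of $\bn$ only yields a cone of height of order $r-t$, which is too small once the edge ratio $e=1-t/r$ is small), and one exploits that $\sqrt{r^2-t^2\cos^2\omega}$, equivalently $r-t\cos\omega$, stays $\gg r\sin^2\omega$ even as $t\uparrow r$. Everything else reduces to routine computation with the explicit cone $\fC$.
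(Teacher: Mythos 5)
Your proof is correct, and the overall strategy coincides with the paper's: both use $B\cap\vecv^\perp$ as the base of the cone and pick an apex inside $\fC\cap\vecv^\perp$ lying off the hyperplane $\Pi=\vece_1^\perp\cap\vecv^\perp$. The difference is in how the apex is chosen. You move from the centre $\vecm$ of $B^\circ$ by the fixed distance $s=\tfrac{1}{2\sqrt2}\,r\sin^2\omega$ along the unit normal $\vecn$ of $\Pi$ inside $\vecv^\perp$ and verify $\veca\in\fC$ by explicit computation. The paper instead observes that the segment from the apex $\vecp$ of $\fC$ to one of the two endpoints of the diameter of $B$ perpendicular to $\Pi$ must cross $\vecv^\perp$, takes $\vecy'$ to be that crossing point, and uses the $\pi/4$ slope of the lateral edge to get a height $\geq 2^{-1/2}(r-t|\cos\omega|)$, yielding the slightly stronger volume estimate $\gg(r-t|\cos\omega|)\,{r'}^{d-2}$ with $r'=\sqrt{r^2-t^2\cos^2\omega}$. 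Since $r-t|\cos\omega|\geq r(1-|\cos\omega|)\geq\tfrac12 r\sin^2\omega$, the paper's apex is at least as far from $\Pi$ as yours, so both give $\gg r^{d-1}(\sin\omega)^d$, which is all the lemma asserts; the sharper $t$-dependent form is closer in spirit to what Lemma \ref{CONVENIENTCONELEM} extracts later (and its proof, with the explicit $\vecq+\vech$, is actually the closest analogue of your construction). Your closing remark about why one must move from $\vecm$ rather than from $\bn$ is exactly the right observation.
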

\begin{proof}
Note that (by \eqref{CONEAPEXTHMPF1}, and since $0<\varpi<\pi$)
$B\cap\vecv^\perp$ is an open $(d-2)$-dimensional ball of radius
$r'=\sqrt{r^2-t^2\cos^2\omega}$.
Furthermore the diameter of $B$ which is perpendicular to
$\vece_1^\perp\cap\vecv^\perp$ intersects $\vece_1^\perp\cap\vecv^\perp$
at the center $\vecc$ of $B\cap\vecv^\perp$, and
if $\vecy_1$ and $\vecy_2$ are the endpoints of this diameter,
taken so that $\|\vecy_1-\vecc\|\leq\|\vecy_2-\vecc\|$,
then $\|\vecy_1-\vecc\|=r-t|\cos\omega|$.
Since $\vecy_1$ and $\vecy_2$ lie on different sides of the hyperplane 
$\vecv^\perp$
it follows that $\vecv^\perp$ must intersect either the line segment 
$\vecp\vecy_1$ or the line segment $\vecp\vecy_2$, say at the point
$\vecy'\in\vecp\vecy_j$.
The angle between $\vece_1^\perp$ and the line $\vecp\vecy_j$
is $\frac\pi4$ (cf.\ \eqref{CONEAPEXTHMPF1})
and thus the distance between $\vecy'$ and
$\vece_1^\perp\cap\vecv^\perp$ must be 
$\geq2^{-\frac12}\|\vecy_j-\vecc\|\geq2^{-\frac12}(r-t|\cos\omega|)$.
By convexity $\fC\cap\vecv^\perp$ contains the $(d-1)$-dimensional
open cone with base $B\cap\vecv^\perp$ and apex $\vecy'$,
and this cone has volume
$\gg(r-t|\cos\omega|){r'}^{d-2}\geq r^{d-1}(1-|\cos\omega|)^{\frac d2}
\gg r^{d-1}(\sin\omega)^d.$
\end{proof}

The lemma together with \eqref{ATHREYAMARGULISRES}
imply that, for any $\vecv$ as in \eqref{VPARA} and any $a_1>Cr$,
\begin{align}\label{TRIVCONEDM1BOUND}
p^{(d-1)}\bigl(a_1^{\frac1{d-1}}\fC_\vecv\bigr)
\ll\min\bigl(1,r^{-d}(\sin\omega)^{-d}\bigr).
\end{align}
Hence we get from \eqref{SUM1BOUNDFIRSTTIMEEVER},
using also \eqref{DVINPARA} and the fact that
$\Si_{d-1}$ can be covered by a finite number of fundamental regions
for $\Gamma^{(d-1)}\backslash G^{(d-1)}$, 
\begin{align}\label{CONEAPEXTHMPF1a}
{\sum}_1\ll 
r^{-d}\int_0^{\pi/2}\min\bigl(1,r^{-d}\omega^{-d}\bigr)\omega^{d-3}\, d\omega
\ll r^{2-2d}.
\end{align}

It remains to treat $\sum_0$.
Using \eqref{SIDSUBSSIDM1}, \eqref{LATTICEINPARAM} %
and Lemma \ref{GKYMEASLEM2},
we get
\begin{align*}
{\sum}_0
\ll&\|\vecp\|^{-1}
\int_{Cr}^\infty
\int_{\S_1^{d-1}\cap\vecp^\perp}
\sum_{\veck'\in\widehat\Z^{d-1}}
\nu_{\vecy}\Bigl(\Bigl\{\tM\in G_{\veck',\vecy}\cap\Si_{d-1}\col
\Z^{d-1}\tM\cap a_1^{\frac1{d-1}}\fC_\vecv=\emptyset
\Bigr\}\Bigr)
\,d_\vecp(\vecv)\,a_1^{-d}\,da_1,
\end{align*}
where $\vecy=\vecy(a_1,\vecv)=a_1^{\frac1{d-1}}\iota^{-1}(\vecp f(\vecv)^{-1})$
as in \eqref{RA1V}.
Now for any fixed $a_1,\vecv$ we have,
using \eqref{X1E1P} applied to
$X_1^{(d-1)}(\vece_1,\vecy)$, and the fact that
$\Si_{d-1}$ is contained in a finite union of fundamental regions
for $\Gamma^{(d-1)}\backslash G^{(d-1)}$,
\begin{align*}
\sum_{\veck'\in\widehat\Z^{d-1}}
\nu_{\vecy}\Bigl(\Bigl\{\tM\in G_{\veck',\vecy}\cap\Si_{d-1}\col
\Z^{d-1}\tM\cap a_1^{\frac1{d-1}}\fC_\vecv=\emptyset
\Bigr\}\Bigr)
\ll p_{\vecy}^{(d-1)}\bigl(a_1^{\frac1{d-1}}\fC_\vecv\bigr).
\end{align*}
But $\fC_\vecv$ is isometric with $\fC\cap\vecv^\perp$,
and (since $\vecv\in\S_1^{d-1}\cap\vecp^\perp$)
this set equals the $(d-1)$-dimensional open cone with base
$B\cap\vecv^\perp$ and apex $\vecp$.
This cone has height $\geq r$ and 
radius $r'=\sqrt{r^2-t^2\cos^2\omega}\geq r\sin\omega$,
where as usual $\omega\in[0,\pi]$ is the angle between 
$\vecv'=(v_2,\ldots,v_d)$ and $\vece_1$ in $\R^{d-1}$.
Hence if $a_1\geq Cr$ then the set $a_1^{\frac1{d-1}}\fC_\vecv$
contains a $(d-1)$-dimensional open cone with $\bn$ in its base,
apex $\vecy$, and which has volume $\gg r^d(\sin\omega)^{d-2}$.
Hence by induction
we have
\begin{align*}
p_{\vecy}^{(d-1)}\bigl(a_1^{\frac1{d-1}}\fC_\vecv\bigr)
\ll %
\bigl(r^d(\sin\omega)^{d-2}\bigr)^{-2+\frac2{d-1}}. %
\end{align*}
This gives
\begin{align*}
{\sum}_0\ll r^{-d}\int_{\S_1^{d-1}\cap\vecp^\perp}
\min\Bigl(1,\bigl(r^d(\sin\omega)^{d-2}\bigr)^{-2+\frac2{d-1}}\Bigr)
\,d_\vecp(\vecv).
\end{align*}
Finally note that the map $\vecv\mapsto\|\vecv'\|^{-1}\vecv'$ is a 
diffeomorphism from $\S_1^{d-1}\cap\vecp^\perp$ onto $\S_1^{d-2}$ 
whose Jacobian determinant is uniformly bounded from below by a positive
constant which only depends on $d$, 
since $\varphi(\vecp,\vece_1)\leq\frac\pi4$.
Hence we get
\begin{align}\label{RESULTFROMBOUNDEDJAC}
{\sum}_0\ll r^{-d}\int_0^{\pi}
\min\bigl(1,(r^{-d}(\sin\omega)^{2-d})^{2-\frac2{d-1}}\bigr)\,
(\sin\omega)^{d-3}\,d\omega
\ll \begin{cases} r^{-6}\log(1+r)&\text{if }\:d=3
\\ r^{-2d}&\text{if }\:d\geq4,
\end{cases}
\end{align}
since $(2-d)(2-\frac2{d-1})+d-3$ equals $-1$ when $d=3$ but is less than
$-1$ when $d\geq4$.
Now the bound \eqref{CONEAPEXBOUNDABOVE}
follows from \eqref{CONEAPEXTHMPF1a} and \eqref{RESULTFROMBOUNDEDJAC}.

\subsection{\texorpdfstring{Proof that $p_\vecp(\fC)$ vanishes whenever $e|\fC|^{\frac2d}$ is sufficiently large}{Proof that pp(C) vanishes whenever * is sufficiently large}}
\label{SEC6P2}

Let $\fC$ be as in the previous section (cf.\ \eqref{CONEAPEXTHMPF1}).
As in the proof of Lemma \ref{MAINTHM2REDAUXLEM} there is some $M\in G$
such that we may imbed in $\fC M$ a cut ball with cut ratio $t=\frac15$,
edge ratio $\min(1,20e)$ and volume $\gg|\fC|$.
Applying now the argument just above Remark \ref{LASTDISCREM}
in Section \ref{PROOFOFTHMMAINTHM2SEC} 
involving Minkowski's Theorem,
we conclude that there is a constant $c>0$ which only depends on $d$
such that if $e\geq c|\fC|^{-\frac2d}$ then there does not exist any
lattice $L\in X_1$ which satisfies both $L\cap\vece_1^\perp=\{\bn\}$
and $L\cap\fC=\emptyset$.
Hence a fortiori there is no such lattice in $X_1(\vecp)$.
This implies that $p_\vecp(\fC)=0$, once we note that
\begin{align}\label{CONEAPEXTHMPF2}
\nu_\vecp\bigl(\bigl\{M\in X_1(\vecp)\col\Z^dM\cap\vece_1^\perp\neq\{\bn\}
\bigr\}\bigr)=0.
\end{align}
This relation is valid for any $\vecp\notin\vece_1^\perp$
and can be proved in many ways;
for instance if $d\geq3$ it follows immediately from
\cite[Prop.\ 7.6]{partI} (applied with $q=1$, $\vecalf=\bn$, $\vecy=\vecp$ and
$F$ as the characteristic function of $\vece_1^\perp\cap\scrB_R^d$,
and then letting $R\to\infty$).
For $d=2$, \eqref{CONEAPEXTHMPF2} 
follows in a similar way from \cite[Prop.\ 7.8]{partI},
or more easily directly from the definition of $\nu_\vecp$
(cf.\ \cite[Sec.\ 7.1]{partI} and also 
the proof of Theorem~2 in \cite{partIII}).

Hence we have proved that $p_\vecp(\fC)=0$ holds whenever
$e\geq c|\fC|^{-\frac2d}$.
Combining this with \eqref{CONEAPEXBOUNDABOVE} we have now proved
the \textit{right} inequality in \eqref{CONEAPEXTHMRES}.

\subsection{\texorpdfstring{Bounding $p_\vecp(\fC)$ from below}{Bounding pp(C) from below}}
\label{PPCFROMBELOWSEC}

Finally we will prove the \textit{left} inequality in \eqref{CONEAPEXTHMRES},
viz.\ that there exists a constant $c>0$ which only depends on $d$
such that whenever $|\fC|\geq\frac12$ and
the edge ratio is $e<c|\fC|^{-\frac2d}$ then
\begin{align}\label{CONEAPEXTHMRES3RESTAT}
p_\vecp(\fC)\gg|\fC|^{-2+\frac2d}.
\end{align}
If $d=2$ then this statement can be verified easily
by an explicit computation; 
alternatively it follows from the explicit formula in
\cite[Theorem 2]{partIII} (cf.\ \eqref{PF2DIMGENERIC}),
as we now indicate.
We keep $\fC$ as in \eqref{CONEAPEXTHMPF1}.
Let $\fC'$ be the open parallelogram with vertices
$(0,t\pm r)$, $(r,\pm r)$.
Then $\fC\subset\fC'$, and using \eqref{PPDINV} and the definition
of $\Phi_\bn$ (cf.\ \eqref{PHI0DEF})
we see that $p_\vecp(\fC)\geq p_\vecp(\fC')=\Phi_\bn(r^2,\frac tr,\frac tr)$.
Now if $e=\frac{r-t}r<\frac12r^{-2}$ and $r\geq1$ then
$\Phi_\bn(r^2,\frac tr,\frac tr)=\frac6{\pi^2}\frac{r^{-1}-(r-t)}{2t}
\gg r^{-2}$; on the other hand if $r\leq1$ then
$\Phi_\bn(r^2,\frac tr,\frac tr)\geq\frac3{\pi^2}$ for all $t\in[0,r]$.
Hence \eqref{CONEAPEXTHMRES3RESTAT} holds whenever 
$e<\frac12r^{-2}=\frac12|\fC|^{-1}$
and $|\fC|\geq\frac12$.

From now on we assume $d\geq3$.
Arguing as in \eqref{X1E1P}, \eqref{CONEAPEXTHMPF3}, \eqref{CONEAPEXTHMPF4}
(considering only $k_1=1$)
and using the first relation in \eqref{SIDSUBSSIDM1} and the fact that 
$\Si_d$ can be covered by a finite number of fundamental regions
for $\Gamma\backslash G$, we obtain
\begin{align}\notag
p_\vecp(\fC)\gg\int_{\S_{\vecp+}^{d-1}}\sum_{\veck'\in\Z^{d-1}}
\mu\Bigl(\Bigl\{\tM\in\Si_{d-1}\col
\ta_1\leq\sfrac2{\sqrt3}a_1^{\frac d{d-1}},\:
\vecu(\vecv,\veck,\tM)\in(-\sfrac12,\sfrac12]^{d-1},
\hspace{50pt}
\\\label{PPCFROMBELOWSECSTAT1}
\Z^dM\cap\fC=\emptyset\Bigr\}\Bigr)
\,\frac{d\vecv}{|\vecp\cdot\vecv|^d},
\end{align}
wherein $a_1=\vecp\cdot\vecv$, $\veck=(1,\veck')$,
and $\vecu(\vecv,\veck,\tM)$ is given by \eqref{UFORMULAIFK1NEQ0}, and
$M=[\vecv,\tM]_{\veck,\vecp}$.

Let us restrict the range of $\vecv$ to the set
\begin{align}\label{PPCFROMBELOWSECSTAT2}
S=\{\vecv\in\S_1^{d-1}\col 0<\varpi<\sfrac14\pi,\:0<\omega<cr^{-1}\},
\end{align}
where from start we assume $0<c<\frac12$ and $r\geq1$, 
$e=\frac{r-t}r<\frac12$.
In particular $cr^{-1}<\frac12<\frac\pi6$ and hence for all $\vecv\in S$ 
the number $a_1=\vecp\cdot\vecv$ is bounded from above and below by
\begin{align}\label{A1FROMABOVEBELOW}
2^{-\frac12}r<r\cos\varpi\leq r\cos\varpi+t\sin\varpi\cos\omega
=\vecp\cdot\vecv\leq\|\vecp\|\leq2^{\frac12}r.
\end{align}
Let us note that for any $\vecv\in S$ and any $n\in\Z\setminus\{0\}$,
the hyperplane $na_1\vecv+\vecv^\perp$ is disjoint from $\fC$.
Indeed, for $n\geq1$ this holds since
$\vecp\in a_1\vecv+\vecv^\perp$ and $\varphi(\vecv,\vece_1)=\varpi<\frac\pi4$;
on the other hand for $n\leq-1$ it holds since
for every $\vecx\in\fC$ we have $x_1>0$, $x_2>t-r$ and 
$\|(x_3,\ldots,x_d)\|<r$,
and thus (using also \eqref{VPARA}, $\frac{r-t}r<\frac12$ and 
$\sin\omega<\frac12$):
\begin{align*}
\vecx\cdot\vecv\geq x_2\sin\varpi\cos\omega
-\|(x_3,\ldots,x_d)\|\sin\varpi\sin\omega
>(\sin\varpi)(t-r-\sfrac12r)
>-2^{-\frac12}r\geq-a_1.
\end{align*}
Using the disjointness just proved
and \eqref{LATTICEINPARAM}, \eqref{LATTICECONTAINEMENT}, 
we now have
\begin{align*}
p_\vecp(\fC)\gg r^{-d}\int_S\sum_{\veck'\in\Z^{d-1}}
\mu\Bigl(\Bigl\{\tM\in\Si_{d-1}\col
\ta_1\leq(2^{-1/2}r)^{\frac d{d-1}},\:
\vecu(\vecv,\veck,\tM)\in(\sfrac12,\sfrac12]^{d-1},
\hspace{50pt}
\\
\Z^{d-1}\tM\cap a_1^{\frac1{d-1}}\fC_\vecv=\emptyset\Bigr\}\Bigr)
\,d\vecv.
\end{align*}
As we noted above \eqref{SUM1BOUNDFIRSTTIMEEVER}, for any fixed
$\vecv\in S$ and $\tM\in\Si_{d-1}$
there is exactly one $\veck'\in\Z^{d-1}$ for which 
$\vecu(\vecv,\veck,\tM)\in(\sfrac12,\sfrac12]^{d-1}$. Hence
\begin{align}\label{PPCBOUNDFROMBELOW}
p_\vecp(\fC)\gg r^{-d}\int_S
\mu\Bigl(\Bigl\{\tM\in\Si_{d-1}\col
\ta_1\leq(2^{-1/2}r)^{\frac d{d-1}},\:
\Z^{d-1}\tM\cap a_1^{\frac1{d-1}}\fC_\vecv=\emptyset\Bigr\}\Bigr)
\,d\vecv.
\end{align}
Note that for $\vecv\in S$, the set $\fC_\vecv$ is contained in a 
$(d-1)$-dimensional cylinder of radius $\sqrt{r^2-t^2\cos^2\omega}$
and height $\sqrt2(r-t\cos\omega)\ll er+c^2r^{-1}$;
hence $|\fC_\vecv|\ll r^{\frac d2-1}(er+c^2r^{-1})^{\frac d2}$.
Using also \eqref{A1FROMABOVEBELOW} it follows that 
if the constant $c$ is sufficiently small (in a way that only depends on $d$)
and if $e<c|\fC|^{-\frac2d}$,
then $|a_1^{\frac1{d-1}}\fC_\vecv|<\frac12$ holds for all $\vecv\in S$.
Hence there is some $C>1$ which only depends on $d$ such that if
both $e<c|\fC|^{-\frac2d}$ and $|\fC|\geq C$ hold,
then the integrand in \eqref{PPCBOUNDFROMBELOW} is $\geq\frac13$
for all $\vecv\in S$;
and thus by also using \eqref{DVINPARA} it follows that  
\eqref{CONEAPEXTHMRES3RESTAT} holds.

The case when $\frac12\leq|\fC|<C$
may be treated by applying the following lemma to $\fC$ and an
appropriate cone $\fC'$ with $|\fC'|=C$.
\begin{lem}
If $\fC,\fC'\subset\R^d$ are two open cones %
with $|\fC|<|\fC'|$
which both contain $\bn$ in their bases, with equal edge ratios,
and if $\vecp,\vecp'$ are their respective apexes,
then $p_\vecp(\fC)\geq p_{\vecp'}(\fC').$
\end{lem}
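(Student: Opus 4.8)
The plan is to use the $\SL_d(\R)$-invariance \eqref{PPDINV} to bring both cones into a common normal form that depends only on the shared edge ratio $e$ and on the volume, and then to write down an explicit diagonal element of $\SL_d(\R)$ carrying the smaller normalized cone (apex included) inside the larger one.

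\textbf{Normal form.} First I would verify that for any open cone $\fC$ with $\bn$ in its base, edge ratio $e$, volume $V$ and apex $\vecp$, there is an $M\in\SL_d(\R)$ with $\vecp M=h\vece_1$ and $\fC M=\fC_{e,h}$, where $h=V/\kappa_d$ and $\fC_{e,h}$ is the open convex hull of $h\vece_1$ and the $(d-1)$-dimensional ball $(1-e)\vece_2+\iota(\scrB_1^{d-1})$. This is the same kind of elementary reduction already used in the proof of Lemma~\ref{MAINTHM2REDAUXLEM}: a linear map taking the (linear) hyperplane of the base onto $\vece_1^\perp$; then a shear fixing $\vece_1^\perp$ pointwise, chosen to move the apex onto the $\vece_1$-axis; then a block-diagonal map fixing the $\vece_1$-axis which turns the base ellipsoid into a round ball with center on the $\vece_2$-axis; then a diagonal scaling $\diag[\delta_1,\delta_2,\dots,\delta_2]$ normalizing the base radius to $1$ and the total determinant to $1$; and finally, if needed, composition with $\diag[-1,-1,1,\dots,1]$ or $\diag[-1,1,-1,1,\dots,1]$ (both of determinant $1$ when $d\ge3$) to place the apex on the positive $\vece_1$-axis and the base center on the positive $\vece_2$-axis. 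The key point to be checked is that the only linear maps of $\vece_1^\perp$ sending a ball to a ball are similarities, so that each step preserves the edge ratio $e=1-\|\vecq\|/r$ of the base (equivalently, the affine-invariant ``level'' $(1-e)^2$ of $\bn$ inside the base is unchanged); and since $\SL_d(\R)$ preserves volume, the apex height is forced to be $h=V/\kappa_d$. (For $d=2$ the sign of the apex cannot in general be fixed inside $\SL_2(\R)$, but the lemma is only needed for $d\ge3$; the case $d=2$ also follows directly from the explicit formula \eqref{PF2DIMGENERIC}.)

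\textbf{Comparison.} Applying the normal form to $\fC$ and to $\fC'$ and writing $h=|\fC|/\kappa_d<h'=|\fC'|/\kappa_d$, I get from \eqref{PPDINV} that $p_\vecp(\fC)=p_{h\vece_1}(\fC_{e,h})$ and $p_{\vecp'}(\fC')=p_{h'\vece_1}(\fC_{e,h'})$. Then I would take
\[
M_1:=\diag\bigl[h'/h,\ (h/h')^{1/(d-1)},\dots,(h/h')^{1/(d-1)}\bigr]\in\SL_d(\R),
\]
so that $h\vece_1M_1=h'\vece_1$ and, with $\mu=(h/h')^{1/(d-1)}\in(0,1)$, the base $(1-e)\vece_2+\iota(\scrB_1^{d-1})$ of $\fC_{e,h}$ is mapped by $M_1$ onto its own $\mu$-dilate. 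Since that base is convex and contains $\bn$ in its closure, the $\mu$-dilate stays inside it, so $\fC_{e,h}M_1$, being the convex hull of $h'\vece_1$ and this dilated ball, is contained in $\fC_{e,h'}$. Using \eqref{PPDINV} again together with the monotonicity $\fA\subseteq\fB\Rightarrow p_\vecy(\fB)\le p_\vecy(\fA)$ (for the same $\vecy\ne\bn$, immediate from the definition of $p_\vecy$), this gives
\[
p_\vecp(\fC)=p_{h\vece_1}(\fC_{e,h})=p_{h'\vece_1}(\fC_{e,h}M_1)\ge p_{h'\vece_1}(\fC_{e,h'})=p_{\vecp'}(\fC'),
\]
which is the assertion.

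The main obstacle is the normal-form step: one has to make sure the chain of linear maps can be performed inside $\SL_d(\R)$ and really does preserve the edge ratio, so that after normalization the two cones have \emph{literally the same} base — this is exactly what the final $\mu$-dilation argument needs. Once that is in place, the comparison itself is just the remark that a convex set containing $\bn$ shrinks under dilation toward $\bn$.
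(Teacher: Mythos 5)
Your proposal is correct and is essentially the paper's own argument: both normalize via \eqref{PPDINV} so that the two cones share a common base on the $\vece_2$-axis with apexes on the positive $\vece_1$-axis (this is where the shared edge ratio is used), and both then apply a one-parameter diagonal map $\diag[\lambda,\lambda^{-1/(d-1)},\dots,\lambda^{-1/(d-1)}]$ to align the apexes and observe that the base, being convex and containing $\bn$, is respected by the induced dilation, yielding a containment that finishes the proof by monotonicity. The only cosmetic difference is that the paper contracts the larger cone $\fC'$ onto a superset of $\fC$ ($\lambda=h/h'<1$), while you expand $\fC$ onto a subset of $\fC'$ ($\lambda=h'/h>1$); and you spell out the normal-form step, including the precise value of the apex height, which the paper simply asserts.
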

\begin{proof}
Using \eqref{PPDINV} we may assume that $\fC$ and $\fC'$ have the same base
$B=t\vece_2+\iota(\scrB_1^{d-1})$
($0\leq t<1$),
and that $\fC$ has apex $\vecp=h\vece_1$ and $\fC'$ has apex
$\vecp'=h'\vece_1$ (some $h'>h>0$).
Set 
$M=\text{diag}[h/h',(h'/h)^{\frac1{d-1}},$ $\cdots,(h'/h)^{\frac1{d-1}}]\in G$;
then $\vecp'M=\vecp$ and $\fC'M\supset\fC$;
hence $p_{\vecp'}(\fC')=p_{\vecp}(\fC'M)\leq p_\vecp(\fC).$
\end{proof}
We conclude that, with a new constant $c$,
\eqref{CONEAPEXTHMRES3RESTAT} indeed holds
whenever $|\fC|\geq\frac12$ and $e<c|\fC|^{-\frac2d}$.
This concludes the proof of Theorem \ref{CONEAPEXTHM}.
\hfill$\square$ $\square$ $\square$

\begin{remark}\label{PPCVOLSMALLREM}
Regarding the restriction $|\fC|\geq\frac12$ in Theorem \ref{CONEAPEXTHM},
we remark that $p_\vecp(\fC)>\frac1{10}$ holds whenever
$|\fC|<\frac12$.
This follows from 
\cite[Prop.\ 7.6]{partI} applied with $F$ as the characteristic function
of $\fC$; for note that the right hand side of \cite[(7.30)]{partI}
is then $\leq|\fC|+\sum_{t=2}^\infty\varphi(t)t^{-d}
\leq|\fC|+\frac{\zeta(2)}{\zeta(3)}-1<\frac12+\frac25=\frac9{10}$.
\end{remark}

\section{\texorpdfstring{Bounding $\Phi_\bn(\xi,\vecw,\vecz)$; proof of Theorem \ref*{CYLINDER2PTSMAINTHM} and Proposition \ref*{CYLSUPPPROP}}{Bounding Phi0; proof of Theorem 1.8 and Proposition 1.9}}
\label{CYLINDER2PTSEC}

\subsection{\texorpdfstring{On the support of $\Phi_\bn$; proof of Proposition \ref*{CYLSUPPPROP}}{On the support of Phi0; proof of Proposition 1.9}}

Our first task is to prove that
$\Phi_\bn(\xi,\vecz,\vecw)>0$ implies
$e\ll s_d(\xi,\varphi)$, where
\begin{align*}
e:=\max(1-\|\vecz\|,1-\|\vecw\|),
\end{align*}
and
\begin{align*}
s_d(\xi,\varphi):=
\begin{cases}\min(\xi^{-\frac2d},
(\varphi \xi)^{-\frac2{d-1}})&\text{if }\:\varphi\leq\frac\pi2
\\
\max(\xi^{-\frac2{d-2}},(\frac{\xi}{\pi-\varphi})^{-\frac2{d-1}})
&\text{if }\:\varphi>\frac\pi2,
\end{cases}
\end{align*}
and $s_d(\xi,\varphi):=\xi^{-\frac2{d-2}}$ when $\varphi$ is undefined.
We will prove this using Minkowski's Theorem, 
similarly as in Section \ref{SEC6P2}
and in Section \ref{PROOFOFTHMMAINTHM2SEC} just above Remark \ref{LASTDISCREM}.
We first make some convenient reductions by invoking Theorem \ref{CONEAPEXTHM}.

Recall that $\Phi_\bn(\xi,\vecw,\vecz)=p_\vecp(\fC)$,
where $\fC$ is the open cylinder which is the interior of the
convex hull of the two $(d-1)$-dimensional balls 
\begin{align*}
B_1=\iota(\vecz+\scrB_1^{d-1})\quad\text{and}\quad
B_2=\xi\vece_1+B_1,
\end{align*}
and where $\vecp=(\xi,\vecz+\vecw)$.
Now by Theorem \ref{CONEAPEXTHM},
since $\fC$ contains the open cone with base $B_1$ and apex $\vecp$,
$\Phi_\bn(\xi,\vecw,\vecz)>0$ implies 
$1-\|\vecz\|\ll|\fC|^{-\frac2d}\ll\xi^{-\frac2d}$.
Next let us set
\begin{align*}
\fC':=\vecp-\fC.
\end{align*}
Then $p_\vecp(\fC)=p_\vecp(\fC')$ by \eqref{PPFSYMM}, and since 
$\fC'$ is the convex hull of $B_1'=\iota(\vecw+\scrB_1^{d-1})$ and
$\xi\vece_1+B_1'$, we conclude
\begin{align}\label{PHI0SYMM}
\Phi_\bn(\xi,\vecw,\vecz)=\Phi_\bn(\xi,\vecz,\vecw).
\end{align}
Hence $\Phi_\bn(\xi,\vecw,\vecz)>0$ also implies
$1-\|\vecw\|\ll\xi^{-\frac2d}$
and thus
$e\ll\xi^{-\frac2d}$.
This gives the desired conclusion in the case $\varphi\leq\xi^{-\frac1d}$,
since then $s_d(\xi,\varphi)\gg\xi^{-\frac2d}$.
Hence from now on we may assume $\varphi>\xi^{-\frac1d}$.
We may also assume that $e$ is small
(since otherwise $e\ll\xi^{-\frac2d}$ forces
$\xi\ll1$ and thus $s_d(\xi,\varphi)\gg1$);
say $0<e<\frac1{10}$.

Using \eqref{PPDINV}
we may assume
\begin{align}\label{CYLSUPPPROPPF1}
\vecw=(y,w,0,\ldots,0);\qquad
\vecz=(y,z,0,\ldots,0),
\end{align}
for some $y\geq0$, $w,z\in\R$. Now let $F$ be the box
\begin{align*}
F=[-\sfrac12\xi,\sfrac12\xi]\times [-s,s]
\times\Bigl[-\sfrac12|z-w|,\sfrac12|z-w|\Bigr]
\times
\underbrace{
\Bigl[-\frac{\sqrt e}{2\sqrt{d}},\frac{\sqrt e}{2\sqrt{d}}\Bigr]
\times\cdots\times
\Bigl[-\frac{\sqrt e}{2\sqrt{d}},\frac{\sqrt e}{2\sqrt{d}}\Bigr]}_{\text{$d-3$ copies}}
\end{align*}
where $s=\frac{e}{10(\sqrt e+y)}$.
We then claim that 
\begin{align}\label{FINCLUSION}
F\quad\subset\quad
\fC\cup\fC'\cup\vece_1^\perp\cup(-\fC)\cup(-\fC').
\end{align}
Indeed, let $\vecx=(x_1,\ldots,x_d)$ be an arbitrary point in $F$.
Using $|x_3|\leq\frac12|z-w|$ and splitting into the two cases
$zw\leq0$ and $zw>0$ we check that
$\min(|x_3-z|,|x_3-w|)\leq\frac12(|z|+|w|)$. 
Hence we have for $\vecx':=(x_2,x_3,\ldots,x_d)$:
\begin{align}\notag
\min\bigl(\|\vecx'-\vecz\|^2,\|\vecx'-\vecw\|^2\bigr)
&<(s+y)^2+\sfrac14\bigl(|z|+|w|\bigr)^2+\sfrac14e
\\\notag
&\leq y^2+2sy+s^2+\sfrac 12(z^2+w^2)+\sfrac14e
<\sfrac12\bigl(1+(1-e)^2+e\bigr)<1,
\end{align}
where we used $2sy\leq\frac15e$, $s^2\leq\frac1{100}e$
and the fact that one of $y^2+z^2$ and $y^2+w^2$ equals $(1-e)^2$
while both are $<1$.
The above inequality 
proves that $\vecx\in\fC\cup\fC'$ whenever $x_1\in(0,\frac12\xi]$.
Similarly $\vecx\in(-\fC)\cup(-\fC')$ whenever $x_1\in[-\frac12\xi,0)$,
and this completes the proof of the inclusion \eqref{FINCLUSION}.

Now assume $\Phi_\bn(\xi,\vecw,\vecz)>0$, viz.\ $p_\vecp(\fC)>0$.
Then by \eqref{CONEAPEXTHMPF2} there is some lattice
$L\in X_1(\vecp)$ which is disjoint from $\fC$ and which
satisfies $L\cap\vece_1^\perp=\{\bn\}$.
As noted above $L$ must also be disjoint from $\fC'$ 
and thus also from $-\fC$ and $-\fC'$.
Hence by \eqref{FINCLUSION}, $L\cap F=\{\bn\}$, and
now Minkowski's Theorem implies $|F|\leq2^d$, thus
\begin{align}\label{CYLSUPPPROPPF2}
\xi s|z-w|e^{\frac{d-3}2}\ll1.
\end{align}
Note that $|z-w|=\|\vecz-\vecw\|\gg\varphi$.
(Indeed this is obvious if $\varphi\geq\frac\pi2$,
since $\|\vecz\|,\|\vecw\|\geq1-e>\frac9{10}$,
and if $\varphi<\frac\pi2$ then 
$\|\vecz-\vecw\|\geq\min_{t\in\R}\|\vecz-t\vecw\|
=\|\vecz\|\sin\varphi\gg\varphi$.)
Using this together with \eqref{CYLSUPPPROPPF2} and
$s>\frac e{20}$ we obtain $e\ll (\varphi\xi)^{-\frac2{d-1}}$,
which is the desired bound if $\varphi\leq\frac\pi2$,
since we are assuming $\varphi>\xi^{-\frac1d}$.
On the other hand if $\varphi>\frac\pi2$ then we note that
$y\asymp\pi-\varphi$, e.g.\ since the area of the
triangle with vertices $\bn,\vecw,\vecz$
can be expressed both as $\frac12y|z-w|\asymp y$ and as
$\frac12\|\vecw\|\cdot\|\vecz\|\sin\varphi\asymp\pi-\varphi$.
Hence $s\asymp\min(\sqrt e,\frac e{\pi-\varphi})$ and thus
\eqref{CYLSUPPPROPPF2} implies
$e\ll\max(\xi^{-\frac2{d-2}},(\frac\xi{\pi-\varphi})^{-\frac2{d-1}})$.
Hence in all cases we have $e\ll s_d(\xi,\varphi)$,
and this concludes the proof of the first statement in
Proposition \ref{CYLSUPPPROP}.
\hfill$\square$

\vspace{5pt}

We now turn to the proof of 
the second statement of Proposition \ref{CYLSUPPPROP},
viz.\ that there is a constant $c>0$ which only depends on $d$ such that
$\Phi_\bn(\xi,\vecw,\vecz)>0$ holds whenever $e<c\cdot s_d(\xi,\varphi)$.
Note that in order to prove $\Phi_\bn(\xi,\vecw,\vecz)>0$ it suffices to
construct a lattice $L\in X_1(\vecp)$ satisfying
$L\cap\overline{\fC}=\{\bn,\vecp\}$.
The reason for this is that $\tilde L\cap\overline{\fC}=\{\bn,\vecp\}$
(and thus $\tilde L\cap\fC=\emptyset$) must then hold for all
$\tilde L$ in some neighbourhood of $L$ in $X_1(\vecp)$.

Given $\vecw,\vecz$ as in \eqref{CYLSUPPPROPPF1},
to construct an admissible lattice $L$ we take
$\vecv=(1,\ve,0,0,\ldots,0)$ with $\ve>0$ small,
and let $I_\vecv:\R^{d-1}\to\vecv^\perp$ be the linear map
\begin{align*}
I_\vecv(x_1,\ldots,x_{d-1})=(-\ve x_1,x_1,\ldots,x_{d-1}).
\end{align*}
We will take $L$ to be the lattice spanned by
$I_\vecv(L')$ and $\vecp$ for an appropriate $(d-1)$-dimensional lattice 
$L'\subset\R^{d-1}$.
Then note that $L\subset\sqcup_{n\in\Z}(n\vecp+\vecv^\perp)$.
We claim that, if $\ve<\frac12\xi$, then
\begin{align}\label{CYLSUPPPROPPF3}
(n\vecp+\vecv^\perp)\cap\overline\fC=\emptyset,\qquad
\forall n\in\Z\setminus\{0,1\}.
\end{align}
Indeed, assume $\vecx\in(n\vecp+\vecv^\perp)\cap\overline\fC$
for some $n\in\Z\setminus\{0,1\}$.
Then $(\vecx-n\vecp)\cdot\vecv=0$, i.e.\
$\ve x_2=-x_1+n(\xi+2\ve y)$.
Since $x_1\in[0,\xi]$, $y\geq0$ and $n\geq2$ or $n\leq-1$, this forces
$\ve x_2\geq\xi$ or $\ve x_2\leq-\xi$.
Therefore $|x_2|>2$ and
$\|\vecx'-\vecz\|\geq|x_2-y|>2-1=1$, 
contradicting $\vecx\in\overline\fC$.

Because of \eqref{CYLSUPPPROPPF3},
$L\cap\overline{\fC}=\{\bn,\vecp\}$ holds if and only if
$I_\vecv(L')\cap\overline{\fC}=\{\bn\}$ and
$(\vecp+I_\vecv(L'))\cap\overline{\fC}=\{\vecp\}$.
The latter condition is equivalent with
$I_\vecv(L')\cap\overline{\fC'}=\{\bn\}$, since $I_\vecv(L')=-I_\vecv(L')$.
From the definition of $I_\vecv$ we see that 
for any $\vecell=(\ell_1,\ldots,\ell_{d-1})$ with $\ell_1>0$ we have
$I_\vecv(\vecell)\cdot\vece_1<0$, and thus $I_\vecv(\vecell)\notin\overline\fC\cup
\overline{\fC'}$. Hence we have $L\cap\overline{\fC}=\{\bn,\vecp\}$
so long as every non-zero lattice point
$\vecell=(\ell_1,\ldots,\ell_{d-1})\in L'$ with $\ell_1\leq0$ satisfies
both $\|\vecell-\vecz\|>1$ and $\|\vecell-\vecw\|>1$.

Let us first assume $\varphi>\frac\pi2$ (or $\varphi$ undefined,
i.e.\ $\vecz=\bn$ or $\vecw=\bn$).
Then choose %
\begin{align}\label{EXPLLATTICE1}
L':=\text{Span}_\Z\Bigl\{\alpha\frac e{\sqrt e+y}\vece_1,
3\vece_2,2\sqrt e\vece_3,\ldots,2\sqrt e\vece_{d-1}\Bigr\}\subset\R^{d-1},
\end{align}
for some $\alpha>4$.
To verify that $L'$ has the required property, we consider an
arbitrary non-zero lattice point $\vecell=(\ell_1,\ldots,\ell_{d-1})\in L'$
with $\ell_1\leq0$.
If $\ell_2\neq0$ then $\|\vecell\|\geq|\ell_2|\geq3$ and thus
$\|\vecell-\vecz\|,\|\vecell-\vecw\|>1$. %
On the other hand if $\ell_2=0$ and $\ell_1<0$ then
$\ell_1\leq-\alpha\frac e{\sqrt e+y}$
and this gives
\begin{align*}
\|\vecell-\vecz\|^2\geq(\ell_1-y)^2+z^2
\geq\Bigl(y+\alpha\frac e{\sqrt e+y}\Bigr)^2+z^2>y^2+z^2+\alpha e>(1-e)^2+4e>1.
\end{align*}
(We used the fact that $(y+\alpha\frac e{\sqrt e+y})^2>y^2+\alpha e$;
this is clear if $y\geq\sqrt e$, and if $y<\sqrt e$ then it follows from
$(\frac12\alpha\sqrt e)^2>\alpha e$.)
In the same way $\|\vecell-\vecw\|>1$.
Finally if $\ell_1=\ell_2=0$;
then since $\vecell\neq\bn$ we must have $d\geq4$ and
$\ell_j\neq0$ for some $j\geq3$;
then $|\ell_j|\geq2\sqrt e$ and thus
$\|\vecell-\vecz\|^2\geq y^2+z^2+4e\geq(1-e)^2+4e>1$
and similarly $\|\vecell-\vecw\|^2>1$.
Hence $L'$ has the required property, i.e.\ $L'$ leads to a lattice
$L\subset\R^d$ with $\vecp\in L$ and $L\cap\overline{\fC}=\{\bn,\vecp\}$.

Note that $\text{covol}(L)=(\xi+2\ve y)\text{covol}(L')$.
Hence \textit{if}
$\xi\frac e{\sqrt e+y}\cdot3\cdot(2\sqrt e)^{d-3}<\frac14$
then by appropriate choice of $\alpha>4$ and $\ve>0$ small we obtain
$\text{covol}(L)=1$, and thus $\Phi_\bn(\xi,\vecw,\vecz)>0$.
Also note that $\sqrt e+y\asymp\sqrt e+\pi-\varphi$;
indeed this is clear if $e\geq\frac1{10}$, and if $e<\frac1{10}$
it follows from $y\asymp\pi-\varphi$ which we proved above.
Combining these facts we conclude that there is a constant $c>0$
which only depends on 
$d$ such that $\Phi_\bn(\xi,\vecw,\vecz)>0$ holds
whenever $\varphi>\frac\pi2$ and 
$e<c\max(\xi^{-\frac2{d-2}},(\frac{\xi}{\pi-\varphi})^{-\frac2{d-1}})$.

We now turn to the remaining case, $\varphi\leq\frac\pi2$.
First assume that $\vecz\neq\vecw$ and that the triangle
$\triangle \bn\vecz\vecw$ is acute or right.
We then again write $\vecw,\vecz$ as in \eqref{CYLSUPPPROPPF1}, and choose
\begin{align}\label{EXPLLATTICE2}
L':=\text{Span}_\Z\Bigl\{\alpha e\vece_1,
5(\varphi+\sqrt e)\vece_2,2\sqrt e\vece_3,\ldots,2\sqrt e\vece_{d-1}\Bigr\}\subset\R^{d-1},
\end{align}
for some $\alpha>3$.
In order to prove that this $L'$ has the required property, we first note that
under the present assumptions we have
\begin{align}\label{CYLSUPPPROPPF4}
\sqrt{1-y^2}<2(\varphi+\sqrt e)\qquad\text{and}\qquad
y\geq2^{-\frac12}(1-e).
\end{align}
Indeed, if %
$|z|\leq2^{-\frac12}(1-y^2)^{\frac12}$
then the first inequality
follows from $1-y^2\leq1-(1-e)^2+z^2<2e+\frac12(1-y^2)$.
On the other hand if $|z|>2^{-\frac12}(1-y^2)^{\frac12}$ then
since $\triangle \bn\vecz\vecw$ is acute or right we have
$\varphi\geq\varphi(\vece_1,\vecz)$ and thus
$\tan\varphi\geq\frac{|z|}{y}>2^{-\frac12}(1-y^2)^{\frac12}$.
If $\varphi\leq\frac\pi4$ then we get
$\varphi\geq\frac{\pi}4\tan\varphi>\frac{\pi}{4}2^{-\frac12}(1-y^2)^{\frac12}
>\frac12(1-y^2)^{\frac12}$,
while if $\varphi>\frac\pi4$ then trivially
$(1-y^2)^{\frac12}\leq1<2\varphi$. Hence we have proved the
first inequality in \eqref{CYLSUPPPROPPF4}.
Next note that
$\varphi=\varphi(\vece_1,\vecz)+\varphi(\vece_1,\vecw)$,
since $\triangle \bn\vecz\vecw$ is acute or right.
Hence since $\varphi\leq\frac\pi2$, at least one of
$\varphi(\vece_1,\vecz)$ and $\varphi(\vece_1,\vecw)$ must be $\leq\frac\pi4$,
and now the second inequality in \eqref{CYLSUPPPROPPF4} follows since
$\cos\varphi(\vece_1,\vecz)=\frac y{\|\vecz\|}\leq\frac y{1-e}$ and 
similarly $\cos\varphi(\vece_1,\vecw)\leq\frac y{1-e}$.

Now consider an
arbitrary non-zero lattice point $\vecell=(\ell_1,\ldots,\ell_{d-1})\in L'$
with $\ell_1\leq0$.
If $\ell_2\neq0$ then $|\ell_2|\geq5(\varphi+\sqrt e)>\frac52\sqrt{1-y^2}$ 
and using $|z|<\sqrt{1-y^2}$ %
we get  %
$\|\vecell-\vecz\|^2\geq(\ell_1-y)^2+(\ell_2-z)^2
\geq y^2+(\frac32\sqrt{1-y^2})^2>1$. Similarly $\|\vecell-\vecw\|>1$.
On the other hand if $\ell_2=0$ and $\ell_1<0$ then
$\ell_1\leq-\alpha e$ and hence
\begin{align*}
\|\vecell-\vecz\|^2\geq(\alpha e+y)^2+z^2
\geq(1-e)^2+2\alpha ey+\alpha^2e^2
>(1-e)^2+2e>1,
\end{align*}
where $2\alpha ey+\alpha^2e^2>2e$ holds since
either $e\geq\frac12$ or $y>\frac13$, by \eqref{CYLSUPPPROPPF4}.
Similarly $\|\vecell-\vecw\|>1$.
Finally if $\ell_1=\ell_2=0$ then
$\|\vecell-\vecz\|>1$ and $\|\vecell-\vecw\|>1$ just as for
\eqref{EXPLLATTICE1}.
Hence $L'$ has the required property.

On the other hand if $\triangle \bn\vecz\vecw$ is obtuse
or $\vecz=\vecw$
then we choose coordinates as follows.
Using the symmetry \eqref{PHI0SYMM} we may assume that the
angle at $\vecz$ is $>\frac\pi2$,
and we rotate $\fC$ so that
\begin{align*}
\vecw=y'\vece_1+w\vece_2;\qquad\vecz=y\vece_1,
\end{align*}
for some $y'\geq y\geq0$, $w\in\R$.
Now $e=1-y$, and also $|w|<\sqrt{1-{y'}^2}\leq\sqrt{2(1-y')}\leq\sqrt{2e}$.
Note that \eqref{CYLSUPPPROPPF3} remains true for our present
$\vecp$, so long as $\ve<\frac12\xi$.
Now choose
\begin{align}\label{EXPLLATTICE3}
L':=\text{Span}_\Z\Bigl\{\alpha e\vece_1,
4\sqrt e\vece_2,2\sqrt e\vece_3,\ldots,2\sqrt e\vece_{d-1}\Bigr\}
\subset\R^{d-1},
\end{align}
where $\alpha>1$. Consider an
arbitrary non-zero lattice point $\vecell=(\ell_1,\ldots,\ell_{d-1})\in L'$
with $\ell_1\leq0$.
If $\ell_2\neq0$ then $|\ell_2|\geq4\sqrt e$
and thus 
$|\ell_2-w|\geq(4-\sqrt2)\sqrt e>2\sqrt e$ and
$\|\vecell-\vecw\|^2>{y'}^2+(\ell_2-w)^2\geq(1-e)^2+4e>1$.
Also $\|\vecell-\vecz\|^2\geq(1-e)^2+16e>1$.
On the other hand if $\ell_2=0$ and $\ell_1<0$ then
$\ell_1\leq-\alpha e$ and hence
$y'-\ell_1\geq y-\ell_1\geq y+\alpha e=1+(\alpha-1)e>1$,
thus $\|\vecell-\vecz\|>1$ and $\|\vecell-\vecw\|>1$.
Finally if $\ell_1=\ell_2=0$ then
$\|\vecell-\vecz\|>1$ and $\|\vecell-\vecw\|>1$ as before.
Hence $L'$ has the required property.

Note that both for \eqref{EXPLLATTICE2} and \eqref{EXPLLATTICE3} we have
$\text{covol}(L')\ll\alpha e^{\frac{d-1}2}(\varphi+\sqrt e)$.
Hence, arguing as before, we conclude that there is a constant $c>0$ which
only depends on $d$ such that $\Phi_\bn(\xi,\vecw,\vecz)>0$ holds
whenever $\varphi\leq\frac\pi2$ and
$e<c\min(\xi^{-\frac2d},(\varphi \xi)^{-\frac2{d-1}})$.

This concludes the proof of Proposition \ref{CYLSUPPPROP}.
\hfill$\square$ $\square$ $\square$

\subsection{\texorpdfstring{Bounding $\Phi_\bn(\xi,\vecw,\vecz)$ from above}{Bounding Phi0(xi,w,z) from above}}
\label{CYLBDSEC}

We will now prove Theorem \ref{CYLINDER2PTSMAINTHM}, viz.\ the bound
(for $d\geq3$, and with $\varphi=\varphi(\vecw,\vecz)$)
\begin{align}\label{CYLINDER2PTSMAINTHMRESRESTATE}
\Phi_\bn(\xi,\vecw,\vecz)\ll\begin{cases}
\xi^{-2+\frac2d}\min\Bigl\{1,
(\xi\varphi^{d})^{-1+\frac2{d(d-1)}}\Bigr\}
&\text{if }\:\varphi\leq\frac\pi2
\\
\xi^{-2}\min\Bigl\{1,
(\xi(\pi-\varphi)^{d-2})^{-1+\frac2{d-1}}\Bigr\}
&\text{if }\:\varphi\geq\frac\pi2.
\end{cases}
\end{align}
The main idea of the proof is to use the parametrization from Section 
\ref{X1RECOLLECTSEC} and try to carefully bound the integrals arising,
just as in the proof of Theorem \ref{CONEAPEXTHM} in 
Section \ref{PPCBOUNDCCONESEC}.

Given $\xi,\vecw,\vecz$ we may use \eqref{PPDINV}
to see that $\Phi_\bn(\xi,\vecw,\vecz)=p_\vecp(\fC)$,
where $\fC$ is a cylinder which has radius and height both equal
to $r=\xi^{1/d}$.
More specifically we may take $\fC$ as the interior of the 
convex hull of $B_1$ and $B_2$, where
\begin{align}\label{CYLINDER2PTSMAINTHMPF1}
B_1=\iota(r\vecz+\scrB_r^{d-1}),
\quad
B_2=r\vece_1+B_1,\quad
\text{and }\:
\vecp=r(1,\vecw+\vecz).
\end{align}
We might also
assume $\vecz=z\vece_1$, $\vecw=w(\cos\varphi)\vece_1+w(\sin\varphi)\vece_2$
for some $z,w\in[0,1)$.
However in order to make the symmetry between $\vecz$ and $\vecw$
somewhat more explicit in our arguments, and also make the lemmas in 
the present section and the next more directly applicable
when we will later derive an asymptotic formula for
$\Phi_\bn(\xi,\vecw,\vecz)$ in \cite{partIV},
we will just assume that the points $\vecz,\vecw\in\scrB_1^{d-1}$
satisfy the following:
\begin{alignat}{2}\label{CYLINDER2PTSMAINTHMPF2alt}
\begin{cases}
\text{if }\:\varphi\leq\sfrac\pi2:&
\varphi(\vece_1,\vecz),\varphi(\vece_1,\vecw)\leq\varphi;
\\
\text{if }\:\varphi\geq\sfrac\pi2:&
\varphi(\vece_1,\vecz)\leq\pi-\varphi,\quad
\varphi(-\vece_1,\vecw)\leq\pi-\varphi.
\end{cases}
\end{alignat}
(We tacitly assume $\vecz,\vecw\neq\bn$; this is ok in view
of Proposition \ref{CYLSUPPPROP}.)
We may furthermore assume that $r=\xi^{1/d}\geq1$, since otherwise
the bound \eqref{CYLINDER2PTSMAINTHMRESRESTATE} is trivial.

It is convenient, as preparation for the next section,
to define  %
\begin{align*}
\Si_d':=\bigl\{[a_1,\vecv,\vecu,\tM]\in\Si_d\col\vecv\cdot\vece_1\geq0\bigr\}.
\end{align*}
Note that if $M=\nn(u)\aa(a)\kk$ lies in $\Si_d$ but not in $\Si_d'$, then
if we set $D=\text{diag}[-1,-1,1,\ldots,1]\in\Gamma$
and take $\gamma\in\Gamma\cap N$ so that $\gamma D\nn(u)D\in\F_N$,
we obtain $\gamma DM\in\Si_d'$,
since $\gamma DM$ has the Iwasawa decomposition
$\gamma DM=(\gamma D\nn(u)D)\aa(a)D\kk$,
and $\vece_1 D\kk=-\vecv$.
Hence since $\Si_d$ contains a fundamental domain for $\Gamma\backslash G$,
it follows that also the subset $\Si_d'$ contains a fundamental 
domain for $\Gamma\backslash G$.
Now by the same argument as in Section~\ref{PPCBOUNDCCONESEC} we have
\begin{align}\label{PHI0FIRSTSPLIT}
\Phi_\bn(\xi,\vecw,\vecz)=p_\vecp(\fC)
\leq\sum_{\veck\in\widehat\Z^d}
\nu_\vecp\bigl(\bigl\{
M\in G_{\veck,\vecp}\cap\Si_d'\col \Z^dM\cap\fC=\emptyset\bigr\}\bigr)
={\sum}_0+{\sum}_1,
\end{align}
where ${\sum}_0$ and ${\sum}_1$ are the sums corresponding to $k_1=0$ and 
$k_1\neq0$, respectively,
and there is a positive constant $C$ which only depends on $d$ such that
$a_1>Cr$ holds for all
$M=[a_1,\vecv,\vecu,\tM]\in\Si_d'$ satisfying $\Z^dM\cap\fC=\emptyset$.
We first consider ${\sum}_1$.
Note that for $M=[\vecv,\tM]_{\veck,\vecp}\in G_{\veck,\vecp}$ we have,
using \eqref{LATTICEINPARAM} and $\veck M=\vecp$,
\begin{align*}
\iota(\vecm)M=a_1^{-\frac1{d-1}}\iota(\vecm\tM)f(\vecv)
\quad\text{and}\quad
(\veck+\iota(\vecm))M=\vecp+a_1^{-\frac1{d-1}}\iota(\vecm\tM)f(\vecv),
\quad\forall\vecm\in\Z^{d-1}.
\end{align*}
Hence $\Z^dM\cap\fC=\emptyset$ implies that the $(d-1)$-dimensional lattice
$a_1^{-\frac1{d-1}}\iota(\Z^{d-1}\tM)f(\vecv)$ is disjoint from both
$\fC$ and from $\fC':=\vecp-\fC$. 
Note here that $\fC'$ is the cylinder which is the interior of the
convex hull of $\iota(r\vecw+B_r^{d-1})$ and 
$r\vece_1+\iota(r\vecw+B_r^{d-1})$.
Mimicking now the argument in Section \ref{PPCBOUNDCCONESEC} leading up to
\eqref{SUM1BOUNDFIRSTTIMEEVER}, we get
\begin{align}\label{CYLINDER2PTSMAINTHMPF9}
{\sum}_1\ll
\sum_{\substack{|k_1|<2/C\\k_1\neq0}}
|k_1|^{d-1}\int_{S}
\mu\Bigl(\Bigl\{\tM\in\Si_{d-1}\col
\Z^{d-1}\tM\cap a_1^{\frac1{d-1}}(\fC_\vecv\cup\fC'_\vecv)=\emptyset
\Bigr\}\Bigr)
\,\frac{d\vecv}{(|k_1|r)^d},
\end{align}
where $\fC_\vecv=\iota^{-1}(\fC f(\vecv)^{-1})$ (as usual),
$\fC'_\vecv=\iota^{-1}(\fC' f(\vecv)^{-1})$,
$a_1=k_1^{-1}(\vecp\cdot\vecv)$, and 
\begin{align}\label{SK1DEF}
S=S^{(k_1)}=\{\vecv\in\S_1^{d-1}\col v_1>0,\:k_1^{-1}(\vecp\cdot\vecv)>Cr\}.
\end{align}

Recall that $\fC_\vecv$ is isometric with $\fC\cap\vecv^\perp$.
The following lemma gives a precise description of a 
certain $(d-1)$-dimensional cone contained in $\fC\cap\vecv^\perp$;
the point is that this will allow us to apply Corollary \ref{CONECYLCOR} 
to obtain a bound on the 
integrand in the right hand side of \eqref{CYLINDER2PTSMAINTHMPF9}.
For $\vecv=(v_1,\ldots,v_d)$ we write $\vecv':=(v_2,\ldots,v_d)$ and
\begin{align}\label{OMEGAZWDEF}
\omega_\vecz:=\varphi(\vecv',\vecz)\qquad\text{and}\quad
\omega_\vecw:=\varphi(\vecv',\vecw).
\end{align}

\begin{lem}\label{CONVENIENTCONELEM}
For any $\vecv\in\S_1^{d-1}$ with $0<v_1<1$,
the intersection $\fC\cap\vecv^\perp$ contains a right relatively open 
$(d-1)$-dimensional cone with $\bn$ in its base,
of height $\asymp r\min(1,\frac{1-\|\vecz\|+\omega_\vecz^2}{v_1})
\gg r\omega_\vecz^2$,
radius
$\asymp r(1-\|\vecz\|+\sin^2\omega_\vecz)^{\frac12}\gg r\sin\omega_\vecz$
(thus volume $\gg r^{d-1}\omega_\vecz^2(\sin\omega_\vecz)^{d-2}$),
and edge ratio $\asymp\min(1,\frac{1-\|\vecz\|}{\sin^2\omega_\vecz})$.
\end{lem}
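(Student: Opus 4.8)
The plan is to write the cone down explicitly. After rotating $\R^d$ so as to fix $\vece_1$, and rotating inside $\vece_1^\perp$, we may assume $\vecz=\|\vecz\|\vece_2$ (recall $\vecz\neq\bn$) and $\vecv=(v_1,v_2,v_3,0,\dots,0)$ with $v_2=\|\vecv'\|\cos\omega_\vecz$, $v_3=\|\vecv'\|\sin\omega_\vecz$, where $\vecv'=(v_2,\dots,v_d)$, $\|\vecv'\|=\sqrt{1-v_1^2}>0$, and $\omega_\vecz=\varphi(\vecv',\vecz)\in[0,\pi]$. Next I would note that the convex hull of $B_1$ and $B_2$ in \eqref{CYLINDER2PTSMAINTHMPF1} is just the solid cylinder, so that, writing points of $\R^d$ as $(x_1,\vecy)$ with $\vecy\in\R^{d-1}$ and identifying $(0,\vecy)$ with $\iota(\vecy)$, we have $\fC=\{(x_1,\vecy)\col 0<x_1<r,\ \|\vecy-r\vecz\|<r\}$. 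The cone I will use has base $B_{\mathrm{base}}:=B_1\cap\vecv^\perp$, a $(d-2)$-dimensional ball lying in the $(d-2)$-plane $\Pi:=\vecv^\perp\cap\vece_1^\perp$ through $\bn$, and apex $\vecp^\ast:=\vecc+h\vecn$, where $\vecc$ is the centre of $B_{\mathrm{base}}$, $\vecn$ is the unit vector in $\vecv^\perp$ orthogonal to $\Pi$ pointing into the region $x_1>0$ (explicitly $\vecn=(\|\vecv'\|,-v_1\|\vecv'\|^{-1}\vecv')$), and $h>0$ is a height to be fixed below. Since $\vecp^\ast-\vecc=h\vecn$ is orthogonal to $\Pi$ and $\vecn$ is a unit vector, $\mathrm{conv}(\overline{B_{\mathrm{base}}},\vecp^\ast)$ is a right cone of height $h$.

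First I would carry out the elementary computation of the base: $B_{\mathrm{base}}$ has centre $\vecc=r(\vecz-\|\vecv'\|^{-2}(\vecv'\cdot\vecz)\vecv')$ and radius $r'=r\sqrt{1-\|\vecz\|^2\cos^2\omega_\vecz}$, and $\|\bn-\vecc\|=r\|\vecz\|\sin\omega_\vecz<r'$, so $\bn\in B_{\mathrm{base}}$ and the edge ratio of the cone equals $e_{\mathrm{cone}}=(r'-\|\vecc\|)/r'=1-\|\vecz\|\sin\omega_\vecz/\sqrt{1-\|\vecz\|^2\cos^2\omega_\vecz}$. Then I would record the two elementary equivalences
\[
1-\|\vecz\|^2\cos^2\omega_\vecz\ \asymp\ (1-\|\vecz\|)+\sin^2\omega_\vecz,\qquad
1-\|\vecz\|\cos\omega_\vecz\ \asymp\ (1-\|\vecz\|)+\omega_\vecz^2,
\]
valid uniformly for $\|\vecz\|\in[0,1)$ and $\omega_\vecz\in[0,\pi]$. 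The first gives $r'\asymp r((1-\|\vecz\|)+\sin^2\omega_\vecz)^{1/2}\gg r\sin\omega_\vecz$, and multiplying out the edge ratio one finds $e_{\mathrm{cone}}\asymp e_{\mathrm{cone}}(2-e_{\mathrm{cone}})=(1-\|\vecz\|^2)/(1-\|\vecz\|^2\cos^2\omega_\vecz)\asymp\min(1,(1-\|\vecz\|)/\sin^2\omega_\vecz)$, exactly as claimed.

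For the apex I would take $h:=c_1r\min(1,v_1^{-1}(1-\|\vecz\|\cos\omega_\vecz))$ with $c_1>0$ a small absolute constant; note $h>0$ since $1-\|\vecz\|\cos\omega_\vecz\ge 1-\|\vecz\|>0$. Writing $\vecp^\ast=(x_1^\ast,\vecy^\ast)$ one computes $x_1^\ast=h\|\vecv'\|$ and $\|\vecy^\ast-r\vecz\|=|\,r\|\vecz\|\cos\omega_\vecz+hv_1\,|$; from $h\|\vecv'\|\le c_1r<r$, from $0\le hv_1\le c_1r(1-\|\vecz\|\cos\omega_\vecz)$, and from $\|\vecz\|<1$ one gets $0<x_1^\ast<r$ and $\|\vecy^\ast-r\vecz\|<r$. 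Granting this, for every $t\in(0,1)$ and every $\vecb=(0,\vecbeta)\in\overline{B_{\mathrm{base}}}$ the point $(1-t)\vecb+t\vecp^\ast=(tx_1^\ast,(1-t)\vecbeta+t\vecy^\ast)$ has $0<tx_1^\ast<r$ and, using $\|\vecbeta-r\vecz\|\le r$,
\[
\|(1-t)\vecbeta+t\vecy^\ast-r\vecz\|\le(1-t)\|\vecbeta-r\vecz\|+t\|\vecy^\ast-r\vecz\|<(1-t)r+tr=r,
\]
so it lies in $\fC$; it also lies in $\vecv^\perp$, and since the relatively open cone (the relative interior of $\mathrm{conv}(\overline{B_{\mathrm{base}}},\vecp^\ast)$) is contained in the set of such points, the whole cone lies in $\fC\cap\vecv^\perp$. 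Its height $h$ is, by the second equivalence above, $\asymp r\min(1,v_1^{-1}((1-\|\vecz\|)+\omega_\vecz^2))$ and in particular $\gg r\omega_\vecz^2$; combining this with $r'\gg r\sin\omega_\vecz$ and $\mathrm{vol}\asymp(\mathrm{height})(\mathrm{radius})^{d-2}$ yields the stated volume bound.

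The only real obstacle is the bookkeeping behind the two displayed equivalences and the edge-ratio estimate: each has to be checked by separating the regimes $\|\vecz\|$ bounded away from $1$ versus $\|\vecz\|$ close to $1$, and $\omega_\vecz$ close to $0$, close to $\tfrac\pi2$, and close to $\pi$, keeping all implied constants dependent only on $d$. Everything else is the short explicit computation with the cylinder described above, together with the observation that $\vecn$ is orthogonal to $\vecv$ and to $\Pi$ and has positive $\vece_1$-component.
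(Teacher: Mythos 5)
Your proof is correct and follows the same basic plan as the paper's: rotate so $\vecz$ lies along a coordinate axis, take $B_1\cap\vecv^\perp$ as the base of the cone, push a small distance into $\fC$ along the unit normal $\vecn\in\vecv^\perp$ orthogonal to $\vecv^\perp\cap\vece_1^\perp$, and check the resulting apex stays inside the cylinder. Your formulas for the base centre, radius and edge ratio agree with the paper's (your $\vecn$ is the normalization of the paper's $\vech$), and the two elementary equivalences $1-\|\vecz\|\cos\omega_\vecz\asymp(1-\|\vecz\|)+\omega_\vecz^2$ and $1-\|\vecz\|^2\cos^2\omega_\vecz\asymp(1-\|\vecz\|)+\sin^2\omega_\vecz$ are indeed true uniformly in $\|\vecz\|\in[0,1)$ and $\omega_\vecz\in[0,\pi]$ (the first factors as $(1-\cos\omega_\vecz)+(1-\|\vecz\|)\cos\omega_\vecz$, handled by splitting at $\omega_\vecz=\pi/2$; the second follows by writing $1-\|\vecz\|^2\cos^2\omega_\vecz=(1-\|\vecz\|\cos\omega_\vecz)(1+\|\vecz\|\cos\omega_\vecz)$).

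The one genuine improvement in your version is that the single apex $\vecp^\ast=\vecc+h\vecn$ with $h=c_1 r\min\bigl(1,v_1^{-1}(1-\|\vecz\|\cos\omega_\vecz)\bigr)$ covers all of $\omega_\vecz\in(0,\pi)$ at once: by using $\cos\omega_\vecz$ rather than $|\cos\omega_\vecz|$ in the height formula, the constraint $\|\vecy^\ast-r\vecz\|=|r\|\vecz\|\cos\omega_\vecz+hv_1|<r$ is automatically satisfied also when $\cos\omega_\vecz<0$ (where the left term is negative, giving you more slack, not less). The paper's construction uses $1-\|\vecz\||\cos\omega_\vecz|$ and so has to switch to the separate apex \eqref{HINCYLINDER} for $\omega_\vecz>\frac\pi2$, after which the asymptotics are re-verified; your unified formula avoids that case split. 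The rest — the convexity argument that every point $(1-t)\vecb+t\vecp^\ast$ lies in $\fC$, the identification of the relative interior, and the edge-ratio computation via $e(2-e)=(1-\|\vecz\|^2)/(1-\|\vecz\|^2\cos^2\omega_\vecz)$ — is the same as the paper, just organized a bit more explicitly.
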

(We say that a cone is ``right'' if the line between its apex and the
center of its base is orthogonal to the base.
In the present section we will only use the fact that the cone has volume
$\gg r^{d-1}(\sin\omega_\vecz)^{d}$; the more detailed information in the
lemma will be used later.)
\begin{proof}
After a rotation inside $\iota(\R^{d-1})$ we may assume,
for the proof of the present lemma,
that $\vecz=z\vece_1$ ($0<z<1$),
and thus $\omega_\vecz=\varphi(\vecv',\vece_1)$.
Now $B_1\cap\vecv^\perp$ is a $(d-2)$-dimensional ball with center
\begin{align}\label{QCENTERDEF}
\vecq=(q_1,\ldots,q_d)=
r\cdot\iota\Bigl(\vecz-\frac{z\cos\omega_\vecz}{\|\vecv'\|}\vecv'\Bigr)
\end{align}
and radius $r'=r\sqrt{1-z^2\cos^2\omega_\vecz}\asymp 
r(1-z+\sin^2\omega_\vecz)^{\frac12}\gg r\sin\omega_\vecz$.
Note that the point $\vecq+\vech$ lies in $\fC$, where
\begin{align}\label{HFIRSTDEF}
\vech=(h_1,\ldots,h_d)
:=r\delta\bigl(\|\vecv'\|^2\vece_1-v_1\iota(\vecv')\bigr),
\qquad\text{with }\:
\delta
=\min\Bigl(\frac1{2\|\vecv'\|^2},\frac{1-z|\cos\omega_\vecz|}{2v_1\|\vecv'\|}
\Bigr).
\end{align}
Indeed $0<h_1<r$, and $\vecq'=(q_2,\ldots,q_d)$ 
and $\vech'=(h_2,\ldots,h_d)$ satisfy
\begin{align*}
\bigl\|r\vecz-(\vecq'+\vech')\bigr\|
=r\Bigl\|\frac{z\cos\omega_\vecz+\delta v_1\|\vecv'\|}{\|\vecv'\|}\vecv'\Bigr\|
\leq r\Bigl(z|\cos\omega_\vecz|+\sfrac12(1-z|\cos\omega_\vecz|)\Bigr)<r;
\end{align*}
hence indeed $\vecq+\vech\in\fC$.
Note also $\vecq,\vech\in\vecv^\perp$.
Hence $\fC\cap\vecv^\perp$ contains the $(d-1)$-dimensional
cone which is the relative
interior of the convex hull of $B_1\cap\vecv^\perp$ and $\vecq+\vech$.
Also $\vech$ is orthogonal to $B_1\cap\vecv^\perp$
(since $\vech\in\text{Span}\{\vece_1,\vecv\}$),
thus the height of this cone is 
$\|\vech\|=r\delta\|\vecv'\|\asymp 
r\min(1,\frac{1-z+\sin^2\omega_\vecz}{v_1})$,
where we used the fact that if
$\frac1{2\|\vecv'\|^2}<\frac{1-z|\cos\omega_\vecz|}{2v_1\|\vecv'\|}$
then %
$\|\vecv'\|\asymp1$.

If $\omega_\vecz>\frac\pi2$ then we may replace $\vech$ in %
the above argument by
\begin{align}\label{HINCYLINDER}
\vech=(h_1,\ldots,h_d)
:=r\|\vecv'\|^{-1}\bigl(\|\vecv'\|^2\vece_1-v_1\iota(\vecv')\bigr).
\end{align}
We still have $0<h_1<r$, and 
$r\vecz-(\vecq'+\vech')=r\frac{z\cos\omega_\vecz+v_1}{\|\vecv'\|}\vecv'$
has length $<r$ since $\cos\omega_\vecz<0$; %
hence $\vecq+\vech\in\fC$ just as before.
With this choice the cone has radius $r'=r\sqrt{1-z^2\cos^2\omega_\vecz}$ 
as before but height $\|\vech\|=r$.

Finally the edge ratio of the cone is, in both cases,
$\frac{r'-rz\sin\omega_\vecz}{r'}
\asymp\min(1,\frac{1-z}{\sin^2\omega_\vecz})$.
\end{proof}

In the same way we have that $\fC'\cap\vecv^\perp$ contains a 
$(d-1)$-dimensional cone of volume $\gg r^{d-1}(\sin\omega_\vecw)^d$, 
with $\bn$ in its base.
Hence by Corollary \ref{CONECYLCOR}, and using the fact that
$\Si_{d-1}$ can be covered by a finite number of fundamental regions
for $\Gamma^{(d-1)}\backslash G^{(d-1)}$, we get
\begin{align}\label{CYLINDER2PTSMAINTHMPF5}
\mu\Bigl(\Bigl\{\tM\in\Si_{d-1}\col
\Z^{d-1}\tM\cap a_1^{\frac1{d-1}}(\fC_\vecv\cup\fC'_\vecv)=\emptyset
\Bigr\}\Bigr)
\ll\Bigl(r\max\bigl(\sin\omega_\vecz,\sin\omega_\vecw\bigr)
\Bigr)^{-\frac{2d(d-2)}{d-1}}
\end{align}
whenever $a_1>Cr$.

Set $\varphi_0:=\min(\varphi,\pi-\varphi)$.
This is the distance between $\|\vecz\|^{-1}\vecz$ and 
$\|\vecw\|^{-1}\vecw$ inside $\S_1^{d-2}/\{\pm\}$
with its quotient metric from $\S_1^{d-2}$.  %
Similarly the distance between $\|\vecz\|^{-1}\vecz$ and 
$\|\vecv'\|^{-1}\vecv'$ is $\min(\omega_\vecz,\pi-\omega_\vecz)$ 
and the distance between $\|\vecv'\|^{-1}\vecv'$ and
$\|\vecw\|^{-1}\vecw$ is 
$\min(\omega_\vecw,\pi-\omega_\vecw)$.
Hence by the triangle inequality at least one of
$\min(\omega_\vecz,\pi-\omega_\vecz)$ and $\min(\omega_\vecw,\pi-\omega_\vecw)$
must be $\geq\frac12\varphi_0$, and thus
$\max(\sin\omega_\vecz,\sin\omega_\vecw)\geq\sin\frac12\varphi_0$
for all $\vecv\in\S_1^{d-1}$ with $\vecv'\neq\bn$.
Note also that if we parametrize $\vecv$ as in \eqref{VPARA} then
$|\omega-\omega_\vecz|\leq\varphi_0$,
by \eqref{CYLINDER2PTSMAINTHMPF2alt} and the triangle inequality 
in $\S_1^{d-2}$. 
Using this latter fact whenever 
$2\varphi_0\leq\omega\leq\pi-2\varphi_0$
we obtain, for all $\vecv\in\S_1^{d-1}$ with $\vecv'\neq\bn$,
\begin{align}\label{MAXSOZWINEQ}
\max(\sin\omega_\vecz,\sin\omega_\vecw)\gg
\max(\sin\omega,\varphi_0).
\end{align}

Hence
\begin{align}\label{CYLINDER2PTSMAINTHMPF6}
{\sum}_1\ll r^{-d}\int_0^\pi
\min\Bigl\{1,\Bigl(r\max\bigl(\sin\omega,\varphi_0
\bigr)\Bigr)^{-\frac{2d(d-2)}{d-1}}\Bigr\}
(\sin\omega)^{d-3}\,d\omega.
\end{align}
The integrand is invariant under
$\omega\leftrightarrow\pi-\omega$, and furthermore the part of
the integral where $\omega\in(0,\frac12\varphi_0)$
is bounded above (up to a constant which only depends on $d$)
by the part where $\omega\in(\frac12\varphi_0,\varphi_0)$.
Hence
\begin{align}\label{CYLINDER2PTSMAINTHMPF3}
{\sum}_1\ll r^{-d}\int_{\varphi_0/2}^{\pi/2}
\min\bigl\{1,(r\omega)^{-\frac{2d(d-2)}{d-1}}\bigr\}\,
\omega^{d-3}\,d\omega\ll
\min\Bigl\{r^{2-2d},r^{\frac{2d}{d-1}-3d}\varphi_0^{-d+\frac2{d-1}}\Bigr\}.
\end{align}

In the case when $\varphi$ is near $\pi$ we can do better, and in fact we have
\begin{align}\label{CYLINDER2PTSMAINTHMPF4}
{\sum}_1\ll r^{\frac{2d}{d-1}-3d}\qquad
\text{whenever }\:\varphi\geq\frac\pi2.
\end{align}
To prove this, first note that $\vecv\in S$ implies
$|\vecp\cdot\vecv|>Cr$ and therefore since $\|\vecp\|<3r$
there exists a constant $c_1>0$ which only depends on $d$
such that for all $k_1\neq0$ and all $\vecv\in S=S^{(k_1)}$ 
we have $|\varphi(\vecp,\vecv)-\frac\pi2|>2c_1$.

Now note that \eqref{CYLINDER2PTSMAINTHMPF4} follows
from \eqref{CYLINDER2PTSMAINTHMPF3} unless $r$ is large and 
$\varphi_0=\pi-\varphi$ is small.
Furthermore we may assume that both $\|\vecw\|$ and $\|\vecz\|$ are near $1$,
since otherwise $\Phi_\bn(\xi,\vecw,\vecz)=0$ 
by Proposition \ref{CYLSUPPPROP}.
In particular, since $\vecp=r(1,\vecw+\vecz)$, 
we may assume that $\varphi(\vecp,\vece_1)<c_1$ holds.
This implies $|\varphi(\vecp,\vecv)-\varphi(\vece_1,\vecv)|<c_1$
by the triangle inequality for the geodesic metric on $\S_1^{d-1}$.
It follows that $|\varphi(\vece_1,\vecv)-\frac\pi2|>c_1$
(viz.\ $|\varpi-\frac\pi2|>c_1$) 
holds for all $\vecv\in S$.
But we also have $v_1=\cos\varpi\geq0$ for $\vecv\in S$;
hence in fact $0\leq\varpi<\frac\pi2-c_1$.

First consider $\vecv\in S$ with $\omega\in[\frac\pi2,\pi)$.
Writing $p:\R^d\to\R^{d-1}$ for the projection
$(x_1,\ldots,x_d)\mapsto(x_2,\ldots,x_d)$, we note that
\begin{align*}
p(\fC\cap\vecv^\perp)=\bigl\{\vecx\in r\vecz+\scrB_r^{d-1}\col
0<-v_1^{-1}(\vecx\cdot\vecv')<r\bigr\}.
\end{align*}
Recalling \eqref{VPARA} and \eqref{OMEGAZWDEF}
we see that ${p}(\fC\cap\vecv^\perp)$ is a ``doubly cut ball'' as in
Corollary \ref{DCBCOR}, of radius $r$ and with cut parameters
$t=1+\|\vecz\|\cos\omega_\vecz$ and $t'=\min(2,t+\cot\varpi)$.
(Thus if $\cot\varpi\geq1-\|\vecz\|\cos\omega_\vecz$ then 
${p}(\fC\cap\vecv^\perp)$ is a cut ball with cut ratio $t=t_1$.)
Note that $p_{|\vecv^\perp}$ is a linear map
$\vecv^\perp\to\R^{d-1}$ which scales volume with a factor
$\cos\varpi\asymp1$; thus $\fC_\vecv$ can be mapped by
a map in $G^{(d-1)}$ to a doubly cut ball of radius $\asymp r$
and cut parameters $t$ and $t'$ as above.
Since $\omega\in[\frac\pi2,\pi)$ and
$|\omega_\vecz-\omega|\leq\varphi_0$ (as noted previously),
and we are assuming $\varphi_0$ to be small,
we may assume that we always have $\omega_\vecz>\frac13\pi$.
Thus $t<\frac32$.
We also have $t'-t\gg1$, since $\varpi<\frac\pi2-c_1$.
Hence by Corollary \ref{DCBCOR},
\begin{align}\label{CYLINDER2PTSMAINTHMPF10}
\mu\Bigl(\Bigl\{\tM\in\Si_{d-1}\col
\Z^{d-1}\tM\cap a_1^{\frac1{d-1}}\fC_\vecv=\emptyset
\Bigr\}\Bigr)\ll r^{-2d+\frac{2d}{d-1}}
\end{align}
whenever $a_1>Cr$.

On the other hand if $\omega\in(0,\frac\pi2)$ then we instead consider
$\fC'_\vecv$.
Similarly as above we find that ${p}(\fC'\cap\vecv^\perp)$ is
a doubly cut ball of radius $r$ and with cut parameters 
$t=1+\|\vecw\|\cos\omega_\vecw$ and $t'=\max(2,t+\cot\varpi)$.
Now $|\omega_\vecw-(\pi-\omega)|=
|\varphi(\vecv',\vecw)-\varphi(\vecv',-\vece_1)|\leq\varphi(\vecw,-\vece_1)
\leq\varphi_0$, by \eqref{CYLINDER2PTSMAINTHMPF2alt},
and thus we may assume that we always have $\omega_\vecw>\frac13\pi$.
We now obtain, as before,
\begin{align}\label{CYLINDER2PTSMAINTHMPF11}
\mu\Bigl(\Bigl\{\tM\in\Si_{d-1}\col
\Z^{d-1}\tM\cap a_1^{\frac1{d-1}}\fC'_\vecv=\emptyset
\Bigr\}\Bigr)\ll r^{-2d+\frac{2d}{d-1}}
\end{align}
whenever $a_1>Cr$.
Using \eqref{CYLINDER2PTSMAINTHMPF10} and \eqref{CYLINDER2PTSMAINTHMPF11}
in \eqref{CYLINDER2PTSMAINTHMPF9} we obtain 
\eqref{CYLINDER2PTSMAINTHMPF4}.

\vspace{5pt}

We next turn to $\sum_0$.
Mimicking the argument in Section \ref{PPCBOUNDCCONESEC} we get
\begin{align}\label{CYLINDER2PTSMAINTHMPF7}
{\sum}_0\ll r^{-1}\int_{Cr}^\infty\int_{\S_1^{d-1}\cap\vecp^\perp}
p_\vecy^{(d-1)}\bigl(a_1^{\frac1{d-1}}\fC_\vecv\bigr)
\, d_\vecp(\vecv)\,a_1^{-d}\,da_1
\end{align}
where $\vecy=\vecy(a_1,\vecv)=a_1^{\frac1{d-1}}\iota^{-1}(\vecp f(\vecv)^{-1})$
as in \eqref{RA1V}.
Here for any fixed $a_1,\vecv$ we have as in Section \ref{PPCBOUNDCCONESEC},
using the fact that $\fC\cap\vecv^\perp$ contains the
$(d-1)$-dimensional relatively open cone 
with base $B_1\cap\vecv^\perp$ and apex $\vecp$,
\begin{align}\label{CYLINDER2PTSMAINTHMPF12}
p_\vecy^{(d-1)}\bigl(a_1^{\frac1{d-1}}\fC_\vecv\bigr)
\ll %
\bigl(r^d(\sin\omega_\vecz)^{d-2}\bigr)^{-2+\frac2{d-1}}. %
\end{align}
But note \textit{also} that $\fC\cap\vecv^\perp$ contains the 
$(d-1)$-dimensional relatively open cone
with base $B_2\cap\vecv^\perp$ and apex $\bn$.
This cone has volume $\gg r^{d-1}(\sin\omega_\vecw)^{d-2}$.
Hence (since $a_1>Cr$) $a_1^{\frac1{d-1}}\fC_\vecv\subset\R^{d-1}$ contains an 
open cone of volume $\gg r^d(\sin\omega_\vecw)^{d-2}$
with apex $\bn$ and with $\vecy$ in its base.
Hence by Theorem \ref{CONEAPEXTHM} (and using the general symmetry relation
$p_\vecy(\vecy-\fZ)=p_\vecy(\fZ)$),
\begin{align}\label{CYLINDER2PTSMAINTHMPF13}
p_\vecy^{(d-1)}\bigl(a_1^{\frac1{d-1}}\fC_\vecv\bigr)
\ll %
\bigl(r^d(\sin\omega_\vecw)^{d-2}\bigr)^{-2+\frac2{d-1}}. %
\end{align}
Note that the map $\vecv\mapsto\|\vecv'\|^{-1}\vecv'$ is a diffeomorphism from 
$\S_1^{d-1}\cap\vecp^\perp$ onto $\S_1^{d-2}$ 
whose Jacobian determinant is uniformly bounded from below by a positive
constant which only depends on $d$, 
since $\varphi(\vecp,\vece_1)<\arctan 2<\pi$.
Hence, mimicking the discussion around
\eqref{CYLINDER2PTSMAINTHMPF5}---\eqref{CYLINDER2PTSMAINTHMPF3},
\begin{align}\label{CYLINDER2PTSMAINTHMPF8}
{\sum}_0\ll r^{-d}\int_{\varphi_0/2}^{\pi/2}
\min\Bigl\{1,
\bigl(r^d\omega^{d-2}\bigr)^{-2+\frac2{d-1}}
\Bigr\}\,\omega^{d-3}\,d\omega
\ll\begin{cases}
r^{-6}\log(2+\min(r,\varphi_0^{-1}))&\text{if }\: d=3
\\
r^{-2d}\min(1,(r^d\varphi_0^{d-2})^{-\frac{d-3}{d-1}})&\text{if }\: d\geq4.
\end{cases}
\end{align}
Let us note that this bound can be slightly improved in the case
$d=3$, $\varphi>\pi-\frac1{10}$, by using the fact that
for any fixed $a_1,\vecv$ appearing in \eqref{CYLINDER2PTSMAINTHMPF7},
$\fC\cap\vecv^\perp$ contains the relative interior of the convex hull of
$B_1\cap\vecv^\perp$ and $B_2\cap\vecv^\perp$.
If $d=3$ then this convex hull is a quadrilateral with two parallel sides
which have distance $\geq r$ from each other, and 
lengths $2r\sqrt{1-\|\vecz\|^2\cos^2\omega_\vecz}\geq2r\sin\omega_\vecz$
and $2\sqrt{1-\|\vecw\|^2\cos^2\omega_\vecw}\geq2r\sin\omega_\vecw$.
The following lemma is a simple consequence of the explicit formula
for $\Phi_\bn$ in dimension 2.

\begin{figure}
\begin{center}
\framebox{
\begin{minipage}{0.55\textwidth}
\unitlength0.1\textwidth
\begin{picture}(10.0,4.9)(0,0)
\put(2,0.4){\includegraphics[width=0.55\textwidth]{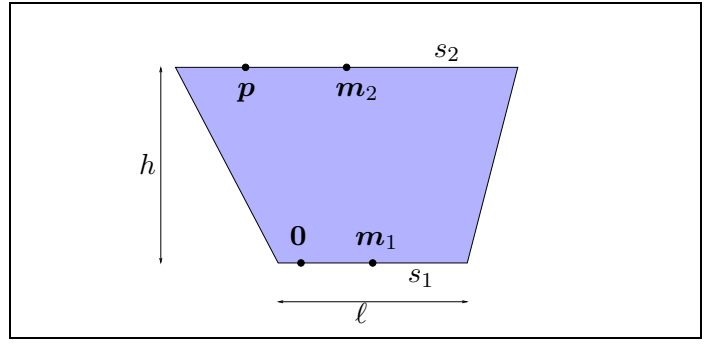}}
\put(1.7,2.4){$h$}
\put(5.0,0.1){$\ell$}
\put(5.8,0.73){$s_1$}
\put(5.0,1.3){$\vecm_1$}
\put(4.0,1.3){$\bn$}
\put(6.2,4.2){$s_2$}
\put(4.7,3.6){$\vecm_2$}
\put(3.2,3.6){$\vecp$}
\end{picture}
\end{minipage}
}
\end{center}
\caption{The quadrilateral in Lemma \ref{TRAPEZOIDLEM}}\label{TRAPEZOIDLEMFIG}
\end{figure}

\begin{lem}\label{TRAPEZOIDLEM}
Let $\fC\subset\R^2$ be the interior of a quadrilateral with two 
parallel sides $s_1,s_2$, such that $\bn\in s_1$.
Let $\vecp$ be a point on $s_2$ such that
$\vecm_1\cdot(\vecm_2-\vecp)\geq0$, where
$\vecm_1,\vecm_2$ are the respective midpoints of $s_1,s_2$.
Assume furthermore that $h\ell>2$, where $h$ is the height of $\fC$
(viz.\ the distance between the two lines containing $s_1$ and $s_2$)
and $\ell$ is the length of the shortest of the two sides $s_1$, $s_2$.
Then $p_\vecp(\fC)=0$.
\end{lem}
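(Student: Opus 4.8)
\emph{Plan.} The proof will combine the explicit two–dimensional formula \eqref{PF2DIMGENERIC} with a containment argument. First I record the input. From \eqref{PF2DIMGENERIC} one reads off that $\Phi_\bn(\xi,w,z)=0$ whenever $\xi>1$ and $wz\leq0$: if $wz\leq0$ then $|w+z|\leq\max(|w|,|z|)$, so $1+\max(|w|,|z|)-|w+z|\geq1>\xi^{-1}$, which forces the argument of $\Upsilon$ to be $\leq0$ (and the degenerate case $w+z=0$ is immediate from the second and third lines of \eqref{PF2DIMGENERIC}). Recall also that $\Phi_\bn(\xi,w,z)=p_{\vecp_0}(\fC_0)$ with $\fC_0\subset\R^2$ the (degenerate) cylinder of \eqref{PHI0DEF}, i.e.\ an open rectangle; that $p_\vecp$ is $\SL(2,\R)$–invariant by \eqref{PPDINV}, satisfies $p_\vecp(\fC)=p_\vecp(\vecp-\fC)$ by \eqref{PPFSYMM}, and is monotone under inclusion, $\fC'\subseteq\fC\Rightarrow p_\vecp(\fC)\leq p_\vecp(\fC')$ (immediate from the definition). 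Hence it suffices to inscribe in $\fC$ an open parallelogram $\fC'$ having $\bn$ on one side and $\vecp$ on the opposite side which, after an area–preserving linear map, becomes a rectangle as in \eqref{PHI0DEF} with parameters $(\xi,w,z)$ satisfying $\xi>1$ and $wz\leq0$; then $p_\vecp(\fC)\leq p_\vecp(\fC')=\Phi_\bn(\xi,w,z)=0$.

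Next I normalise. Using $p_\vecp(\fC)=p_\vecp(\vecp-\fC)$ I may assume that the side $s_2$ containing $\vecp$ is the shorter of the two parallel sides; one checks that $\vecm_1\cdot(\vecm_2-\vecp)=\vecm_1\cdot\vecm_2-\vecm_1\cdot\vecp$ is unchanged when $\fC$ is replaced by $\vecp-\fC$ (the data transform as $\vecm_1\mapsto\vecp-\vecm_2$, $\vecm_2\mapsto\vecp-\vecm_1$, $\vecp\mapsto\vecp$), so the hypothesis persists. After a rotation I place the two parallel sides on the lines $x_1=0$ and $x_1=h$, with $s_1\ni\bn=\vecnull$. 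Since $s_1$ passes through the origin, its midpoint $\vecm_1$ lies on the $x_2$–axis, and $\vecm_2-\vecp$ is likewise parallel to the $x_2$–axis; thus $\vecm_1\cdot(\vecm_2-\vecp)\geq0$ just says the second coordinates of $\vecm_1$ and of $\vecm_2-\vecp$ have the same sign (or one vanishes).

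Now I construct $\fC'$. Put $\ell=|s_2|\leq|s_1|$. A sliding argument gives a sub‑segment $s_1'\subseteq s_1$ of length $\ell$ with $\bn\in s_1'$; as $s_1'$ slides, the second coordinate $c$ of its midpoint $\vecm_1'$ is constrained by the two interval conditions $|c|\leq\ell/2$ (so $\bn\in s_1'$) and $|c-(\text{2nd coord of }\vecm_1)|\leq(|s_1|-\ell)/2$ (so $s_1'\subseteq s_1$), one interval centred at $0$ and one at $\vecm_1$; a short argument shows their intersection is non‑empty and contains a value of $c$ with the same sign as the second coordinate of $\vecm_1$ (or the value $c=0$). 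Pick such an $s_1'$ and let $\fC'$ be the (open) parallelogram with sides $s_1'$ and $s_2$; since its four vertices lie in $\overline\fC$ and $\overline\fC$ is convex, $\fC'\subseteq\fC$. Applying the area‑preserving map $\diag[\ell/2,2/\ell]$ followed by the shear fixing the line $x_1=0$ that brings the two sides of $\fC'$ directly opposite, $\fC'$ becomes a rectangle $(0,\xi)\times(z-1,z+1)$ with $\xi=h\ell/2>1$ by hypothesis, and $\vecp=(\xi,z+w)$ with $|w|\leq1$. Neither the diagonal map nor the shear changes the signed offsets of $\bn$ and of $\vecp$ from the midpoints of $s_1'$ and $s_2$; tracking these through \eqref{PHI0DEF} shows that $z$ and $w$ are these offsets up to the common factor $2/\ell$ and with opposite signs built in, so $wz\leq0$ is precisely the sign condition secured above (together with the hypothesis). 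Therefore $\Phi_\bn(\xi,w,z)=0$, and $p_\vecp(\fC)=0$.

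The main obstacle is bookkeeping: keeping the orientation of the $x_2$–axis consistent through rotation, scaling, shear and the $\vecp-\fC$ symmetry, so that $\vecm_1\cdot(\vecm_2-\vecp)\geq0$ emerges exactly as $wz\leq0$; once that is set up the computations are routine. A few degenerate situations — $\vecp$ an endpoint of $s_2$ (so $|w|=1$, outside the range of \eqref{PHI0DEF}), or a flat/degenerate quadrilateral — are dealt with by an obvious approximation of $\fC$ from inside.
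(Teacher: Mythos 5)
Your proof is correct and takes essentially the same route as the paper's: both inscribe a parallelogram $\fC'\subset\fC$ with $\bn$ on one parallel side and $\vecp$ on the other, reduce to the normal form of \eqref{PHI0DEF} via $\SL_2$-invariance, and invoke the fact (read off from \eqref{PF2DIMGENERIC}) that $\Phi_\bn(\xi,w,z)=0$ when $\xi>1$ and $wz\leq0$. The only cosmetic difference is which side you keep fixed: you normalize so the side through $\vecp$ is the shorter one and shrink $s_1$, whereas the paper normalizes so the side through $\bn$ is the shorter one and shrinks $s_2$; these choices are exchanged by the $\vecp-\fC$ symmetry, so the constructions are mirror images of each other.
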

\begin{proof}
Because of the symmetry relation $p_\vecp(\fC)=p_\vecp(\vecp-\fC)$
we may without loss of generality assume that $\ell$ is the length of $s_1$.
Now there is a subsegment $s_2'\subset s_2$ of length $\ell$ and with 
midpoint $\vecm_2'$ such that
$\vecm_2-\vecp$ and $\vecm_2'-\vecp$ have the same direction (or one is zero),
and thus $\vecm_1\cdot(\vecm_2'-\vecp)\geq0$.
Let $\fC'$ be the parallelogram which is the interior of the
convex hull of  $s_1$ and $s_2'$.
Then $\fC'\subset\fC$ and thus $p_\vecp(\fC)\leq p_\vecp(\fC')$.
By \eqref{PPDINV}   %
we have
$p_\vecp(\fC')=\Phi_\bn(\frac12|\fC'|,w,z)$ for some
$w,z\in[-1,1]$ with $wz\leq0$. %
Also $|\fC'|=h\ell>2$.
Now $p_\vecp(\fC)=p_\vecp(\fC')=0$ follows from the fact that
$\Phi_\bn(\xi,w,z)=0$ whenever $\xi>1$ and $wz\leq0$,
by \eqref{PF2DIMGENERIC}.
\end{proof}

Now to carry through our argument for $d=3$, $\varphi>\pi-\frac1{10}$
it is convenient to make the specific choices
$\vecz=(z,0)$, $\vecw=w(\cos\varphi,\sin\varphi)$
($0<z,w<1$).\label{SPECD2COORDS}
We parametrize $\vecv$ 
in \eqref{CYLINDER2PTSMAINTHMPF7} by writing
$\tvecv':=\|\vecv'\|^{-1}\vecv'=(\cos\omega,\sin\omega)$.
Since $(-\vecv)^\perp=\vecv^\perp$ we need only consider
$0<\omega<\pi$; thus $\omega_\vecz=\omega$.
Now if $0<\omega<\varphi$ then $\omega_\vecw=\varphi-\omega$ and
the midpoints of the two line segments $B_1\cap\vecv^\perp$
and $B_2\cap\vecv^\perp$ are
\begin{align*}
\vecm_1=r\bigl(0,\vecz-z(\cos\omega_\vecz)\tvecv'\bigr)
\quad\text{and}\quad
\vecm_2=r\bigl(1,\vecz+w(\cos\omega_\vecw)\tvecv'\bigr);
\end{align*}
and thus
\begin{align*}
r^{-2}\vecm_1\cdot(\vecm_2-\vecp)
=(\vecz-z(\cos\omega_\vecz)\tvecv')\cdot
(-\vecw+w(\cos\omega_\vecw)\tvecv')
=zw\sin\omega\sin(\varphi-\omega)>0.
\end{align*}
Hence by Lemma \ref{TRAPEZOIDLEM},
we have $p_\vecy^{(2)}(a_1^{1/2}\fC_\vecv)=0$ for all
$a_1>Cr$ and all $\omega\in(0,\varphi)$ for which
$r^3\min(\sin\omega,\sin(\varphi-\omega))>C^{-1}$.
Using $p_\vecy^{(2)}(a_1^{1/2}\fC_\vecv)\leq1$ for all remaining
$\omega\in(0,\varphi)$, and
$p_\vecy^{(2)}(a_1^{1/2}\fC_\vecv)\leq r^{-3}\varphi_0^{-1}$
for all $\omega\in[\varphi,\pi)$
(cf.\ \eqref{CYLINDER2PTSMAINTHMPF12} and \eqref{CYLINDER2PTSMAINTHMPF13}
and note that $\omega_\vecw=\omega-\varphi$ when $\omega\in[\varphi,\pi)$)
we get in \eqref{CYLINDER2PTSMAINTHMPF7}:
\begin{align}\label{CYLINDER2PTSMAINTHMPF8IMPRD3} 
{\sum}_0\ll r^{-3}\bigl(r^{-3}+\varphi_0(r^{-3}\varphi_0^{-1})\bigr)\ll r^{-6}.
\end{align}

Adding the bound \eqref{CYLINDER2PTSMAINTHMPF3}
(improved as in \eqref{CYLINDER2PTSMAINTHMPF4} when $\varphi\geq\frac\pi2$)
and the bound \eqref{CYLINDER2PTSMAINTHMPF8} 
(improved as in \eqref{CYLINDER2PTSMAINTHMPF8IMPRD3} when $d=3$ and 
$\varphi\geq\frac\pi2$),
we finally obtain \eqref{CYLINDER2PTSMAINTHMRES}.
This completes the proof of Theorem \ref{CYLINDER2PTSMAINTHM}.
\hfill $\square$ $\square$ $\square$

\subsection{\texorpdfstring{A better bound on the contribution from $k_1\neq1$ for $\varphi$ small}{A better bound on the contribution from k1 not equal 1 for phi small}}

Let us fix a fundamental domain $\F_d\subset\Si_d'$ for $\Gamma\backslash G$.
Replacing $\Si_d'$ with $\F_d$ in \eqref{PHI0FIRSTSPLIT},
and arguing as before \eqref{CONEAPEXTHMPF3},
we get an \textit{equality}
\begin{align}\label{PHI0FIRSTSPLIT2}
\Phi_\bn(\xi,\vecw,\vecz)=
\sum_{\veck\in\widehat\Z^d}
\nu_\vecp\bigl(\bigl\{
M\in G_{\veck,\vecp}\cap\F_d\col \Z^dM\cap\fC=\emptyset\bigr\}\bigr).
\end{align}
To prepare for the derivation in \cite{partIV} of an asymptotic formula
for $\Phi_\bn(\xi,\vecw,\vecz)$ for $\varphi$ small and $\xi\to\infty$,
we give in this section some further bounds on the various contributions
in the sum in \eqref{PHI0FIRSTSPLIT2}, which fit naturally in the
present discussion.

Our first result says that for $\varphi$ small, the
contribution from all terms with $k_1\neq1$ in 
\eqref{PHI0FIRSTSPLIT2} (or in \eqref{PHI0FIRSTSPLIT})
is of strictly lower order of magnitude than the right hand side
of \eqref{CYLINDER2PTSMAINTHMRES}, and furthermore
if $\xi^{\frac2{d-1}}\max(1-\|\vecz\|,1-\|\vecw\|)$ 
is sufficiently large then these terms
in fact vanish! 

\begin{prop}\label{K1NEQ1BOUNDPROP}
Let $B_1$, $B_2$, $\vecp$, $\vecw$, $\vecz$ be as in
\eqref{CYLINDER2PTSMAINTHMPF1}, \eqref{CYLINDER2PTSMAINTHMPF2alt},
with $\varphi\leq\frac\pi2$,
and let $\fC$ be the the interior of the convex hull of 
$B_1$ and $B_2$, as before.
Then
the contribution from all terms with $k_1\neq1$ in \eqref{PHI0FIRSTSPLIT2} is
\begin{align}\label{K1NEQ1BOUNDPROPRES}
\ll\begin{cases}
\xi^{-2}\log(2+\min(\xi,\varphi^{-1}))&\text{if }\:d=3
\\
\xi^{-2}\min\bigl(1,(\xi\varphi^{d-2})^{-\frac{d-3}{d-1}}\bigr)
&\text{if }\:d\geq4.
\end{cases}
\end{align}
Furthermore there is a constant $c>0$ which only depends on $d$ such that
if either $1-\|\vecz\|$ or $1-\|\vecw\|$
is $\geq c\,\xi^{-\frac2{d-1}}$, then %
all terms with $k_1\neq1$ in \eqref{PHI0FIRSTSPLIT2} vanish.
\end{prop}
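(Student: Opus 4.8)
The plan is to split the contribution of the terms with $k_1\neq1$ in \eqref{PHI0FIRSTSPLIT2} into those with $k_1=0$ and those with $k_1\notin\{0,1\}$, and to bound each separately, reusing as much as possible of the machinery already set up in the proof of Theorem~\ref{CYLINDER2PTSMAINTHM} in Section~\ref{CYLBDSEC}. For $k_1=0$ I would note that in the argument leading to \eqref{CYLINDER2PTSMAINTHMPF8} the set $a_1^{\frac1{d-1}}\fC_\vecv$ always contains \emph{two} large $(d-1)$-dimensional cones --- one with $\vecy$ in its base and apex $\bn$, and one with base $B_1\cap\vecv^\perp$ and apex $\vecp$ --- giving, via Theorem~\ref{CONEAPEXTHM} and the symmetry $p_\vecy(\vecy-\fZ)=p_\vecy(\fZ)$, the two bounds \eqref{CYLINDER2PTSMAINTHMPF12}--\eqref{CYLINDER2PTSMAINTHMPF13}. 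Taking the better of the two and integrating over $\S_1^{d-1}\cap\vecp^\perp$ exactly as in \eqref{CYLINDER2PTSMAINTHMPF8} yields the right-hand side of \eqref{K1NEQ1BOUNDPROPRES}; in the case $d=3$, $\varphi\geq\frac\pi2$ is excluded here since we assume $\varphi\leq\frac\pi2$, so the $\log$ term is as stated. So $\sum_0$ is already of the required order.

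The real point is therefore the terms with $|k_1|\geq2$ (the term $k_1=-1$ being handled the same way as $|k_1|\geq2$ up to the symmetry $(-\vecv)^\perp=\vecv^\perp$ already used in Section~\ref{CYLBDSEC}, or simply absorbed by enlarging the range of $k_1$). For these I would first observe that, exactly as in the derivation of \eqref{CYLINDER2PTSMAINTHMPF9}, only finitely many values of $k_1$ (namely $2\le|k_1|<2/C$, hence in fact none once $C$ is large, or at worst a bounded set) can contribute, because $\vecv\in S^{(k_1)}$ forces $|k_1|a_1=|\vecp\cdot\vecv|\le\|\vecp\|<3r$ while $a_1>Cr$. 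Thus for suitable $C$ the sum over $|k_1|\geq2$ is \emph{empty}, and the only surviving non-$1$ term is $k_1=0$. (If one does not want to push $C$ that far, the same bound \eqref{CYLINDER2PTSMAINTHMPF5} combined with \eqref{MAXSOZWINEQ} as in \eqref{CYLINDER2PTSMAINTHMPF3} gives $\ll r^{2-2d}$ for each such $k_1$, which is $\ll\xi^{-2+\frac2d}\cdot\xi^{-?}$, of strictly smaller order than \eqref{K1NEQ1BOUNDPROPRES}; either route works.) This is the step I expect to require the most care: making the constants consistent so that ``$k_1\neq1$'' really does reduce to ``$k_1=0$,'' and checking that the half-sphere condition $v_1\ge0$ built into $\Si_d'$ (hence into $\F_d$) does not interfere with the $k_1=0$ estimate, where $\vecv$ ranges over $\S_1^{d-1}\cap\vecp^\perp$ rather than over a half-sphere.

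For the vanishing statement, the plan is a Minkowski-type argument in the spirit of Section~\ref{SEC6P2} and of the support argument for Proposition~\ref{CYLSUPPPROP}. If $1-\|\vecz\|\geq c\,\xi^{-\frac2{d-1}}$, then by Theorem~\ref{CONEAPEXTHM} applied to the cone with base $B_1$ and apex $\vecp$ (which is contained in $\fC$) we already have $\Phi_\bn(\xi,\vecw,\vecz)=0$ once $c$ is large enough, since $1-\|\vecz\|\gg|\fC|^{-\frac2d}$ would be violated --- wait, more precisely: Theorem~\ref{CONEAPEXTHM} gives $p_\vecp(\mathrm{cone})=0$ as soon as the edge ratio exceeds $c'|\fC|^{-\frac2d}$, and the edge ratio of that cone is $\asymp 1-\|\vecz\|$, while $|\fC|\asymp\xi$; so $1-\|\vecz\|\gg\xi^{-\frac2d}$, hence a fortiori $\gg\xi^{-\frac2{d-1}}$ with a suitable constant, forces every lattice in $X_1(\vecp)$ to meet $\fC$, so in particular \emph{every} term in \eqref{PHI0FIRSTSPLIT2} vanishes, not just those with $k_1\neq1$. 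The same holds with $\vecw$ in place of $\vecz$ via $\fC'=\vecp-\fC$ and \eqref{PPFSYMM}, \eqref{PHI0SYMM}. Thus the stated threshold $c\,\xi^{-\frac2{d-1}}$ is comfortably above the $\xi^{-\frac2d}$ threshold at which even the $k_1=1$ term dies, so the second assertion of the proposition follows immediately from Proposition~\ref{CYLSUPPPROP} (the ``$\Rightarrow e\ll s_d(\xi,\varphi)$'' direction), using $s_d(\xi,\varphi)\le\xi^{-\frac2{d}}$ for $\varphi\le\frac\pi2$. I would phrase the write-up so that this last observation is just a one-line appeal to Proposition~\ref{CYLSUPPPROP}.
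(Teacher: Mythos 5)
Your decomposition into $k_1=0$ and $|k_1|\ge 2$ (plus $k_1=-1$) is the right start, and your treatment of the $k_1=0$ part (reusing \eqref{CYLINDER2PTSMAINTHMPF8}) is consistent with what the paper does. But the rest of the argument has two genuine gaps, one of them fatal, and the vanishing threshold is inverted.

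\textbf{The bound for $|k_1|\ge2$ is not strong enough.} You cannot take $C$ large enough to kill all $|k_1|\ge2$: the constant $C$ in Lemma~\ref{A1LARGELEM} is fixed by the reduction theory in dimension $d$ and is not a free parameter. Your fallback is to reuse \eqref{CYLINDER2PTSMAINTHMPF5}--\eqref{CYLINDER2PTSMAINTHMPF3}, which gives $\ll r^{2-2d}=\xi^{-2+\frac2d}$. But this is \emph{larger} than the target $\xi^{-2}\min(1,\dots)$ of \eqref{K1NEQ1BOUNDPROPRES}, not smaller: $\xi^{-2+\frac2d}\gg\xi^{-2}$ for $\xi$ large. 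So that bound does not yield the proposition. The paper's actual argument for $k_1\ge2$ exploits a further structural constraint you have not used: since $k_1\ge2$, the hyperplane $\lfloor k_1/2\rfloor a_1\vecv+\vecv^\perp$ passes through $t\vecp$ with $t=k_1^{-1}\lfloor k_1/2\rfloor\in[\frac13,\frac12]$, and this point is deep in the interior of $\fC$ (distance $\gg r$ to $\partial\fC$, using $\varphi\le\frac\pi2$). This forces $\tilde a_1\gg r^{\frac d{d-1}}$, which in turn (via Lemma~\ref{CONELATTICELEM3VAR} with $A\asymp r^{d/(d-1)}$) gives the much stronger bound $\ll\xi^{-3+\frac2{d-1}}$ for the $k_1\ge2$ contribution; this $A$ is also what produces the $\xi^{-2/(d-1)}$ vanishing threshold. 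For $k_1\le-1$ the paper uses yet another observation: $\vecp\cdot\vecv<0$ together with $v_1\geq 0$ forces $\omega_\vecz,\omega_\vecw$ bounded away from $0$, so one may apply Corollary~\ref{CONECYLCOR} with the $(d-2)$-power rather than the cruder $(d-1)$-power in the volume, yielding \eqref{OMEGAGTPIHALFBD}. Your proposed symmetry $(-\vecv)^\perp=\vecv^\perp$ does not relate the $k_1=-1$ and $k_1=1$ sets of $\vecv$, because $\Si_d'$ has already fixed $v_1\ge0$; the two sets $S^{(1)}$ and $S^{(-1)}$ are disjoint pieces of the same half-sphere.

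\textbf{The vanishing threshold is the wrong way round.} You write that $c\xi^{-\frac2{d-1}}$ is ``comfortably above'' $\xi^{-\frac2d}$, but since $\frac2{d-1}>\frac2d$ we have $\xi^{-\frac2{d-1}}<\xi^{-\frac2d}$ for $\xi>1$. Hence the hypothesis $1-\|\vecz\|\ge c\xi^{-\frac2{d-1}}$ is \emph{weaker} than the all-terms-vanish condition $1-\|\vecz\|\gg\xi^{-\frac2d}$ from Proposition~\ref{CYLSUPPPROP}. The proposition is asserting something strictly beyond Proposition~\ref{CYLSUPPPROP}: the $k_1\ne1$ terms die earlier (at $\xi^{-2/(d-1)}$) than the full $\Phi_\bn$ does (at $\xi^{-2/d}$). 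Appealing to Proposition~\ref{CYLSUPPPROP} cannot give this. One needs the refined estimates just described --- the $\tilde a_1\gg r^{d/(d-1)}$ constraint for $k_1\ge2$, the ``large $\omega_\vecz,\omega_\vecw$'' constraint for $k_1\le-1$, and for $k_1=0$ the observation that the cone inside $\fC\cap\vecv^\perp$ has edge ratio $\asymp\min(1,\frac{1-\|\vecz\|}{\sin^2\omega_\vecz})$ and volume $\gg r^{d-1}(1-\|\vecz\|+\sin^2\omega_\vecz)^{(d-2)/2}$ --- so that the relevant product scales like $\xi(1-\|\vecz\|)^{(d-1)/2}$ in all three cases, matching the stated threshold.
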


We stress that for the proposition to be valid
it is crucial that we assume $\F_d\subset\Si_d'$, 
viz.\ $v_1\geq0$ for all $[a_1,\vecv,\vecu,\tM]\in\F_d$.

\vspace{5pt}

We
will need the following variant of
(a part of) Corollary \ref{CONECYLCOR}.
\begin{lem}\label{CONELATTICELEM3VAR}
Assume $d\geq2$, $r\geq h>0$ and $A>0$.
Let $\fC\subset\R^d$ be a right open cone of height $h$ whose base
is a $(d-1)$-dimensional ball of radius $r$ containing $\bn$
in its relative interior.
Then
\begin{align}\label{CONELATTICELEM3VARRES}
\mu\bigl(\bigl\{M=\nn(u) \aa(a) \kk\in\Si_d\col a_1>A,\:
\Z^dM\cap\fC=\emptyset\bigr\}\bigr)
\ll A^{-d}\min\bigl(1,(Ahr^{d-2})^{\frac2d-1}\bigr).
\end{align}
In fact the left hand side of \eqref{CONELATTICELEM3VARRES} vanishes
unless the edge ratio of $\fC$ is $e\ll(Ahr^{d-2})^{-\frac2d}$.
\end{lem}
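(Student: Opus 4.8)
The plan is to deduce Lemma \ref{CONELATTICELEM3VAR} essentially from Corollary \ref{CONECYLCOR} together with the parametrisation $M=[a_1,\vecv,\vecu,\tM]$, mimicking the way earlier lemmas in this section pass from a fixed cone to an integral over $\vecv$. First I would reduce to the case $A\gg r$: if $A\leq cr$ for a suitable small $c=c(d)$, then by Lemma \ref{A1LARGELEM} any $M\in\Si_d$ with $\Z^dM\cap\fC=\emptyset$ already satisfies $a_1\gg r\gg A$ (using that $\fC\setminus\{\bn\}$ contains a ball of radius $\gg h$; actually here one has to be a little careful since $h$ may be much smaller than $r$ — the relevant ball contained in $\fC$ has radius $\gg\min(h,r)=h$, so Lemma \ref{A1LARGELEM} only gives $a_1\gg h$, not $a_1\gg r$). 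So in fact the clean reduction is: the condition $a_1>A$ together with $\Z^dM\cap\fC=\emptyset$ can only be non-vacuous if $A\gg h$, and moreover if $A\leq h$ we may simply replace $A$ by $h$ at the cost of an absolute constant in the final bound (the left side is monotone in $A$ and both $A^{-d}$ and $(Ahr^{d-2})^{2/d-1}$ change by bounded factors when $A$ moves between $h$ and a constant multiple of $h$). Hence I would assume from the start $A\geq h$, and also $A\gg1$ since otherwise $A^{-d}\gg1$ and the bound is trivial.

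Next, using \eqref{SIDSUBSSIDM1} and \eqref{SLDRSPLITHAAR}, write
\begin{align*}
\mu\bigl(\bigl\{M\in\Si_d\col a_1>A,\:\Z^dM\cap\fC=\emptyset\bigr\}\bigr)
\ll\int_{\S_1^{d-1}}\int_A^\infty
p^{(d-1)}\Bigl(a_1^{\frac1{d-1}}\fC\cap\vecv^\perp\Bigr)
\,\frac{da_1}{a_1^{d+1}}\,d\vecv,
\end{align*}
exactly as in the proof of Proposition \ref{GENPRINCBOUNDPROP} (here one also uses that $a_1>A\gg1$ lets us drop the $n\neq0$ lattice layers, since the base of $\fC$ lies in a ball of radius $\ll r$ and $a_1$ can be taken $\gg r$ after the reduction above — if instead $A$ is only $\gg h$ and $h\ll r$, I would first intersect $\fC$ with a smaller sub-cone to arrange the base has radius $\ll A$, paying only a constant; alternatively invoke the $n=0$ layer bound directly as in the proof of Proposition \ref{GENPRINCBOUNDPROP}, whose argument does not need $a_1\gg r$ but only the trivial $\Z^d[a_1,\vecv,\vecu,\tM]\supset a_1^{-1/(d-1)}\iota(\Z^{d-1}\tM)f(\vecv)$). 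For each $\vecv$ the intersection $\fC\cap\vecv^\perp$ is, by a routine geometric computation of the same flavour as Lemma \ref{CONVENIENTCONELEM}, either empty or contains a $(d-1)$-dimensional cone with $\bn$ in its base whose radius is $\asymp r\sin\omega'$ and whose height is $\asymp\min(h,r\sin\omega')$ (where $\omega'$ is the angle between $\vecv$ and the plane of the base of $\fC$), hence of volume $\gg r^{d-2}\min(h,r\sin\omega')(\sin\omega')^{d-2}$; moreover one can bound the edge ratio of this sub-cone in terms of the edge ratio $e$ of $\fC$, as in Lemma \ref{CONVENIENTCONELEM}. Then apply Corollary \ref{CONECYLCOR} (with $d-1$ in place of $d$) to bound $p^{(d-1)}(a_1^{1/(d-1)}\fC\cap\vecv^\perp)\ll\min(1,(a_1r^{d-2}\min(h,r\sin\omega'))^{-2+2/(d-1)})$, plus the vanishing statement when the rescaled edge ratio is too large.

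Carrying out the $a_1$-integral first gives a factor $\ll A^{-d}\min(1,(Ar^{d-2}\min(h,r\sin\omega'))^{-1+2/d})$ (the exponent bookkeeping: $p^{(d-1)}$ scales like $(\text{vol})^{-2+2/(d-1)}$ and $\int_A^\infty a_1^{-d-1+\frac{d-1}{d-1}(2-2/(d-1))}\,da_1$ recombines to the $-1+2/d$ power; this is the one genuinely fiddly computation and I would do it carefully but it is mechanical). Then integrating over $\vecv$ using $d\vecv\asymp(\sin\omega')^{d-3}\,d\omega'\,d\vecalf$ over $\omega'\in(0,\pi/2]$: split at $\sin\omega'\asymp h/r$. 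For $\sin\omega'\lesssim h/r$ one has $\min(h,r\sin\omega')\asymp r\sin\omega'$ and the integrand is $\asymp A^{-d}\min(1,(Ar^{d-1}\sin\omega')^{-1+2/d})(\sin\omega')^{d-3}$; for $\sin\omega'\gtrsim h/r$ one has $\min(h,r\sin\omega')\asymp h$ and the integrand is $\asymp A^{-d}\min(1,(Ahr^{d-2})^{-1+2/d})(\sin\omega')^{d-3}$. Both pieces integrate (a short computation, again of the type done repeatedly in Sections \ref{GENBALLTHMPROOFSEC}--\ref{MAINTHM2PFSEC}) to $\ll A^{-d}\min(1,(Ahr^{d-2})^{2/d-1})$, which is \eqref{CONELATTICELEM3VARRES}; and the vanishing statement follows because each $\fC\cap\vecv^\perp$ has edge ratio $\gtrsim e$ (the cone narrows but the edge ratio does not improve by more than a constant), so if $e\gg(Ahr^{d-2})^{-2/d}$ then $p^{(d-1)}$ of the rescaled slice vanishes for every $\vecv$ and every $a_1>A$. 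The main obstacle is the careful geometry of the slice $\fC\cap\vecv^\perp$ — getting the height $\asymp\min(h,r\sin\omega')$ and the edge-ratio control right uniformly in $\vecv$ — since everything downstream is just exponent arithmetic that parallels the earlier sections; a secondary annoyance is handling the regime $h\ll A\lesssim r$ cleanly, which I would dispatch by the monotonicity-in-$A$ reduction described above.
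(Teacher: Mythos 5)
The overall strategy matches the paper's (rotate $\fC$ to canonical position, apply the Proposition~\ref{GENPRINCBOUNDPROP} slicing, estimate $p^{(d-1)}$ of the slices, integrate), and your outline of the reduction is in the right spirit. Two remarks on the preliminaries first: the worry about ``dropping $n\neq0$ lattice layers'' is a non-issue, since the slicing bound of Proposition~\ref{GENPRINCBOUNDPROP} is a one-sided inequality obtained from the $n=0$ layer alone, valid for every $A>0$; hence the reduction to $A\gg r$ or $A\geq h$ is unnecessary (the paper just integrates from $A$). Also, the paper deliberately uses the crude bound~\eqref{ATHREYAMARGULISRES} on $p^{(d-1)}$ rather than Corollary~\ref{CONECYLCOR}, noting that the sharper input does not improve the final estimate; your use of Corollary~\ref{CONECYLCOR} is harmless but not needed.

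The genuine gap is in the geometric input. You claim that $\fC\cap\vecv^\perp$ contains a $(d-1)$-dimensional cone of radius $\asymp r\sin\omega'$ and \emph{height} $\asymp\min(h,r\sin\omega')$. That height claim is false in the regime that actually matters, namely small edge ratio $e$. The correct statement (a modification of Lemma~\ref{CONEINTERSECTLEM}) is that the sub-cone has base radius $\asymp r'=\sqrt{r^2-t^2\cos^2\omega}$ and height $\asymp\frac{h}{r}(r-t|\cos\omega|)$, where $\omega$ is the angle in the parametrization~\eqref{VPARA} (the angle, \emph{within} the base hyperplane, between the projected direction $\vecv'$ and the offset direction of the base). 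For $e$ small and $\omega$ small this height is $\asymp h\,\omega^2$, \emph{not} $\min(h,r\omega)$; the resulting sub-cone volume lower bound is $\gg hr^{d-2}(\sin\omega)^d$, with exponent $d$ on $\sin\omega$, whereas your formula gives exponent $d-1$ (for $r\sin\omega'<h$) or $d-2$ (for $r\sin\omega'\geq h$). This is a strictly stronger, and incorrect, lower bound on the slice volume. If one nevertheless plugs your formula into the angular integral $\int_0^{\pi/2}\min\bigl(1,(AV(\omega))^{-1}\bigr)\omega^{d-3}\,d\omega$, the piece with $\omega>h/r$ produces $\int B^{-1}\omega^{-1}\,d\omega\asymp B^{-1}\log(r/h)$ (with $B=Ahr^{d-2}$), a logarithm which is absent from the target $\min(1,B^{2/d-1})$ and whose removal requires a separate argument you do not give. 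With the correct $hr^{d-2}(\sin\omega)^d$ the exponent $-d$ appears inside the $\min$, the split at $\omega\asymp B^{-1/d}$ is clean, and the integral evaluates directly to $\min(1,B^{-(d-2)/d})$. So the ``routine geometric computation'' you defer is precisely the place where the proof lives, and as stated it gives the wrong answer.

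A secondary issue is the vanishing statement. Knowing only that the slice's edge ratio satisfies $e'\gg e$ is not enough: Corollary~\ref{CONECYLCOR} forces $p^{(d-1)}(\fZ)=0$ when $e'|\fZ|^{2/(d-1)}$ is large, and since $|\fZ|\gg Ahr^{d-2}(\sin\omega)^d$ becomes small as $\omega\to0$, the product $e\cdot|\fZ|^{2/(d-1)}$ is \emph{not} uniformly large under the hypothesis $e\gg(Ahr^{d-2})^{-2/d}$. One needs the sharper trade-off $e'\asymp\min(1,e/\sin^2\omega)$ together with $|\fZ|\gg Ahr^{d-2}(e+\sin^2\omega)^{d/2}$, from which the paper derives $e'|\fZ|^{2/(d-1)}\gg(e^{d/2}Ahr^{d-2})^{2/(d-1)}$ uniformly in $\omega$; the blow-up of $e'$ at small $\omega$ exactly compensates the decay of the volume. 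Your argument omits this compensation.
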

\begin{proof}
After applying an appropriate rotation $M_0\in\SO(d)$
we may assume $\fC$ is the interior of the convex hull of
$B$ and $\vecp$, where
\begin{align*}
B=t\vece_2+\iota(\scrB_r^{d-1})\quad\text{and}\quad
\vecp=h\vece_1+t\vece_2,\quad\text{for some }\:0\leq t<r.
\end{align*}
(We keep $M_0\in\SO(d)$ in order that the map
$M\mapsto MM_0$ leaves ``$a_1$'' invariant;
this is the reason why we require $\fC$ to be a right cone from start.)
Mimicking the proof of Proposition~\ref{GENPRINCBOUNDPROP} we obtain
that the left hand side of \eqref{CONELATTICELEM3VARRES} is
\begin{align*}%
\ll\int_{\S_1^{d-1}}\int_A^\infty 
p^{(d-1)}\Bigl(a_1^{\frac1{d-1}}\fC_\vecv\Bigr)\,\frac{da_1}{a_1^{d+1}}
\,d\vecv.
\end{align*}
This is of course $\ll\int_A^\infty a_1^{-d-1}\,da_1\ll A^{-d}$,
and this suffices to prove \eqref{CONELATTICELEM3VARRES} when $d=2$.
Next, if $d\geq3$ then
by a small modification of the proof of Lemma \ref{CONEINTERSECTLEM}
we see that for any $\vecv$ as in \eqref{VPARA} with $\varpi,\omega\in(0,\pi)$,
$\fC_\vecv$ contains a $(d-1)$-dimensional cone of volume
$\gg \frac h{\sqrt{r^2+h^2}}r^{d-1}(\sin\omega)^d
\gg hr^{d-2}(\sin\omega)^d$, the base of which contains $\bn$.
Hence we get, using \eqref{ATHREYAMARGULISRES} in dimension $d-1$,
\begin{align*}
\ll\int_{A}^\infty\int_0^{\pi/2}
\min\bigl(1,A^{-1}h^{-1}r^{2-d}\omega^{-d}\bigr)\omega^{d-3}\,d\omega
\,\frac{da_1}{a_1^{d+1}},
\end{align*}
and this leads to the bound in \eqref{CONELATTICELEM3VARRES}.
(Note that we do not get any better bound by using
Corollary \ref{CONECYLCOR} in place of \eqref{ATHREYAMARGULISRES}.)

We next prove the statement about vanishing.
For $d=2$ one notes that $\fC_\vecv$ is a line segment of length
$\gg eh$ for all $\vecv\in\S_1^{d-1}\setminus\{\pm\vece_1\}$,
and hence if $Aeh$ is sufficiently large then
$p^{(1)}(a_1\fC_\vecv)=0$ for all
$a_1>A$ and all $\vecv\in\S_1^{d-1}\setminus\{\pm\vece_1\}$,
and hence the left hand side of \eqref{CONELATTICELEM3VARRES} vanishes.
Next assume $d\geq3$.
Note that for any $\vecv\in\S_1^{d-1}\setminus\{\pm\vece_1\}$,
as in \eqref{VPARA} with $\varpi,\omega\in(0,\pi)$,
the intersection $\fC\cap\vecv^\perp$ contains a $(d-1)$-dimensional
cone of radius $r'\gg r(e+\sin^2\omega)^{\frac12}$,
height $h'\geq\frac{h}{\sqrt{r^2+h^2}}(r-t|\cos\omega|)
\asymp h(e+\sin^2\omega)$,
and edge ratio $e'\asymp\min(1,\frac e{\sin^2\omega})$.
Thus if $a_1>A$ then
\begin{align*}
e'\bigl|a_1^{\frac1{d-1}}\fC_\vecv\bigr|^{\frac2{d-1}}
\gg e'\bigl(Ahr^{d-2}(e+\sin^2\omega)^{\frac d2}\bigr)^{\frac2{d-1}}
\asymp r^{\frac{2(d-2)}{d-1}}h^{\frac2{d-1}}A^{\frac2{d-1}}
e\max(e,\sin^2\omega)^{\frac1{d-1}}
\\
\geq (e^{\frac d2}Ahr^{d-2})^{\frac2{d-1}}.
\end{align*}
Hence if $e^{\frac d2}Ahr^{d-2}$ is large then
also $e'\bigl|a_1^{\frac1{d-1}}\fC_\vecv\bigr|^{\frac2{d-1}}$ is
large, uniformly over all $a_1\geq A$ and all
$\vecv\in\S_1^{d-1}\setminus\{\pm\vece_1\}$,
and by Corollary \ref{CONECYLCOR} (in dimension $d-1$) this
forces $p^{(d-1)}(a_1^{\frac1{d-1}}\fC_\vecv)=0$ for all these $a_1,\vecv$,
which means that the left hand side of \eqref{CONELATTICELEM3VARRES} vanishes.
\end{proof}

\begin{proof}[Proof of Proposition \ref{K1NEQ1BOUNDPROP}]
Let us first consider the terms with $k_1\geq2$ in \eqref{PHI0FIRSTSPLIT2}.
We now refine the argument around 
\eqref{CYLINDER2PTSMAINTHMPF5}--\eqref{CYLINDER2PTSMAINTHMPF6}.
Given $M=[\vecv,\tM]_{\veck,\vecp}\in G_{\veck,\vecp}$,
by applying \eqref{LATTICEINPARAM} with $n=\lfloor\frac{k_1}2\rfloor$
we see that there is some $\veca\in\R^{d-1}$ such that
\begin{align*}
\Bigl(\lfloor\sfrac{k_1}2\rfloor,\vecm\Bigr)M
=\lfloor\sfrac{k_1}2\rfloor a_1\vecv+a_1^{-\frac1{d-1}}
\bigl(0,\veca+\vecm\tM\bigr)f(\vecv),
\qquad\forall\vecm\in\Z^{d-1}.
\end{align*}
Hence if $\Z^dM\cap\fC=\emptyset$ then in particular $\fC$ has empty
intersection with $\lfloor\frac{k_1}2\rfloor a_1\vecv+a_1^{-\frac1{d-1}}
\iota(\veca+\Z^{d-1}\tM)f(\vecv)$.
The latter is a $(d-1)$-dimensional lattice inside the hyperplane
$\lfloor\frac{k_1}2\rfloor a_1\vecv+\vecv^\perp$,
and because of $a_1=k_1^{-1}(\vecp\cdot\vecv)$
this hyperplane contains the point $t\vecp$ where
$t:=k_1^{-1}\lfloor\frac{k_1}2\rfloor$.
Note that $\frac13\leq t\leq\frac12$ since $k_1\geq2$.
Now there is an absolute constant $c>0$ such that
$\|t\vecw-(1-t)\vecz\|<1-c$ for all 
$\vecz,\vecw\in\scrB_1^{d-1}$ with $\varphi(\vecz,\vecw)\leq\frac\pi2$
and all $\frac13\leq t\leq\frac12$;
hence, if we take $c\leq\frac13$,
we necessarily have $t\vecp+\scrB_{cr}^d\subset\fC$.
Hence if $\Z^dM\cap\fC=\emptyset$ then
$a_1^{-\frac1{d-1}}\Z^{d-1}\tM$ must have empty intersection
with a certain ball of radius $cr$,
and thus using $a_1>Cr$ and Lemma \ref{A1LARGELEM} 
(and $M\in\F_d\subset\Si_d\Rightarrow\tM\in\Si_{d-1}$),
we conclude that $\ta_1\geq C'r^{\frac d{d-1}}$,
where $C'$ is a positive constant which only depends on $d$.
Hence the contribution from $k_1\geq2$ in \eqref{PHI0FIRSTSPLIT2} is,
by mimicking the argument in Section \ref{PPCBOUNDCCONESEC} leading up to
\eqref{SUM1BOUNDFIRSTTIMEEVER}:
\begin{align}\label{CYLINDER2PTSMAINTHMPF9new}
\ll\sum_{2\leq k_1<2/C}k_1^{d-1}\int_{S}
\mu\Bigl(\Bigl\{\tM\in\Si_{d-1}\col
\ta_1\geq C'r^{\frac d{d-1}},\:
\Z^{d-1}\tM\cap a_1^{\frac1{d-1}}(\fC_\vecv\cup\fC'_\vecv)=\emptyset
\Bigr\}\Bigr)
\,\frac{d\vecv}{(k_1r)^d},
\end{align}
where $S=S^{(k_1)}$ is as in \eqref{SK1DEF},
and $a_1=k_1^{-1}(\vecp\cdot\vecv)$ as before
(thus $a_1>Cr$ for all $\vecv\in S^{(k_1)}$).

Now by Lemma \ref{CONVENIENTCONELEM},
$\fC\cap\vecv^\perp$ contains a right relatively open $(d-1)$-dimensional
cone with $\bn$ in its base,
of radius $r'$, height $h'$ and edge ratio $e'$ satisfying
\begin{align}\label{RHEBOUNDS}
r'\gg r(1-z+\sin^2\omega_\vecz)^{\frac12},\qquad
h'\gg r(1-z+\sin^2\omega_\vecz),\qquad
e'\asymp\min\bigl(1,\sfrac{1-z}{\sin^2\omega_\vecz}\bigr),
\end{align}
where we write $z=\|\vecz\|$.
We may also assume $h'\leq r'$, 
so that Lemma \ref{CONELATTICELEM3VAR} applies;
for if $h'>r'$ then we may shrink the cone by decreasing $h'$ until $h'=r'$;
the bounds \eqref{RHEBOUNDS} remain true.
Now $a_1^{\frac1{d-1}}\fC_\vecv$ has radius
$a_1^{\frac1{d-1}}r'$ and height $a_1^{\frac1{d-1}}h'$, and by 
Lemma \ref{CONELATTICELEM3VAR},
applied with $A=C'r^{\frac d{d-1}}$ and with $d-1$ in place of $d$,
it follows that (for $\vecv\in S\setminus\{\vece_1\}$)
the integrand in \eqref{CYLINDER2PTSMAINTHMPF9new} vanishes
whenever the product
\begin{align*}
\min\bigl(1,\sfrac{1-z}{\sin^2\omega_\vecz}\bigr)^{\frac{d-1}2}
r^{\frac d{d-1}+\frac{(d-2)d}{d-1}}(1-z+\sin^2\omega_\vecz)^{\frac{d-1}2}
\end{align*}
is larger than a certain constant which only depends on $d$.
The last expression is seen to be
$\asymp r^d(1-z)^{\frac{d-1}2}=\xi(1-\|\vecz\|)^{\frac{d-1}2}$,
independently of $\omega_\vecz$;
hence we conclude that \eqref{CYLINDER2PTSMAINTHMPF9new} vanishes
whenever $\xi(1-\|\vecz\|)^{\frac{d-1}2}$ is sufficiently large.
Similarly, using $\fC_\vecv'$ in place of $\fC_\vecv$,
one proves that \eqref{CYLINDER2PTSMAINTHMPF9new} vanishes
whenever $\xi(1-\|\vecw\|)^{\frac{d-1}2}$ is sufficiently large.
Furthermore, Lemma \ref{CONELATTICELEM3VAR} also implies
(using just $r'\gg r\sin\omega_\vecz$ and $h'\gg r\sin^2\omega_\vecz$,
and the corresponding result for $\fC'\cap\vecv^\perp$) that
for general $\vecz,\vecw$ (with $\varphi\leq\frac\pi2$),
\eqref{CYLINDER2PTSMAINTHMPF9new} is
\begin{align}
\ll r^{-d}\int_{\S_1^{d-1}}  %
r^{-d}\min\Bigl\{1,
\Bigl(r^d\max(\sin\omega_\vecz,\sin\omega_\vecw)^{d-1}\Bigr)^{\frac2{d-1}-1}
\Bigr\}\,d\vecv.
\end{align}
Now by \eqref{MAXSOZWINEQ} we have
$\max(\sin\omega_\vecz,\sin\omega_\vecw)\gg\sin\omega$;
thus the above is
\begin{align}\label{K1GEQ2BOUND}
\ll r^{-d}\int_0^{\pi/2} r^{-d}\min\Bigl\{1,
(r^d\omega^{d-1})^{\frac2{d-1}-1}\Bigr\}\,\omega^{d-3}\,d\omega
\ll r^{-3d+\frac{2d}{d-1}}\int_0^{\pi/2}\,d\omega\ll %
\xi^{-3+\frac2{d-1}}.
\end{align}
Hence we have now proved both the desired vanishing and the desired bound
for all the terms with $k_1\geq2$ in \eqref{PHI0FIRSTSPLIT2}.

\vspace{5pt}

We next consider the terms with $k_1\leq-1$. %
We will again refine the argument around 
\eqref{CYLINDER2PTSMAINTHMPF5}--\eqref{CYLINDER2PTSMAINTHMPF6}.
Note that if $\vecv\in S^{(k_1)}$ for some $k_1\leq-1$ then
$\vecp\cdot\vecv<0$ and $v_1>0$; hence
$(\vecz+\vecw)\cdot\vecv'<0$, viz.\ 
$\varphi(\vecz+\vecw,\vecv')>\frac\pi2$.
By Proposition \ref{CYLSUPPPROP}
we may assume $\vecz,\vecw$ to be close to $1$ without loss of generality;
hence since $\varphi=\varphi(\vecz,\vecw)\leq\frac\pi2$ there exists an
absolute constant $c>0$ such that
$\varphi(\vecz+\vecw,\vecz)\leq\frac\pi2-c$
and $\varphi(\vecz+\vecw,\vecw)\leq\frac\pi2-c$.
It follows that 
\begin{align*}
\sfrac\pi2<\varphi(\vecz+\vecw,\vecv')
\leq\varphi(\vecz+\vecw,\vecz)+\varphi(\vecz,\vecv')
\leq\sfrac\pi2-c+\omega_\vecz,
\end{align*}
viz.\ $\omega_\vecz>c$. Similarly $\omega_\vecw>c$.
Hence by Lemma \ref{CONVENIENTCONELEM}, $\fC\cap\vecv^\perp$ contains
a relatively open 
$(d-1)$-dimensional cone of volume $\gg r^{d-1}(\sin\omega_\vecz)^{d-2}$
with $\bn$ in its base, and $\fC'\cap\vecv^\perp$ contains
a relatively open 
$(d-1)$-dimensional cone of volume $\gg r^{d-1}(\sin\omega_\vecw)^{d-2}$
with $\bn$ in its base.
Hence by Corollary \ref{CONECYLCOR} we have
\begin{align}\label{OMEGAGTPIHALFBD}
\mu\Bigl(\Bigl\{\tM\in\Si_{d-1}\col
\Z^{d-1}\tM\cap a_1^{\frac1{d-1}}(\fC_\vecv\cup\fC'_\vecv)=\emptyset
\Bigr\}\Bigr)
\ll\Bigl(r^d\max\bigl(\sin\omega_\vecz,\sin\omega_\vecw\bigr)^{d-2}\Bigr)^{-\frac{2(d-2)}{d-1}}
\end{align}
whenever $a_1>Cr$.
Using this improvement of \eqref{CYLINDER2PTSMAINTHMPF5}
and the same type of argument as in 
\eqref{CYLINDER2PTSMAINTHMPF6}, \eqref{CYLINDER2PTSMAINTHMPF3},
we obtain that the contribution from all terms with $k_1\leq-1$ in
\eqref{PHI0FIRSTSPLIT2} %
is bounded by
exactly the same integral as in \eqref{CYLINDER2PTSMAINTHMPF8},
i.e.\ this contribution is
\begin{align}\label{OMEGAGTPIHALFBD2}
\ll\begin{cases}
r^{-6}\log(2+\min(r,\varphi^{-1}))&\text{if }\: d=3
\\
r^{-2d}\min(1,(r^d\varphi^{d-2})^{-\frac{d-3}{d-1}})&\text{if }\: d\geq4,
\end{cases}
\end{align}
which agrees with \eqref{K1NEQ1BOUNDPROPRES}.
To prove the statement about vanishing we note that 
the $(d-1)$-dimensional cone inside $\fC\cap\vecv^\perp$
provided by Lemma \ref{CONVENIENTCONELEM} in fact has 
volume $\gg r^{d-1}(1-z+\sin^2\omega_\vecz)^{\frac{d-2}2}$;
also the edge ratio is $\asymp\min(1,\frac{1-z}{\sin^2\omega_\vecz})$,
and hence from Corollary \ref{CONECYLCOR} it follows that
for any $k_1\leq-1$ the integrand in \eqref{CYLINDER2PTSMAINTHMPF9}
vanishes
if the product
\begin{align}\label{VANPROD}
\min\bigl(1,\sfrac{1-z}{\sin^2\omega_\vecz}\bigr)^{\frac{d-1}2}
r^d(1-z+\sin^2\omega_\vecz)^{\frac{d-2}2}
\end{align}
is larger than a certain constant which only depends on $d$.
This product is 
$\gg\xi(1-z)^{\frac{d-1}2}$.
Hence all terms with $k_1\leq-1$ in \eqref{PHI0FIRSTSPLIT2} vanish
whenever $\xi(1-\|\vecz\|)^{\frac{d-1}2}$ is sufficiently large.
Similarly, using $\fC_\vecv'$ in place of $\fC_\vecv$, we see that this
vanishing statement also holds
whenever $\xi(1-\|\vecw\|)^{\frac{d-1}2}$ is sufficiently large.

\vspace{5pt}

It now only remains to consider the terms with $k_1=0$.
The contribution from these terms has already been proved to satisfy
the bound \eqref{K1NEQ1BOUNDPROPRES}; cf.\ \eqref{CYLINDER2PTSMAINTHMPF8}.
To prove the desired vanishing we note that the 
$(d-1)$-dimensional cone inside $\fC\cap\vecv^\perp$
used in the proof of \eqref{CYLINDER2PTSMAINTHMPF8}
in fact has volume $\gg r^{d-1}(1-z+\sin^2\omega_\vecz)^{\frac{d-2}2}$,
since the base $B_1\cap\vecv^\perp$ has radius
$\gg r(1-z+\sin^2\omega_\vecz)^{\frac12}$ just as in the proof of
Lemma \ref{CONVENIENTCONELEM}.
Also the edge ratio is $\asymp\min(1,\frac{1-z}{\sin^2\omega_\vecz})$,
and hence by Corollary~\ref{CONECYLCOR} it follows that 
the integrand in \eqref{CYLINDER2PTSMAINTHMPF7} 
vanishes if the product in \eqref{VANPROD} is sufficiently large.
Hence we have $\sum_0=0$ whenever $\xi(1-\|\vecz\|)^{\frac{d-1}2}$ is
sufficiently large; and similarly we also have
$\sum_0=0$ whenever $\xi(1-\|\vecw\|)^{\frac{d-1}2}$ is sufficiently large.
\end{proof}
\vspace{5pt}

We next give a bound which shows that 
when considering the contribution for
$k_1=1$ in \eqref{PHI0FIRSTSPLIT2}, we may restrict the range of
$\vecv$ in $M=[a_1,\vecv,\vecu,\tM]$ somewhat, at the cost of an
error satisfying the same bound as in Proposition~\ref{K1NEQ1BOUNDPROP}.

\begin{prop}\label{RESTRRANGELEM}
Take notation %
as in Section \ref{CYLBDSEC}, assume $\varphi\leq\frac\pi2$,
and keep $k_1=1$, so that $a_1=\vecp\cdot\vecv$.
Let $C'>1$ be an arbitrary constant, and set
\begin{align*}
\Delta=\Bigl\{\vecv\in\S_1^{d-1}\col\vecp\cdot\vecv>Cr,\:
0\leq v_1\leq C'(\varphi+\omega)^2\Bigr\}.
\end{align*}
Then
\begin{align}\notag
\int_{\Delta}
\mu\Bigl(\Bigl\{\tM\in\Si_{d-1}\col
\Z^{d-1}\tM\cap a_1^{\frac1{d-1}}(\fC_\vecv\cup\fC'_\vecv)=\emptyset
\Bigr\}\Bigr)\,\frac{d\vecv}{|\vecp\cdot\vecv|^d}\hspace{60pt}
\\\label{RESTRRANGELEMRES}
\ll\begin{cases}
\xi^{-2}\log(2+\min(\xi,\varphi^{-1}))&\text{if }\:d=3
\\
\xi^{-2}\min\bigl(1,(\xi\varphi^{d-2})^{-\frac{d-3}{d-1}}\bigr)
&\text{if }\:d\geq4,
\end{cases}
\end{align}
where the implied constant depends only on $d$ and $C'$.
\end{prop}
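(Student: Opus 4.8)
The plan is to estimate the left-hand side of \eqref{RESTRRANGELEMRES} --- which is precisely the restriction to $\vecv\in\Delta$ of the $k_1=1$ term in \eqref{CYLINDER2PTSMAINTHMPF9} --- by the scheme used in the proof of Theorem~\ref{CYLINDER2PTSMAINTHM} in Section~\ref{CYLBDSEC}, but exploiting the defining constraint $v_1\le C'(\varphi+\omega)^2$ of $\Delta$ to improve the lower bound on the height of the cone produced by Lemma~\ref{CONVENIENTCONELEM}. Note first that on $\Delta$ one has $Cr<\vecp\cdot\vecv\le\|\vecp\|\ll r$, so $a_1=\vecp\cdot\vecv\asymp r$ and $|\vecp\cdot\vecv|^{-d}\asymp r^{-d}$.

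The geometric core is the following claim: for every $\vecv\in\Delta$ with $0<v_1<1$, at least one of the cones supplied by Lemma~\ref{CONVENIENTCONELEM} applied to $\fC\cap\vecv^\perp$ and to $\fC'\cap\vecv^\perp$ (recall $\fC'=\vecp-\fC$ has the form of $\fC$ with $\vecz$ replaced by $\vecw$) has height $\asymp_{C',d}r$ and radius $\gg r\max(\sin\omega,\varphi)$. By \eqref{MAXSOZWINEQ} one of $\sin\omega_\vecz,\sin\omega_\vecw$ --- say $\sin\omega_\vecz$, with $\omega_\vecz,\omega_\vecw$ as in \eqref{OMEGAZWDEF} --- is $\gg\max(\sin\omega,\varphi)$, which yields the radius bound $\gg r\sin\omega_\vecz\gg r\max(\sin\omega,\varphi)$ for the $\vecz$-cone; its height, which by Lemma~\ref{CONVENIENTCONELEM} is $\asymp r\min(1,\tfrac{1-\|\vecz\|+\omega_\vecz^2}{v_1})$, is $\asymp_{C',d}r$ once we check $\omega_\vecz^2\gg_{C',d}v_1$. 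Since $v_1\le\min\bigl(1,C'(\varphi+\omega)^2\bigr)$, $\omega_\vecz\le\pi$ and $\omega_\vecz\ge\omega-\varphi$ (by \eqref{CYLINDER2PTSMAINTHMPF2alt} and the triangle inequality for angles), the inequality $\omega_\vecz^2\gg_{C',d}v_1$ reduces to a short case analysis according to whether $\varphi+\omega$ is small --- where $\omega_\vecz\gg\omega+\varphi$ so $\omega_\vecz^2\gg(\varphi+\omega)^2$ --- or $\varphi+\omega\asymp1$ --- where $v_1\le1$ and $\omega_\vecz$ is bounded away from $0$ and $\pi$. I expect this case analysis to be the main technical nuisance, since $\omega$ can be close to $\pi$ and $\varphi$ close to $\tfrac\pi2$, so one cannot simply replace $\sin\omega_\vecz$ by $\omega_\vecz$.

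Granting the claim, the good cone inside $a_1^{\frac1{d-1}}\fC_\vecv$ (or $a_1^{\frac1{d-1}}\fC'_\vecv$) has $(d-1)$-dimensional volume $\gg_{C',d}a_1\cdot r^{d-1}\max(\sin\omega,\varphi)^{d-2}\asymp r^{d}\max(\sin\omega,\varphi)^{d-2}$, and since $\Z^{d-1}\tM$ must be disjoint from it, Corollary~\ref{CONECYLCOR} in dimension $d-1$ (whose upper bound, as usual, holds for all volumes) together with the fact that $\Si_{d-1}$ is covered by finitely many fundamental regions for $\Gamma^{(d-1)}\backslash G^{(d-1)}$ gives
\[
\mu\bigl(\bigl\{\tM\in\Si_{d-1}\col\Z^{d-1}\tM\cap a_1^{\frac1{d-1}}(\fC_\vecv\cup\fC'_\vecv)=\emptyset\bigr\}\bigr)\ll\min\Bigl\{1,\bigl(r^{d}\max(\sin\omega,\varphi)^{d-2}\bigr)^{-2+\frac2{d-1}}\Bigr\}
\]
uniformly for $\vecv\in\Delta$. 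Inserting this and $|\vecp\cdot\vecv|^{-d}\asymp r^{-d}$ into the left-hand side of \eqref{RESTRRANGELEMRES}, writing $d\vecv$ as in \eqref{DVINPARA}, carrying out the ($O(1)$) integrations in $\varpi$ and $\vecalf$, using the symmetry $\omega\leftrightarrow\pi-\omega$ and replacing $\sin\omega$ by $\omega$ for $\omega\in(0,\tfrac\pi2)$, I obtain the bound $\ll r^{-d}\int_0^{\pi/2}\min\{1,(r^{d}\max(\omega,\varphi)^{d-2})^{-2+\frac2{d-1}}\}\,\omega^{d-3}\,d\omega$. Splitting this integral at $\omega=\varphi$: the range $\omega\le\varphi$ contributes $\ll r^{-d}\min\{1,(r^{d}\varphi^{d-2})^{-2+\frac2{d-1}}\}\,\varphi^{d-2}$, which a direct check (distinguishing $r^{d}\varphi^{d-2}\le1$ from $r^{d}\varphi^{d-2}>1$, with $r=\xi^{1/d}$) shows is $\ll$ the right-hand side of \eqref{RESTRRANGELEMRES}; and the range $\omega\ge\varphi$ is dominated by the integral already evaluated in \eqref{CYLINDER2PTSMAINTHMPF8} (with $\varphi_0=\varphi$), whose value is exactly the right-hand side of \eqref{RESTRRANGELEMRES}. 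This completes the proof.
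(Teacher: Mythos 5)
Your proposal is correct and takes essentially the same approach as the paper's proof: the paper also reduces the statement to showing that the key inequality \eqref{OMEGAGTPIHALFBD} holds for all $\vecv\in\Delta^\circ$ (by exhibiting a cone of volume $\gg r^{d-1}\max(\sin\omega_\vecz,\sin\omega_\vecw)^{d-2}$ inside $\fC\cap\vecv^\perp$ or $\fC'\cap\vecv^\perp$), and then integrates using \eqref{MAXSOZWINEQ} and the computation already done for \eqref{CYLINDER2PTSMAINTHMPF8}. The only presentational difference is that the paper isolates the geometric step as Lemma \ref{CONVENIENTCONELEMvar} (re-running the cone construction of Lemma \ref{CONVENIENTCONELEM} with a constant choice of $\vech$, under the hypothesis $\omega_\vecz\geq\frac12\varphi$, and splitting on whether $\omega_\vecz\geq\frac12\varphi$ or $<\frac12\varphi$), whereas you obtain the same height bound $\gg r$ by reading off the formula $\asymp r\min(1,\frac{1-\|\vecz\|+\omega_\vecz^2}{v_1})$ from Lemma \ref{CONVENIENTCONELEM} and verifying $\omega_\vecz^2\gg_{C'}v_1$ on $\Delta$ via a case split on whether $\varphi+\omega$ is small; these are equivalent encodings of the same observation that the constraint $v_1\leq C'(\varphi+\omega)^2$ forces a tall cone. (One tiny slip: in your second case you only need, and in fact only have, $\omega_\vecz$ bounded away from $0$, not away from $\pi$; but that is exactly what the height lower bound requires, so the argument is unaffected.)
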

The proof depends on the following lemma.
\begin{lem}\label{CONVENIENTCONELEMvar}
For every $\vecv\in\Delta^\circ$ with $\omega_\vecz\geq\frac12\varphi$,
$\fC\cap\vecv$ contains a $(d-1)$-dimensional cone of volume
$\gg r^{d-1}(\sin\omega_\vecz)^{d-2}$ with $\bn$ in its base.
(The implied constant depends only on $d$ and $C'$.)
\end{lem}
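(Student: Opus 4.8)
\textbf{Proof plan for Lemma \ref{CONVENIENTCONELEMvar}.}
The idea is to reuse the construction from the proof of Lemma \ref{CONVENIENTCONELEM}, but to observe that the constraint $v_1 \le C'(\varphi+\omega)^2$ built into the definition of $\Delta$ gives an essentially \emph{uniform} lower bound on the height of the cone we extracted there. Recall that Lemma \ref{CONVENIENTCONELEM} produces inside $\fC\cap\vecv^\perp$ a right $(d-1)$-dimensional cone with base $B_1\cap\vecv^\perp$ (a $(d-2)$-ball of radius $r' = r\sqrt{1-\|\vecz\|^2\cos^2\omega_\vecz} \gg r\sin\omega_\vecz$, using $\|\vecz\|$ close to $1$, which we may assume by Proposition \ref{CYLSUPPPROP}) and with height $\asymp r\min\bigl(1,\tfrac{1-\|\vecz\|+\sin^2\omega_\vecz}{v_1}\bigr)$. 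The point is that on $\Delta$ we have $v_1 \le C'(\varphi+\omega)^2$, and since $\omega_\vecz \ge \tfrac12\varphi$ together with the triangle-inequality bound $|\omega-\omega_\vecz| \le \varphi_0 \le \varphi$ (valid by \eqref{CYLINDER2PTSMAINTHMPF2alt}) forces $\omega \ll \omega_\vecz$, we get $(\varphi+\omega)^2 \ll \omega_\vecz^2 \ll \sin^2\omega_\vecz + (1-\|\vecz\|)$ — hence $v_1 \ll 1-\|\vecz\|+\sin^2\omega_\vecz$, so that the height is $\asymp r$ (with implied constant depending only on $d$ and $C'$).

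Given this, the cone has height $\asymp r$ and base radius $r' \gg r\sin\omega_\vecz$, so its volume is $\gg r\cdot(r\sin\omega_\vecz)^{d-2} = r^{d-1}(\sin\omega_\vecz)^{d-2}$, which is exactly the claimed bound. I would first record, as in the proof of Lemma \ref{CONVENIENTCONELEM}, that after a rotation inside $\iota(\R^{d-1})$ we may take $\vecz = z\vece_1$ with $z = \|\vecz\|$ close to $1$ and $\omega_\vecz = \varphi(\vecv',\vece_1)$; then repeat the explicit choice of the apex-displacement vector $\vech$ from \eqref{HFIRSTDEF}, and simply check that the first alternative in the minimum defining $\delta$ there, namely $\delta \asymp \|\vecv'\|^{-2} \asymp 1$, is the operative one precisely because $v_1$ is dominated by $1-z+\sin^2\omega_\vecz$ on $\Delta$. (When $\omega_\vecz > \tfrac\pi2$ one uses instead the choice \eqref{HINCYLINDER}, which always gives height $r$.) This yields the stated volume bound; note we do not need the edge-ratio information here.

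One mild point to be careful about: the hypothesis is stated for $\vecv\in\Delta^\circ$, and near the boundary of $\Delta$ the inequality $v_1 \le C'(\varphi+\omega)^2$ is still available with the \emph{same} constant, so the estimate on $\delta$ goes through without loss; also one should note $\vecv \ne \pm\vece_1$ for $\vecv\in\Delta^\circ$ with $\omega_\vecz \ge \tfrac12\varphi > 0$, so $\vecv^\perp$ genuinely meets the interior of $B_1$ and the cone is nondegenerate. The only genuine content is the chain of inequalities $v_1 \le C'(\varphi+\omega)^2 \ll \omega_\vecz^2 \ll \sin^2\omega_\vecz + (1-\|\vecz\|)$; once that is in place the geometry is identical to Lemma \ref{CONVENIENTCONELEM}, so I do not expect any real obstacle — the main thing to get right is tracking that the implied constants depend only on $d$ and $C'$, and invoking Proposition \ref{CYLSUPPPROP} at the outset to legitimately assume $\|\vecz\|$ (and $\|\vecw\|$) are close to $1$.
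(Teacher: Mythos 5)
Your approach is close in spirit to the paper's: both reuse the apex-displacement machinery of Lemma \ref{CONVENIENTCONELEM} and exploit the $\Delta$-constraint $v_1 \le C'(\varphi+\omega)^2 \ll \omega_\vecz^2$ to conclude the cone can be made of height $\asymp r$. However, the pivotal step in your write-up is imprecise in a way that matters. You assert that ``the first alternative in the minimum defining $\delta$ \ldots is the operative one'' because $v_1 \ll 1-z+\sin^2\omega_\vecz$. But the first alternative $\tfrac1{2\|\vecv'\|^2}$ of $\delta$ in \eqref{HFIRSTDEF} is the minimum only when $v_1 \le \|\vecv'\|(1-z|\cos\omega_\vecz|)$, and for $\omega_\vecz \le \tfrac\pi2$ the right-hand side is $\asymp \omega_\vecz^2 + (1-z)$ with an \emph{absolute} implied constant, whereas the $\Delta$-constraint only gives $v_1 \le 25C'\omega_\vecz^2$; once $C'$ is large the first alternative need \emph{not} be operative. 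Your overall conclusion can still be rescued — in the second-alternative case the height is $r\delta\|\vecv'\|=r\,\tfrac{1-z|\cos\omega_\vecz|}{2v_1}\gg r\,\tfrac{\omega_\vecz^2}{25C'\omega_\vecz^2}\gg_{C'} r$ — but this is not what you wrote, and the distinction is exactly the issue the paper handles explicitly. The paper instead reduces immediately to $\omega_\vecz<\tfrac1{10}$ (citing Lemma \ref{CONVENIENTCONELEM} otherwise), then defines a \emph{new} apex displacement $\vech=cr\|\vecv'\|^{-1}(\|\vecv'\|^2\vece_1-v_1\iota(\vecv'))$ with the scalar $c$ chosen, depending on $C'$, so that $\cos\alpha+25C'c\alpha^2<1$ on $(0,\tfrac1{10}]$; this makes the apex land in $\fC$ unconditionally, giving $\|\vech\|=cr\asymp_{C'} r$ without any case analysis on which branch of $\min$ is active.

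Two smaller points. First, your appeal to Proposition \ref{CYLSUPPPROP} to ``assume $\|\vecz\|$ close to $1$'' is unnecessary and not quite legitimate here: the lemma is a pure geometric statement used to bound an integrand inside Proposition \ref{RESTRRANGELEM}, which must be controlled for all $\vecz\in\scrB_1^{d-1}$; fortunately the radius estimate $r'=r\sqrt{1-z^2\cos^2\omega_\vecz}\ge r\sin\omega_\vecz$ holds for every $z<1$, so no such assumption is needed. Second, the paper's initial reduction to $\omega_\vecz<\tfrac1{10}$ is slightly tighter than your split at $\omega_\vecz=\tfrac\pi2$; in the range $\omega_\vecz\ge\tfrac1{10}$ Lemma \ref{CONVENIENTCONELEM} already gives volume $\gg r^{d-1}\omega_\vecz^2(\sin\omega_\vecz)^{d-2}\gg r^{d-1}(\sin\omega_\vecz)^{d-2}$ with no further work. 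Your version can be made correct, but as written the key claim fails for large $C'$.
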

\begin{proof}
We may assume $\omega_\vecz<\frac1{10}$, since the claim otherwise
follows from Lemma \ref{CONVENIENTCONELEM}.
Now $\vecv\in\Delta^\circ$ and $\omega_\vecz\geq\frac12\varphi$
imply $0<v_1<C'(\varphi+\omega)^2
\leq C'(2\omega_\vecz+\omega_\vecz+\varphi)^2\leq25C'\omega_\vecz^2$.
Now to construct our cone we rotate temporarily, 
as in the proof of Lemma \ref{CONVENIENTCONELEM}, to the situation
where $\vecz=z\vece_1$ for some $0<z<1$;
thus $\omega_\vecz=\varphi(\vecv',\vece_1)$.
Fix $0<c<1$ so small that $\cos\alpha+25C'c\alpha^2<1$ 
for all $\alpha\in(0,\frac1{10}]$.
Now $B_1\cap\vecv^\perp$ is a $(d-2)$-dimensional ball with center
$\vecq$ as in \eqref{QCENTERDEF} and radius $r'\gg r\sin\omega_\vecz$.
Instead of \eqref{HFIRSTDEF} we set
\begin{align*}
\vech=(h_1,\ldots,h_d)
:=cr\|\vecv'\|^{-1}\bigl(\|\vecv'\|^2\vece_1-v_1\iota(\vecv')\bigr).
\end{align*}
Indeed this gives $0<h_1<r$ and
\begin{align*}
\bigl\|r\vecz-(\vecq'+\vech')\bigr\|
=r\Bigl\|\frac{z\cos\omega_\vecz+cv_1}{\|\vecv'\|}\vecv'\Bigr\|
=r(z\cos\omega_\vecz+cv_1)
<r(\cos\omega_\vecz+25C'c\omega_\vecz^2)
\leq r,
\end{align*}
by our choice of $c$; hence $\vecq+\vech\in\fC$.
The desired conclusion now follows as in the proof of 
Lemma \ref{CONVENIENTCONELEM}.
\end{proof}
\begin{proof}[Proof of Proposition \ref{RESTRRANGELEM}]
For every $\vecv\in\Delta^\circ$ it follows from 
Lemma \ref{CONVENIENTCONELEMvar} and its analogue for
$\vecw$ that at least one of $\fC\cap\vecv$ or $\fC'\cap\vecv$ contains
a $(d-1)$-dimensional cone of volume 
$\gg r^{d-1}\max(\sin\omega_\vecz,\sin\omega_\vecw)^{d-2}$ with
$\bn$ in its base.
Indeed this is direct if $\omega_\vecz,\omega_\vecw\geq\frac12\varphi$;
on the other hand if $\omega_\vecz<\frac12\varphi$ then by
the triangle inequality in $\S_1^{d-1}$ we have
$\frac12\varphi<\omega_\vecw<\frac32\varphi\leq\frac34\pi
\leq\pi-\frac12\varphi$
and thus $\fC'\cap\vecv$ contains a $(d-1)$-dimensional cone of
volume $\gg r^{d-1}(\sin\omega_\vecw)^{d-2}
\geq r^{d-1}\max(\sin\omega_\vecz,\sin\omega_\vecw)^{d-2}$;
similarly the statement also holds if $\omega_\vecw<\frac12\varphi$.

Hence \eqref{OMEGAGTPIHALFBD} holds for all $\vecv\in\Delta^\circ$,
and integrating over $\vecv$ we obtain the stated bound.
\end{proof}

\subsection{\texorpdfstring{Lower bounds on $\Phi_\bn(\xi,\vecw,\vecz)$}{Lower bounds on Phi0(xi,w,z)}}

In this section we prove some lower bounds on $\Phi_\bn(\xi,\vecw,\vecz)$.
Our first proposition says that the bound in Theorem \ref{CYLINDER2PTSMAINTHM} 
is sharp in a natural sense when $d=3$.
Note that for $d=3$, Theorem \ref{CYLINDER2PTSMAINTHM} states that
$\Phi_\bn(\xi,\vecw,\vecz)\ll\min(\xi^{-\frac43},\xi^{-2}\varphi^{-2})$, 
uniformly over $\varphi\in[0,\pi]$.

\begin{prop}\label{CYLD3OPTPROP}
There is an absolute constant $c>0$ such that, for $d=3$,
\begin{align}\label{CYLD3OPTPROPRES}
\Phi_\bn(\xi,\vecw,\vecz)\gg
\min\bigl(1,\xi^{-\frac43},\xi^{-2}\varphi^{-2}\bigr)
\end{align}
holds whenever 
$\|\vecw\|,\|\vecz\|\geq 1-c\cdot s_3(\xi,\varphi)$.
\end{prop}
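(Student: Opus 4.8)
The plan is to establish the lower bound by the same circle of ideas as the lower bounds in Theorem~\ref{CONEAPEXTHM} (Section~\ref{PPCFROMBELOWSEC}) and in Theorem~\ref{MAINTHM2}. First I would reduce, via \eqref{PPDINV} and \eqref{PHI0SYMM}, to the case $\Phi_\bn(\xi,\vecw,\vecz)=p_\vecp(\fC)$ with $\fC$ the explicit right cylinder of radius and height $r=\xi^{1/3}$ and $\vecp$ as in \eqref{CYLINDER2PTSMAINTHMPF1}, and normalize $\vecz,\vecw$ as in \eqref{CYLINDER2PTSMAINTHMPF2alt}. The range $\xi\ll1$ is handled by passing to a smaller cylinder and invoking Remark~\ref{PPCVOLSMALLREM}, so it remains to treat $\xi$ large; here I would split into the regimes $\varphi\le\xi^{-1/3}$, $\xi^{-1/3}<\varphi\le\frac\pi2$, and $\varphi>\frac\pi2$, in which $\min(1,\xi^{-4/3},\xi^{-2}\varphi^{-2})$ equals $\xi^{-4/3}$, $\xi^{-2}\varphi^{-2}$ and $\xi^{-2}\varphi^{-2}\asymp\xi^{-2}$ respectively.

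In every regime I would retain only the contribution of $\veck=(1,\veck')$ in the $[\vecv,\tM]_{\veck,\vecp}$-parametrization (Lemma~\ref{GKYMEASLEM1}), restricted to a set $S\subset\S_1^{d-1}$ of unit vectors with $\varpi=\varphi(\vecv,\vece_1)$ in a suitable fixed closed subinterval of $(0,\frac\pi2)$ (so that $a_1=\vecp\cdot\vecv\asymp r$ and, as in Section~\ref{PPCFROMBELOWSEC}, $na_1\vecv+\vecv^\perp$ misses $\fC$ for all $n\in\Z\setminus\{0,1\}$, reducing $\Z^dM\cap\fC=\emptyset$ to $\Z^{d-1}\tM\cap a_1^{\frac1{d-1}}(\fC_\vecv\cup\fC'_\vecv)=\emptyset$, where $\fC'=\vecp-\fC$) and with $\vecv'=(v_2,\ldots,v_d)$ confined to a carefully chosen arc of directions. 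Since $\Si_{d-1}=\Si_2$ contains a fundamental domain and, after relaxing the constraint $\ta_1\le\frac2{\sqrt3}a_1^{d/(d-1)}$ exactly as in the proof of Proposition~\ref{GENCONVBOUNDPROP2}, the reduced measure is $\gg p^{(2)}\bigl(a_1^{\frac1{d-1}}(\fC_\vecv\cup\fC'_\vecv)\bigr)$, this yields
\begin{align*}
p_\vecp(\fC)\;\gg\; r^{-d}\int_S p^{(2)}\bigl(a_1^{\frac1{d-1}}(\fC_\vecv\cup\fC'_\vecv)\bigr)\,d\vecv.
\end{align*}
The geometric input is that $\fC\cap\vecv^\perp$ and $\fC'\cap\vecv^\perp$ are planar disc-segments whose chords both lie on the single line $\vecv^\perp\cap\{x_1=0\}$ through $\bn$; a direct computation shows that, for $\vecv\in S$, $\fC\cap\vecv^\perp$ has area $\asymp r^2(1-\|\vecz\|+\omega_\vecz^2)^{3/2}$ and, when $\omega_\vecz\gg(1-\|\vecz\|)^{1/2}$, has $\bn$ close to one endpoint of its chord, with analogous statements for $\fC'\cap\vecv^\perp$ in terms of $\vecw$; here $\omega_\vecz=\varphi(\vecv',\vecz)$ and $\omega_\vecw=\varphi(\vecv',\vecw)$.

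The three regimes differ only in the choice of arc and the planar lower bound invoked. For $\varphi\le\xi^{-1/3}$ (so $s_3=\xi^{-2/3}$) I would take $\vecv'$ in an arc of length $\asymp\xi^{-1/3}$ about the bisector of $\vecz,\vecw$, making $\omega_\vecz,\omega_\vecw\ll\xi^{-1/3}$ and hence both segments of scaled area $\ll1$, so that $p^{(2)}\ge1-|\cdot|>\frac12$ by Lemma~\ref{TRIVPLOWBOUNDLEM}; with $\vol(S)\asymp\xi^{-1/3}$ this gives $p_\vecp(\fC)\gg\xi^{-1}\cdot\xi^{-1/3}=\xi^{-4/3}$. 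For $\xi^{-1/3}<\varphi\le\frac\pi2$ (so $s_3=(\varphi\xi)^{-1}$) I would take $\vecv'$ in an arc of length $\asymp\varphi$ lying beyond $\vecz$, so that $\omega_\vecz,\omega_\vecw\asymp\varphi$ and the two chords run the same way from $\bn$; then $\fC_\vecv\cup\fC'_\vecv$ is contained in a planar parallelogram with $\bn$ on one base, of scaled area $\asymp\xi\varphi^3$ and — using $1-\|\vecz\|,1-\|\vecw\|\ll(\varphi\xi)^{-1}$ — edge ratio $\ll(\xi\varphi^3)^{-1}$, so Corollary~\ref{CONECYLCOR} gives $p^{(2)}\gg(\xi\varphi^3)^{-1}$; with $\vol(S)\asymp\varphi$ this gives $p_\vecp(\fC)\gg\xi^{-1}\cdot\varphi\cdot(\xi\varphi^3)^{-1}=\xi^{-2}\varphi^{-2}$. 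For $\varphi>\frac\pi2$ I would take $\vecv'$ near the direction of $\vecz$, so that $\fC_\vecv$ is tiny while $\fC'_\vecv$, together with $\fC_\vecv$, lies inside a planar cut ball with $\bn$ on its flat part, radius $\asymp r$ and cut ratio $\asymp(\pi-\varphi)^2+(1-\|\vecw\|)$ (a doubly cut ball in the intermediate range, cf.\ Corollary~\ref{DCBCOR}); applying Theorem~\ref{MAINTHM2} (the ball case $t=0$ of Theorem~\ref{GENBALLTHM} when $\varphi$ is close enough to $\pi$, the small-cut-ratio case otherwise), and using the two alternatives in $s_3(\xi,\varphi)$ to control the cut and edge ratios, yields $p^{(2)}\gg\xi^{-2}$ on a set $S$ with $\vol(S)\asymp1$, hence $p_\vecp(\fC)\gg\xi^{-2}$.

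The main obstacle will be the uniform geometric bookkeeping: giving a description of $\fC\cap\vecv^\perp$ and $\fC'\cap\vecv^\perp$ — their areas, the precise location of $\bn$ on the common chord line, and the resulting cut and edge ratios — that is valid over the whole chosen arc, and then checking, in the regime $\xi^{-1/3}<\varphi\le\frac\pi2$, that the possibly non-negligible segment $\fC_\vecv$ nonetheless fits with $\fC'_\vecv$ inside one parallelogram at $\bn$ so that the clean cylinder bound of Corollary~\ref{CONECYLCOR} applies, and verifying in the regime $\varphi>\frac\pi2$ that the cut ratio of the enclosing cut ball is correctly matched against both pieces of $s_3(\xi,\varphi)$.
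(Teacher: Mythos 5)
Your overall framework coincides with the paper's: reduce to large $\xi$ via Lemma \ref{PHI0MONOTONELEM} and Remark \ref{PPCVOLSMALLREM}, restrict to $k_1=1$, parametrize via $[\vecv,\tM]_{\veck,\vecp}$, kill the $n\in\Z\setminus\{0,1\}$ planes for $v_1$ near $1$, and split into the regimes $\varphi\ll\xi^{-1/3}$, $\xi^{-1/3}\ll\varphi\leq\frac\pi2$, $\varphi>\frac\pi2$. The first regime and the rough geometry of the middle regime match what the paper does. However, there are two genuine gaps.

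First, your step of ``relaxing the constraint $\ta_1\leq\frac2{\sqrt3}a_1^{d/(d-1)}$ exactly as in the proof of Proposition \ref{GENCONVBOUNDPROP2}'' to get a clean lower bound $\gg p^{(2)}\bigl(a_1^{1/2}(\fC_\vecv\cup\fC'_\vecv)\bigr)$ does not go through as stated. Lemma \ref{GENCONVBOUNDPROP2LEM} (the key ingredient in that proof) requires the target set to be \emph{convex}, whereas $\fC_\vecv\cup\fC'_\vecv$ is a union of two convex sets and is not convex in general. Moreover, even ignoring convexity, the usual subtraction bound
$\mu\bigl(\{\ta_1\leq A,\:\Z^2\tM\cap X=\emptyset\}\bigr)\gg p^{(2)}(X)-\mu(\{\ta_1>A\})$
only produces a useful lower bound when $p^{(2)}(X)$ beats the tail $\mu(\{\ta_1>A\})\asymp A^{-2}\asymp\xi^{-1}$, which fails precisely when $\varphi$ is bounded away from $0$ (so that $p^{(2)}\asymp(\xi\varphi^3)^{-1}$ is only $\asymp\xi^{-1}$). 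The paper \emph{keeps} the constraint (see Lemma \ref{CYLINDER2PTSLOWBDPROP2RES1PF1LEM}) and, when $\varphi$ is not small, replaces $\Si_2$ by an enlarged Siegel set $\Si_3(K)$ of the form \eqref{CYLD3OPTPROPPF3} so that the tail becomes an arbitrarily small fraction of $p^{(2)}$. You will need that (or some equivalent device) to close the middle regime.

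Second, your treatment of $\varphi>\frac\pi2$ is off. The stated arithmetic does not produce the claimed bound: with $\vol(S)\asymp1$ and $p^{(2)}\gg\xi^{-2}$, the basic estimate $p_\vecp(\fC)\gg\xi^{-1}\int_S p^{(2)}\,d\vecv$ gives only $\xi^{-3}$, not $\xi^{-2}$. The paper instead first disposes of the very degenerate sub-range $\pi-\varphi\leq c_3\xi^{-1}$ by a trivial small-area bound and then, for the remaining range, takes an $\omega$-arc of length $\asymp\varphi_0:=\pi-\varphi$ near the direction \emph{opposite} to $\vecz$ (i.e.\ near $\vecw$), obtaining $p^{(2)}\asymp(\xi\varphi_0)^{-1}$ on that arc, so that $\xi^{-1}\cdot\varphi_0\cdot(\xi\varphi_0)^{-1}=\xi^{-2}$. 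Your proposed arc near the direction of $\vecz$ makes $\omega_\vecw$ close to $\pi$ and hence $\fC'_\vecv$ a ``major segment'' rather than a thin cap; it is not contained in a cut ball of area $\asymp r^2\varphi_0^3$ as your description implicitly assumes, and you would in any case still need to arrange an $\omega$-range and a $p^{(2)}$-estimate that multiply to $\xi^{-1}$. In short, the $\varphi>\frac\pi2$ case needs to be reworked along the lines of \eqref{CYLD3OPTPROPPF2}--\eqref{CYLD3OPTPROPPF4}, again using the enlarged Siegel set rather than a constraint-removal step.
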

The next proposition says that for general $d\geq3$
the bound in Theorem \ref{CYLINDER2PTSMAINTHM} is sharp
if either $\varphi\ll\xi^{-\frac1d}$ or $\pi-\varphi\ll\xi^{-\frac1{d-2}}$.
(To see that the restrictions on $\|\vecw\|,\|\vecz\|$ below
are natural, note that for $\xi$ large we have
$s_d(\xi,\varphi)\asymp\xi^{-\frac2d}$ if $\varphi\ll\xi^{-\frac1d}$,
and $s_d(\xi,\varphi)\asymp\xi^{-\frac2{d-2}}$
if $\pi-\varphi\ll\xi^{-\frac1{d-2}}$.)
\begin{prop}\label{CYLINDER2PTSLOWBDPROP2}
Let $d\geq3$. There is a constant $c>0$ which only depends on $d$
such that, 
if $\varphi\leq c\xi^{-\frac1d}$ and
$\|\vecw\|,\|\vecz\|\geq1-c\xi^{-\frac2d}$
then
\begin{align}\label{CYLINDER2PTSLOWBDPROP2RES1}
\Phi_\bn(\xi,\vecw,\vecz)\gg\min\bigl(1,\xi^{-2+\frac2d}\bigr)
\end{align}
whereas if $\pi-\varphi\leq c\xi^{-\frac1{d-2}}$ and 
$\|\vecw\|,\|\vecz\|\geq1-c\xi^{-\frac2{d-2}}$ then
\begin{align}\label{CYLINDER2PTSLOWBDPROP2RES2}
\Phi_\bn(\xi,\vecw,\vecz)\gg\min\bigl(1,\xi^{-2}\bigr).
\end{align}
\end{prop}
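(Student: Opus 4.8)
\textbf{Plan of proof for Proposition \ref{CYLINDER2PTSLOWBDPROP2}.}
The strategy for both lower bounds is the same as the one used for the left inequalities in Theorem \ref{MAINTHM2} and Theorem \ref{CONEAPEXTHM}: we produce an explicit positive-measure family of lattices $L\in X_1(\vecp)$ which avoid the relevant cylinder $\fC$, and estimate the measure of this family from below using the parametrization $M=[a_1,\vecv,\vecu,\tM]$ of $X_1(\vece_1,\vecp)$ and the measure formula of Lemma \ref{GKYMEASLEM1}. Concretely, I would first use \eqref{PPDINV} to put $\fC$ in the normalized position \eqref{CYLINDER2PTSMAINTHMPF1}, so that $\fC$ is a cylinder of radius and height $r=\xi^{1/d}$ with bases $B_1=\iota(r\vecz+\scrB_r^{d-1})$, $B_2=r\vece_1+B_1$ and apex $\vecp=r(1,\vecw+\vecz)$; since $\Si_d$ contains a fundamental domain we have, restricting to $k_1=1$ and to a suitable open set $S\subset\S_{\vecp+}^{d-1}$ as in \eqref{PPCFROMBELOWSECSTAT1}--\eqref{PPCBOUNDFROMBELOW},
\begin{align*}
\Phi_\bn(\xi,\vecw,\vecz)\gg r^{-d}\int_S
\mu\Bigl(\Bigl\{\tM\in\Si_{d-1}\col
\ta_1\leq\sfrac2{\sqrt3}(\vecp\cdot\vecv)^{\frac d{d-1}},\:
\Z^{d-1}\tM\cap a_1^{\frac1{d-1}}(\fC_\vecv\cup\fC'_\vecv)=\emptyset\Bigr\}\Bigr)\,d\vecv,
\end{align*}
where $a_1=\vecp\cdot\vecv\asymp r$ on $S$, and $\fC'_\vecv$ comes from the symmetric cylinder $\fC'=\vecp-\fC$. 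The key point is then to choose $S$ so small (in the angular variables $\varpi,\omega$) that \emph{every} section $\fC_\vecv$ and $\fC'_\vecv$, scaled by $a_1^{1/(d-1)}$, has $(d-1)$-volume $<\frac12$, whence by Lemma \ref{TRIVPLOWBOUNDLEM} the inner $\mu$-measure is $\geq\frac13$ and the integral reduces to estimating $\vol_{\S_1^{d-1}}(S)$.

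For the first bound \eqref{CYLINDER2PTSLOWBDPROP2RES1}, with $\varphi\leq c\xi^{-1/d}$ and $1-\|\vecw\|,1-\|\vecz\|\leq c\xi^{-2/d}$, I would take $S$ to consist of $\vecv$ parametrized as in \eqref{VPARA} with $\varpi\in(0,c_1 r^{-d/(d-1)})$ and $\omega\in(0,c_1 r^{-d/(d-1)})$ for a small constant $c_1=c_1(d)$; this mimics \eqref{CUTBALLLOWBOUNDPF4} in the cut-ball proof. The section $\fC_\vecv$ is contained in a cylinder of radius $\asymp r$ and height $\asymp r(\varpi+\text{(stuff of size }\xi^{-1/d}\text{)})$, so $|\fC_\vecv|\ll r^{d-1}\cdot r\varpi+\ldots$; one checks that with $\varpi,\omega\ll r^{-d/(d-1)}$ and the hypotheses on $\|\vecw\|,\|\vecz\|$ this is $\ll r^{d-2}$ times a small quantity, giving $|a_1^{1/(d-1)}\fC_\vecv|<\frac12$ after choosing $c_1,c$ small; the same holds for $\fC'_\vecv$. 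Then $\vol_{\S_1^{d-1}}(S)\asymp (r^{-d/(d-1)})^{d-1}=r^{-d}$ by \eqref{DVINPARA}, so $\Phi_\bn(\xi,\vecw,\vecz)\gg r^{-d}\cdot r^{-d}=r^{-2d}=\xi^{-2}$, i.e. the claimed $\xi^{-2+2/d}$ (wait—$r^{-2d}=\xi^{-2}$, so this is $\xi^{-2}$, which is $\ll\xi^{-2+2/d}$; here one must instead enlarge $S$: take $\omega$ up to $\asymp r^{-1}\asymp\xi^{-1/d}$, as in \eqref{PPCFROMBELOWSECSTAT2}, while keeping $\varpi\ll r^{-d/(d-1)}$). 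With $\omega\in(0,c_1\xi^{-1/d})$ and $\varpi\in(0,c_1 r^{-d/(d-1)})$ one has $\vol_{\S_1^{d-1}}(S)\asymp r^{-d/(d-1)}\cdot(\xi^{-1/d})^{d-2}\cdot\xi^{-1/d}$; recomputing and tracking the $t^{(d-1)/d}$-type factor as in \eqref{CUTBALLLOWBOUNDPF5} yields $\Phi_\bn\gg r^{2-2d}\asymp\xi^{(2-2d)/d}=\xi^{-2+2/d}$, as desired. The hypothesis $\varphi\leq c\xi^{-1/d}$ is exactly what guarantees that $\omega_\vecz,\omega_\vecw$ stay $\ll\xi^{-1/d}$ throughout $S$, so that the height bound on $\fC_\vecv$ and the disjointness $(n a_1\vecv+\vecv^\perp)\cap\fC=\emptyset$ for $n\neq0,1$ both hold; and $1-\|\vecz\|,1-\|\vecw\|\ll\xi^{-2/d}$ controls the remaining contribution to $|\fC_\vecv|$.

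For the second bound \eqref{CYLINDER2PTSLOWBDPROP2RES2}, with $\pi-\varphi\leq c\xi^{-1/(d-2)}$, the geometry is different: now $\vecz$ and $\vecw$ are nearly antipodal in $\scrB_1^{d-1}$, so the cylinder $\fC$ is long and ``thin transversally to the $\vecz$-$\vecw$ line''. Here I expect the right choice of $S$ to force $\vecv'$ nearly orthogonal to the $\vecz$-$\vecw$ direction; parametrizing as in Section \ref{CYLBDSEC} one arranges $y\asymp\pi-\varphi\ll\xi^{-1/(d-2)}$, and takes $\varpi$ small ($\ll r^{-d/(d-1)}$) together with an angular window for the direction of $\vecv'$ of width $\asymp\xi^{-1/(d-2)}$ in the ``$\vece_2$'' coordinate and of width $\asymp 1$ in the remaining $d-3$ coordinates. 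One then checks as above that $|a_1^{1/(d-1)}\fC_\vecv|$ and $|a_1^{1/(d-1)}\fC'_\vecv|$ stay $<\frac12$ on $S$ — this is where $\pi-\varphi\ll\xi^{-1/(d-2)}$ and $1-\|\vecz\|,1-\|\vecw\|\ll\xi^{-2/(d-2)}$ are used — and that $\vol_{\S_1^{d-1}}(S)\asymp r^{-d}$, so $\Phi_\bn\gg r^{-d}\cdot r^{-d}=\xi^{-2}$. For the regime $\xi\ll1$ (needed to get the $\min(1,\cdot)$) one simply notes $\Phi_\bn\gg 1$ by Remark \ref{PPCVOLSMALLREM} or directly; and for $\xi\geq1$ but $r<1$ one can reduce to $r\geq1$ by a containment/scaling argument as in the last lemma of Section \ref{PPCFROMBELOWSEC}.

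\textbf{Main obstacle.} The routine but delicate part is the bookkeeping of the shape of $\fC_\vecv$ and $\fC'_\vecv$: one needs explicit control, uniform over $\vecv\in S$, of the radius and height of these sections (and of the disjointness $(n a_1\vecv+\vecv^\perp)\cap\fC=\emptyset$ for $n\notin\{0,1\}$), in order to certify $|a_1^{1/(d-1)}\fC_\vecv|<\frac12$. The genuinely delicate point, which I expect to be the crux, is the second case $\pi-\varphi$ small: there the cylinder is not ``small in all transverse directions'', so the naive choice of $S$ fails and one must exploit the near-antipodality to shrink $\fC_\vecv$ only in the single dangerous direction while keeping the solid angle of $S$ of the full order $r^{-d}$. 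Getting these two requirements — small sections \emph{and} solid angle $\asymp r^{-d}$ — to hold simultaneously is the heart of the argument; once that is arranged, the measure computation via \eqref{DVINPARA} and Lemma \ref{TRIVPLOWBOUNDLEM} is immediate.
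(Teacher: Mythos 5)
Your outline for \eqref{CYLINDER2PTSLOWBDPROP2RES1} has the correct general shape (restrict to the $k_1=1$ terms, choose $\vecv$ so that Lemma \ref{CONVENIENTCYLINDERLEM} makes the sections $\fC_\vecv,\fC'_\vecv$ have $(d-1)$-volume $<\frac12$, then count), but the window you impose on $\varpi$ is wrong. You require $\varpi\ll r^{-d/(d-1)}$; by \eqref{DVINPARA} this suppresses the measure of $S$ by an extra factor $\asymp r^{-d}$, yielding $\Phi_\bn\gg r^{-3d+2}=\xi^{-3+2/d}$ rather than the claimed $\xi^{-2+2/d}$. (Your formula ``$\vol_{\S_1^{d-1}}(S)\asymp r^{-d/(d-1)}\cdot(\xi^{-1/d})^{d-2}\cdot\xi^{-1/d}$'' also does not match \eqref{DVINPARA}: the exponent on the $\varpi$-factor should be $d-1$, not $1$.) In the paper's Lemma \ref{CYLINDER2PTSLOWBDPROP2RES1PF1LEM} the constraint on $\varpi$ is $v_1>\frac{99}{100}$, a fixed window; the disjointness $(na_1\vecv+\vecv^\perp)\cap\fC=\emptyset$ for $n\notin\{0,1\}$ and the smallness of $\fC_\vecv,\fC'_\vecv$ are controlled by $\omega$ (and hence $\omega_\vecz,\omega_\vecw$) together with $1-\|\vecz\|,1-\|\vecw\|$ alone. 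This gives $\vol(S)\asymp r^{-(d-2)}$ and hence $\Phi_\bn\gg r^{-d}\cdot r^{-(d-2)}=\xi^{-2+2/d}$, as needed.

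For \eqref{CYLINDER2PTSLOWBDPROP2RES2} your $k_1=1$ plan genuinely fails, and a different idea is required. Any $k_1=1$ term with $a_1=\vecp\cdot\vecv\asymp r$ forces $\varphi(\vecv,\vecp)$ bounded away from $\frac\pi2$, hence $\varpi$ at most a small constant. But if $\pi-\varphi$ is small then $\omega_\vecz+\omega_\vecw\approx\pi$ by the triangle inequality in $\S_1^{d-2}$, so at least one of $\omega_\vecz,\omega_\vecw$ is near $\pi$ for \emph{every} such $\vecv$; for that index the bounding cylinder of Lemma \ref{CONVENIENTCYLINDERLEM} has height $\asymp r\omega^2\asymp r$, and direct inspection shows the corresponding section has $(d-1)$-volume $\asymp r^{d-1}$, not $O(r^{-1})$. (Geometrically, when $\varpi$ is small $\vecv^\perp$ is nearly $\vece_1^\perp$, so one of $\fC\cap\vecv^\perp$, $\fC'\cap\vecv^\perp$ is essentially the full base ball.) The paper therefore switches to the $k_1=0$ sum: $\vecv\in\S_1^{d-1}\cap\vecp^\perp$, parametrized via Lemma \ref{GKYMEASLEM2}, so that $\vecv^\perp$ contains the segment from $\bn$ to $\vecp$; then for $\omega<cr^{-d/(d-2)}$ both $B_1\cap\vecv^\perp$ and $B_2\cap\vecv^\perp$ are simultaneously $(d-2)$-balls of radius $\ll c^{1/2}r^{-2/(d-2)}$, giving $|\fC_\vecv|\ll c^{(d-2)/2}r^{-1}$, and the conditional probability $p_\vecy^{(d-1)}$ is bounded below by Remark \ref{PPCVOLSMALLREM}. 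That remark needs $d-1\geq3$, so the argument covers only $d\geq4$; the case $d=3$ of \eqref{CYLINDER2PTSLOWBDPROP2RES2} has to be extracted from the separate two-dimensional analysis in Proposition \ref{CYLD3OPTPROP}, which your plan does not address at all.
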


When proving these two propositions we will again
let $\fC$ and $\vecp$ be as 
around \eqref{CYLINDER2PTSMAINTHMPF1} (wherein $r=\xi^{1/d}>0$),
with $\vecz,\vecw$ as in \eqref{CYLINDER2PTSMAINTHMPF2alt}.
We start by giving some auxiliary lemmas.

\begin{lem}\label{CONVENIENTCYLINDERLEM}
For any $\vecv\in\S_1^{d-1}$ with $\frac9{10}<v_1<1$, the intersection
$\fC\cap\vecv^\perp$ is contained in a right
$(d-1)$-dimensional cylinder with $\bn$ in one of its bases,
of height $\asymp r(1-\|\vecz\|+\omega_\vecz^2)$, radius
$\asymp r(1-\|\vecz\|+\sin^2\omega_\vecz)^{\frac12}$
and edge ratio $\asymp\min(1,\frac{1-\|\vecz\|}{\sin^2\omega_\vecz})$.
\end{lem}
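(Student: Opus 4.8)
The plan is to identify $\fC\cap\vecv^\perp$ explicitly as an affine image of a spherical cap of the ball $B_1$ and then enclose that cap in the right cylinder over its ``rim disk''; this is the mirror image of the computation done for the \emph{contained} cone in the proof of Lemma~\ref{CONVENIENTCONELEM}. First I would, as there, rotate inside $\iota(\R^{d-1})$ so that $\vecz=z\vece_1$ with $z=\|\vecz\|\in[0,1)$, whence $\omega_\vecz=\varphi(\vecv',\vece_1)$. Writing $\vecv=(v_1,\vecv')$ and using the orthogonal projection $p\col(x_1,\vecy)\mapsto\vecy$ restricted to $\vecv^\perp$ — a linear isomorphism onto $\R^{d-1}$ which stretches the direction normal to $B_1\cap\vecv^\perp$ by the factor $1/v_1\in(1,\tfrac{10}{9})$ — together with the relation $x_1=-v_1^{-1}\vecv'\cdot\vecy$ valid on $\vecv^\perp$, one gets
\begin{align*}
p(\fC\cap\vecv^\perp)=\bigl\{\vecy\in\R^{d-1}\col\|\vecy-rz\vece_1\|<r,\ -v_1 r<\vecv'\cdot\vecy<0\bigr\}.
\end{align*}
Since $v_1>\tfrac{9}{10}>\|\vecv'\|$, the hyperplane $\{\vecv'\cdot\vecy=-v_1r\}$ lies entirely below the ball $\{\|\vecy-rz\vece_1\|<r\}$, so this set is just the spherical cap $K:=\{\vecy\in B_1\col\vecv'\cdot\vecy<0\}$, cut off by the hyperplane $\{\vecv'\cdot\vecy=0\}$, which passes through $\bn$.

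Next I would read off the geometry of $K$. The centre $rz\vece_1$ of $B_1$ has distance $rz|\cos\omega_\vecz|$ to the cutting hyperplane, so $K$ has height $r(1-z\cos\omega_\vecz)$; its rim is the round $(d-2)$-ball $B_1\cap\vecv^\perp$, of radius $r'=r\sqrt{1-z^2\cos^2\omega_\vecz}$ and centre $\vecc$ at distance $\|\vecc\|=rz\sin\omega_\vecz$ from $\bn$; and $K$ is contained in the right cylinder over the disk of radius $R$ about $\vecc$ in the plane of the rim, extended over the height of $K$, where $R$ is the largest cross-sectional radius of $K$. A one-line case check gives $R=r'$ when $\cos\omega_\vecz\ge0$ and $R=r$ when $\cos\omega_\vecz<0$, and in both cases $\|\vecc\|\le R$, so $\bn$ lies in this base disk. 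Pulling this cylinder back by $p^{-1}$ — which only rescales the height by $1/v_1\asymp1$ and keeps the base a round ball — yields a right $(d-1)$-dimensional cylinder inside $\vecv^\perp$ that contains $\fC\cap\vecv^\perp$, has $\bn$ in a base, and has height $\asymp r(1-z\cos\omega_\vecz)$, radius $\asymp R$, and edge ratio $(R-\|\vecc\|)/R$.

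Finally I would convert these to the stated form by the elementary estimates $1-z\cos\omega_\vecz=(1-z)+z(1-\cos\omega_\vecz)\asymp(1-z)+\omega_\vecz^2$ and $1-z^2\cos^2\omega_\vecz=(1-z^2)+z^2\sin^2\omega_\vecz\asymp(1-z)+\sin^2\omega_\vecz$, both uniform for $z\in[0,1)$ and $\omega_\vecz\in[0,\pi]$ (using $1-\cos\omega\asymp\omega^2$ on $[0,\pi]$), together with $(r'-\|\vecc\|)/r'\asymp(1-z^2)/(1-z^2\cos^2\omega_\vecz)\asymp\min\bigl(1,(1-z)/\sin^2\omega_\vecz\bigr)$. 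For $\cos\omega_\vecz\ge0$ this gives the Lemma immediately. The one point needing care is the sub-case $\cos\omega_\vecz<0$ (where $K$ is a major cap and $R=r$): here one argues exactly as in the treatment of $\omega_\vecz>\tfrac\pi2$ in the proof of Lemma~\ref{CONVENIENTCONELEM}, the stated asymptotics being in force because for the values of $\vecv$ relevant to the applications $\omega_\vecz$ stays bounded away from $\pi$, so $\sin^2\omega_\vecz\asymp1$ and height, radius and edge ratio are all $\asymp r$ and $\asymp1$. The real work is not any single hard step but carrying the three parameters simultaneously through the projection $p$ and the ball-cap geometry — the same bookkeeping as in Lemma~\ref{CONVENIENTCONELEM}, run in reverse.
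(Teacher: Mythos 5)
Your identification of $p(\fC\cap\vecv^\perp)$ with the spherical cap $K=\{\vecy\in B_1\col\vecv'\cdot\vecy<0\}$, the observation that $v_1>\frac9{10}>2\|\vecv'\|$ makes the second inequality $-v_1r<\vecv'\cdot\vecy$ automatic, the computation of the rim (radius $r'=r\sqrt{1-z^2\cos^2\omega_\vecz}$, center at distance $rz\sin\omega_\vecz$ from $\bn$), and the pullback by $p^{-1}$ — an isometry on the rim plane which scales only the axial direction, hence preserves rightness — are all correct, and this is indeed the natural mirror of the proof of Lemma~\ref{CONVENIENTCONELEM}, which is plainly what the paper's one-line ``similar to'' proof has in mind. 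The elementary estimates $1-z\cos\omega_\vecz\asymp(1-z)+\omega_\vecz^2$, $1-z^2\cos^2\omega_\vecz\asymp(1-z)+\sin^2\omega_\vecz$ and $\frac{1-z^2}{1-z^2\cos^2\omega_\vecz}\asymp\min(1,\frac{1-z}{\sin^2\omega_\vecz})$ are also fine, so the case $\cos\omega_\vecz\geq0$ is complete.

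The handling of $\cos\omega_\vecz<0$ is where your proposal breaks down, and more seriously than you acknowledge. First, the phrase ``argue exactly as in the treatment of $\omega_\vecz>\frac\pi2$ in the proof of Lemma~\ref{CONVENIENTCONELEM}'' does not transfer: there one only has to \emph{inscribe} a cone, and a cone of rim-radius $r'$ inscribes fine, whereas here we must \emph{circumscribe}, and a cylinder of radius $r'$ does not contain the major cap, whose widest cross-section has radius $r$. Second, with the actual minimal circumscribed right cylinder (your $R=r$), the lemma's stated asymptotics genuinely fail for $\omega_\vecz>\frac\pi2$ once $z$ is close to $1$: taking $\omega_\vecz\to\pi$ with $1-z\to0$, the claimed radius $r\sqrt{(1-z)+\sin^2\omega_\vecz}\to0$ while the major cap has width $\asymp r$ and volume $\asymp r^{d-1}$, so no cylinder with the claimed parameters can contain it; and already for, say, $\omega_\vecz=\frac34\pi$ and $z\to1$, the actual edge ratio $1-z\sin\omega_\vecz\to1-\frac1{\sqrt2}$ stays $\asymp1$ while the claimed $\min(1,\frac{1-z}{\sin^2\omega_\vecz})\asymp1-z\to0$. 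Your assertion that ``height, radius and edge ratio are all $\asymp r$ and $\asymp1$'' thus contradicts the lemma's own formulas (which give radius $\ll r$ and edge ratio $\ll1$ there), so it is not a fix, and the appeal to ``values of $\vecv$ relevant to the applications'' is out of place in a proof of a lemma stated for all $\vecv$ with $\frac9{10}<v_1<1$. The honest resolution is to note that the lemma is only correct, and only needed, in the regime $\omega_\vecz\leq\frac\pi2$ — indeed, everywhere the paper invokes it one has $\omega_\vecz\ll r^{-1}$ — and to restrict the statement accordingly (or, for $\omega_\vecz>\frac\pi2$, replace the radius claim by $\asymp r$ and the edge-ratio claim by $1-z\sin\omega_\vecz$).
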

\begin{proof}
This is similar to the proof of Lemma \ref{CONVENIENTCONELEM}.
\end{proof}

\begin{lem}\label{CYLINDER2PTSLOWBDPROP2RES1PF1LEM}
\begin{align*}%
p_\vecp(\fC)\gg r^{-d}
\int_{\{\vecv\in\S_1^{d-1}\col v_1>\frac{99}{100}\}}
\mu\Bigl(\Bigl\{\tM\in\Si_{d-1}\col
\ta_1\leq(\sfrac23r)^{\frac d{d-1}},\:
\Z^{d-1}\tM\cap a_1^{\frac1{d-1}}(\fC_\vecv\cup\fC_\vecv')
=\emptyset\Bigr\}\Bigr)
\,d\vecv,
\end{align*}
where $a_1=\vecp\cdot\vecv$. %
\end{lem}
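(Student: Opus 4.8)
The plan is to derive this lower bound directly from Proposition \ref{GENCONVBOUNDPROP2} by mimicking, with suitable modifications, the argument used for $\sum_1$ in the proof of Theorem \ref{CYLINDER2PTSMAINTHM} (cf.\ around \eqref{CYLINDER2PTSMAINTHMPF9}), but now in the \emph{opposite} direction. First I would observe that since $p_\vecp(\fC)$ is defined via $\nu_\vecp$ on $X_1(\vecp)$ and $\Si_d'$ contains a fundamental domain for $\Gamma\backslash G$, we have $p_\vecp(\fC)\gg$ (the $\nu_\vecp$-measure of the set of $M\in G_{\vece_1,\vecp}\cap\Si_d'$ with $\Z^dM\cap\fC=\emptyset$), just by keeping only the $\veck=\vece_1$ contribution in the analogue of \eqref{PHI0FIRSTSPLIT} — this is legitimate because all the omitted terms are non-negative. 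This reduces matters to bounding from below a single integral over $G_{\vece_1,\vecp}$, which by Lemma \ref{GKYMEASLEM1} (with $\veck=\vece_1$, $k_1=1$) takes the form $\zeta(d)^{-1}\int |\vecp\cdot\vecv|^{-d}\,\mu^{(d-1)}(\cdots)\,d\vecv$, where the inner measure is of those $\tM\in\Si_{d-1}$ satisfying the Siegel constraint $\ta_1\le\frac2{\sqrt3}a_1^{d/(d-1)}$, the box constraint $\vecu(\vecv,\vece_1,\tM)\in(-\frac12,\frac12]^{d-1}$, and $\Z^dM\cap\fC=\emptyset$ with $M=[\vecv,\tM]_{\vece_1,\vecp}$.

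Next I would restrict the $\vecv$-integration to the region $v_1>\frac{99}{100}$. On this region $a_1=\vecp\cdot\vecv$ satisfies $a_1\le\|\vecp\|<3r$ and also $a_1>\frac{99}{100}v_1\|\vecp\|\gg r$ (using $\|\vecp\|\asymp r$ since $\|\vecz\|,\|\vecw\|<1$); I should pin down the lower bound to be precisely $a_1\ge\frac23 r$ — this is where the constant $\frac{99}{100}$ is chosen, via $\vecp\cdot\vecv\ge v_1\,p_1 = v_1 r\ge\frac{99}{100}r>\frac23 r$ since $p_1=r$ in the coordinates \eqref{CYLINDER2PTSMAINTHMPF1}. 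Consequently the Siegel constraint $\ta_1\le\frac2{\sqrt3}a_1^{d/(d-1)}$ is \emph{implied} by the (more restrictive) constraint $\ta_1\le(\frac23 r)^{d/(d-1)}$, because $\frac2{\sqrt3}a_1^{d/(d-1)}\ge\frac2{\sqrt3}(\frac23 r)^{d/(d-1)}>(\frac23 r)^{d/(d-1)}$; hence replacing the constraint shrinks the set and preserves the lower bound. Then I handle the box constraint exactly as in the proof of Theorem~\ref{CYLINDER2PTSMAINTHM}: for each fixed $\vecv,\tM$ there is exactly one $\veck'\in\Z^{d-1}$ making $\vecu(\vecv,\vece_1,\tM)\in(-\frac12,\frac12]^{d-1}$, so after summing over this single $\veck'$ (equivalently, after noting that the translates of $(-\frac12,\frac12]^{d-1}$ under $\Z^{d-1}\nn(\tu)$ tile, and $\fC$ is a fixed compactly supported set) the box constraint can simply be dropped; more precisely, the condition $\Z^dM\cap\fC=\emptyset$ for $M=[\vecv,\tM]_{\vece_1,\vecp}$ with $\tM\in\Si_{d-1}$ is, via \eqref{LATTICEINPARAM} and \eqref{LATTICECONTAINEMENT}, equivalent to $\Z^{d-1}\tM\cap a_1^{1/(d-1)}(\fC_\vecv\cup\fC'_\vecv)=\emptyset$ together with disjointness of the intermediate hyperplanes $n a_1\vecv+\vecv^\perp$ ($n\ne 0,1$) from $\fC$ — and on $v_1>\frac{99}{100}$ the latter disjointness is automatic exactly as argued around \eqref{CYLSUPPPROPPF3} (here $\fC$ has $x_1$-extent only $r$ and $a_1\ge\frac23 r>\frac12 r$, and $\vecv$ is nearly aligned with $\vece_1$, so $n a_1\vecv+\vecv^\perp$ misses $\fC$ for $n\le -1$ and, together with $\vecp\in a_1\vecv+\vecv^\perp$, for $n\ge 2$). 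Finally I use $|\vecp\cdot\vecv|^{-d}=a_1^{-d}\gg r^{-d}$ on this region and absorb the harmless constant $\zeta(d)^{-1}$ and the bounded-number-of-fundamental-domains factor for $\Si_{d-1}$ into $\gg$, giving the claimed inequality.

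The main obstacle — though a mild one at the level of a plan — is verifying carefully that passing from $\nu_\vecp$ on $X_1(\vecp)$ down to the single coset $G_{\vece_1,\vecp}\cap\Si_d'$ and then to the Siegel-reduced parametrization is valid as a \emph{lower} bound; this requires that $\Si_d'$ (not just $\Si_d$) contains a fundamental domain, which was established in Section~\ref{CYLBDSEC}, and that the inner $\Z^d M\cap\fC=\emptyset$ condition genuinely reduces to the stated $(d-1)$-dimensional condition once $v_1$ is close to $1$ — i.e.\ the intermediate-hyperplane disjointness. One must also confirm that $\fC'_\vecv$ genuinely enters (and not just $\fC_\vecv$): this is because $\Z^d M\cap\fC=\emptyset$ with $\vecp\in\Z^d M$ forces, via the translate $(\vece_1+\iota(\vecm))M=\vecp+a_1^{-1/(d-1)}\iota(\vecm\tM)f(\vecv)$, that $a_1^{-1/(d-1)}\iota(\Z^{d-1}\tM)f(\vecv)$ avoid both $\fC$ and $\fC'=\vecp-\fC$, exactly as in the derivation of \eqref{CYLINDER2PTSMAINTHMPF9}. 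All of the estimates here ($\|\vecp\|\asymp r$, the constant comparisons for the Siegel inequality, the hyperplane disjointness for $v_1>\frac{99}{100}$) are elementary and of the same type already carried out repeatedly in Sections~\ref{PPCBOUNDCCONESEC} and~\ref{CYLINDER2PTSEC}, so I would state them and refer back rather than re-deriving them in full.
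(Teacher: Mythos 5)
Your plan follows the same route as the paper's proof: keep the terms with $k_1=1$, parametrize via Lemma~\ref{GKYMEASLEM1}, restrict to $v_1>\frac{99}{100}$ to pin $a_1\asymp r$ and to kill the intermediate hyperplanes, and then invoke the unique-$\veck'$ trick. But as written, the opening step is inconsistent with the box-constraint step. You say you keep only the $\veck=\vece_1$ contribution, yet later invoke ``summing over this single $\veck'$'' to discard the constraint $\vecu(\vecv,\veck,\tM)\in(-\frac12,\frac12]^{d-1}$. If $\veck$ is fixed to $\vece_1$ (so $\veck'=\bn$) there is no sum and the box constraint is binding; dropping it enlarges the set and does \emph{not} produce a lower bound. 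What is actually needed --- and what the paper does in \eqref{PPCFROMBELOWSECSTAT1} --- is to retain \emph{all} $\veck=(1,\veck')$ with $\veck'\in\Z^{d-1}$. Since the lattice $\Z^d M$ is unchanged as $\veck'$ varies (only $\vecu$ shifts by $-\veck'\nn(\tu)$, cf.\ \eqref{UFORMULAIFK1NEQ0}), exactly one $\veck'$ places $\vecu$ in $(-\frac12,\frac12]^{d-1}$, and after summing over $\veck'$ the box constraint disappears.

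Two further slips worth correcting. The estimate ``$\vecp\cdot\vecv\ge v_1 p_1=v_1 r$'' is false in general, since the cross-term $r(\vecw+\vecz)\cdot\vecv'$ can be negative; the paper instead uses $|\vecp\cdot\vecv-r|\le r(1-v_1)+r\|\vecw+\vecz\|\,\|\vecv'\|<\frac13 r$ for $v_1>\frac{99}{100}$ (so $\|\vecv'\|<\frac3{20}$), yielding $\frac23 r<a_1<\frac43 r$; this is what actually fixes the threshold $\frac{99}{100}$ and the constant $\frac23$ in the $\ta_1$-constraint. Also, the justification you give for the $n\ge2$ disjointness (``together with $\vecp\in a_1\vecv+\vecv^\perp$'') is imported from the cone case of Section~\ref{PPCBOUNDCCONESEC}, where $\vecp$ is the apex and $\fC$ lies entirely on one side of that hyperplane; for the cylinder $\fC$ typically meets both sides of $a_1\vecv+\vecv^\perp$, so this reasoning does not apply. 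The valid argument is the uniform displacement bound: for $\vecx\in\fC$ one has $0<x_1<r$ and $\|(x_2,\ldots,x_d)\|<2r$, hence $-\frac3{10}r<\vecx\cdot\vecv<\frac{13}{10}r$ when $\|\vecv'\|<\frac3{20}$, which together with $a_1>\frac23 r$ excludes every $n\notin\{0,1\}$ simultaneously. (The derivation around \eqref{CYLSUPPPROPPF3} that you cite is for the special choice $\vecv=(1,\ve,0,\ldots,0)$ with $\ve$ small, not the generic $v_1>\frac{99}{100}$ regime.)
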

\begin{proof}
Arguing as in Section \ref{PPCFROMBELOWSEC} 
we see that \eqref{PPCFROMBELOWSECSTAT1} holds for our $\fC$.
Now for any $\vecv\in\S_1^{d-1}$ with $\frac{99}{100}<v_1<1$
we have $\|\vecv'\|<\frac3{20}$ and thus
\begin{align*}
|a_1-r|=|\vecp\cdot\vecv-r|\leq r(1-v_1)+\|\vecw+\vecz\|\cdot\|\vecv'\|
<\sfrac13r,
\end{align*}
i.e.\ $\frac23r<a_1<\frac43r$.
Now the lemma will follow by arguing in the same way as in 
Section \ref{PPCFROMBELOWSEC}  (up to \eqref{PPCBOUNDFROMBELOW}),
if we can only prove that 
$(na_1\vecv+\vecv^\perp)\cap\fC=\emptyset$ holds
for all $n\in\Z\setminus\{0,1\}$ and all
$\vecv\in\S_1^{d-1}$ with $\frac{99}{100}<v_1<1$.
But note that for any such $\vecv$ and any $\vecx\in\fC$,
we have, using $\|\vecv'\|<\frac3{20}$, $0<x_1<r$, $\|(x_2,\ldots,x_d)\|<2r$
and $\frac23r<a_1<\frac43r$:
\begin{align*}
-a_1<-\sfrac3{10}r<-2r\cdot\sfrac3{20}
<\vecx\cdot\vecv
<r+2r\cdot\sfrac3{20}=\sfrac{13}{10}r<2a_1.
\end{align*}
Hence $(na_1\vecv+\vecv^\perp)\cap\fC=\emptyset$ for all 
$n\in\Z\setminus\{0,1\}$, and we are done.
\end{proof}

\begin{lem}\label{PHI0MONOTONELEM}
For any fixed $\vecw,\vecz\in\scrB_1^{d-1}$ the function
$\Phi_\bn(\xi,\vecw,\vecz)$ is %
decreasing 
in $\xi$.
\end{lem}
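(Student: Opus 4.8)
The plan is to establish the stronger pointwise inequality $\Phi_\bn(\xi_1,\vecw,\vecz)\geq\Phi_\bn(\xi_2,\vecw,\vecz)$ for all $0<\xi_1\leq\xi_2$, by transporting the shorter configuration into the longer one with a single element of $\SL_d(\R)$. Write $\fC_i$ for the cylinder in \eqref{PHI0DEF} with $\xi=\xi_i$ and $\vecp_i=(\xi_i,\vecz+\vecw)$, so that $\Phi_\bn(\xi_i,\vecw,\vecz)=p_{\vecp_i}(\fC_i)$. Two elementary facts will be used: the invariance $p_\vecp(\fC)=p_{\vecp M}(\fC M)$ from \eqref{PPDINV}, and the set-monotonicity $\fA\subseteq\fB\Rightarrow p_\vecp(\fA)\geq p_\vecp(\fB)$, which is immediate from the definition of $p_\vecp$ since any lattice disjoint from $\fB\setminus\{\bn,\vecp\}$ is a fortiori disjoint from $\fA\setminus\{\bn,\vecp\}$. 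Granting these, it suffices to exhibit $N\in\SL_d(\R)$ with $\vecp_1 N=\vecp_2$ and $\fC_1 N\subseteq\fC_2$; then $p_{\vecp_1}(\fC_1)=p_{\vecp_1 N}(\fC_1 N)=p_{\vecp_2}(\fC_1 N)\geq p_{\vecp_2}(\fC_2)$, which is the claim.

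The map I would take is $N=\begin{pmatrix}\lambda&\vecb\\ \bn&\delta I_{d-1}\end{pmatrix}$, where $\lambda=\xi_2/\xi_1$, $\delta=(\xi_1/\xi_2)^{1/(d-1)}$, and $\vecb=(1-\delta)(\vecz+\vecw)/\xi_1$ is regarded as a row in $\R^{d-1}$; here $\det N=\lambda\delta^{d-1}=1$, and $N$ acts by $(x_1,\vecy)\mapsto(\lambda x_1,\,x_1\vecb+\delta\vecy)$. Writing $\vecq=\vecz+\vecw$, a one-line computation gives $\vecp_1 N=(\lambda\xi_1,\,\xi_1\vecb+\delta\vecq)=(\xi_2,\,(1-\delta)\vecq+\delta\vecq)=\vecp_2$. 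For the containment, a point $(x_1,\vecz+\vecu)\in\fC_1$ (so $0<x_1<\xi_1$, $\|\vecu\|<1$) is sent to a point whose first coordinate $\lambda x_1$ lies in $(0,\xi_2)$ and whose transverse part differs from $\vecz$ by $(1-\delta)(\tau\vecq-\vecz)+\delta\vecu$, where $\tau:=x_1/\xi_1\in(0,1)$. Since $\tau\vecq-\vecz=-(1-\tau)\vecz+\tau\vecw$ is a convex combination of two vectors of norm $<1$, we get $\|\tau\vecq-\vecz\|<1$; using also $\|\vecu\|<1$ and $0\leq\delta\leq1$, $\|(1-\delta)(\tau\vecq-\vecz)+\delta\vecu\|\leq(1-\delta)\|\tau\vecq-\vecz\|+\delta\|\vecu\|<(1-\delta)+\delta=1$. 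Hence the image lies in $\fC_2$, so $\fC_1 N\subseteq\fC_2$ and the lemma follows. (For $\xi_1=\xi_2$ one has $N=\id$; the formula for $\vecb$ also covers the case $\vecz+\vecw=\bn$, where $N$ is diagonal.)

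The only step requiring real thought is the construction of $N$. The naive diagonal stretch $\diag[\lambda,\delta,\dots,\delta]$ does satisfy $\fC_1 N\subseteq\fC_2$, but it maps $\vecp_1$ to $(\xi_2,\delta\vecq)$ rather than $(\xi_2,\vecq)$, and attempting to repair this by rescaling only in directions transverse to $\vecq$ breaks the containment. The correct device is to absorb the transverse discrepancy into a shear along the $\vece_1$-direction with shear vector proportional to $\vecq$ — precisely the off-diagonal block $\vecb$ — and the arithmetic coincidence that keeps the containment alive is that $\vecz$, $\vecw$ and hence all $\tau\vecq-\vecz=-(1-\tau)\vecz+\tau\vecw$ ($\tau\in[0,1]$) stay inside the open unit ball. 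Everything beyond this choice is a routine computation.
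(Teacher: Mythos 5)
Your proposal is correct and takes essentially the same approach as the paper: the matrix $N$ you construct is exactly the matrix $T$ in the paper's proof (with $\lambda=\alpha$, $\delta=\alpha^{-1/(d-1)}$, and $\vecb$ the same shear vector), and the containment $\fC_1 N\subseteq\fC_2$ is verified by precisely the same rewriting of the transverse displacement as a convex combination of $\tau\vecw-(1-\tau)\vecz$ and $\vecu$, both of norm $<1$.
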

\begin{proof}
Let $0<\xi<\xi'$ be given, 
and set 
\begin{align*}
&\fC=\bigl\{(x_1,\ldots,x_d)\in\R^d\col
0<x_1<\xi,\:\bigl\|(x_2,\ldots,x_d)-\vecz\bigr\|<1\bigr\};
&&
\vecp=(\xi,\vecz+\vecw);
\\
&\fC'=\bigl\{(x_1,\ldots,x_d)\in\R^d\col
0<x_1<\xi',\:\bigl\|(x_2,\ldots,x_d)-\vecz\bigr\|<1\bigr\};
&&
\vecp'=(\xi',\vecz+\vecw),
\end{align*}
so that 
$\Phi_\bn(\xi,\vecw,\vecz)=p_\vecp(\fC)$ and
$\Phi_\bn(\xi',\vecw,\vecz)=p_{\vecp'}(\fC')$.
Furthermore set $\alpha=\frac{\xi'}{\xi}>1$ and
\begin{align*}
T=\begin{pmatrix}\alpha &
{\xi}^{-1}(1-\alpha^{-\frac1{d-1}})(\vecw+\vecz)
\\
\trans\bn & \alpha^{-\frac1{d-1}}1_{d-1}
\end{pmatrix}\in G.
\end{align*}
We now claim $\fC T\subset\fC'$.
Indeed, take an arbitrary point $\vecx=(x_1,\ldots,x_d)\in\fC$ and set 
$\vecy=(y_1,\ldots,y_d)=\vecx T$.
Then $y_1=\alpha x_1\in(0,\xi')$, and
\begin{align*}
\bigl\|(y_2,\ldots,y_d)-\vecz\bigr\|
&=\bigl\| x_1\xi^{-1}(1-\alpha^{-\frac1{d-1}})(\vecw+\vecz)
+\alpha^{-\frac1{d-1}}(x_2,\ldots,x_d)-\vecz\bigr\|
\\
&=\bigl\|(1-\alpha^{-\frac1{d-1}})( x_1\xi^{-1}\vecw-(1- x_1\xi^{-1})\vecz)
+\alpha^{-\frac1{d-1}}((x_2,\ldots,x_d)-\vecz)\bigr\|
\\
&\hspace{200pt}<(1-\alpha^{-\frac1{d-1}})+\alpha^{-\frac1{d-1}}=1.
\end{align*}
Hence $\vecy\in\fC'$ and the claim is proved.
Noticing also $\vecp T=\vecp'$,
it now follows that 
$p_\vecp(\fC)=p_{\vecp'}(\fC T)\geq p_{\vecp'}(\fC')$,
i.e.\ $\Phi_\bn(\xi,\vecw,\vecz)\geq \Phi_\bn(\xi',\vecw,\vecz)$.
\end{proof}

\begin{proof}[Proof of \eqref{CYLINDER2PTSLOWBDPROP2RES1} in Proposition \ref{CYLINDER2PTSLOWBDPROP2}]
We assume from start $0<c<1$, and consider some
arbitrary $\xi,\vecw,\vecz$ with $\varphi\leq c\xi^{-\frac1d}$ and
$\|\vecw\|,\|\vecz\|\geq1-c\xi^{-\frac2d}$.
Now for every $\vecv\in\S_1^{d-1}$ with
$\frac{99}{100}<v_1<1$ and
$\omega=\varphi(\vecv',\vece_1)<cr^{-1}$
we have
$\omega_\vecz,\omega_\vecw<2cr^{-1}$, 
and hence by Lemma \ref{CONVENIENTCYLINDERLEM},
$|\fC_\vecv|\ll %
c^{\frac d2}r^{-1}$ and similarly
$|\fC_\vecv'|\ll c^{\frac d2}r^{-1}$.
Also recall from the proof of Lemma \ref{CYLINDER2PTSLOWBDPROP2RES1PF1LEM} 
that $a_1=\vecp\cdot\vecv<\frac43r$.
Hence if $c$ is sufficiently small then
$|a_1^{\frac1{d-1}}(\fC_\vecv\cup\fC_\vecv')|<\frac12$
holds for all $\vecv\in\S_1^{d-1}$ with
$\frac{99}{100}<v_1<1$ and $\omega<cr^{-1}$.
Considering the contribution from these $\vecv$ in 
Lemma \ref{CYLINDER2PTSLOWBDPROP2RES1PF1LEM}
we obtain, if $r=\xi^{1/d}$ is sufficiently large,
\begin{align*}
\Phi_\bn(\xi,\vecw,\vecz)\gg r^{-d}\int_0^{cr^{-1}}\omega^{d-3}\,d\omega
\gg r^{-2d+2}=\xi^{-2+\frac2d}.
\end{align*}
Note also that Remark \ref{PPCVOLSMALLREM} applies to our $\fC$,
thus $\Phi_\bn(\xi,\vecw,\vecz)>\frac1{10}$ 
for all $\vecw,\vecz\in\scrB_1^{d-1}$ and all
$0<\xi<2^{-d}v_{d-1}^{-1}$,
where $v_{d-1}=|\scrB_1^{d-1}|=\pi^{\frac{d-1}2}\Gamma(\frac{d+1}2)^{-1}$.
Now \eqref{CYLINDER2PTSLOWBDPROP2RES1} follows for all $\xi>0$
by using the monotonicity in Lemma \ref{PHI0MONOTONELEM}.
\end{proof}

We next give the proof 
of \eqref{CYLINDER2PTSLOWBDPROP2RES2} in 
Proposition \ref{CYLINDER2PTSLOWBDPROP2} in the case $d\geq4$.
The remaining case $d=3$ will be treated below in the proof of 
Proposition \ref{CYLD3OPTPROP}.

\begin{proof}[Proof of \eqref{CYLINDER2PTSLOWBDPROP2RES2} in Proposition \ref{CYLINDER2PTSLOWBDPROP2} when $d\geq4$]
We have
\begin{align*}
&\Phi_\bn(\xi,\vecw,\vecz)\gg{\sum}_0
=\sum_{\veck'\in\widehat\Z^{d-1}}\nu_\vecp\bigl(\bigl\{
M\in G_{(0,\veck'),\vecp}\cap\Si_d\col
\Z^dM\cap\fC=\emptyset\bigr\}\bigr)
\\
&\asymp \sum_{\veck'\in\widehat\Z^{d-1}}
r^{-1}\int_{\R_{>0}}\int_{\S_1^{d-1}\cap\vecp^\perp}\int_{\R^{d-1}}
\nu_\vecy\bigl(\bigl\{\tM\in G_{\veck',\vecy}\col
M\in\Si_d,\:
\Z^dM\cap\fC=\emptyset\bigr\}\bigr)
\,d\vecu\,d_\vecp(\vecv)\,a_1^{-d}\,da_1,
\end{align*}
where in the last step we used Lemma \ref{GKYMEASLEM2},
and in the last line
$\vecy=\vecy(a_1,\vecv)$ is as in \eqref{RA1V},
and we write $M=[a_1,\vecv,\vecu,\tM]$.
Note that $\fC\subset\scrB_{3r}^d$, and thus if $a_1>3r$ then 
$\fC\cap(na_1\vecv+\vecv^\perp)=\emptyset$
for all $n\in\Z\setminus\{0\}$.
Hence, using \eqref{LATTICEINPARAM} and \eqref{SIDSUBSSIDM1},
we see that the above expression is
\begin{align*}
\gg \sum_{\veck'\in\widehat\Z^{d-1}}
r^{-d-1}\int_{3r}^{4r}\int_{\S_1^{d-1}\cap\vecp^\perp}
\nu_\vecy\Bigl(\Bigl\{\tM\in G_{\veck',\vecy}\cap\F_{d-1}\col
\ta_1\leq\sfrac2{\sqrt3}(3r)^{\frac d{d-1}},\:  
\hspace{70pt}
\\
\Z^{d-1}\tM\cap a_1^{\frac1{d-1}}\fC_\vecv=\emptyset\Bigr\}\Bigr)
\,d_\vecp(\vecv)\,da_1,
\end{align*}
where $\F_{d-1}\subset\Si_{d-1}$ is some fixed fundamental domain for
$\Gamma^{(d-1)}\backslash G^{(d-1)}$.
Using \eqref{X1E1P} in dimension $d-1$ we get,
so long as $r$ is larger than some constant which only depends on $d$, 
\begin{align}\label{CYLINDER2PTSLOWBDPROP2RES2PF1}
\geq r^{-d-1}\int_{\S_1^{d-1}\cap\vecp^\perp}\int_{3r}^{4r}
\Bigl(p_\vecy(a_1^{\frac1{d-1}}\fC_\vecv)-\sfrac1{100}\Bigr)
\,da_1\,d_\vecp(\vecv).
\end{align}
Now for every $\vecv\in\S_1^{d-1}\cap\vecp^\perp$ with
$\omega<cr^{-\frac d{d-2}}$
we have $\omega_\vecz<2cr^{-\frac d{d-2}}$
and $\pi-\omega_\vecw<2cr^{-\frac d{d-2}}$,
because of
\eqref{CYLINDER2PTSMAINTHMPF2alt} and $\pi-\varphi\leq cr^{-\frac d{d-2}}$.
Furthermore $\|\vecw\|,\|\vecz\|\geq 1-cr^{-\frac{2d}{d-2}}$ and
hence as in the proof of Lemma \ref{CONVENIENTCONELEM}, the
$(d-2)$-dimensional balls $B_1\cap\vecv$ and $B_2\cap\vecv$
both have radii
$\ll c^{\frac12}r^{-\frac{2}{d-2}}$.
Since $\fC\cap\vecv$ does not intersect the central axis of $\fC$
(provided $c$ is sufficiently small)
it follows that
$|\fC_\vecv|\ll c^{\frac{d-2}2}r^{-1}$.
Hence if $c$ is sufficiently small then
$|a_1^{\frac1{d-1}}\fC_\vecv|<\frac12$ for all 
$\vecv\in\S_1^{d-1}\cap\vecp^\perp$ with $\omega<cr^{-\frac d{d-2}}$
and all $a_1\in(3r,4r)$,
and since $d-1\geq3$ and $\R\vecy\cap a_1^{\frac1{d-1}}\fC_\vecv=(0,1)\vecy$,
the argument in Remark \ref{PPCVOLSMALLREM}
applies to give $p_\vecy(a_1^{\frac1{d-1}}\fC_\vecv)>\frac1{10}$.
Considering the contribution from these $\vecv$ in 
\eqref{CYLINDER2PTSLOWBDPROP2RES2PF1} we get
\begin{align*}
\Phi_\bn(\xi,\vecw,\vecz)\gg r^{-d}\int_0^{cr^{-\frac d{d-2}}}
\omega^{d-3}\,d\omega
\gg r^{-2d}=\xi^{-2}.
\end{align*}
This has been proved for all sufficiently large $\xi$;
the case of smaller $\xi$ is treated as in the proof of
\eqref{CYLINDER2PTSLOWBDPROP2RES1}.
\end{proof}

\begin{proof}[Proof of Proposition \ref{CYLD3OPTPROP}]
We will prove that there exists an absolute constant $0<c<1$
such that if $\xi$ is sufficiently large
and if $\|\vecw\|,\|\vecz\|\geq1-c\cdot s_3(\xi,\varphi)$,
then \eqref{CYLD3OPTPROPRES} holds.
This suffices, since the case of smaller $\xi$ can then be treated as in
the proof of \eqref{CYLINDER2PTSLOWBDPROP2RES1}.
We will successively impose conditions on $c$ being sufficiently
small.

Let us fix the choice
$\vecz=(z,0)$ and $\vecw=w(\cos\varphi,\sin\varphi)$
in \eqref{CYLINDER2PTSMAINTHMPF1}, with $\varphi\in[0,\pi]$ and $z,w\in(0,1)$.
We write
\begin{align*}
\vecv=(\cos\varpi,\sin\varpi\cos\omega,\sin\varpi\sin\omega),
\end{align*}
where we will keep $\varpi\in(0,\frac1{10})$ (thus $\frac{99}{100}<v_1<1$)
and $\omega\in(0,\pi)$.
Hence $\omega_\vecz=\omega$ and 
$\omega_\vecw=|\varphi-\omega|$.
Let us also write $t=\max(1-w,1-z)$.
Thus we are assuming $t\leq c\cdot s_3(\xi,\varphi)$.
As a variant of Lemma \ref{CONVENIENTCYLINDERLEM} one proves that,
for any $\vecv\in\S_1^{d-1}$
with $\varpi\in(0,\frac1{10})$ and $\omega\in(0,\pi)$,
the union $\fC_\vecv\cup\fC_\vecv'$
is contained in some right 2-dimensional cylinder
(viz.\ a rectangle) with $\bn$ in one of its bases,
which has height $\asymp r(t+\omega_\vecz^2+\omega_\vecw^2)$,
radius $\asymp r(t^{\frac12}+\sin\omega_\vecz+\sin\omega_\vecw)$,
and edge ratio $\asymp\min(1,\max(\frac{1-z}{\sin^2\omega_\vecz},
\frac{1-w}{\sin^2\omega_\vecw}))$ if $\omega\geq\varphi$,
otherwise edge ratio $1$.

Let us first assume $0\leq\varphi\leq\frac\pi2$.
By 
\eqref{CYLINDER2PTSLOWBDPROP2RES1} in Proposition \ref{CYLINDER2PTSLOWBDPROP2}
there is an absolute constant $c_1>0$ such that,
provided $c$ is sufficiently small,
$\Phi_\bn(\xi,\vecw,\vecz)\gg\xi^{-\frac43}$ holds
whenever $\varphi\leq c_1\xi^{-\frac13}$ 
(since $t\leq c\cdot s_3(\xi,\varphi)\leq c\xi^{-\frac23}$).
Hence we may now assume
$c_1\xi^{-\frac13}<\varphi\leq\frac\pi2$.
Then
\begin{align}\label{CYLD3OPTPROPPF1}
t\leq c\cdot s_3(\xi,\varphi)\leq c(\varphi\xi)^{-1}<cc_1^{-3}\varphi^{2}.
\end{align}
Let us consider the contribution 
in Lemma \ref{CYLINDER2PTSLOWBDPROP2RES1PF1LEM}
from $\vecv$ with $\varpi\in(0,\frac1{10})$
and $\omega\in(\frac43\varphi,\frac53\varphi)$.
For any such $\vecv$ we have 
$\sin\omega_\vecz\asymp\omega_\vecz\asymp\sin\omega_\vecw\asymp
\omega_\vecw\asymp\varphi$,
and thus (using also \eqref{CYLD3OPTPROPPF1})
$\fC_\vecv\cup\fC_\vecv'$ is contained in a rectangle
of base $\asymp r\varphi$,
height $\asymp r\varphi^2$ and edge ratio $\asymp t\varphi^{-2}$.
Recall from the proof of Lemma \ref{CYLINDER2PTSLOWBDPROP2RES1PF1LEM} that
$a_1=\vecp\cdot\vecv\asymp r$; hence
$|a_1^{\frac12}(\fC_\vecv\cup\fC_\vecv')|\asymp\xi\varphi^3>c_1^3$.
Now Corollary \ref{CONECYLCOR} gives,
using \eqref{CYLD3OPTPROPPF1} and provided that $c$ is sufficiently small:
$p^{(2)}(a_1^{\frac12}(\fC_\vecv\cup\fC_\vecv'))\asymp(\xi\varphi^3)^{-1}$.
On the other hand 
$\mu(\{\tM\in\Si_2\col\ta_1>(\frac23r)^{\frac32}\})\asymp r^{-3}=\xi^{-1}$
(cf.\ the proof of Lemma \ref{TRIVPCBOUNDLEM}).
Hence without keeping explicit track of implied constants, our usual
subtraction argument cannot be carried through unless $\varphi$
is sufficiently small.
As a simple way to remedy this, note that we may
fix an absolute constant $c_2>1$ such that
$\mu(\{\tM\in\Si_2\col\ta_1>c_2(\frac23r)^{\frac32}\})
<\frac12p^{(2)}(a_1^{\frac12}(\fC_\vecv\cup\fC_\vecv'))$ holds for all
 $\xi,\vecw,\vecz,\vecv$ which we are currently considering.
Let
\begin{align}\label{CYLD3OPTPROPPF3}
\Si_3(K):=\Bigl\{\nn(u) \aa(a) \kk \col u\in \mathcal{F}_N,\:
0<a_2\leq Ka_1,\: 0<a_3\leq\sfrac{2}{\sqrt 3}a_2,
\: \kk \in \SO(d) \Bigr\}
\end{align}
(thus $\Si_3=\Si_3(\frac2{\sqrt3})$, cf.\ \eqref{SIDEF});
then for \textit{any} $K>0$ the set $\Si_3(K)$ is
contained in a finite union of 
fundamental regions for $X_1$ (\cite{Borel}). %
Using this fact with $K=c_2$ 
in the proof of Lemma \ref{CYLINDER2PTSLOWBDPROP2RES1PF1LEM}
it follows that
\begin{align}\notag
p_\vecp(\fC) \gg \xi^{-1}\int_{\substack{\vecv\in\S_1^2\\ 
(v_1>\frac{99}{100})}}
\mu\Bigl(\Bigl\{\tM\in\Si_2\col
\ta_1\leq c_2(\sfrac23r)^{\frac32},\:
\Z^2\tM\cap a_1^{\frac12}(\fC_\vecv\cup\fC_\vecv')
=\emptyset\Bigr\}\Bigr)\,d\vecv
\hspace{50pt}
\\\label{CYLD3OPTPROPPF4}
\geq\xi^{-1}\int_0^{\frac1{10}}\int_{\frac43\varphi}^{\frac53\varphi}
\sfrac12\,p^{(2)}\Bigl(a_1^{\frac12}(\fC_\vecv\cup\fC_\vecv')\Bigr)
\,d\omega\,(\sin\varpi)\,d\varpi
\gg\xi^{-1}\int_{\frac43\varphi}^{\frac53\varphi}(\xi\varphi^3)^{-1}
\,d\omega\asymp\xi^{-2}\varphi^{-2},
\end{align}
as desired.

Next assume $\frac\pi2<\varphi\leq\pi$.
Recall that we write $\varphi_0=\pi-\varphi$.
First note that if $c$ is sufficiently small, then
there is an absolute constant $c_3\in(0,1)$ such that if
$\varphi_0\leq c_3\xi^{-1}$, 
then for any
$\vecv$ with $\varpi\in(0,\frac1{10})$ 
and $\omega\in(\varphi-c_3\xi^{-1},\varphi)$,
$a_1^{\frac12}(\fC_\vecv\cup\fC_\vecv')$ is contained in a 
rectangle of area $\leq\frac12$
(the proof of this makes use of
$t\leq c\cdot s_3(\xi,\varphi)=c\xi^{-2}$).
It follows that if $\xi$ is larger than a certain absolute constant
and if $\varphi_0\leq c_3\xi^{-1}$, then
\begin{align*}
p_\vecp(\fC)\gg\xi^{-1}\int_{\varphi-c_3\xi^{-1}}^\varphi\sfrac13\,d\omega
\gg\xi^{-2}.
\end{align*}
Hence we may now assume %
$\frac\pi2<\varphi<\pi-c_3\xi^{-1}$.
Then
\begin{align}\label{CYLD3OPTPROPPF2}
t\leq c\cdot s_3(\xi,\varphi)<cc_3^{-1}\varphi_0\xi^{-1}
<cc_3^{-2}\varphi_0^2.
\end{align}
Now consider the set of 
$\vecv$ with $\varpi\in(0,\frac1{10})$
and $\omega\in(\pi-\frac12\varphi_0,\pi-\frac13\varphi_0)$.
For any such $\vecv$ we have
$\omega_\vecz\asymp1$ and $\sin\omega_\vecz\asymp\sin\omega_\vecw
\asymp\omega_\vecw\asymp\varphi_0$,
and thus (using also \eqref{CYLD3OPTPROPPF2})
$\fC_\vecv\cup\fC_\vecv'$ is contained in a rectangle
of base $\asymp r\varphi_0$,
height $\asymp r$ and edge ratio $\asymp t\varphi_0^{-2}$.
Hence $|a_1^{\frac12}(\fC_\vecv\cup\fC_\vecv')|\asymp\xi\varphi_0>c_3$,
and thus Corollary \ref{CONECYLCOR} gives,
using \eqref{CYLD3OPTPROPPF2} and assuming that $c$ is sufficiently small:
$p^{(2)}(a_1^{\frac12}(\fC_\vecv\cup\fC_\vecv'))\asymp(\xi\varphi_0)^{-1}$.
Repeating the argument from \eqref{CYLD3OPTPROPPF3}, \eqref{CYLD3OPTPROPPF4}
we now obtain
\begin{align*}
p_\vecp(\fC)\gg\xi^{-1}\int_{\pi-\frac12\varphi_0}^{\pi-\frac13\varphi_0}
(\xi\varphi_0)^{-1}\,d\omega
\asymp\xi^{-2},
\end{align*}
as desired.
\end{proof}


\begin{thebibliography}{99}

\bibitem{athreyamargulis} 
J. S. Athreya and G. A. Margulis,
Logarithm laws for unipotent flows, I,
J.\ Mod.\ Dyn.\ \textbf{3} (2009), 359--378. 
%

%
%

%
%

\bibitem{Boca00}
F.P. Boca, C. Cobeli and A. Zaharescu, Distribution of lattice points visible from the origin.  Comm. Math. Phys.  \textbf{213}  (2000), 433--470.

\bibitem{Boca06}
F.P. Boca and A. Zaharescu, On the correlations of directions in the Euclidean plane.  Trans. Amer. Math. Soc.  \textbf{358 } (2006), 1797--1825 

%
%

%
%

%
%

%
%

%
%

\bibitem{bonnesen87}
T.\ Bonnesen and W.\ Fenchel,
\textit{Theory of convex bodies},
Translated from the German and edited by L.\ Boron, C.\ Christenson 
and B.\ Smith,
BCS Associates,
Moscow, ID,
1987.

\bibitem{Borel}
A. Borel, \textit{Introduction aux groupes arithm\'etiques},
Hermann, Paris, 1969.

\bibitem{Bourgain98}
J. Bourgain, F. Golse and B. Wennberg, On the distribution of free path lengths for the periodic Lorentz gas.  Comm. Math. Phys.  \textbf{190}  (1998), 491--508.

%
%

%
%
%
%

%
%

%
%

%
%

%
%
%
%
%

%
%

\bibitem{Daleyverejones}
D. J. Daley and D. Vere-Jones, 
\textit{An introduction to the theory of point processes, Vol.\ II},
2nd ed.,
Probability and its Applications,
Springer-Verlag, New York, 2008.

%
%
%

\bibitem{DRS}
W. Duke, Z. Rudnick, P. Sarnak,
Density of Integer Points on Affine Homogeneous Varieties,
Duke Math.\ J.\ \textbf{71} (1993), 143--179.

\bibitem{Elkies04}
N.D. Elkies and C.T. McMullen, Gaps in $\sqrt{n}\bmod 1$ and ergodic theory.  Duke Math. J.  \textbf{123}  (2004), 95--139.

%
%
%
%

%
%
%
%

\bibitem{Golse00}
F. Golse and B. Wennberg, On the distribution of free path lengths for the periodic Lorentz gas. II.  M2AN Math. Model. Numer. Anal.  \textbf{34}  (2000),  no. 6, 1151--1163.

%
%

%
%

%
%
%

%
%

%
%
%

\bibitem{John48}
F. John, Extremum problems with inequalities as subsidiary conditions,
\textit{Courant Anniversary Volume}, Interscience, New York, 1948,
pp.\ 187--204.
%

\bibitem{Kallenberg1984}
O. Kallenberg, An informal guide to the theory of conditioning in point processes,
Internat.\ Statist.\ Rev.\ \textbf{52} (1984), 151--164.

\bibitem{Kallenberg}
O. Kallenberg, \textit{Random measures},
4th ed., Akademie-Verlag, Berlin, 1986.

\bibitem{Marklof00}
J. Marklof,
The $n$-point correlations between values of a linear form,
Ergod.\ Th.\ \& Dynam.\ Sys.\ \textbf{20} (2000), 1127--1172.

%
%

\bibitem{partI}
J. Marklof and A. Str\"ombergsson, The distribution of free path lengths in the periodic Lorentz gas and related lattice point problems, 
Annals of Math. \textbf{172} (2010), 1949--2033.

\bibitem{partII}
J. Marklof and A. Str\"ombergsson, %
The Boltzmann-Grad limit of the periodic Lorentz gas, arXiv:0801.0612; to appear in the Annals of Mathematics.

\bibitem{partIII}
J. Marklof and A. Str\"ombergsson, Kinetic transport in the two-dimensional periodic Lorentz gas, arXiv:0804.0566; to appear in Nonlinearity.

\bibitem{partIV}
J. Marklof and A. Str\"ombergsson, The periodic Lorentz gas in the Boltzmann-Grad limit: Asymptotic estimates, 
to appear in GAFA.

%
%

%
%

%
%
%
%

\bibitem{Rogers58}
C. A. Rogers, 
Lattice covering of space: {T}he {M}inkowski-{H}lawka theorem,
Proc.\ London Math.\ Soc.\ \textbf{8} (1958), 447--465.

\bibitem{Schmidt58}
W. M. Schmidt,
The measure of the set of admissible lattices,
Proc.\ Amer.\ Math.\ Soc.\ \textbf{9} (1958), 390--403.

\bibitem{Schmidt59}
W. M. Schmidt,
Masstheorie in der {G}eometrie der {Z}ahlen,
Acta Math.\ \textbf{102} (1959), 159--224.

\bibitem{schneider}
R. Schneider, \textit{Convex bodies: the {B}runn-{M}inkowski theory},
Cambridge University Press, Cambridge, 1993.

%
%

\bibitem{Shimura}
G. Shimura,
\textit{Introduction to the Arithmetic Theory of Automorphic Forms},
Iwanami Shoten and Princeton University Press, 1971.
%

%
%
%
%

\bibitem{siegel45}
C. L. Siegel,
A mean value theorem in geometry of numbers,
Ann.\ of Math.\ \textbf{46} (1945), 340--347.
%

\bibitem{siegel}
C. L. Siegel,
Lectures on the Geometry of Numbers,
Springer-Verlag, Berlin-Heidelberg-New York, 1989.

%
%
%

\bibitem{dSwKjM95}
D.\ Stoyan, W.\ S.\ Kendall and J.\ Mecke, 
\textit{Stochastic geometry and its applications},
John Wiley \& Sons,
Chichester, 1995.

\bibitem{SV}
A.\ Str\"ombergsson, A.\ Venkatesh,
Small solutions to linear congruences and Hecke equidistribution, 
Acta Arith., \textbf{118} (2005), 41-78.

%
%
%

%
%
%
%

%
%
%
\end{thebibliography}
\end{document}